\definecolor{mydarkblue}{rgb}{0,0.08,0.45}
\tikzset{
  block/.style = {draw, fill=white, rectangle, minimum height=3em, minimum width=5em},
  arrow/.style = {thick,->,>=stealth}
}
    \let\Cref\crtCref
    \let\cref\crtcref
\theoremstyle{plain}
\newtheorem{theorem}{Theorem}[section]
\newtheorem{proposition}[theorem]{Proposition}
\newtheorem{lemma}[theorem]{Lemma}
\newtheorem{corollary}[theorem]{Corollary}
\theoremstyle{definition}
\newtheorem{definition}[theorem]{Definition}
\newtheorem{assumption}[theorem]{Assumption}
\theoremstyle{remark}
\newcolumntype{Y}{>{\centering\arraybackslash}X}
\newcolumntype{P}{>{\raggedleft\arraybackslash}X}
\crefname{assumption}{assumption}{assumptions}
\def\sR{\mathscr{R}}
\def\sG{\mathscr{G}}
\def\sM{\mathscr{M}}
\def\sN{\mathscr{N}}
\def\sD{\mathscr{D}}
\def\sT{\mathscr{T}}
\def\sE{\mathscr{E}}
\newcommand{\rd}{{\,\mathrm{d}}}
\newcommand{\asto}{\xrightarrow{\textup{a.s.}}}
\newcommand{\as}{\textup{a.s.}}
\newcommand{\pto}{\xrightarrow{\textup{p}}}
\newcommand{\dto}{\xrightarrow{\textup{d}}}
\newcommand{\norm}[1]{\left\lVert#1\right\rVert}
\let\hat\widehat
\let\tilde\widetilde
\def\given{\,\middle|\,}
\newcommand{\ba}{\bm{a}}
\newcommand{\bb}{\bm{b}}
\newcommand{\bc}{\bm{c}}
\newcommand{\bw}{\bm{w}}
\newcommand{\bx}{\bm{x}}
\newcommand{\by}{\bm{y}}
\newcommand{\bz}{\bm{z}}
\newcommand{\bA}{\bm{A}}
\newcommand{\bB}{\bm{B}}
\newcommand{\bC}{\bm{C}}
\newcommand{\bD}{\bm{D}}
\newcommand{\bE}{\bm{E}}
\newcommand{\bI}{\bm{I}}
\newcommand{\bL}{\bm{L}}
\newcommand{\bM}{\bm{M}}
\newcommand{\bN}{\bm{N}}
\newcommand{\bQ}{\bm{Q}}
\newcommand{\bR}{\bm{R}}
\newcommand{\bS}{\bm{S}}
\newcommand{\bU}{\bm{U}}
\newcommand{\bV}{\bm{V}}
\newcommand{\bW}{\bm{W}}
\newcommand{\bX}{\bm{X}}
\newcommand{\bY}{\bm{Y}}
\newcommand{\bZ}{\bm{Z}}
\newcommand{\cC}{\mathcal{C}}
\newcommand{\cD}{\mathcal{D}}
\newcommand{\cF}{\mathcal{F}}
\newcommand{\cI}{\mathcal{I}}
\newcommand{\cK}{\mathcal{K}}
\newcommand{\cN}{\mathcal{N}}
\newcommand{\CC}{\mathbb{C}}
\newcommand{\EE}{\mathbb{E}}
\newcommand{\NN}{\mathbb{N}}
\newcommand{\PP}{\mathbb{P}}
\newcommand{\RR}{\mathbb{R}}
\newcommand{\bbeta}{\bm{\beta}}
\newcommand{\bepsilon}{\bm{\epsilon}}
\newcommand{\bSigma}{\bm{\Sigma}}
\newcommand{\argmin}{\mathop{\mathrm{argmin}}}
\newcommand{\tr}{\mathop{\mathrm{tr}}}
\DeclareMathOperator{\Var}{{\rm Var}}
\DeclareMathOperator{\ind}{\mathds{1}}  %
\newcommand*{\zero}{{\bm 0}}
\newcommand{\oper}{\mathop{\mathrm{op}}}
\newcommand{\SNR}{\texttt{SNR}\xspace}
\newcommand{\betaridge}{\hat{\bbeta}^{\lambda}\xspace}
\newcommand{\hSigma}{\hat{\bSigma}\xspace}
\newcommand{\hf}{{\hat{f}}}
\newcommand{\tf}{{\widetilde{f}}}
\newcommand{\tbeta}{{\widetilde{\bbeta}}}
\newcommand{\tfWR}[2]{{\tf_{{#1},{#2}}}}
\def \hbeta {\widehat{\bbeta}}
\newcommand{\tv}{\widetilde{v}}
\newcommand{\tc}{\widetilde{c}}
\newcommand{\RzeroMe}[3]{{\sR^0_{{#1}}({#2},{#3})}}
\newcommand{\RlamMe}[3]{{\sR_{{#1}}^{\lambda}({#2},{#3})}}
\newcommand{\RzeroM}[2]{{\sR_{{#1}}^{0}({#2},\phi_s)}}
\NewDocumentCommand{\RlamM}{ O{\lambda} O{M} O{\phi} O{\phi_s}}{{\sR_{{#2}}^{{#1}}({#3},{#4})}}
\NewDocumentCommand{\RlamMtr}{ O{\lambda} O{M} O{\phi} O{\phi_s}}{{\sT^{{#1}}_{{#2}}({#3},{#4})}}
\newcommand{\BlamM}[2]{{\mathscr{B}_{{#1}}^{\lambda}({#2},\phi_s)}}
\newcommand{\VlamM}[2]{{\mathscr{V}_{{#1}}^{\lambda}({#2},\phi_s)}}
\newcommand{\rhoar}{\rho_{\mathrm{AR1}}}
\newcommand{\asympequi}{\simeq}
\newcommand{\SRS}{\textup{\texttt{SRS}}}
\newcommand{\Errtrain}{\mathrm{Err}_{\mathrm{train}}}
\newcommand{\Errtest}{\mathrm{Err}_{\mathrm{test}}}
\newcommand{\gcv}{\textup{gcv}}
\NewDocumentCommand{\Rdet}{ O{\lambda} O{M} O{\phi} O{\phi_s}}{{\sR_{{#2}}^{{#1}}({#3},{#4})}}
\NewDocumentCommand{\gcvdet}{ O{\lambda} O{\phi} O{\phi_s} O{\infty}}{{\sG_{#4}^{#1}({#2},{#3})}}
\NewDocumentCommand{\Mdet}{ O{\lambda} O{M} O{\phi} O{\phi_s}}{{\sM^{{#1}}_{{#2}}({#3},{#4})}}
\NewDocumentCommand{\Ndet}{ O{\lambda} O{M} O{\phi} O{\phi_s}}{{\sN^{{#1}}_{{#2}}({#3},{#4})}}
\NewDocumentCommand{\Ddet}{ O{\infty} O{\lambda} O{\phi} O{\phi_s}}{{\sD_{{#1}}^{{#2}}({#3},{#4})}}
\newcommand\ddfrac[2]{\tfrac{\displaystyle #1}{\displaystyle #2}}
\newcommand{\titletext}{Subsample Ridge Ensembles: Equivalences and Generalized Cross-Validation}
\newcommand{\titletextno}{Subsample Ridge Ensembles: Equivalences and Generalized Cross-Validation}
\icmltitlerunning{\titletextno}
\begin{document}

\twocolumn[
\icmltitle{\titletext}

\icmlsetsymbol{equal}{*}

\begin{icmlauthorlist}
\icmlauthor{Jin-Hong Du}{equal,cmustats}
\icmlauthor{Pratik Patil}{equal,berkeleystats}
\icmlauthor{Arun Kumar Kuchibhotla}{cmustats}
\end{icmlauthorlist}

\icmlaffiliation{cmustats}{Department of Statistics and Data Science, Carnegie Mellon University, Pittsburgh, PA 15213, USA.}
\icmlaffiliation{berkeleystats}{Department of Statistics, University of California, Berkeley, CA 94720, USA}

\icmlcorrespondingauthor{Jin-Hong}{jinhongd@andrew.cmu.edu}
\icmlcorrespondingauthor{Pratik}{pratikpatil@berkeley.edu}

\icmlkeywords{Machine Learning, ICML}

\vskip 0.3in
]

\printAffiliationsAndNotice{\icmlEqualContribution}  %

\begin{abstract}
We study subsampling-based ridge ensembles in the proportional asymptotics regime, where the feature size grows proportionally with the sample size
such that their ratio converges to a constant.
By analyzing the squared prediction risk of ridge ensembles as a function of the explicit penalty $\lambda$ and the limiting subsample aspect ratio $\phi_s$ (the ratio of the feature size to the subsample size), we characterize contours in the $(\lambda, \phi_s)$-plane at any achievable risk.
As a consequence, we prove that the risk of the optimal full ridgeless ensemble (fitted on all possible subsamples) matches that of the optimal ridge predictor.
In addition, we prove strong uniform consistency of generalized cross-validation (GCV) over the subsample sizes for estimating the prediction risk of ridge ensembles. 
This allows for GCV-based tuning of full ridgeless ensembles without sample splitting and yields a predictor whose risk matches optimal ridge risk.
\end{abstract}

\section{Introduction}\label{sec:introduction}

    Ensemble methods \citep{breiman_1996} are widely used in various real-world applications in statistics and machine learning.
    They combine a collection of weak predictors to produce more stable and accurate predictions.
    One notable example of an ensemble method is bagging (\textbf{b}ootstrap \textbf{agg}regat\textbf{ing}) \citep{breiman_1996,buhlmann2002analyzing}. Bagging involves averaging base predictors that are fitted on different subsampled datasets and has been shown to stabilize the prediction and reduce the predictive variance \citep{buhlmann2002analyzing}.
    In this paper, we study such a class of ensemble methods that fit each base predictor independently using a different subsampled dataset of the full training data.
    As a prototypical base predictor, we focus on \emph{ridge regression} \citep{hoerl_kennard_1970_1,hoerl_kennard_1970_2}, 
    one of the most popular statistical methods.
    We refer readers to the ``ridgefest'' by \citet{hastie2020ridge} for the history and review of ridge regression.
    
    \begin{figure}[!t]
        \centering
        \includegraphics[width=0.45\textwidth]{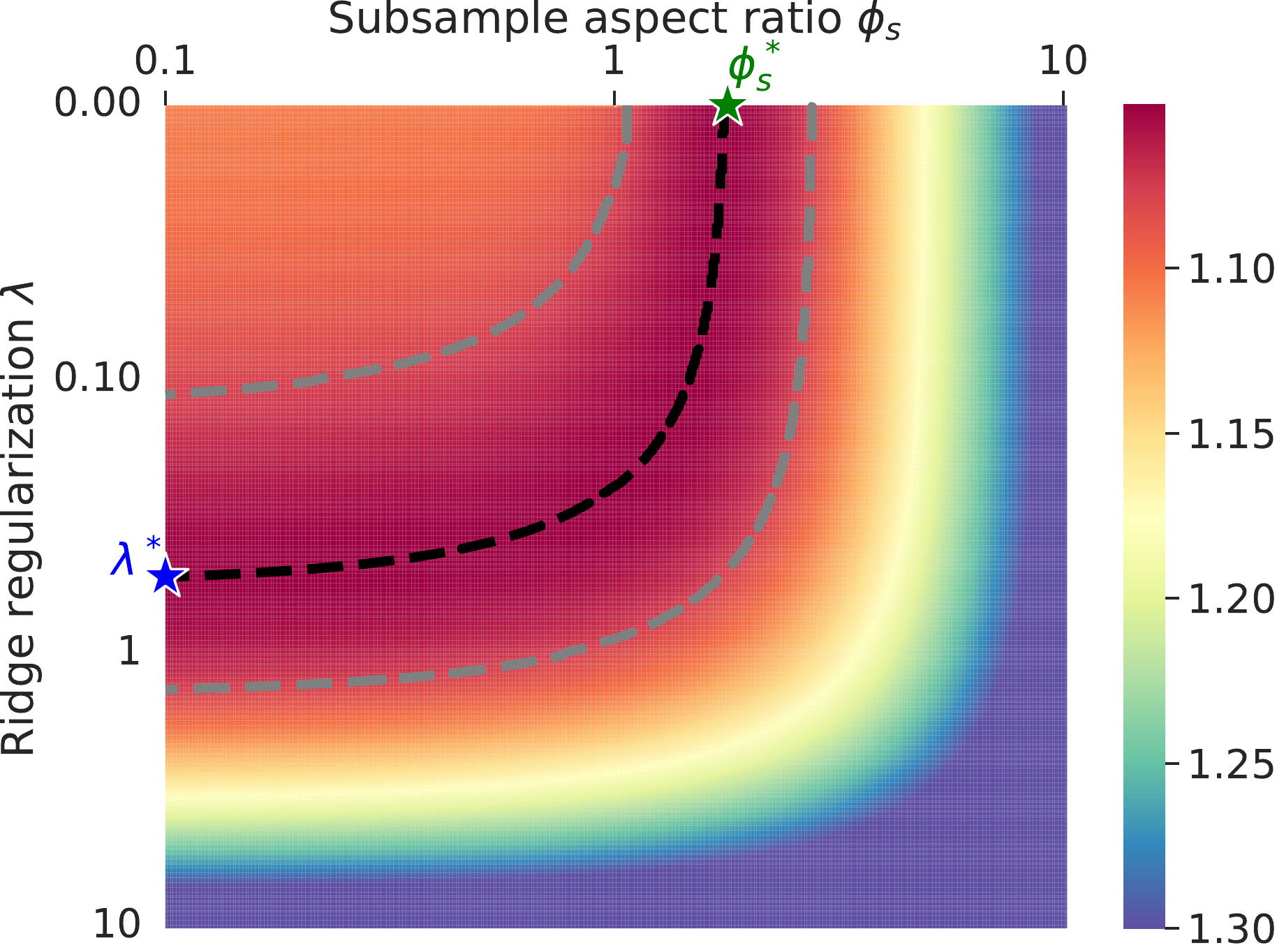}
        \caption{Heat map of the asymptotic prediction risk landscape of full ridge ensembles as the number of observation $n$, the subsample size $k$, and the feature dimension $p$ tend to infinity, for varying regularization parameters $\lambda$ and limiting subsample aspect ratio $\phi_s=\lim p/k$.
        The data $(\bx,y)\in \RR^{p}\times\RR$ is generated from a non-isotropic linear model $y = \bx^{\top}\bbeta_0 + \epsilon$ with $\phi=\lim p/n=0.1$, where the features, the coefficients, and the residuals are distributed as $\bx\sim\cN(0,\bSigma_{\mathrm{AR1}})$, $\bbeta_0=\frac{1}{5}\sum_{j=1}^5 \bw_{(j)}$, and $\epsilon\sim \cN(0,1)$, respectively.
        Here, the covariance matrix $(\bSigma_{\mathrm{AR1}})_{ij} = 0.5^{|i-j|}$, $\bw_{(j)}$ is the top $j$th eigenvector of $\bSigma_{\mathrm{AR1}}$.
        The green and blue stars denote the risk of the optimal full-ensemble ridgeless predictor and the optimal ridge predictor without subsampling, respectively.
        The black dashed line denotes the set of $(\lambda,\phi_s)$ pairs that yield the same risk as $(\lambda^*,\phi)$ and $(0,\phi_s^*)$, while the gray dashed lines indicate the set of pairs that all result in the same sub-optimal risk.}
        \label{fig:overview}
    \end{figure}

    Ridge regression has recently attracted great interest, particularly the limiting case of zero regularization (where the regularization parameter tends to zero), termed ``ridgeless'' regression.
    In the underparameterized regime,
    the ridgeless predictor is ordinary least squares.
    However, in the overparameterized regime, 
    it interpolates the training data and exhibits a peculiar risk behavior \citep{belkin_hsu_xu_2020,bartlett_long_lugosi_tsigler_2020,hastie2022surprises,muthukumar_vodrahalli_subramanian_sahai_2020}.
    \citet{lejeune2020implicit,patil2022bagging} have recently analyzed the statistical properties of the ensemble ridge and ridgeless predictors under proportional asymptotics.
    Under a linear model with the isotropic Gaussian covariate distribution, \citet{lejeune2020implicit} prove that the \emph{full ensemble} (ensemble fitted on all possible subsampled datasets) of least squares predictors with optimal subsample size has the same risk as that of ridge predictor with optimal regularization. 
    Under a more general but still isotropic covariate distribution, \citet{patil2022bagging} prove similar risk equivalence of the optimized full ridgeless ensemble and the optimized ridge predictor.
    
    These findings inspire two natural avenues to investigate.
    
    \emph{(1) Understanding the extent of risk equivalences.}
    As a curious experiment, one can empirically observe that a similar phenomenon to the one just mentioned appears to hold under quite general non-isotopic data models, as illustrated in \Cref{fig:overview}.
    We observe that the optimal ridgeless in the full ensemble (the green star) has the same prediction risk as the optimal ridge on the full data (the blue star).
    Furthermore, any pair of $(\lambda, \phi_s)$ on the black line achieves the same optimal risk.
    Such a relationship also extends to any other attainable risk value.
    For example, see the grey lines for $(\lambda, \phi_s)$ pairs that all achieve the same sub-optimal risk.
    This inspires our first investigation to establish risk equivalences between subsampling and ridge regression under general settings.
    
    \emph{(2) Overcoming limitations of split cross-validation.}
    Apart from its theoretical interest, the risk equivalences also suggest an alternative practical way to tune the ridge regularization parameter by tuning the subsample size.
    In terms of practical tuning of the ridge and ridgeless ensembles, \citet{patil2022bagging} provide a split cross-validation method to estimate the prediction risk of ensembles with a fixed (finite) number of ensemble sizes and further prove that the split cross-validation consistently selects the best subsample size. The split cross-validation procedure has two disadvantages: (a) sample splitting introduces additional external randomness in the predictor; and (b) the reduced sample size, although asymptotically negligible, has significant finite sample effects, especially near the interpolation thresholds.
    This inspires our second investigation to address these limitations by considering generalized cross-validation (GCV) that does not require any sample splitting.
    The consideration of GCV as a viable estimator of the prediction risk for ridge ensembles stems from the observation that the ridge ensembles are also in fact linear smoothers.

    \subsection{Summary of Contributions} 
    
    Below we provide a brief overview of our main results.
    
    \begin{itemize}[leftmargin=*]
        \item
        \textbf{General risk equivalences.}
        We establish general equivalences between the subsample-optimized ridgeless ensemble, the optimal ridge predictor, and the optimal subsample ridge ensemble (see \Cref{thm:comparison_optimal_ridge}).
        In addition, for any $\tau\geq 0$, we provide an exact characterization of the sets $\cC_{\tau}$ of pairs $(\lambda,\phi_s)$ (the regularization parameter and the limiting subsample aspect ratio) such that the risk of the full ridge ensemble with ridge regularization $\lambda$ and subsample aspect ratio $\phi_s$ is equal to the risk of the ridge predictor with ridge regularization $\tau$.
        In essence, this amounts to showing that the implicit regularization of subsampling is the same as additional explicit ridge regularization.

        \item
        \textbf{Uniform consistency of GCV.}
        We establish the uniform consistency of GCV across all possible subsample sizes for full ridge ensembles with fixed regularization parameters (see \Cref{thm:uniform-consistency-k}).
        Notably, this result is also applicable to zero explicit regularization and covers the case of ridgeless regression.
        This finding enables tuning over the subsample size in a data-dependent manner, and in conjunction with \Cref{thm:comparison_optimal_ridge}, 
        it implies that GCV tuning leads to a predictor with the same risk as the optimal ridge predictor (see \Cref{cor:gcv-opt-ridge}).

        \item 
        \textbf{Finite-ensemble surprises.}
        Even though GCV is consistent for the non-ensemble ridge and full-ensemble ridge predictors, interestingly, this is the first paper that proves GCV \emph{can} be inconsistent even for ridge ensembles when the ensemble size is two (see \Cref{prop:inconsistency}).
        This finding is in contrast to other known results of GCV for ridge (see \Cref{subsec:related-work} for more details).
        Nevertheless, experiments on synthetic data and real-world single-cell multiomic datasets demonstrate the applicability of GCV for tuning subsample sizes, even with moderate ensemble sizes (roughly of order 10).
    \end{itemize}
    
    \subsection{Related Work}\label{subsec:related-work}

    \emph{Ensembles and risk analysis.}
    Ensemble methods are effective in combining weak predictors 
    to build strong predictors in both regression and classification settings \citep{friedman_hastie_tibshirani_2009}.
    Early work on ensemble methods includes classical papers by \citet{breiman_1996,buhlmann2002analyzing}.
    There has been further work on the ensembles of smooth weak predictors \citep{buja2006observations,friedman_hall_2007}, non-parametric estimators \citep{buhlmann2002analyzing,loureiro2022fluctuations}, and classifiers \citep{hall_samworth_2005,samworth2012optimal}.
    Under proportional asymptotics, \citet{d2020double,adlam2020understanding,loureiro2022fluctuations} study ensemble learning under random feature models.
    For ridge ensembles, \citet{sollich1995learning,krogh1997statistical} derive risk asymptotics under Gaussian features.
    \citet{lejeune2020implicit} consider least squares ensembles obtained by subsampling such that the final subsampled dataset has more observations than the number of features.
    The asymptotic risk characterization for general data models 
    has been derived by \citet{patil2022bagging}.
    Both of these works show the equivalence between the subsample optimized full ridgeless ensemble and the optimal ridge under isotropic models.
    Our work significantly extends the scope of these results by characterizing risk equivalences for both optimal and suboptimal risks and for arbitrary feature covariance and signal structures.
    See the remarks after \Cref{thm:comparison_optimal_ridge}
    for a detailed comparison,

    \emph{Cross-validation and consistency.} 
    Cross-validation (CV) is arguably the most popular class of methods for model assessment and selection.
    Classical work on CV include: \citet{allen_1974,stone_1974,stone_1977,geisser_1975}, among others.
    We refer the reader to \citet{arlot_celisse_2010,zhang_yang_2015} for comprehensive  surveys of different CV variants.
    In practice, $k$-fold CV is widely used with typical $k$ being $5$ or $10$ \citep{friedman_hastie_tibshirani_2009,gyorfi_kohler_krzyzak_walk_2006}, but such small values of $k$ suffer from bias in high dimensions \citep{rad_maleki_2020}.
    The extreme case of leave-one-out cross-validation (LOOCV) (when $k = n$) alleviates the bias issues in risk estimation, and various statistical consistency properties of LOOCV have been analyzed in recent years; see, e.g., \citet{kale_kumar_vassilvitskii_2011,kumar_lokshtanov_vassilviskii_vattani_2013,obuchi_kabashima_2016,rad_zhou_maleki_2020}.
    Except for special cases, LOOCV is computationally expensive, and consequently, various approximations and their theoretical properties have been studied; see, e.g, \citet{wang_zhou_lu_maleki_mirrokni_2018,rad_maleki_2020,rad_zhou_maleki_2020,xu_maleki_rad_2019}.
    Generalized cross-validation (GCV) is a sort of approximation for the ``shortcut'' leave-one-out formula \citep{friedman_hastie_tibshirani_2009}, originally studied for the fixed-X design setting for linear smoothers by \citet{golub_heath_wabha_1979, craven_wahba_1979}.
    The consistency of GCV in such a setting has been investigated in \citet{li_1985, li_1986, li_1987}.
    More recently, in the random-$X$ setting, GCV has received considerable attention.
    In particular, consistency of GCV for ridge regression has been established in \citet{adlam_pennington_2020neural,hastie2020ridge,patil2021uniform,patil2022estimating,wei_hu_steinhardt} under various data settings.
    Our work contributes to this body of work by analyzing GCV 
    for subsampled ensemble ridge regression.

\section{Subsample and Ridge Equivalences}\label{sec:equiv}

    We consider the standard supervised regression setting.
    Let $\mathcal{D}_n = \{(\bx_j, y_j) : j \in [n] \}$ denote a dataset containing i.i.d.\ random vectors in $\RR^{p} \times \RR$, $\bX\in\RR^{n\times p}$ denote the feature matrix whose $j$-th row contains $\bx_j^\top$, and $\by\in\RR^n$ denote the response vector whose $j$-th entry contains $y_j$.
    For an index set $I\subseteq [n]$ of size $k$, let $\mathcal{D}_{I} = \{(\bx_j, y_j):\, j \in I\}$ be a subsampled dataset and let $\bL_I \in \RR^{n \times n}$ denote a diagonal matrix such that its $j$th diagonal entry is $1$ if $j \in I$ and $0$ otherwise. 
    Noting that the feature matrix and response vector associated with $\cD_I$ are $\bL_I \bX$ and $\bL_I \by$, respectively, the \emph{ridge} estimator $\hbeta_{k}^\lambda(\cD_I)$ fitted on $\cD_I$ (containing $k$ samples) with regularization parameter $\lambda>0$ can be expressed as:
    \begin{align}
        \hbeta^{\lambda}_{k}(\cD_I) 
        &= \argmin\limits_{\bbeta\in\RR^p}
        \sum_{j \in I} (y_j  - \bx_j^\top \bbeta)^2 / k
        + \lambda \| \bbeta \|_2^2 \notag \\
        &= (\bX^{\top} \bL_I \bX / k  + \lambda\bI_p)^{-1}{\bX^{\top} \bL_I \by}/{k} .\label{eq:ingredient-estimator}
    \end{align}
    Letting $\lambda\rightarrow0^+$, $\hbeta^{0}_{k}(\cD_I):=(\bX^{\top} \bL_I \bX/k)^{+}\bX^{\top} \bL_I \by/k$ becomes the so-called \emph{ridgeless} estimator, 
    where $\bA^+$ denotes the Moore-Penrose inverse of matrix $\bA$.
    
    \textbf{Ensemble estimator.}
    To introduce the ensemble estimator, it helps to define the set of all $k$ distinct elements from $[n]$ to be $\mathcal{I}_k:= \{\{i_1, i_2, \ldots, i_k\}:\, 1\le i_1 < i_2 < \ldots < i_k \le n\}$.
    Note that the cardinality of $\cI_k$ is $\smash{\binom{n}{k}}$.
    For $\lambda\geq0$, the ensemble estimator is then defined as:
    \begin{align}
        \label{eq:def-M-ensemble}
        \tbeta^{\lambda}_{k,M}(\cD_n;\{I_{\ell}\}_{\ell=1}^M) 
        &:= \frac{1}{M} \sum_{\ell\in[M]}\hbeta^{\lambda}_{k}(\cD_{I_\ell}),
    \end{align}
    where $I_{1},\ldots,I_M$ are simple random samples from $\cI_k$.
    The \emph{full-ensemble} ridge estimator is the average of predictors fitted on all possible subsampled datasets:
    \begin{align}
        \label{eq:def-full-ensemble}
        \tbeta^{\lambda}_{k,\infty}(\cD_n)
        &:= 
        \frac{1}{|\cI_k|} \sum_{I \in \cI_k} \hbeta^{\lambda}_{k}(\cD_{I})
        =
        \EE[\hbeta^{\lambda}_{k}(\cD_{I}) \,|\, \cD_n],
    \end{align}
    where the conditional expectation is taken with respect to the randomness of sampling from $\cI_k$.
    \Cref{lem:full-ensemble} shows that $\tbeta^{\lambda}_{k,\infty}(\cD_n)$ is also almost surely equivalent to letting the ensemble size $M$ tend to infinity in \eqref{eq:def-M-ensemble} conditioning on the full dataset $\cD_n$, thus justifying the notation in \eqref{eq:def-full-ensemble}.
    For simplicity, we drop the dependency on $\cD_n$, $\{I_{\ell}\}_{\ell=1}^M$ and only write $\tbeta^{\lambda}_{k,M}$, $\tbeta^{\lambda}_{k,\infty}$, when it is clear from the context.

    \textbf{Prediction risk.}
    We assess the performance of an $M$-ensemble predictor via conditional squared prediction risk:
    \begin{align}
        R_{k,M}^{\lambda}&:=\EE_{(\bx,y)}[(y-\bx^{\top} \tbeta_{k,M}^{\lambda})^2\mid\cD_n, \{I_{\ell}\}_{\ell = 1}^M] , \label{eq:R_M}
    \end{align}
    where $(\bx, y)$ is an independent test point sampled from the distribution as $\cD_n$.
    Note that the conditional risk $R_{k,M}^{\lambda}$ is a random variable that depends on both the dataset $\cD_n$ and the random samples $I_{\ell}$, $\ell = 1, \dots, M$.
    For the full ensemble estimator $\tbeta_{k,\infty}^{\lambda}$, the conditional prediction risk is defined analogously, except the risk now only depends on $\cD_n$:
    \begin{align}
        R_{k,\infty}^{\lambda} := \EE_{(\bx,y)}[(y-\bx^{\top} \tbeta_{k,\infty}^{\lambda})^2\mid\cD_n].
        \label{eq:R_inf}
    \end{align}

    \subsection{Data Assumptions} 

    For our theoretical results, we work under a proportional asymptotics regime, in which the original \emph{data aspect ratio} ($p / n$) converges to $\phi \in (0, \infty)$ as $n, p \to \infty$, and the \emph{subsample aspect ratio} ($p/k$) converges to $\phi_s$ as $k, p \to \infty$.
    Note that because $k \le n$, $\phi_s$ always lie in $[\phi, \infty]$.
    In addition, we impose two structural assumptions on the feature matrix and response vector as summarized in \Crefrange{asm:rmt-feat}{asm:lin-mod}, respectively.

    \begin{assumption}[Feature model]\label{asm:rmt-feat}
        The feature matrix decomposes as $\bX = \bZ\bSigma^{1/2}$, where 
        $\bZ\in\RR^{n\times p}$ contains i.i.d.\ entries with mean $0$, variance $1$, bounded moments of order $4+\delta$ for some $\delta > 0$,
        and $\bSigma \in \RR^{p \times p}$ is deterministic and symmetric with eigenvalues uniformly bounded between $r_{\min}>0$ and $r_{\max}<\infty$.        
        Let $\bSigma = \sum_{j=1}^p r_j\bw_j\bw_j^{\top}$ denote the eigenvalue decomposition, where $(r_j, \bw_j), j \in [p]$, are pairs of associated eigenvalue and normalized eigenvector.
        We assume there exists a deterministic distribution $H$ such that the empirical spectral distribution of $\bSigma$, $H_{p}(r) := p^{-1}\sum_{i=1}^{p} \ind_{\{r_i \le r\}}$, weakly converges to $H$, almost surely (with respect to $\bX$).
    \end{assumption}
    
    \begin{assumption}[Response model]\label{asm:lin-mod}
        The response vector decomposes as $\by=\bX\bbeta_0+\bepsilon$, where $\bbeta_0 \in \RR^{p}$ is an unknown signal vector with $\ell_2$-norm uniformly bounded and $\lim_{p\rightarrow\infty}\|\bbeta_0\|_2^2=\rho^2,$ and $\bepsilon$ is an unobserved error vector independent of $\bX$ with mean $0$, variance $\sigma^2$, and bounded moment of order $4 + \delta$ for some $\delta > 0$.
        We assume there exists a deterministic distribution $G$ such that the empirical distribution of $\bbeta_0$'s (squared) projection onto $\bSigma$'s eigenspace, $G_{p}(r) := \| \bbeta_0 \|_2^{-2} \sum_{i = 1}^{p} (\bbeta_0^\top \bw_i)^2 \, \ind_{\{ r_i \le r \}}$, weakly converges to $G$, almost surely (with respect to $\bX$).
    \end{assumption}

    \Cref{asm:rmt-feat,asm:lin-mod} are standard in the study of the ridge and ridgeless regression under proportional asymptotics; see, e.g., \citet{hastie2022surprises,patil2022mitigating,patil2022bagging}.
    It is possible to further relax both of these assumptions.
    Specifically, one can incorporate other feature models, e.g., random features \cite{mei_montanari_2022}, and can allow for certain non-linearities in the regression function \cite{bartlett_montanari_rakhlin_2021} for the response model.
    We leave these for future work.
    
    \begin{table*}[!t]
        \centering
        \begin{tabularx}{0.82\textwidth}{c|cccc|cccc}
            \toprule
            \multicolumn{1}{c}{\multirow{2}{*}{\textbf{Variable}}} & \multicolumn{4}{c}{\textbf{$M$-ensemble}} & \multicolumn{4}{c}{\textbf{Full ensemble}} \\\cmidrule(lr){2-5}\cmidrule(lr){6-9}
            \multicolumn{1}{c}{} & \multicolumn{2}{c}{\textbf{Finite-sample}} & \multicolumn{2}{c}{\textbf{Asymptotic}} & \multicolumn{2}{c}{\textbf{Finite-sample}} & \multicolumn{2}{c}{\textbf{Asymptotic}}\\
            \midrule
             Prediction risk & $R_{k,M}^{\lambda}$ & \eqref{eq:R_M} & $\RlamM$  & \eqref{eq:risk-det-with-replacement} & $R_{k,\infty}^{\lambda}$ & \eqref{eq:R_inf}& $\RlamM[\lambda][\infty]$ & \eqref{eq:risk-det-with-replacement} \\[0.2em]
             Test error & $\overline{R}_{k,M}^{\lambda}$ & \eqref{eq:Rb_M} & $\RlamM$  & \eqref{eq:risk-det-with-replacement} & \cellcolor{lightgray!25} & \cellcolor{lightgray!25} & \cellcolor{lightgray!25} & \cellcolor{lightgray!25}\\[0.2em]\cmidrule(lr){1-9}
             Training error & $T_{k,M}^{\lambda}$ & \eqref{eq:T_M} & $\RlamMtr$ & \eqref{eq:train-err} & $T_{k,\infty}^{\lambda}$ & \eqref{eq:T_inf} & $\RlamMtr[\lambda][\infty]$ & \eqref{eq:Ndet}\\[0.2em]             
             GCV denominator & $D_{k,M}^{\lambda}$ & \eqref{eq:D-kM} & \cellcolor{lightgray!25} & \cellcolor{lightgray!25} & $D_{k,\infty}^{\lambda}$ & \eqref{eq:D-inf}  & $\Ddet$ & \eqref{eq:Ddet}\\[0.2em]
             GCV estimator & $\gcv_{k,M}^{\lambda}$ & \eqref{eq:gcv-kM-inf} & \cellcolor{lightgray!25} & \cellcolor{lightgray!25} & $\gcv_{k,\infty}^{\lambda}$ & \eqref{eq:gcv-kM-inf} & $\gcvdet$ & \eqref{eq:gcv-det}\\[0.2em]
            \bottomrule
        \end{tabularx}
      \caption{Summary of notations and pointers to definitions of important empirical quantities used in this paper and their asymptotic limits.}\label{tab:notations}
    \end{table*}
    
    \subsection{Risk Equivalences}\label{subsec:connect-subsample-ridge}
    Under the above assumptions, \Cref{thm:ver-with-replacement} from \citet{patil2022bagging} implies that for every $M \ge 1$, the prediction risk $R_{k,M}^{\lambda}$ of the ridge and ridgeless predictors in the full ensemble converges to some deterministic limit $\RlamM$ as $k,n,p\rightarrow\infty$, $p/n\rightarrow \phi$ and $p/k\rightarrow\phi_s$.
    When $\phi_s=\phi$ (e.g., $k=n$), the asymptotic risk $\RlamM[\lambda][M][\phi][\phi]$ is equal to $\RlamM[\lambda][1][\phi][\phi]$ of the ridge predictor on the full dataset $\cD_n$ for all $M\ge1$, and we denote this risk simply by $\RlamM[\lambda][\infty][\phi][\phi]$.
    To facilitate our discussion and for simplicity, \Cref{tab:notations} provides pointers to definitions of all important quantities used in the paper.

    From a practical point of view, it is important to understand the least attainable risk that could be attained in the full ensemble.
    For the full ridge ensembles, we found that the explicit ridge regularization is unnecessary when considering optimal bagging and that the implicit regularization of ridgeless and subsampling suffices.
    The result below formalizes this empirical observation.
    
    \begin{theorem}[Optimal ridgeless ensemble vs optimal ridge]
    \label{thm:comparison_optimal_ridge}
        Under \Cref{asm:rmt-feat,asm:lin-mod}, for all $\phi\in(0,\infty)$, we have
        \begin{align*}
            \underbrace{\vphantom{\min_{\substack{\phi_s\geq\phi,\\\lambda\geq 0}}}\min_{\phi_s\geq \phi}
            \RzeroM{\infty}{\phi}}_{\substack{\mbox{opt. ensemble}\\\mbox{and no ridge}}} 
            \stackrel{(a)}{=} \underbrace{\vphantom{\min_{\substack{\phi_s\geq\phi,\\\lambda\geq 0}}}
            \min_{\lambda\geq 0}\RlamMe{\infty}{\phi}{\phi}}_{\substack{\mbox{no ensemble}\\\mbox{and opt. ridge}}} 
            \stackrel{(b)}{=} \underbrace{\min_{\substack{\phi_s\geq\phi,\\\lambda\geq 0}}\RlamMe{\infty}{\phi}{\phi_s}}_{\substack{\mbox{opt. ensemble}\\\mbox{and opt. ridge}}}. %
        \end{align*}
        Further, if $\phi_s^*$ is the optimal subsample aspect ratio for ridgeless, and $\lambda^*$ is the optimal ridge regularization with no subsampling, then for any $\theta \in [0,\lambda^*]$, full ridge ensemble with penalty parameter $\lambda = \lambda^* - \theta$ and subsample aspect ratio of $\phi_s = \phi + \theta (\phi_s^* - \phi)/\lambda^*$ also attains the optimal prediction risk. 
    \end{theorem}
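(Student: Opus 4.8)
The plan is to push everything through the deterministic risk profile and a single scalar ``effective regularization'' parameter. By the risk characterization of \citet{patil2022bagging} (\Cref{thm:ver-with-replacement}), each of $\sR^0_\infty(\phi,\phi_s)$, $\sR^\lambda_\infty(\phi,\phi_s)$ and $\sR^\tau_\infty(\phi,\phi)$ is the almost sure limit of the corresponding conditional prediction risk and admits a closed form in $\phi$, $\phi_s$, $\lambda$ and the limiting distributions $H$, $G$ (and $\rho^2$, $\sigma^2$). For $\psi\in(0,\infty]$ and $\mu\ge 0$, let $v(\mu;\psi)$ be the unique nonnegative solution of
\begin{equation*}
\frac{1}{v}=\mu+\psi\int\frac{r}{1+vr}\rd H(r),
\end{equation*}
so that $1/v(\mu;\psi)$ is the self-induced regularization of ridge regression with aspect ratio $\psi$ and penalty $\mu$. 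The crux of the proof, and the step I expect to be the main obstacle, is the \emph{equivalence lemma}: for every $\lambda\ge 0$ and $\phi_s\ge\phi$,
\begin{equation*}
\sR^\lambda_\infty(\phi,\phi_s)=\sR^{\tau}_\infty(\phi,\phi),\qquad \tau=\tau(\lambda,\phi_s):=\lambda+(\phi_s-\phi)\int\frac{r}{1+v(\lambda;\phi_s)r}\rd H(r).
\end{equation*}
I would prove this by writing out the closed form of \citet{patil2022bagging} for the full ensemble: its bias and (ensemble) variance components, once re-expressed, depend on $(\lambda,\phi_s)$ only through $v(\lambda;\phi_s)$ together with the fixed $\phi,H,G$; and the definition of $\tau(\lambda,\phi_s)$ is exactly arranged so that $1/v(\tau;\phi)=\tau+\phi\int r(1+v(\lambda;\phi_s)r)^{-1}\rd H=\lambda+\phi_s\int r(1+v(\lambda;\phi_s)r)^{-1}\rd H=1/v(\lambda;\phi_s)$, i.e.\ $v(\tau;\phi)=v(\lambda;\phi_s)$, which then collapses the full-ensemble risk onto the full-data ridge risk. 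Note $\tau(\lambda,\phi_s)\ge\lambda\ge 0$ and $\tau(\lambda,\phi)=\lambda$. This is essentially the $\cC_\tau$-characterization advertised in the introduction, and carrying the algebra (plus existence/uniqueness of the fixed points and continuity as $\lambda\downarrow 0$ and $\phi_s\uparrow\infty$, so as to cover ridgeless base predictors and degenerate subsamples) is where the bulk of the effort lies.

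Next I record the monotonicity facts I need, each following from a routine implicit-differentiation argument on the self-consistent equation: for fixed $\lambda$, $\phi_s\mapsto v(\lambda;\phi_s)$ is continuous and strictly decreasing; for fixed $\psi$, $\mu\mapsto v(\mu;\psi)$ is continuous and strictly decreasing, equivalently $\mu\mapsto 1/v(\mu;\psi)$ is continuous and strictly increasing. Consequently $\phi_s\mapsto\tau(\lambda,\phi_s)$ is a continuous, strictly increasing bijection of $[\phi,\infty]$ onto $[\lambda,\infty]$; in particular $\phi_s\mapsto\tau(0,\phi_s)$ is a bijection of $[\phi,\infty]$ onto $[0,\infty]$. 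I also note that the three risk profiles above are continuous on the compactified ranges $[\phi,\infty]$, $[0,\infty]$, so all minima in the statement are attained.

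I then deduce (a) and (b). For (a), the equivalence lemma with $\lambda=0$ gives $\sR^0_\infty(\phi,\phi_s)=\sR^{\tau(0,\phi_s)}_\infty(\phi,\phi)$, and since $\phi_s\mapsto\tau(0,\phi_s)$ is a bijection onto $[0,\infty]$,
\begin{equation*}
\min_{\phi_s\ge\phi}\sR^0_\infty(\phi,\phi_s)=\min_{\phi_s\ge\phi}\sR^{\tau(0,\phi_s)}_\infty(\phi,\phi)=\min_{\tau\ge 0}\sR^{\tau}_\infty(\phi,\phi)=\min_{\lambda\ge 0}\sR^{\lambda}_\infty(\phi,\phi),
\end{equation*}
which is (a). For (b): restricting the double minimum to $\phi_s=\phi$ gives $\min_{\phi_s\ge\phi,\,\lambda\ge 0}\sR^\lambda_\infty(\phi,\phi_s)\le\min_{\lambda\ge 0}\sR^\lambda_\infty(\phi,\phi)$; conversely, by the lemma every $\sR^\lambda_\infty(\phi,\phi_s)=\sR^{\tau(\lambda,\phi_s)}_\infty(\phi,\phi)$ with $\tau(\lambda,\phi_s)\ge 0$, so $\min_{\phi_s\ge\phi,\,\lambda\ge 0}\sR^\lambda_\infty(\phi,\phi_s)\ge\min_{\tau\ge 0}\sR^{\tau}_\infty(\phi,\phi)=\min_{\lambda\ge 0}\sR^\lambda_\infty(\phi,\phi)$. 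The two inequalities give (b).

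Finally, the line statement. Set $v^\star:=v(\lambda^*;\phi)$, the effective regularization of the optimal ridge predictor. Since $\lambda^*>0$ (if $\sigma^2=0$ then $\lambda^*=0$, $\phi_s^*=\phi$, and the asserted segment degenerates to a point, so this case is trivial) and $\mu\mapsto 1/v(\mu;\phi)$ is strictly increasing, $v^\star<v(0;\phi)$, so $v^\star$ lies in the range of $\phi_s\mapsto v(0;\phi_s)$ on $[\phi,\infty]$. Because $\sR^0_\infty(\phi,\phi_s)=\sR^{\tau(0,\phi_s)}_\infty(\phi,\phi)$ and $\tau\mapsto\sR^{\tau}_\infty(\phi,\phi)$ has its (unique) minimum at $\tau=\lambda^*$, the ridgeless ensemble is optimized exactly when $\tau(0,\phi_s)=\lambda^*$, i.e.\ when $v(0,\phi_s)=v(\lambda^*;\phi)=v^\star$; hence $v(0;\phi_s^*)=v(\lambda^*;\phi)=v^\star$. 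Now the locus $\{(\lambda,\phi_s):v(\lambda;\phi_s)=v^\star\}$ is, directly from the defining equation, the straight line $\lambda=1/v^\star-\phi_s\int r(1+v^\star r)^{-1}\rd H(r)$, affine in $\phi_s$. It passes through the two known points $(\lambda^*,\phi)$ and $(0,\phi_s^*)$; subtracting these pins down the slope: $\lambda^*=(\phi_s^*-\phi)\int r(1+v^\star r)^{-1}\rd H(r)$, so $\int r(1+v^\star r)^{-1}\rd H(r)=\lambda^*/(\phi_s^*-\phi)$, and the line becomes $\lambda=\lambda^*-(\phi_s-\phi)\lambda^*/(\phi_s^*-\phi)$. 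Writing $\theta:=\lambda^*(\phi_s-\phi)/(\phi_s^*-\phi)$ yields precisely $\lambda=\lambda^*-\theta$ and $\phi_s=\phi+\theta(\phi_s^*-\phi)/\lambda^*$, with $\theta$ ranging over $[0,\lambda^*]$ as $\phi_s$ ranges over $[\phi,\phi_s^*]$; these pairs are admissible ($\lambda\ge 0$ since $\theta\le\lambda^*$, and $\phi_s\ge\phi$ since $\theta\ge 0$ and $\phi_s^*\ge\phi$). Along the entire segment $v(\lambda;\phi_s)\equiv v^\star$, so by the equivalence lemma $\sR^\lambda_\infty(\phi,\phi_s)=\sR^{\lambda^*}_\infty(\phi,\phi)$, the optimal risk, completing the proof.
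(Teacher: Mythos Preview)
Your approach is essentially the same as the paper's: both rest on the observation that the full-ensemble risk $\sR^\lambda_\infty(\phi,\phi_s)$ depends on $(\lambda,\phi_s)$ only through the fixed-point value $v(\lambda;\phi_s)$ (together with $\phi$), so matching $v$-values yields equal risks. Your packaging via the explicit map $\tau(\lambda,\phi_s)$ is cleaner and subsumes the paper's case-by-case SNR analysis (its Parts 1--3) into a single surjectivity argument; the paper instead proves both inequalities $\min_{\phi_s}\le\min_\lambda$ and $\min_\lambda\le\min_{\phi_s}$ separately in each SNR regime and only records the $\tau$-correspondence as an extension (its Part 6). The line-segment argument is identical to the paper's Part 5.

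One slip to fix: your claim that $\phi_s\mapsto\tau(0,\phi_s)$ is a \emph{strictly increasing bijection} of $[\phi,\infty]$ onto $[0,\infty]$ fails when $\phi\le 1$, since for $\phi_s\in[\phi,1]$ one has $v(0;\phi_s)=+\infty$ and hence $\tau(0,\phi_s)\equiv 0$ on that whole interval. This does not break the argument---surjectivity onto $[0,\infty]$ still holds, which is all (a) needs---but you should state it as a surjection (or restrict strict monotonicity to $\phi_s>1$) rather than a bijection. Relatedly, your parenthetical ``if $\sigma^2=0$ then $\lambda^*=0$, $\phi_s^*=\phi$'' only covers one edge case; the paper also handles $\rho^2=0$ (where $\lambda^*=\phi_s^*=+\infty$), and you should check your line-segment parameterization there.
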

    In words, Theorem~\ref{thm:comparison_optimal_ridge} says that optimizing subsample size (i.e. $k$) with the full ridgeless ensemble attains the same prediction risk as just optimizing the explicit regularization parameter (i.e., $\lambda$) of the ridge predictor. 
    Further, both of them are the same as optimizing both $k$ and $\lambda$. 
    If one uses a lesser ridge penalty than needed for optimal prediction (i.e., uses $\lambda < \lambda^*$), then a full ensemble at a specific subsample aspect ratio $\phi_s = \phi+(1 - \lambda/\lambda^*)(\phi_s^* - \phi) > \phi$ can recover the remaining ridge regularization. In this sense, the implicit regularization provided by the ensemble amounts to adding more explicit ridge regularization. Similarly, one can supplement a sub-optimal implicit regularization of subsampling by adding explicit ridge regularization.

    A special case of equivalence of $(a)$ in \Cref{thm:comparison_optimal_ridge}
    was previously formalized in~\citet{lejeune2020implicit,patil2022bagging} for isotropic covariates.
    Working with isotropic design helps their proof significantly, as the spectral distributions are the same for all $p, n$, and the closed-form expression of the asymptotic prediction risk can be derived analytically.
    However, in the general non-isotropic design, the asymptotic risk does not admit a closed-form expression, and one needs to account for this carefully.

    \textbf{General risk equivalences.} \Cref{thm:comparison_optimal_ridge} proves the risk equivalence of the ridge and full ensemble ridgeless when they attain minimum risk. \Cref{app:subsec:equiv} shows a further risk equivalence in the full range, i.e., 
    for any $\bar{\phi}_s\in[\phi,+\infty]$, there exists a $\bar{\lambda} \geq 0$ such that $\RzeroMe{\infty}{\phi}{\bar{\phi}_s} = \RlamM[\bar{\lambda}][\infty][\phi][\phi]$. Further, $\RlamM[\lambda][\infty]$ remains constant as $(\lambda, \phi_s)$ varies on the line segment $(1 - \theta)\cdot(\bar{\lambda}, \phi) + \theta\cdot(0, \bar{\phi}_s)$, for all $\theta\in[0, 1]$. 

    A remarkable implication of \Cref{thm:comparison_optimal_ridge} is that for a fixed dataset $\cD_n$, one does not need to tune both the subsample size (i.e., $k$) and the ridge regularization parameter (i.e., $\lambda$), but it suffices to fix for example $\lambda = 0$ and only tune $\phi_s$. 
    Alternatively, one can also fix $k = n$ and just tune $\lambda \ge 0$, which was considered in~\citet{patil2021uniform}.
    Performing tuning over $\lambda \ge 0$ requires one to discretize an infinite interval, while tuning the subsample size for a fixed $\lambda$ only requires searching over a finite grid varying from $k = 1$ to $k = n$.
    For this reason, we fix $\lambda$ and focus on tuning over $k$ in this paper.    
    In the next section, we investigate the problem of tuning the subsample size in the full ensemble to achieve the minimum oracle risk via generalized cross-validation.

\section{Generalized Cross-Validation}\label{sec:gcv-consistency}

        Suppose $\hf(\cdot; \cD_n) : \RR^{p} \to \RR$ is a predictor trained on $\cD_n$.
        We call $\hf(\cdot; \cD_n)$ a linear smoother if $\hf(\bx; \cD_n) = \ba_{\bx}^{\top}\by$ for some vector $\ba_{\bx}$ that only depends on the design $\bX$ (and $\bx$).
        Define the smoothing matrix $\bS \in \RR^{n \times n}$ with rows $\ba_{\bx_1}^{\top}, \ldots, \ba_{\bx_n}^{\top}$, which in turn is only a function of $\bX$.
        For any linear smoother, the generalized cross-validation (GCV) estimator of the prediction risk is defined to be $n^{-1}\| \by - \bS\by \|_2^2/(1 - n^{-1} \tr(\bS))^2$; see, e.g., \citet[Section 5.3]{wasserman2006all}.
        The numerator of GCV is the training error, which typically is biased downwards, and the denominator attempts to account for such optimism of the predictor.
        
        \textbf{Ensemble GCV.}
        Before we analyze GCV for the ridge ensemble, we first introduce some notations.
        Let $I_{1:M}:=\cup_{\ell=1}^MI_{\ell}$ and $I_{1:M}^c:=[n]\setminus I_{1:M}$.
        We define the \emph{in-sample} training error and the \emph{out-of-sample} test error 
        of $\tbeta^\lambda_{k,M}$ as:
        \begin{align}
            T_{k,M}^{\lambda}&:=\frac{1}{|I_{1:M}|}\sum_{i\in I_{1:M}} ( y_i-\bx_i^{\top}\tbeta^{\lambda}_{k,M})^2  , \label{eq:T_M}\\
            \overline{R}_{k,M}^{\lambda}
            &:=\frac{1}{|I_{1:M}^c|}
            \sum_{i\in I_{1:M}^c} ( y_i-\bx_i^{\top}\tbeta^{\lambda}_{k,M})^2 .\label{eq:Rb_M}
        \end{align}
        Since the full ensemble estimator $\tbeta^\lambda_{k,\infty}$ uses all the data $\cD_n$, its training error, denoted by $T_{k,\infty}^\lambda$, is simply:
        \begin{align}
            T_{k,\infty}^{\lambda} &:= \frac{1}{n}\sum_{i\in [n]} ( y_i-\bx_i^{\top}\tbeta^{\lambda}_{k,\infty})^2.   \label{eq:T_inf}
        \end{align}
        Since $I_{1:M} \asto [n]$ for any $n \in \NN$ as $M \to \infty$, the notation $T_{k,\infty}^\lambda$ in \eqref{eq:T_inf} is justified as a limiting case of \eqref{eq:T_M} (see \Cref{app:subsec:full-ensemble} for more details).
        Now, observe that a ridge ensemble is a linear smoother because        
        $\bX_{I_{1:M}}\tbeta^{\lambda}_{k,M}= \bS^{\lambda}_{k,M}\by_{I_{1:M}}$, where the smoothing matrix $\bS^{\lambda}_{k,M}$ is given by:
        {
        \small
        \begin{align}
            \bS^{\lambda}_{k,M} &= \frac{1}{M}\sum_{\ell=1}^M\bX_{I_{\ell}}({\bX_{I_{\ell}}^{\top} \bX_{I_{\ell}}} / {k}+\lambda \bI_p)^{+}{\bX_{I_{\ell}}^{\top} }/{k}.\label{eq:smooth-matrix-M}
        \end{align}
        }
        Analogously, the smoothing matrix for $\tbeta^\lambda_{k,\infty}$ is given by:
        {\small
        \begin{align}
            \bS^{\lambda}_{k,\infty} &= \frac{1}{|\cI_k|}\sum_{I\in \cI_k}\bX({\bX^{\top} \bL_I \bX}/{k}+\lambda \bI_p)^{+}{\bX^{\top} \bL_I}/{k}.
        \end{align}}
        Thus, the GCV estimates for ridge predictors in the finite and full ensemble case 
        are respectively given by:
        \begin{align}
            \gcv_{k,M}^{\lambda}
            &= \frac{T_{k,M}^{\lambda}}{D_{k,M}^{\lambda}},\qquad 
            \gcv_{k,\infty}^{\lambda}= \frac{T_{k,\infty}^{\lambda}}{D_{k,\infty}^{\lambda}},\label{eq:gcv-kM-inf}
        \end{align}    
        where the denominators $D^{\lambda}_{k,M}$
        and $D^\lambda_{k,\infty}$ are as follows:
        \begin{align}
            D_{k,M}^{\lambda} &:= (1 - |I_{1:M}|^{-1} \tr(\bS^{\lambda}_{k,M}))^2, \label{eq:D-kM}\\
            D_{k,\infty}^{\lambda} &:= (1 - n^{-1} \tr(\bS^{\lambda}_{k,\infty}))^2 .\label{eq:D-inf}
        \end{align}

    \subsection{Full-Ensemble Uniform Consistency}\label{subsec:uniform-consistency}

    Let $\cK_n\subset\{0, 1, \ldots, n\}$ be a grid of subsample sizes that covers the full range of $[0, n]$ asymptotically in the sense that $\{k/n:\,k\in\mathcal{K}_n\}$ ``converges'' to the set $[0, 1]$ as $n\to\infty$. 
    One simple choice is to set 
    \begin{align*}
        \cK_n=\{0, k_0,2k_0,\ldots. \lfloor n / k_0\rfloor k_0\}, \label{eq:grid}
    \end{align*}
    where the increment is $k_0=\lfloor n^{\nu}\rfloor$ for some $\nu\in(0,1)$.
    Here, we adopt the convention that when $k = 0$, the predictor reduces to a null predictor that always returns zero.
    Based on the definition above, we now present the uniform consistency results of the GCV estimator \eqref{eq:gcv-kM-inf} for full ensembles when the ridge regularization parameter $\lambda$ is fixed.
    \begin{theorem}[Uniform consistency of GCV]\label{thm:uniform-consistency-k}
        Suppose \Cref{asm:rmt-feat,asm:lin-mod} hold.
        Then, for all $\lambda\geq 0$, we have
        \begin{align*}
            \max_{k\in\cK_n} |\gcv_{k,\infty}^{\lambda} - R_{k,\infty}^{\lambda}| \asto 0,
        \end{align*}
        as $n, p \to \infty$ such that $p/n \to \phi \in (0, \infty)$.
    \end{theorem}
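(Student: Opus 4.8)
The plan is to reduce the uniform-over-$k$ statement to (i) a pointwise consistency result for each fixed $k$ (equivalently, each fixed $\phi_s \in [\phi, \infty]$), together with (ii) an equicontinuity / stochastic-Lipschitz argument that upgrades pointwise convergence to uniform convergence over the grid $\cK_n$. For the pointwise step, I would first establish that both the numerator and the denominator of $\gcv_{k,\infty}^\lambda$ concentrate around deterministic limits. Concretely: the training error $T_{k,\infty}^\lambda$ converges a.s.\ to its deterministic limit $\RlamMtr[\lambda][\infty]$ (entry in \Cref{tab:notations}), and the GCV denominator $D_{k,\infty}^\lambda = (1 - n^{-1}\tr(\bS^\lambda_{k,\infty}))^2$ converges a.s.\ to $\Ddet$. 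The key observation is that $\tr(\bS^\lambda_{k,\infty}) = \EE[\tr(\bS^\lambda_k(\cD_I)) \mid \cD_n]$ is an average over subsamples of traces of single ridge resolvents $\bX_I(\bX_I^\top\bX_I/k + \lambda\bI)^+\bX_I^\top/k$, each of which has a standard Marchenko–Pastur-type deterministic equivalent from random matrix theory; the conditional expectation over $I \in \cI_k$ is handled exactly as the full-ensemble risk is handled in \Cref{thm:ver-with-replacement} of \citet{patil2022bagging} (via a two-subsample / degenerate U-statistic argument, or directly from the $M\to\infty$ limit of \eqref{eq:def-M-ensemble}). Then one checks the algebraic identity at the level of deterministic limits, namely that $\RlamMtr[\lambda][\infty]/\Ddet$ equals the prediction-risk limit $\RlamM[\lambda][\infty]$ — this is the ridge-ensemble analogue of the classical GCV identity ``(training error)/(1 - df/n)$^2$ $\asymp$ (prediction risk),'' and should follow from matching the fixed-point equations that define $\sM, \sN, \sD$ and the bias/variance functionals $\sB, \sV$.

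For the uniform upgrade, the strategy is to show that $k \mapsto \gcv_{k,\infty}^\lambda$ and $k \mapsto R_{k,\infty}^\lambda$, viewed through $\phi_s = p/k$, are asymptotically equicontinuous in an appropriate sense: the deterministic limits $\phi_s \mapsto \RlamM[\lambda][\infty]$, $\phi_s \mapsto \RlamMtr[\lambda][\infty]$, $\phi_s \mapsto \Ddet$ are continuous (indeed Lipschitz on compact subsets, and one handles $\phi_s \to \infty$ and $\phi_s \to \phi$ separately, the latter being the non-ensemble ridge case already covered in \citet{patil2021uniform}), and the empirical quantities differ from their limits by errors that can be controlled uniformly. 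Since $\cK_n$ is polynomially fine ($k_0 = \lfloor n^\nu\rfloor$), a standard chaining/net argument works: on a finite net of $\phi_s$-values one has a.s.\ convergence by the pointwise result plus a union bound (the net size grows only polynomially, so Borel–Cantelli applies given sufficiently fast, e.g.\ polynomial, concentration rates for $T_{k,\infty}^\lambda$ and $\tr(\bS^\lambda_{k,\infty})$), and between net points one uses monotonicity or a uniform Lipschitz bound on the resolvent traces (e.g.\ $\|(\bX_I^\top\bX_I/k+\lambda\bI)^+\|$ combined with $\lambda \mapsto$ resolvent being Lipschitz, and $k \mapsto \bL_I$ changing one coordinate at a time) to bound the oscillation by $o(1)$. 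A subtle point is the region near the interpolation threshold $\phi_s = 1$ when $\lambda = 0$, where the ridgeless denominator $1 - n^{-1}\tr(\bS^0_{k,\infty})$ could a priori approach zero; here the ensembling is exactly what saves us, since for the full ridgeless ensemble $\tr(\bS^0_{k,\infty})$ stays bounded away from $n$ (the averaged smoother has trace strictly less than $n$ even when individual ones interpolate), and this non-degeneracy — which must be established uniformly over the grid — is what makes the ensemble GCV well-behaved where non-ensemble ridgeless GCV is not.

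The main obstacle I expect is the uniform control of the GCV denominator away from zero in the overparameterized ridgeless regime, i.e.\ showing $\inf_{k \in \cK_n} (1 - n^{-1}\tr(\bS^0_{k,\infty})) $ is bounded below by a positive constant a.s. For a single subsample with $\phi_s > 1$ the individual smoother $\bS^0_k(\cD_I)$ has trace exactly $\min(k,p) = k < p$ but embedded in an $n\times n$ matrix with many zero rows, so $n^{-1}\tr$ of it is $k/n = \phi/\phi_s \cdot (p/n)/\phi \cdot \ldots$ — one needs to track that averaging over $\cI_k$ produces a limit $1 - \sD_\infty$ that is uniformly bounded away from $1$ over all $\phi_s \ge \phi$, and crucially that the empirical trace concentrates around this limit \emph{uniformly} in $k$ with a rate fast enough for Borel–Cantelli. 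Establishing this likely requires a careful deterministic-equivalent computation for $n^{-1}\EE[\tr(\bX(\bX^\top\bL_I\bX/k + \lambda\bI)^+\bX^\top\bL_I/k)\mid\cD_n]$ with explicit dependence on $(\lambda,\phi_s)$, extending the companion-transform / fixed-point machinery underlying \Cref{thm:ver-with-replacement} to the trace functional (rather than just the risk), and then verifying the bound $\sup_{\phi_s}\Ddet > 0$ analytically using the structure of the fixed-point equation under $r_{\min} > 0$.
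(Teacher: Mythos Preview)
Your pointwise step matches the paper's approach: compute deterministic limits for the numerator $T_{k,\infty}^\lambda$ and denominator $D_{k,\infty}^\lambda$ separately (\Cref{lem:gcv-num,lem:gcv-den}), then verify algebraically that their ratio equals $\RlamM[\lambda][\infty]$ at the fixed-point level (\Cref{lem:gcv-inf}). For the uniform upgrade, however, the paper takes a much shorter route than your net-plus-chaining plan: a Ces\`aro-type max lemma (\Cref{lem:conv_cond_expectation}) says that if $\gcv_{k_n,\infty}^\lambda - R_{k_n,\infty}^\lambda \asto 0$ for \emph{every} deterministic sequence $k_n\in\cK_n$, then the max over $k\in\cK_n$ converges as well. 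Since $p/k_n$ lives in the compact set $[\phi,\infty]$, any such sequence has sub-subsequences with $p/k_n\to\phi_s$ on which the pointwise lemma applies; the sub-subsequence principle finishes. No concentration rates, nets, or equicontinuity are needed.

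Your identified ``main obstacle'' --- a uniform-in-$k$ positive lower bound on $1 - n^{-1}\tr(\bS^0_{k,\infty})$ --- is both unnecessary in that framework and actually \emph{false}. For $\lambda=0$ with $p > k$ one has $\tr(\bS^0_k(\cD_I)) = \mathrm{rank}(\bX^\top\bL_I\bX) = k$ for every subsample $I$, hence $\tr(\bS^0_{k,\infty}) = k$ exactly and $D_{k,\infty}^0 = (1 - k/n)^2$; at the top of the grid (where $n - k = O(n^\nu)$) this is $O(n^{2(\nu-1)})\to 0$ whenever $\phi > 1$. Ensembling does not keep the trace away from $n$ as $k\to n$. (Incidentally, the delicate point is $\phi_s \to \phi$, not $\phi_s \to 1$: at $\phi_s = 1$ with $\phi < 1$ the denominator limit is $(1-\phi)^2 > 0$.) What actually happens is that $T_{k,\infty}^0$ vanishes at a matching rate, so the \emph{ratio} stays near $\RlamM[0][\infty]$; the paper's route --- prove convergence of the ratio directly for each $\phi_s$ and then lift via the subsequence lemma --- sidesteps any need for separate denominator control.
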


    \Cref{thm:uniform-consistency-k}
    shows the uniform consistency of GCV in the full ensemble for fixed subsample size $k$ and ridge regularization parameter $\lambda$.
    The almost sure qualification in \Cref{thm:uniform-consistency-k}
    is with respect the entire training data $(\bX, \by)$.
    An implication of \Cref{thm:uniform-consistency-k} is that one can select the optimal subsample size in a data-dependent manner, i.e.,
    selecting $\hat{k}^{\lambda}\in\argmin_{k\in\cK_n}\gcv_{k,\infty}^{\lambda}$ guarantees to track the minimum prediction risk $\min_{k\in[n]}R_{k,\infty}^{\lambda}$ asymptotically.
    
    We first provide numerical illustrations for \Cref{thm:uniform-consistency-k} under the non-isotropic AR(1) data model, which is the same as the one used for \Cref{fig:overview}; see \Cref{app:numerical-details} for model details.
    \Cref{fig:gcv_est} shows both the GCV estimate and the asymptotic risk for the full ridge ensemble.
    We observe a close match of the theoretical curves and the GCV estimates.

    \begin{figure*}[!t]
        \centering
        \includegraphics[width=0.95\textwidth]{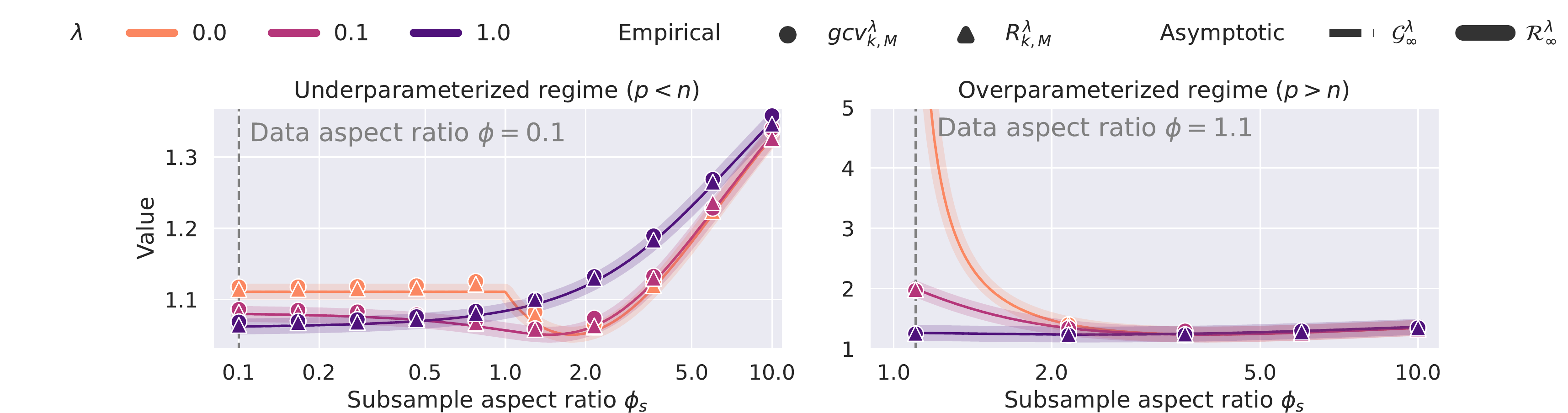}
        \caption{Asymptotic risk and GCV curves for full ridge ensembles, under model \eqref{eq:model-ar1} when $\rhoar=0.5$ and $\sigma^2=1$ with varying regularization parameters $\lambda \in \{ 0, 0.1, 1\}$ and subsample sizes $k=\lfloor p/\phi_s\rfloor$.
        The points denote finite-sample risks averaged over 50 dataset repetitions with an ensemble size of $M=500$, with $n=\lfloor p/\phi\rfloor$ and $p=500$.
        The left and the right panels illustrate the underparameterized and overparameterized cases with the limiting data aspect ratio $\phi=0.1$ and $\phi=1.1$, respectively.}
        \label{fig:gcv_est}
    \end{figure*}
    \begin{figure}[!t]
        \centering
        \includegraphics[width=0.45\textwidth]{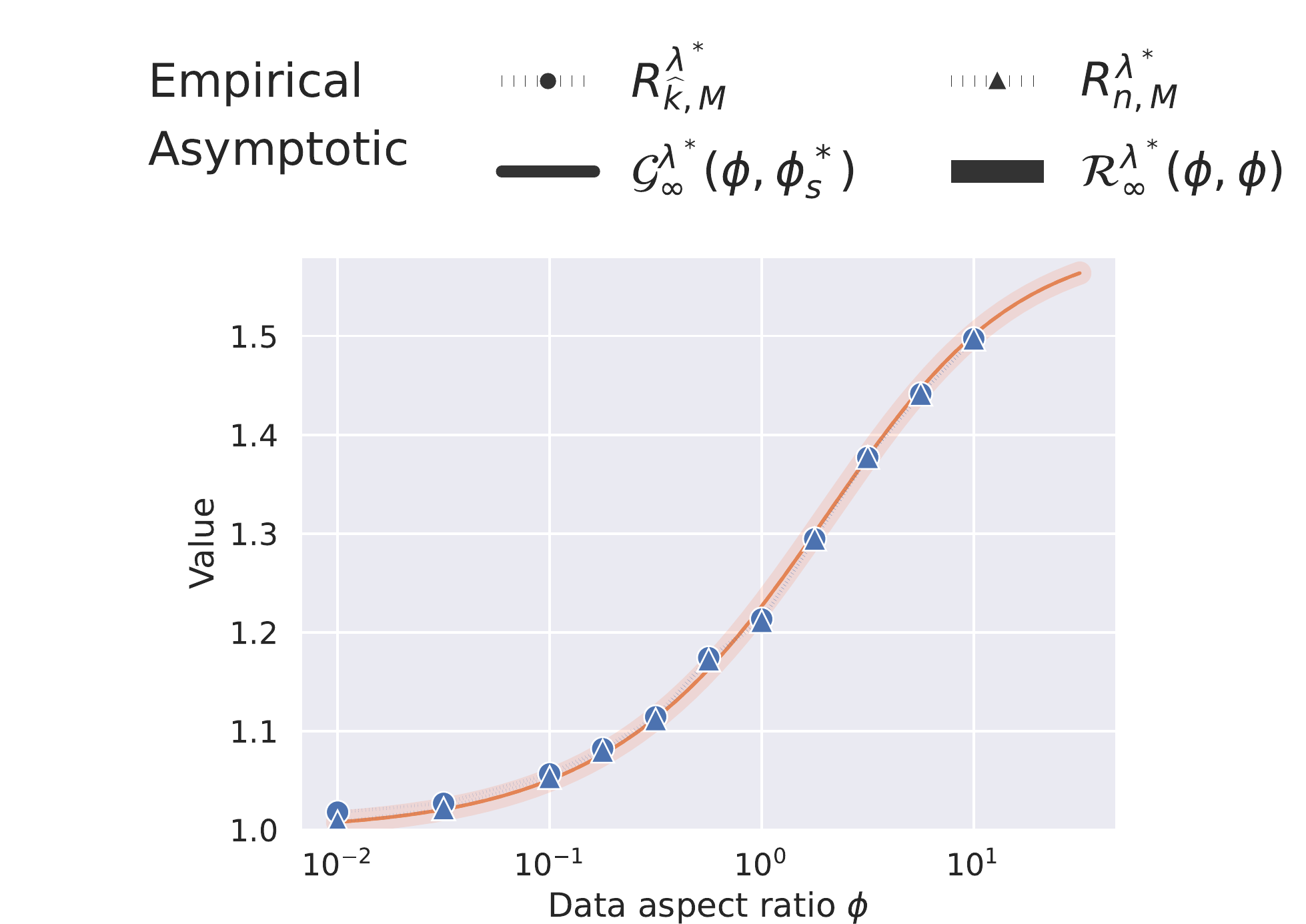}
        \caption{Asymptotic prediction risk curves with optimal tuned parameters $\lambda^*$ and $\phi_s^*$, under model \eqref{eq:model-ar1} when $\rhoar=0.5$, $\sigma^2=1$, for varying data aspect ratio $\phi$.
        The curves represent the theoretical asymptotic GCV estimate in the full ensemble and the asymptotic risk of the optimal ridge predictors.
        The points represent the finite-sample risks of the best 500-ensemble ridgeless and the best ridge predictor averaged over 50 dataset repetitions, with $n = \lfloor p/\phi\rfloor$ and $p = 500$.}
        \label{fig:gcv-opt}
    \end{figure}

    Combining \Cref{thm:comparison_optimal_ridge} and \Cref{thm:uniform-consistency-k}, we can obtain the following corollary regarding GCV subsample tuning.
    \begin{corollary}[Ridge tuning by GCV subsample tuning]\label{cor:gcv-opt-ridge}
        Suppose \Cref{asm:rmt-feat,asm:lin-mod} hold.
        Then, we have
        \[
            \gcv_{\hat{k}^0,\infty}^0 ~\asto~  \min_{\phi_s\geq\phi, \lambda\geq 0}\RlamMe{\infty}{\phi}{\phi_s},
        \]
        as $k, n, p \to \infty$ such that $p/n \to \phi \in (0, \infty)$.
    \end{corollary}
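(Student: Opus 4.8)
The plan is to reduce the corollary to \Cref{thm:comparison_optimal_ridge} and \Cref{thm:uniform-consistency-k} via the standard argument that the minimum of a uniformly consistent estimator tracks the population minimum. First I would observe that since $\hat{k}^0 \in \argmin_{k\in\cK_n}\gcv_{k,\infty}^0$ we simply have $\gcv_{\hat{k}^0,\infty}^0 = \min_{k\in\cK_n}\gcv_{k,\infty}^0$, so the random index $\hat{k}^0$ never needs to be controlled on its own. Applying \Cref{thm:uniform-consistency-k} with $\lambda = 0$ together with the elementary bound $|\min_{k\in\cK_n}\gcv_{k,\infty}^0 - \min_{k\in\cK_n}R_{k,\infty}^0| \leq \max_{k\in\cK_n}|\gcv_{k,\infty}^0 - R_{k,\infty}^0| \asto 0$, it then suffices to prove
\[
  \min_{k\in\cK_n} R_{k,\infty}^0 ~\asto~ \min_{\phi_s\geq\phi}\RzeroMe{\infty}{\phi}{\phi_s},
\]
after which the chain of equalities $(a)$ and $(b)$ in \Cref{thm:comparison_optimal_ridge} rewrites the right-hand side as the claimed joint minimum $\min_{\phi_s\geq\phi,\lambda\geq0}\RlamMe{\infty}{\phi}{\phi_s}$.

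For this remaining step I would combine three ingredients. (i) A uniform-over-the-grid version of the deterministic-equivalent convergence for the full ensemble, namely $\max_{k\in\cK_n}|R_{k,\infty}^0 - \RzeroMe{\infty}{\phi}{p/k}| \asto 0$; this strengthens the pointwise statement of \Cref{thm:ver-with-replacement} of \citet{patil2022bagging} and is already implicit in the proof of \Cref{thm:uniform-consistency-k}, where GCV is compared against the deterministic risk over the same grid. (ii) Continuity of $\phi_s\mapsto\RzeroMe{\infty}{\phi}{\phi_s}$ on $[\phi,\infty]$, together with the fact (guaranteed by \Cref{thm:comparison_optimal_ridge}, which names an optimal $\phi_s^*$) that $\inf_{\phi_s\geq\phi}\RzeroMe{\infty}{\phi}{\phi_s}$ is attained at some $\phi_s^*$, necessarily away from the interpolation point $\phi_s = 1$. (iii) Asymptotic density of $\{p/k:\,k\in\cK_n\}$ in $[\phi,\infty]$ (with $\phi_s=\infty$ covered by the convention $k=0$), which holds because $\{k/n:\,k\in\cK_n\}$ fills $[0,1]$ with mesh $\lfloor n^\nu\rfloor/n\to0$ and $p/k=(p/n)/(k/n)$. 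The lower bound $\liminf_n\min_{k\in\cK_n}R_{k,\infty}^0 \geq \inf_{\phi_s\geq\phi}\RzeroMe{\infty}{\phi}{\phi_s}$ then follows from (i) and the trivial fact $\RzeroMe{\infty}{\phi}{p/k}\geq\inf_{\phi_s\geq\phi}\RzeroMe{\infty}{\phi}{\phi_s}$; the matching upper bound follows by choosing $k_n\in\cK_n$ with $p/k_n\to\phi_s^*$ (possible by (iii), since $p/\phi_s^*\asymp n$) and using (i) and (ii) to get $R_{k_n,\infty}^0\to\RzeroMe{\infty}{\phi}{\phi_s^*}=\min_{\phi_s\geq\phi}\RzeroMe{\infty}{\phi}{\phi_s}$.

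The main obstacle I anticipate is item (i): promoting the pointwise deterministic-equivalent convergence of the full-ensemble ridgeless risk to a statement uniform over the entire grid $\cK_n$, while handling the two boundary regimes — $k\asymp n$ (so $\phi_s\to\phi$, where for $\phi$ near $1$ the ridgeless risk is large but finite) and $k$ small (so $\phi_s\to\infty$, the null-predictor regime) — as well as the isolated singularity at $\phi_s=1$. Fortunately this uniform control is precisely what underlies \Cref{thm:uniform-consistency-k}, so at the level of this corollary it can be invoked directly; the only genuinely new bookkeeping is matching the discrete grid $\cK_n$ to the continuous parameter $\phi_s$ and checking that the optimizer $\phi_s^*$ supplied by \Cref{thm:comparison_optimal_ridge} is approachable along the grid, which is routine once the density statement (iii) is in place.
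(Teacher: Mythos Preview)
Your proposal is correct and follows essentially the same approach as the paper: reduce to $\min_{k\in\cK_n}\gcv_{k,\infty}^0$, invoke the uniform consistency of \Cref{thm:uniform-consistency-k}, pass to the deterministic risk over the grid, and close with \Cref{thm:comparison_optimal_ridge}. The paper's version is a bit terser---it compares $\gcv_{k,\infty}^0$ directly to the deterministic profile $\RzeroMe{\infty}{p/n}{p/k}$ via a multiplicative sandwich that exploits the lower bound $\RzeroMe{\infty}{\phi}{\phi_s}\geq\sigma^2$, and leaves the grid-to-continuum step implicit---whereas you route through $R_{k,\infty}^0$ first and spell out ingredients (i)--(iii) explicitly.
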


    \Cref{cor:gcv-opt-ridge} certifies the validity of GCV tuning for achieving the optimal risk over all possible regularization parameters and subsample sizes.
    In practice, tuning for the ridge parameter $\lambda$ requires one to determine a grid of $\lambda$'s for cross-validation.
    However, the maximum value for the grid is generally chosen by some ad hoc criteria.
    For example, there is no default maximum value for ridge tuning in the widely-used package \texttt{glmnet} \citep{friedman2010regularization}.
    From \Cref{thm:comparison_optimal_ridge}, when the signal-noise ratio $\rho^2/\sigma^2$ is small, the subsample size should be small enough (so that $\phi_s$ is large), and the range of $\lambda$'s grid should be large enough to cover its optimal value.
    On the contrary, the GCV-based method does not need such an upper bound for the grid $\cK_n$ of subsample sizes because the sample size provides a natural grid in finite samples,
    informed by the dataset.

    In \Cref{cor:gcv-opt-ridge}, we fix the ridge regularization parameter $\lambda$ to be zero.
    But, one can also use other value of $\lambda<\lambda^*$ and the similar statement still holds with $\gcv_{\hat{k}^0,\infty}^0$ replaced by $\gcv_{\hat{k}^{\lambda},\infty}^{\lambda}$ based on \Cref{thm:comparison_optimal_ridge}.
    Furthermore, one can construct the estimator of $\lambda^*$ as $\hat{\lambda}= \lambda (n-\hat{k}^0) / (\hat{k}^{\lambda}-\hat{k}^0)$ by extrapolating the line segment between $(0,\hat{k}^0)$ and $(\lambda,\hat{k}^{\lambda})$.

    In \Cref{fig:gcv-opt}, we numerically compare the optimal subsampled ridgeless ensemble with the optimal ridge predictor to verify \Cref{cor:gcv-opt-ridge}.
    As we can see, their theoretical curves exactly match, and the empirical estimates in finite samples are also close to their asymptotic limits.

    \subsection{A Finite-Ensemble Inconsistency Result}\label{subsec:inconsistency}
    While deriving GCV asymptotics in the proof of \Cref{thm:uniform-consistency-k} for the full ensemble, we also obtain as a byproduct the asymptotic limit of the GCV estimate for finite ensembles.
    From related work \citep{patil2021uniform} and \Cref{thm:uniform-consistency-k}, we already know that $\gcv_{k,1}^{\lambda}$ and $\gcv_{k,\infty}^{\lambda}$ are consistent estimators of the non-ensemble risk ($R_{k,1}^{\lambda}$) and the full ensemble risk ($R_{k,\infty}^{\lambda}$), respectively.
    However, $\gcv_{k,M}^{\lambda}$ for $1<M<\infty$ may not be consistent, which is somewhat surprising.
    As an example, GCV for $M=2$ is not a consistent estimator for the prediction risk $R_{k,2}^{\lambda}$, as shown in the following proposition.
    
    \begin{proposition}[GCV inconsistency for ridgeless, $M=2$]\label{prop:inconsistency}
        Suppose \Cref{asm:rmt-feat,asm:lin-mod} hold with $\rho^2,\sigma^2 \in (0, \infty)$.
        Then, for any $\phi \in (0, \infty)$, 
        we have
        \[
            | \gcv_{k,2}^{0} - R^{0}_{k,2} | \not\pto 0,
        \]
        as $k, n, p \to \infty$, $p/n \to \phi$, $p/k \to \phi_s \in (1,\infty) \cap (\phi, \infty)$.
    \end{proposition}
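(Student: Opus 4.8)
The plan is to show that $\gcv_{k,2}^{0}$ and $R_{k,2}^{0}$ converge in probability to deterministic constants that are \emph{not} equal, which immediately forces $|\gcv_{k,2}^{0}-R_{k,2}^{0}|\not\pto 0$. Write $\psi:=\phi/\phi_s$, which lies in $(0,1)$ since $\phi<\phi_s<\infty$, and set $q:=|I_1\cap I_2|$, so that $|I_{1:2}|=|I_1|+|I_2|-q=2k-q$ and $|I_1\setminus I_2|=|I_2\setminus I_1|=k-q$. Because the inclusion indicators in sampling without replacement are negatively associated and $I_1,I_2$ are independent, $\EE[q]=k^2/n$ and $\Var(q)\le k^2/n$, hence $q/k\pto\psi$, and therefore $|I_{1:2}|/k\pto 2-\psi$ and $|I_1\setminus I_2|/k\pto 1-\psi$.

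\textbf{Limit of $R_{k,2}^{0}$.} Substituting $\tbeta_{k,2}^{0}=\tfrac12(\hbeta_k^0(\cD_{I_1})+\hbeta_k^0(\cD_{I_2}))$ and $\by=\bX\bbeta_0+\bepsilon$ into \eqref{eq:R_M} gives the ensemble decomposition $R_{k,2}^{0}=\sigma^2+\tfrac14\sum_{\ell,\ell'\in\{1,2\}}\bm{\delta}_\ell^{\top}\bSigma\bm{\delta}_{\ell'}$ with $\bm{\delta}_\ell:=\bbeta_0-\hbeta_k^0(\cD_{I_\ell})$. By \Cref{thm:ver-with-replacement} of \citet{patil2022bagging}, the two ``diagonal'' terms converge to $\RlamM[0][1][\phi][\phi_s]-\sigma^2$ and the cross term to $\RlamM[0][\infty][\phi][\phi_s]-\sigma^2$ (this is precisely the computation underlying convergence of $R_{k,M}^{\lambda}$ for every $M$), so $R_{k,2}^{0}\pto\tfrac12(\nu+\mu)$, where I abbreviate $\nu:=\RlamM[0][1][\phi][\phi_s]$ and $\mu:=\RlamM[0][\infty][\phi][\phi_s]$.

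\textbf{Limit of $\gcv_{k,2}^{0}=T_{k,2}^{0}/D_{k,2}^{0}$.} For the denominator, since $p/k\to\phi_s>1$ each subsampled design $\bX_{I_\ell}$ has full row rank $k$ with probability tending to one (a lower bound on its least singular value), so the $\ell$-th ridgeless smoother restricts to the identity on the rows indexed by $I_\ell$ and contributes $\mathrm{rank}(\bX_{I_\ell})=k$ to the trace; averaging over $\ell=1,2$ gives $\tr(\bS_{k,2}^{0})=k$, whence $D_{k,2}^{0}=(1-k/|I_{1:2}|)^2\pto\big((1-\psi)/(2-\psi)\big)^2$. For the numerator, partition $I_{1:2}$ into $I_1\cap I_2$, $I_1\setminus I_2$, and $I_2\setminus I_1$. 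Interpolation ($\bx_i^{\top}\hbeta_k^0(\cD_{I_\ell})=y_i$ whenever $i\in I_\ell$) makes the ensemble residual vanish on $I_1\cap I_2$ and equal $\tfrac12(y_i-\bx_i^{\top}\hbeta_k^0(\cD_{I_2}))$ on $I_1\setminus I_2$ (and symmetrically on $I_2\setminus I_1$). Conditioning on $\cD_{I_2}$ and on the random partition of $[n]$, the points $\{(\bx_i,y_i):i\in I_1\setminus I_2\}$ form an i.i.d.\ sample independent of $\hbeta_k^0(\cD_{I_2})$ of size $k-q\to\infty$, so a conditional law of large numbers gives $|I_1\setminus I_2|^{-1}\sum_{i\in I_1\setminus I_2}(y_i-\bx_i^{\top}\hbeta_k^0(\cD_{I_2}))^2\pto\nu$, and likewise over $I_2\setminus I_1$. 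Collecting terms, $T_{k,2}^{0}=|I_{1:2}|^{-1}\big(\tfrac14\sum_{I_1\setminus I_2}(\cdot)^2+\tfrac14\sum_{I_2\setminus I_1}(\cdot)^2\big)\pto\tfrac12\cdot\tfrac{1-\psi}{2-\psi}\,\nu$, and therefore $\gcv_{k,2}^{0}\pto(2-\psi)\nu/(2(1-\psi))$.

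\textbf{Conclusion and main obstacle.} Subtracting, $\gcv_{k,2}^{0}-R_{k,2}^{0}\pto\frac{(2-\psi)\nu}{2(1-\psi)}-\frac{\nu+\mu}{2}=\frac{(\nu-\mu)+\psi\mu}{2(1-\psi)}$. Since bagging does not increase the risk of ridgeless regression we have $\mu\le\nu$ (this follows from the decomposition above and Jensen's inequality, or directly from the risk monotonicity in \citet{patil2022bagging}), while $\mu\ge\sigma^2>0$; hence the limit is at least $\psi\sigma^2/(2(1-\psi))>0$. Thus $|\gcv_{k,2}^{0}-R_{k,2}^{0}|$ converges in probability to a strictly positive constant and cannot tend to $0$. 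The main technical work is the numerator analysis: one must (i) upgrade the overparameterized interpolation property to hold with high probability uniformly enough to conclude $\tr(\bS_{k,2}^{0})=k$, and (ii) rigorously treat $I_1\setminus I_2$ (resp.\ $I_2\setminus I_1$) as a bona fide hold-out set for $\hbeta_k^0(\cD_{I_2})$ (resp.\ $\hbeta_k^0(\cD_{I_1})$), which requires conditioning on the index partition, controlling the random sample size $k-q$ via $q/k\pto\psi$, and invoking the single-fit risk convergence; the remaining pieces are elementary algebra and the ensemble risk decomposition already available from \citet{patil2022bagging}.
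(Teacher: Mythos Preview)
Your argument is correct and, for the ridgeless case $\lambda=0$ with $\phi_s>1$, considerably more direct than the paper's. The paper proceeds by specializing to $\lambda\to 0^+$ the general asymptotic formulas for the training-error limit $\sT_2^{\lambda}$ and the denominator limit $\sD_2^{\lambda}$, which it had derived for arbitrary $\lambda>0$ via random-matrix asymptotic equivalents (\Cref{lem:gcv-den} and the resolvent machinery of \Cref{app:sec:gcv-num}), and then checks that the resulting limit differs from $\RlamM[0][2]$. You instead exploit the exact interpolation property of the overparameterized min-norm predictor: since each $\hbeta_k^0(\cD_{I_\ell})$ fits its own subsample exactly, the $2$-ensemble residual vanishes on $I_1\cap I_2$ and on $I_\ell\setminus I_{3-\ell}$ equals half of the out-of-sample residual of the \emph{other} fit. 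This reduces the numerator to a pure hold-out average of a single predictor (handled by the $M=1$ test-error convergence already supplied by \citet{patil2022bagging}) and the trace computation to $\tr(\bS_{k,2}^0)=k$ by rank. Your route thereby bypasses the heavy resolvent calculus entirely and makes the mechanism of inconsistency transparent: the GCV numerator at $M=2$, $\lambda=0$ is really a \emph{test} error of \emph{single} fits rather than a training error of the ensemble, so the usual GCV correction factor overshoots. The paper's route, in contrast, is the one that would generalize to $\lambda>0$ (where interpolation fails) and to arbitrary finite $M$. Your final limiting discrepancy $\big((\nu-\mu)+\psi\mu\big)/\big(2(1-\psi)\big)$ is strictly positive via $\nu\ge\mu\ge\sigma^2>0$ and $\psi\in(0,1)$; indeed, your limit for $\gcv_{k,2}^{0}$ agrees with what one obtains by carefully sending $\lambda\to 0^+$ in the paper's own formula for $\RlamMtr[\lambda][2]$.
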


    Intuitively, the inconsistency for a finite in large part happens because, for a finite $M$, the residuals computed using the bagged predictor contain non-negligible fractions of out-of-sample and in-sample, and all of them are treated equally.
    As a result, the GCV estimate for finite ensembles indirectly relates to the original data through the aspect ratios $(\phi,\phi_s)$, even though the GCV estimate is computed only using the training observations.
    See \Cref{sec:discussion} about possible approaches for the corrected GCV estimate for arbitrary ensemble sizes.
    Though in practice, the correction may not be crucial for a moderate $M$, because the GCV estimate is close to the underlying target as shown in \Cref{fig:gcv-M}.

    \begin{figure}[!t]
        \centering
        \includegraphics[width=0.48\textwidth]{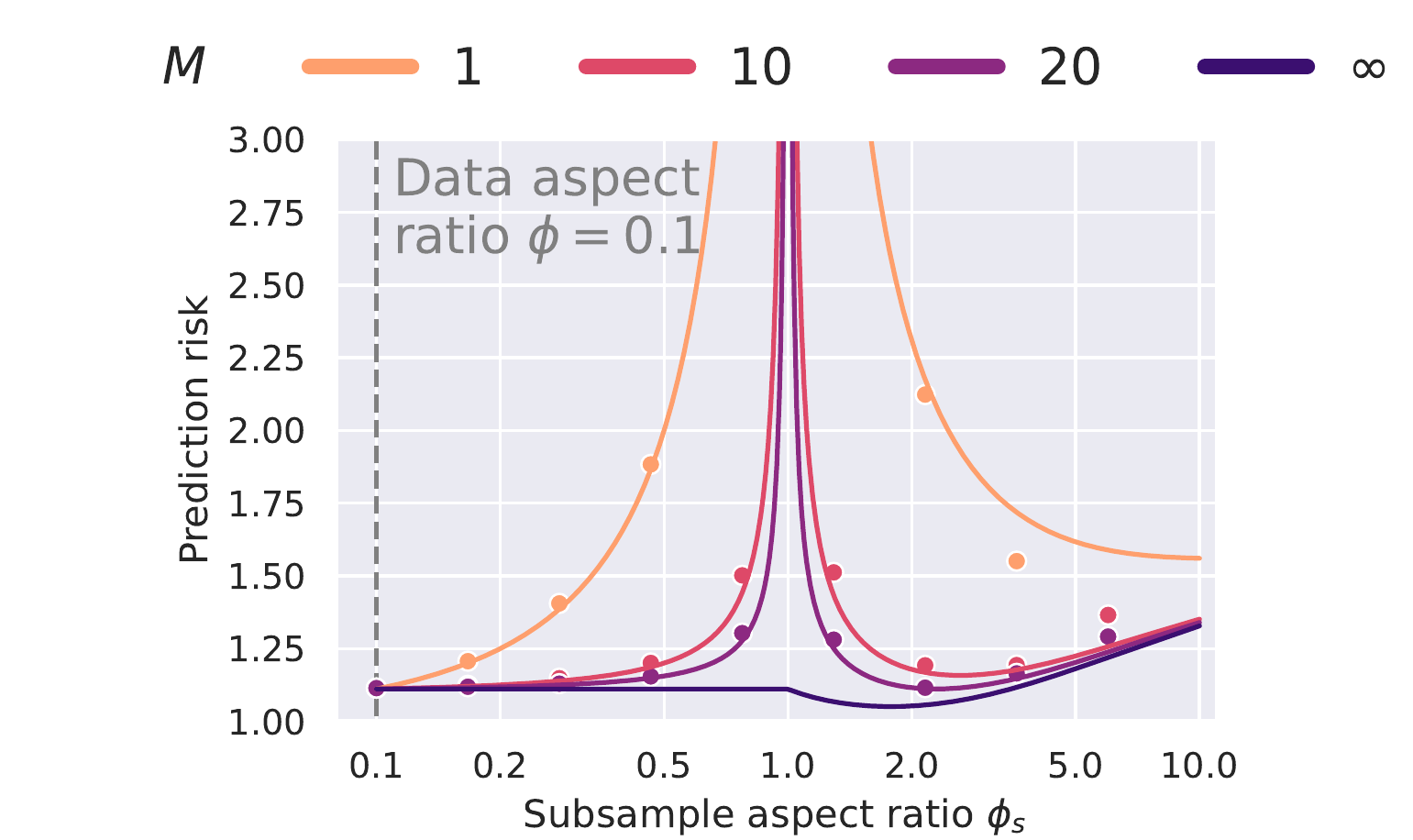}
        \caption{Asymptotic prediction risk curves of ridgeless ensembles, under model \eqref{eq:model-ar1} when $\rhoar=0.5$, $\sigma^2=1$, and $\phi=0.1$.
        The points denote the finite-sample GCV estimates of ridgeless ensembles for varying ensemble sizes $M \in \{ 1, 10, 20 \}$ averaged over 50 dataset repetitions, with $n=\lfloor p/\phi \rfloor$ and $p=500$.
        }
        \label{fig:gcv-M}
    \end{figure}

    \subsection{Proof Outline of \Cref{thm:uniform-consistency-k}}\label{subsec:proof-outline}
        There are three key steps are involved to prove \Cref{thm:uniform-consistency-k}.
        (1) Deriving the asymptotic limit of the prediction risk $R_{k,M}^{\lambda}$. 
        (2) Deriving the asymptotic limit of GCV estimate $\gcv_{k,\infty}^{\lambda}$.
        (3) Showing pointwise consistency in $k$ by matching the two limits and then lifting to uniform convergence in $k$.
        We briefly explain key ideas for showing the three steps below.
        
    \textbf{(1) Asymptotic limit of risk.}\label{subsec:asymp-risk}
        We build upon prior results on the risk analysis of ridge ensembles.
        Under \Cref{asm:rmt-feat,asm:lin-mod}, \Cref{thm:ver-with-replacement} adapted from \citet{patil2022bagging} implies that the conditional prediction risks under proportional asymptotics converge to certain deterministic limits:
        \begin{align}
            R_{k,M}^{\lambda} \asto \RlamM[\lambda][M],\qquad R_{k,\infty}^{\lambda} \asto \RlamM[\lambda][\infty], \label{eq:Rrisk}
        \end{align}
        where $\RlamM[\lambda][M]$, $\RlamM[\lambda][\infty]$ are as defined in \eqref{eq:risk-det-with-replacement}.

    \textbf{(2) Asymptotic limit of GCV.}\label{subsec:asymp-gcv}
    To analyze the asymptotic behavior of the GCV estimates, we obtain the asymptotics of the denominator and the numerator of GCV separately.
    We first show the regular cases when $\phi_s<\infty$ and $\lambda>0$, 
    and then incorporate boundary cases of $\phi_s = \infty$ and $\lambda = 0$.
    Our analysis begins with the following lemma that provides asymptotics for the denominator $D_{k,\infty}^{\lambda}$ (as in \eqref{eq:D-inf}) of GCV:
    \begin{lemma}[Asymptotics of the GCV denominator] \label{lem:gcv-den}
        Suppose \Cref{asm:rmt-feat} holds.
        Then, for all $\lambda> 0$,
        \begin{align*}
            D_{k,\infty}^{\lambda}\asto \Ddet,
        \end{align*}
        as $k, n, p \to \infty$, $p/n \to \phi \in (0, \infty)$, $p/k \to \phi_s \in [\phi, \infty)$.
    \end{lemma}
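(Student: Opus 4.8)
The plan is to evaluate $n^{-1}\tr(\bS^{\lambda}_{k,\infty})$ exactly up to an averaged resolvent trace, establish almost sure convergence of that average to its random-matrix limit, and then apply the continuous mapping theorem. Fix $\lambda>0$; this is precisely what lets us replace the Moore--Penrose inverse in \eqref{eq:D-inf} by an ordinary inverse, since $\bX^{\top}\bL_I\bX/k+\lambda\bI_p\succeq\lambda\bI_p\succ\zero$. Writing $\hat\bSigma_I:=\bX^{\top}\bL_I\bX/k=\bX_I^{\top}\bX_I/k$ for the subsample Gram matrix, the cyclic property of the trace together with the identity $(\hat\bSigma_I+\lambda\bI_p)^{-1}\hat\bSigma_I=\bI_p-\lambda(\hat\bSigma_I+\lambda\bI_p)^{-1}$ gives
\begin{align*}
\tr\!\big(\bX(\hat\bSigma_I+\lambda\bI_p)^{-1}\bX^{\top}\bL_I/k\big)
=\tr\!\big((\hat\bSigma_I+\lambda\bI_p)^{-1}\hat\bSigma_I\big)
=p-\lambda\,\tr\!\big((\hat\bSigma_I+\lambda\bI_p)^{-1}\big).
\end{align*}
Averaging over $I\in\cI_k$ and dividing by $n$ yields $n^{-1}\tr(\bS^{\lambda}_{k,\infty})=(p/n)(1-\lambda\,\bar g_k)$ with $\bar g_k:=|\cI_k|^{-1}\sum_{I\in\cI_k}p^{-1}\tr((\hat\bSigma_I+\lambda\bI_p)^{-1})$.

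\textbf{Step 2 (convergence of $\bar g_k$).} Under \Cref{asm:rmt-feat}, for a fixed $I_0\in\cI_k$ the $k\times p$ matrix $\bX_{I_0}$ has i.i.d.\ rows with population covariance $\bSigma$, $p/k\to\phi_s\in[\phi,\infty)$, and ESD of $\bSigma$ converging to $H$; standard random matrix theory then gives $g_{I_0}:=p^{-1}\tr((\hat\bSigma_{I_0}+\lambda\bI_p)^{-1})\asto v(\lambda;\phi_s)$, the value at $-\lambda$ of the Stieltjes transform of the (generalized Marchenko--Pastur) limiting spectral distribution of $\hat\bSigma_{I_0}$. I claim $\bar g_k\asto v(\lambda;\phi_s)$. \emph{Mean:} by exchangeability of the rows of $\bX$, $\EE[\bar g_k]=\EE[g_{I_0}]$, and since $0\le g_{I_0}\le 1/\lambda$ and $g_{I_0}\asto v(\lambda;\phi_s)$, bounded convergence gives $\EE[g_{I_0}]\to v(\lambda;\phi_s)$. \emph{Concentration:} view $\bar g_k$ as a function of the independent rows $\bx_1,\dots,\bx_n$ of $\bX$. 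Replacing one row $\bx_j$ leaves $g_I$ unchanged for every $I\not\ni j$; for $I\ni j$ it perturbs $\hat\bSigma_I$ by a matrix of rank at most two, and for PSD $\bM,\bM'$ with $\bM-\bM'$ of rank $r$ one has $|\tr((\bM+\lambda\bI_p)^{-1})-\tr((\bM'+\lambda\bI_p)^{-1})|\le r/\lambda$ (split into two rank-one updates, apply Sherman--Morrison, and use $(\bM+\lambda\bI_p)^{-2}\preceq\lambda^{-1}(\bM+\lambda\bI_p)^{-1}$), a bound that does \emph{not} depend on the magnitude of the rows. Hence such a $g_I$ changes by at most $2/(p\lambda)$, so $\bar g_k$ has bounded differences with $c_j\le(k/n)\cdot 2/(p\lambda)\le 2/(p\lambda)$ and $\sum_j c_j^2=O(1/(n\lambda^2))$. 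McDiarmid's inequality then gives $\PP(|\bar g_k-\EE[\bar g_k]|>t)\le 2\exp(-\Omega(n\lambda^2t^2))$, which is summable in $n$, so Borel--Cantelli yields $\bar g_k-\EE[\bar g_k]\asto0$; combined with the mean step this proves the claim. (A variant avoiding McDiarmid: Jensen's inequality gives $\EE|\bar g_k-v(\lambda;\phi_s)|^2\le\EE|g_{I_0}-v(\lambda;\phi_s)|^2=O(n^{-2})$ from the usual bias/variance bounds for resolvent traces, again summable.)

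\textbf{Step 3 and the main obstacle.} Combining Steps 1 and 2 with $p/n\to\phi$ gives $n^{-1}\tr(\bS^{\lambda}_{k,\infty})\asto\phi(1-\lambda v(\lambda;\phi_s))$; since $x\mapsto(1-x)^2$ is continuous, $D^{\lambda}_{k,\infty}=(1-n^{-1}\tr(\bS^{\lambda}_{k,\infty}))^2\asto(1-\phi(1-\lambda v(\lambda;\phi_s)))^2$, and the right-hand side is exactly the quantity $\Ddet$ defined in \eqref{eq:Ddet}. The only delicate point is the concentration of $\bar g_k$: the average runs over the super-exponentially large family $\cI_k$, so one cannot simply union-bound the fluctuations of the individual $g_I$. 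The resolution is to treat $\bar g_k$ as a bounded-difference function of the $n$ independent rows of $\bX$, with the bounded-difference constant coming from a \emph{magnitude-free} resolvent-trace inequality for rank-two perturbations --- which is essential because \Cref{asm:rmt-feat} assumes only moment bounds, not boundedness, on the entries of $\bZ$. Everything else (the trace algebra, the single-subsample random-matrix limit, the continuous mapping step, and the identification with \eqref{eq:Ddet}) is routine.
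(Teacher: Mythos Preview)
Your proof is correct and takes a genuinely different route from the paper's. Both arguments share Step~1 (the trace algebra reducing $n^{-1}\tr(\bS_{k,\infty}^{\lambda})$ to an average of resolvent traces) and the single-subsample limit $g_{I_0}\asto\text{(Stieltjes transform)}$; they diverge on how to pass from the single-term limit to the average over the super-exponentially large family $\cI_k$. The paper establishes the single-term convergence via its asymptotic-equivalents machinery (\Cref{cor:asympequi-scaled-ridge-resolvent}, \Cref{lem:ridge-D}) and then implicitly relies on the Ces\`aro-type lemma \Cref{lem:conv_cond_expectation}: since $\tfrac{1}{n}\tr(\bM_I\hat\bSigma_I)\asto\tfrac{\phi}{\phi_s}(1-\lambda v(-\lambda;\phi_s))$ for every deterministic sequence $I=I^{(n)}\in\cI_{k_n}$, the average over $\cI_k$ inherits the same limit. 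You instead view $\bar g_k$ directly as a function of the independent rows $\bx_1,\dots,\bx_n$ and apply McDiarmid, with the bounded-difference constant supplied by the magnitude-free rank-one resolvent inequality $|\tr((\bM+\lambda\bI)^{-1})-\tr((\bM+\lambda\bI+\bu\bu^{\top})^{-1})|\le 1/\lambda$. Your approach is more elementary and self-contained (it avoids both the asymptotic-equivalents calculus and the abstract Ces\`aro lemma imported from the companion paper) and yields an explicit exponential concentration bound; the paper's approach integrates more cleanly with the framework used for the numerator and elsewhere.

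One small point to tighten: your $v(\lambda;\phi_s)$ is the Stieltjes transform of the limiting ESD of $\hat\bSigma_I$, whereas the paper's $v(-\lambda;\phi_s)$ in \eqref{eq:Ddet} is the \emph{companion} transform (that of the Gram matrix $\bX_I\bX_I^{\top}/k$). Your final claim that $(1-\phi(1-\lambda v(\lambda;\phi_s)))^2=\Ddet$ is correct, but it rests on the companion relation $\lambda v(-\lambda;\phi_s)=\phi_s\lambda\, m_{\hat\bSigma}(-\lambda)-(\phi_s-1)$, which you should state rather than leave implicit, since the two $v$'s are not the same object.
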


    It is worth noting that \Cref{lem:gcv-den} does not require \Cref{asm:lin-mod} because the smoothing matrix only concerns the design matrix $\bX$ and does not depend on the response $\by$.
    
    Towards obtaining asymptotics for the numerator $T_{k,\infty}^{\lambda}$ (as in \eqref{eq:T_inf}) of GCV, we first decompose $T_{k,\infty}^{\lambda}$ into simpler components via \Cref{lem:decomp-train-err}. Specifically, the full mean squared training error admits the following decomposition:
    \[
        T_{k,\infty}^{\lambda}
        - \sum_{m=1}^2
            ( c_m T_{k,m}^{\lambda}
            + (1-c_m) \overline{R}_{k,m}^{\lambda}) \asto 0,
    \]
    where $c_1=\phi/\phi_s$ and $c_2= 2\phi(2\phi_s-\phi)/\phi_s^2$.
    Here, $T_{k,m}^{\lambda}$ and $\overline{R}_{k,m}^{\lambda}$ are the in-sample training and out-of-sample test errors of the $m$-ensemble for $m=1$ and $2$, as defined in \eqref{eq:T_M} and \eqref{eq:Rb_M}.
    This decomposition implies that the full training error is asymptotically simply a linear combination of training and test errors.
    Therefore, it suffices to obtain the asymptotics of each of these components.
    As analyzed in \Cref{lem:conv-test-err}, it is easy to show that the test errors converge to $\sR_{m}^{\lambda}$ for $m=1,2$.
    On the other hand, it is more challenging to derive the asymptotic limits for the training errors $T_{k,m}^{\lambda}$.
    We first split the $T_{k,m}^{\lambda}$ into finer components via a bias-variance decomposition of $T_{k,m}^{\lambda}$. 
    By developing novel asymptotic equivalents of resolvents arising from the decomposition of in-sample errors in \Cref{lem:resolv-insample}, we are able to show the convergence of the bias and variance components (see \Cref{lem:conv-train-err}).
    Combining \Crefrange{lem:decomp-train-err}{lem:conv-train-err} yields the convergence of $T_{k,m}^{\lambda}$ to a deterministic limit $\sT_{m}^{\lambda}$
    as summarized in the following lemma:
    
    \begin{lemma}[Asymptotics of the GCV numerator]\label{lem:gcv-num}
        Suppose \Cref{asm:rmt-feat,asm:lin-mod} hold.
        Then, for all $\lambda> 0$,
        \begin{align}
            T_{k,\infty}^{\lambda}\asto\sT_{\infty}^{\lambda}
            &= \sum_{m=1}^2
            ( c_m \sT_{m}^{\lambda}
            + (1-c_m) \sR_{m}^{\lambda}),\label{eq:Ndet}
        \end{align}
        as $k, n, p \to \infty$, $p/n \to \phi \in (0, \infty)$,
        $p/k \to \phi_s \in [\phi, \infty)$,
        where $c_1=\phi/\phi_s$ and $c_2= 2\phi(2\phi_s-\phi)/\phi_s^2$.
    \end{lemma}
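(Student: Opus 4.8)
\textbf{Proof proposal for \Cref{lem:gcv-num}.}
The plan is to prove the convergence in three stages, peeling the full-ensemble training error $T_{k,\infty}^{\lambda}$ down to a fixed number of finite-ensemble quantities and then analyzing those. The first stage is purely combinatorial and is already packaged as \Cref{lem:decomp-train-err}: writing $T_{k,\infty}^{\lambda}=n^{-1}\sum_{i\in[n]}(y_i-\bx_i^{\top}\tbeta^{\lambda}_{k,\infty})^2$, substituting $\tbeta^{\lambda}_{k,\infty}=\EE[\hbeta^{\lambda}_{k}(\cD_I)\mid\cD_n]$, and expanding the square, the squared term $(\bx_i^{\top}\tbeta^{\lambda}_{k,\infty})^2$ becomes a conditional expectation over two \emph{independent} draws $I_1,I_2$, so only pairwise interactions among subsamples survive. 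Reorganizing each summand according to whether the evaluation index $i$ lands inside zero, one, or both of a random pair of subsamples then rewrites $T_{k,\infty}^{\lambda}$, up to an $\asto 0$ error, as the linear combination $\sum_{m=1}^2(c_m T_{k,m}^{\lambda}+(1-c_m)\overline{R}_{k,m}^{\lambda})$, where $c_1=\phi/\phi_s$ and $c_2=2\phi(2\phi_s-\phi)/\phi_s^2$ are the limiting inclusion probabilities (here one uses $|I_{1:m}|/n\asto$ the appropriate constant and $p/k\asto\phi_s$). Granting this, it remains to identify the almost-sure limits of $T_{k,1}^{\lambda},T_{k,2}^{\lambda},\overline{R}_{k,1}^{\lambda},\overline{R}_{k,2}^{\lambda}$.

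The second stage handles the out-of-sample test errors, and this is the easy case (\Cref{lem:conv-test-err}). For a fixed realization of $\cD_n$ and of $I_1,\dots,I_m$, the indices in $I_{1:m}^c$ are fresh points independent of the fitted predictor $\tbeta^{\lambda}_{k,m}$; conditioning on the data restricted to $I_{1:m}$ together with the remaining design, a law-of-large-numbers/concentration argument over the held-out fold shows $\overline{R}_{k,m}^{\lambda}$ is asymptotically equal to the conditional prediction risk $R_{k,m}^{\lambda}$, which by \eqref{eq:Rrisk} (i.e., \Cref{thm:ver-with-replacement}) converges to $\sR^{\lambda}_{m}=\RlamM[\lambda][m]$. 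Hence $\overline{R}_{k,m}^{\lambda}\asto\sR^{\lambda}_{m}$ for $m=1,2$.

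The third stage, where essentially all of the difficulty lies, is to show $T_{k,m}^{\lambda}\asto\sT^{\lambda}_{m}$ for $m=1,2$ (\Cref{lem:conv-train-err}). One cannot invoke an independent-test-point argument here because in $T_{k,m}^{\lambda}=|I_{1:m}|^{-1}\sum_{i\in I_{1:m}}(y_i-\bx_i^{\top}\tbeta^{\lambda}_{k,m})^2$ the evaluation point $\bx_i$ also enters the design used to fit $\tbeta^{\lambda}_{k,m}$ for those $\ell$ with $i\in I_{\ell}$. I would substitute $y_i=\bx_i^{\top}\bbeta_0+\epsilon_i$, perform a bias–variance decomposition of $T_{k,m}^{\lambda}$, and, for each subsample containing $i$, use a leave-one-out / Sherman–Morrison manipulation to extract $\bx_i$ from the resolvent $(\bX_{I_{\ell}}^{\top}\bX_{I_{\ell}}/k+\lambda\bI_p)^{-1}$, which produces the characteristic self-consistent correction factors. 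The crux --- and the main obstacle --- is that these ``in-sample'' resolvent functionals are genuinely different from the out-of-sample ones appearing in ordinary ridge risk asymptotics (e.g.\ $\bx_i^{\top}(\bX_{I_{\ell}}^{\top}\bX_{I_{\ell}}/k+\lambda\bI_p)^{-1}\bx_i$ for $i\in I_{\ell}$ versus for $i\notin I_{\ell}$), and for $m=2$ one must additionally control the coupled dependence of the cross term $\bx_i^{\top}\hbeta^{\lambda}_{k}(\cD_{I_1})\cdot\bx_i^{\top}\hbeta^{\lambda}_{k}(\cD_{I_2})$ on the shared point $\bx_i$ and the two (independently drawn) subsamples, which may or may not both contain $i$. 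Deriving the deterministic equivalents for these in-sample and coupled resolvents is exactly the content of \Cref{lem:resolv-insample}; feeding them into the bias and variance pieces yields the closed-form limit $\sT^{\lambda}_{m}$.

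Finally, I would assemble the pieces: substituting $T_{k,m}^{\lambda}\asto\sT^{\lambda}_{m}$ and $\overline{R}_{k,m}^{\lambda}\asto\sR^{\lambda}_{m}$ into the decomposition from \Cref{lem:decomp-train-err}, and using that $c_1,c_2$ are continuous functions of $(\phi,\phi_s)$ evaluated at the limiting aspect ratios, gives $T_{k,\infty}^{\lambda}\asto\sum_{m=1}^2(c_m\sT^{\lambda}_{m}+(1-c_m)\sR^{\lambda}_{m})$, which is \eqref{eq:Ndet}. The hypothesis $\lambda>0$ is what keeps the resolvents $(\bX_{I}^{\top}\bX_{I}/k+\lambda\bI_p)^{-1}$ well-conditioned throughout; the boundary cases $\lambda=0$ (ridgeless) and $\phi_s=\infty$ would be recovered afterward by a separate limiting/continuity argument, consistent with the staged plan outlined in \Cref{subsec:proof-outline}. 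I expect the decomposition and test-error steps to be comparatively routine, with the entire burden of the proof concentrated in the in-sample resolvent analysis underlying \Cref{lem:resolv-insample,lem:conv-train-err}.
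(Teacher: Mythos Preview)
Your three-stage structure is correct and matches the paper exactly: \Cref{lem:decomp-train-err} for the combinatorial reduction to $m=1,2$ pieces, \Cref{lem:conv-test-err} for $\overline{R}_{k,m}^{\lambda}\asto\sR_m^{\lambda}$, and \Cref{lem:conv-train-err} backed by \Cref{lem:resolv-insample} for $T_{k,m}^{\lambda}\asto\sT_m^{\lambda}$. The one place you diverge is the mechanics of stage three: you propose a row-by-row leave-one-out/Sherman--Morrison extraction of each $\bx_i$ from the resolvent, whereas the paper works globally. It writes $T_{k,m}^{\lambda}$ directly as $T_C+T_B+T_V$ --- a cross term linear in $\bepsilon$, a bias term of the form $\lambda^2\bbeta_0^{\top}\bM\hSigma\bM\bbeta_0$, and a variance term quadratic in $\bepsilon$ --- then kills $T_C\asto0$ and reduces $T_V$ to a trace functional via linear/quadratic-form concentration (\Cref{lem:ridge-conv-C0,lem:ridge-conv-V0}), and finally computes the limits of $T_B$ and the trace via the matrix asymptotic equivalents in \Cref{lem:ridge-B0,lem:ridge-V0}, which is where \Cref{lem:resolv-insample} actually enters. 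Both routes land in the same place, but the paper's avoids per-row bookkeeping and makes the separate dependence on $\bepsilon$ and $\bbeta_0$ transparent; your Sherman--Morrison route is closer in spirit to classical LOOCV/GCV derivations and would give more direct intuition for the self-consistent correction factors, at the cost of heavier case analysis (especially for $m=2$, where $i$ may lie in zero, one, or both subsamples).
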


    Finally, the boundary cases when $\phi_s=+\infty$ and $\lambda=0$ are taken care of in succession by \Cref{prop:Rdet-ridge-infinity} and \Cref{prop:Rdet-lam-0}, respectively.
    Combining the above results provides the asymptotics for the GCV estimate in the full ensemble:

    \begin{proposition}[Asymptotics of GCV for full ensemble]\label{prop:gcv}
        Suppose \Cref{asm:rmt-feat,asm:lin-mod} hold.
        Then, for all $\lambda\geq 0$,
        \begin{align}
            \gcv_{k,\infty}^{\lambda} \asto  \gcvdet &:= \frac{\sT_{\infty}^{\lambda}(\phi,\phi_s)}{\Ddet},
            \label{eq:gcv-det}
        \end{align}
        as $k, n, p \to \infty$, $p/n \to \phi \in (0, \infty)$,
        $p/k \to \phi_s \in [\phi, \infty]$.
    \end{proposition}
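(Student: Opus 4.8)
The plan is to prove \Cref{prop:gcv} by first handling the interior regime $\lambda>0$, $\phi_s<\infty$ and then extending to the two boundary regimes $\phi_s=\infty$ and $\lambda=0$ by continuity. In the interior regime, recall from \eqref{eq:gcv-kM-inf} that $\gcv_{k,\infty}^{\lambda}=T_{k,\infty}^{\lambda}/D_{k,\infty}^{\lambda}$. \Cref{lem:gcv-num} gives $T_{k,\infty}^{\lambda}\asto\sT_{\infty}^{\lambda}(\phi,\phi_s)$ and \Cref{lem:gcv-den} gives $D_{k,\infty}^{\lambda}\asto\Ddet$ along the stated scaling, so by continuity of the map $(t,d)\mapsto t/d$ at any point with $d\neq 0$, it suffices to verify $\Ddet>0$. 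I would check this directly: for $\lambda>0$, each summand of $\bS^{\lambda}_{k,\infty}$ in \eqref{eq:D-inf} is (up to the coordinate embedding) the ridge hat matrix of the corresponding subsample, whose eigenvalues are bounded by $\widehat\lambda_{\max}/(\widehat\lambda_{\max}+\lambda)$ with $\widehat\lambda_{\max}=\lambda_{\max}(\bX^{\top}\bL_I\bX/k)$; since $\bL_I\preceq\bI_n$ we have $\widehat\lambda_{\max}\le(n/k)\,\lambda_{\max}(\bX^{\top}\bX/n)$, which under \Cref{asm:rmt-feat} is almost surely bounded by a constant $B$ uniformly over $I$ and over large $n$ by the Bai--Yin law. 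Hence $n^{-1}\tr(\bS^{\lambda}_{k,\infty})\le (k/n)\,B/(B+\lambda)\to(\phi/\phi_s)\,B/(B+\lambda)<1$, so $\Ddet$ is bounded below by a strictly positive constant, and $\gcv_{k,\infty}^{\lambda}\asto\sT_{\infty}^{\lambda}(\phi,\phi_s)/\Ddet=\gcvdet$ for all $\lambda>0$, $\phi_s\in[\phi,\infty)$, which is \eqref{eq:gcv-det} in the interior.

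For the boundary $\phi_s=\infty$ (so $k/n\to 0$), I would invoke \Cref{prop:Rdet-ridge-infinity}, which supplies the $\phi_s=\infty$ limits of the numerator and denominator (equivalently, continuity of $\sT_{\infty}^{\lambda}$, $\sD_{\infty}^{\lambda}$, and hence of $\gcvdet$, in $\phi_s$ at $\infty$); here no degeneracy is possible because $n^{-1}\tr(\bS^{\lambda}_{k,\infty})\le k/n\to 0$ forces the denominator limit to $1$, so the quotient argument carries over, if needed via sandwiching between two large finite subsample aspect ratios. For the boundary $\lambda=0$, I would first note that in finite samples $\hbeta^{\lambda}_{k}(\cD_I)\to\hbeta^{0}_{k}(\cD_I)$ as $\lambda\to0^{+}$, since the regularized inverse converges to the Moore--Penrose inverse, so $T_{k,\infty}^{\lambda}\to T_{k,\infty}^{0}$ and $D_{k,\infty}^{\lambda}\to D_{k,\infty}^{0}$; then invoke \Cref{prop:Rdet-lam-0}, which provides the $\lambda=0$ analogues of \Cref{lem:gcv-den,lem:gcv-num} together with the requisite $\lambda\to0^{+}$ continuity of the deterministic profiles. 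The quotient argument applies once $\sD_{\infty}^{0}(\phi,\phi_s)>0$: for $\phi_s\in(\phi,\infty]$ each subsampled ridgeless hat matrix is a projection of rank $\min(k,p)$, whence $n^{-1}\tr(\bS^{0}_{k,\infty})\to(\phi/\phi_s)\wedge\phi<1$, while the only residual corner $\phi_s=\phi$ (the single ridgeless predictor, where the hat-matrix trace can approach $n$) is covered by the $0/0$ resolution already contained in \Cref{prop:Rdet-lam-0} and its precursors. This yields $\gcv_{k,\infty}^{0}\asto\gcvdet[0]$, and combining the three regimes gives the claim for all $\lambda\ge0$ and $\phi_s\in[\phi,\infty]$.

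The main obstacle I anticipate is not the division step but controlling the limiting GCV denominator across the whole range of $\phi_s$: for the non-ensemble ridgeless predictor in the overparameterized regime the hat-matrix trace tends to $n$ and the denominator degenerates, so one must exploit that full-ensemble averaging over all $\binom{n}{k}$ subsamples keeps $n^{-1}\tr(\bS^{0}_{k,\infty})$ uniformly below $1$ whenever $\phi_s>\phi$, and that $\sT_{\infty}^{0}$ and $\sD_{\infty}^{0}$ admit the correct continuous extension as $\phi_s\to\phi$ and as $\phi_s\to\infty$. This is exactly what the operator-norm and resolvent estimates behind \Cref{lem:gcv-den,lem:resolv-insample} and the boundary propositions \Cref{prop:Rdet-ridge-infinity,prop:Rdet-lam-0} are designed to deliver, so once those are in hand the assembly here is routine.
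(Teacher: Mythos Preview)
Your proposal is correct and follows the same structure as the paper's proof: combine the numerator and denominator asymptotics from \Cref{lem:gcv-num} and \Cref{lem:gcv-den} via continuous mapping for the interior regime $\lambda>0$, $\phi_s<\infty$, then extend to the boundaries $\phi_s=\infty$ and $\lambda=0$ via \Cref{prop:Rdet-ridge-infinity} and \Cref{prop:Rdet-lam-0} respectively. The only addition is your explicit operator-norm argument for $\Ddet>0$, which the paper leaves implicit (it can be read directly off the closed-form expression \eqref{eq:Ddet}, since $v(-\lambda;\phi_s)>0$ for $\lambda>0$).
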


    \textbf{(3) Asymptotics matching and uniform convergence.}\label{subsec:asymp-equiv}
        The asymptotic limits obtained in the first steps can be shown to match with each other by algebraic manipulations.
        This shows the pointwise consistency in \Cref{thm:uniform-consistency-k}.
        The uniform convergence then follows by applying a certain Ce\`saro-type mean convergence lemma (see \Cref{lem:conv_cond_expectation}).

\section{Real Data Example: Single-Cell Multiomics}\label{sec:sc}
    We compare tuning subsample size in the full ridgeless ensemble with tuning the ridge parameter on the full data in a real-world data example from multiomics.
    This single-cell CITE-seq dataset from \citet{hao2021integrated} consists of 50,781 human peripheral blood mononuclear cells (PBMCs) originating from eight volunteers post-vaccination (day 3) of an HIV vaccine, which simultaneously measures 20,729 genes and 228 proteins in individual cells.
    
    We follow the standard preprocessing procedure in single-cell data analysis \citep{hao2021integrated,du2022robust} to select the top 5,000 highly variable genes and the top 50 highly abundant surface proteins, which exhibit high cell-to-cell variations in the dataset.
    \begin{figure}[!t]
        \centering
        \includegraphics[width=0.49\textwidth]{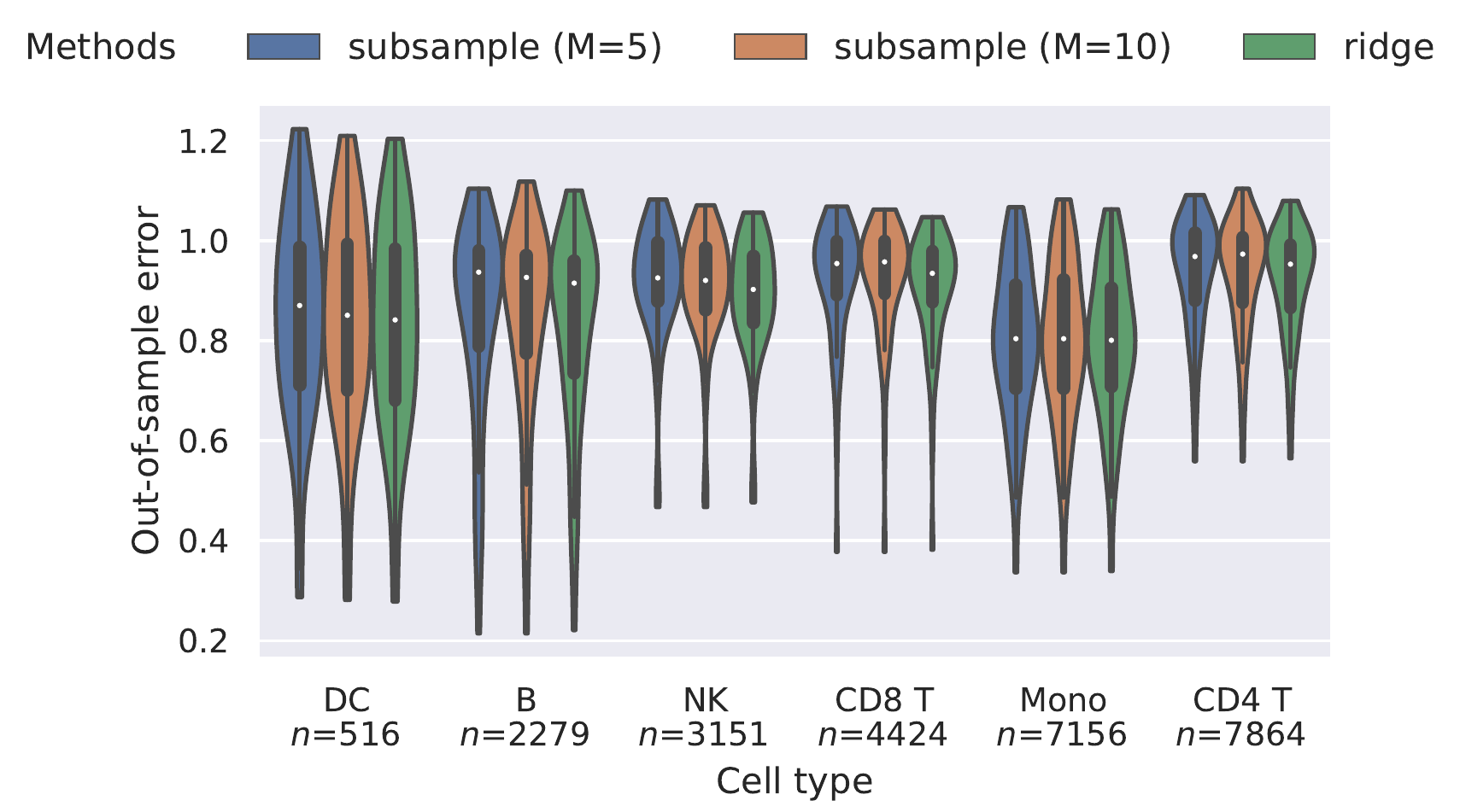}
        \caption{Violin plots of mean squared errors on the randomly held out test sets of different tuning methods for predicting the abundances of 50 proteins in the single-cell CITE-seq dataset. The sizes of the test sets are the same as the sizes of the training sets.}
        \label{fig:sc}
    \end{figure}
    The gene expression and protein abundance counts for each cell are then divided by the total counts for that cell and multiplied by $10^4$ and log-normalized.
    We randomly hold out half of the cells in each cell type as a test set.
    The top 500 principal components of the standardized gene expressions are used as features to predict protein abundances.
    The results of using ensembles with subsample size ($k$) tuning and ridge tuning ($\lambda$) without subsampling based on the GCV estimates are compared in \Cref{fig:sc}.
    For the former, we search over the grid of 25 $k$'s from $n^{\nu}$ to $n$ spaced evenly on the log scale, with $\nu=0.5$ and sample size $n$ ranges from 516 to 7864 for different cell types.
    For the latter, we search over the grid of 100 $\lambda$'s from $10^{-2}$ to $10^2$ spaced evenly on log scale.
    Since different cell types have different sample sizes, this results in different data aspect ratios, presented in increasing order in \Cref{fig:sc}.

    From \Cref{fig:sc}, we see that using a moderate ensemble size ($M=5$ or $10$) and tuning the subsample size have a very similar performance to only tuning the ridge regularization parameter in the full dataset.
    This suggests that the results of \Cref{cor:gcv-opt-ridge} also hold even on real data for different data aspect ratios.
    As discussed after \Cref{cor:gcv-opt-ridge}, subsample tuning is easier to implement because the dataset provides a natural lower and upper bound of the subsample size.
    On the other hand, ridge tuning requires one to heuristically pick the upper regularization threshold for the search grid.

\section{Discussion and Future Directions}\label{sec:discussion}

In this work,
we provide the risk characterization for the full ridge ensemble and establish the oracle risk equivalences between the full ridgeless ensemble and ridge regression. 
At a high level, these equivalences show that implicit regularization induced by subsampling matches explicit ridge regularization, i.e., a subsampled ridge predictor with penalty $\lambda_1$ has the same risk as another ridge predictor with penalty $\lambda_2\geq \lambda_1$.
Additionally, we prove the uniform consistency of generalized cross-validation for full ridge ensembles, which implies the validity of GCV tuning (that does not require sample splitting) for optimal predictive performance.
We describe next some avenues for future work moving forward.

\textbf{Bias correction for finite ensembles.}
In \Cref{prop:inconsistency}, we show that the GCV estimate can be inconsistent in the finite ridge and ridgeless ensembles.
The inconsistency for $M = 2$ occurs because sampling from the whole dataset induces extra randomness beyond those of the training observations used to compute the GCV estimates.
Our analysis of GCV for the full ensemble suggests a way to correct the bias of the GCV estimate.
In \Cref{app:correct}, we outline a possible correction strategy for finite ensembles based on out-of-bag estimates.
An intriguing next research direction is to investigate the implementation 
and uniform consistency of the corrected GCV for finite ensembles in detail.

\textbf{Extensions to other error metrics.}
In this paper, we focus on the in-distribution squared prediction risk. 
It is of interest to extend the equivalences for other error metrics, such as squared estimation risk, general prediction risks, and other functionals of the out-of-sample error distribution, like the quantiles of the error distribution.
Additionally, for the purposes of tuning, it is also of interest to extend the GCV analysis to estimate such functionals of the out-of-sample error distribution.
Such functional estimation could be valuable in constructing prediction intervals for the unknown response.
The technical tools introduced in \citet{patil2022estimating} involving leave-one-out perturbation techniques could prove useful for such an extension.
Furthermore, this extension would also allow for extending the results presented in this paper to hold under a general non-linear response model.

\textbf{Extensions to other base predictors.}
Finally, the focus of this paper is the base ridge predictor.
A natural extension of the current work is to consider kernel ridge regression.
Going further, it is of much interest to consider other regularized predictors, such as lasso.
Whether optimal subsampled lassoless regression still matches with the optimal lasso is an interesting question.
There is already empirical evidence along the lines of \Cref{fig:overview} for such a connection.
The results proved in the current paper make us believe that there is a general story quantifying the effect of implicit regularization by subsampling and that provided by explicit regularization.
Whether the general story unfolds as neatly as presented here for ridge regression remains an exciting next question!

\section*{Acknowledgements}

We are grateful to
Ryan Tibshirani,
Alessandro Rinaldo,
Yuting Wei,
Matey Neykov,
Daniel LeJeune,
Shamindra Shrotriya
for many helpful conversations 
surrounding ridge regression, subsampling, and generalized cross-validation.
Many thanks are also due to the anonymous reviewers
for their encouraging remarks and insightful questions
that have informed several new directions for follow-up future work.
In particular, special thanks to the reviewer
``2t75'' for a wonderful review
and highlighting other related works
on subsampling 
that have made their way into the manuscript.

\bibliography{main.bib}
\bibliographystyle{icml2023}

\newcommand\blfootnote[1]{%
  \begingroup
  \renewcommand\thefootnote{}\footnote{#1}%
  \addtocounter{footnote}{-1}%
  \endgroup
}

\blfootnote{Title in \citet{wasserman2006all}
may seem like a typo, but it is not. 
If you have a moment to chuckle, peek at column 2 of page 266!}

\newpage
\appendix
\onecolumn
\begin{center}
\Large
{\bf \framebox{Appendix}}
\end{center}

This serves as an appendix to the paper
``Subsample Ridge Ensembles: Equivalences and Generalized Cross-Validation.''
Below we provide an outline for the appendix
along with a summary of the notation used in the main paper and the appendix.

\section*{Organization}
The content of the appendix is organized as follows.
\begin{itemize}
   
    \item \Cref{app:tune-k} presents proofs of results in \Cref{sec:equiv}.
    \begin{itemize}
        \item \Cref{app:subsec:full-ensemble} relates the ridge estimator in the full ensemble to the $M$-ensemble ridge estimator as the ensemble size $M$ tends to infinity.
        Specifically, we provide proof of the fact [mentioned in \Cref{sec:equiv}] that the estimator $\tbeta_{k,\infty}^\lambda$ as defined in \eqref{eq:def-full-ensemble}
        is almost surely equivalent to letting the ensemble size $M \to \infty$
        for the estimator $\tbeta_{k,M}^\lambda$ 
        as defined in \eqref{eq:def-M-ensemble}.
        The main ingredients are results in \Cref{sec:appendix-concerntration}.
        
        \item \Cref{app:sec:risk} gathers known results from \citet{patil2022bagging} in the form of \Cref{thm:ver-with-replacement} that characterize the asymptotic prediction risks of ridge ensembles used in the remaining sections. 

        \item \Cref{app:subsec:equiv} proves \Cref{thm:comparison_optimal_ridge}.
        The main ingredients are \Cref{thm:ver-with-replacement} and results in \Cref{sec:calculus_asymptotic_equivalents}.
        See \Cref{fig:schematic-thm:comparison_optimal_ridge}.

    \begin{figure}[!ht]
    \centering
    \begin{tikzpicture}[node distance=3cm]
    \node (ver) [block] {\Cref{thm:ver-with-replacement}};
    \node (calc) [block, right of=ver] {\Cref{sec:calculus_asymptotic_equivalents}};
    \node (ridge) [block, above of=ver, node distance=2.5cm] {\Cref{thm:comparison_optimal_ridge}};
    \draw [arrow] (ver) -- (ridge) ;
    \draw [arrow] (calc) -- (ridge) ;
    \end{tikzpicture}
    \caption{Schematic for the proof of \Cref{thm:comparison_optimal_ridge}.}
    \label{fig:schematic-thm:comparison_optimal_ridge}
    \end{figure}
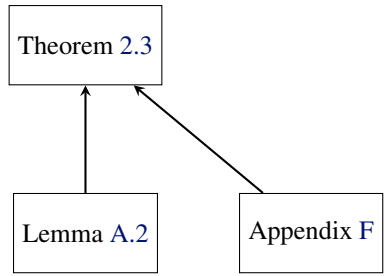

    \end{itemize}

    \item \Cref{app:gcv-consistency} presents proofs of results in \Cref{sec:gcv-consistency}.
    \begin{itemize}
        
        \item \Cref{app:subsec:thm:uniform-consistency-k} proves \Cref{thm:uniform-consistency-k}.
        The main ingredients are
        \Cref{lem:gcv-inf} (proved in \Cref{app:gcv-consistency})
        and results in \Cref{sec:appendix-concerntration}.
        The main ingredients that prove \Cref{lem:gcv-inf}
        are \Cref{prop:gcv} (proved in \Cref{app:proof-ridge})
        and \Cref{thm:ver-with-replacement}.
        See \Cref{fig:schematic-thm:uniform-consistency-k}.

    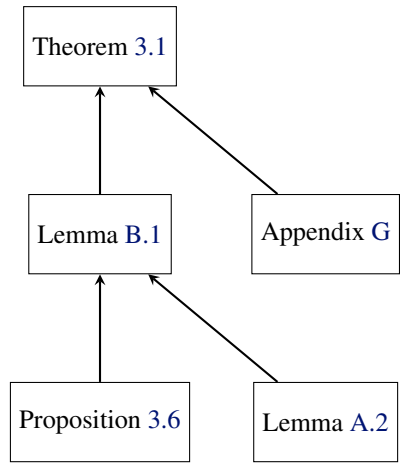
\begin{figure}[!ht]
    \centering
    \begin{tikzpicture}[node distance=3cm]
      \node (thm) [block] {\Cref{thm:uniform-consistency-k}};
      \node (prop1) [block, below of=thm, node distance=2.5cm] {\Cref{lem:gcv-inf}};
      \node (lem2) [block, right of=prop1] {\Cref{sec:appendix-concerntration}};
      \node (prop36) [block, below of=prop1, node distance=2.5cm] {\Cref{prop:gcv}};
      \node (lema2) [block, right of=prop36] {\Cref{thm:ver-with-replacement}};
      
      \draw [arrow] (prop1) -- (thm);
      \draw [arrow] (lem2) -- (thm);
      \draw [arrow] (prop36) -- (prop1);
      \draw [arrow] (lema2) -- (prop1);
    \end{tikzpicture}
    \caption{Schematic for the proof of \Cref{thm:uniform-consistency-k}.}
    \label{fig:schematic-thm:uniform-consistency-k}
    \end{figure}
        \item \Cref{app:subsec:gcv-opt-ridge} proves \Cref{cor:gcv-opt-ridge}.
        The main ingredient in \Cref{thm:uniform-consistency-k}.

        \item \Cref{app:subsec:inconsistency} proves \Cref{prop:inconsistency}.
        The main ingredients are \Cref{lem:gcv-den}
        and results in \Cref{sec:appendix-concerntration}.
        See \Cref{fig:prop:inconsistency}.

            \begin{figure}[!ht]
        \centering
        \begin{tikzpicture}[node distance=3cm]
        \node (prop33) [block] {\Cref{prop:inconsistency}};
        \node (lem34) [block, below of =prop33, node distance=2.5cm] {\Cref{lem:gcv-den}};
        \node (appG) [block, right of=lem34] {\Cref{sec:appendix-concerntration}};
        \draw [arrow] (lem34) -- (prop33) ;
        \draw [arrow] (appG) -- (prop33) ;
        \end{tikzpicture}
        \caption{Schematic for the proof of \Cref{prop:inconsistency}.}
        \label{fig:prop:inconsistency}
        \end{figure}
        
    \end{itemize}

    \item \Cref{app:sec:gcv-den} proves \Cref{lem:gcv-den}.
    The main ingredient is \Cref{lem:ridge-D} (proved in \Cref{app:sec:gcv-den}).
    See \Cref{fig:lem:gcv-den}.

    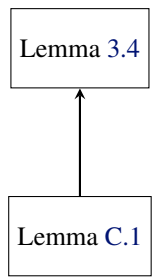
\begin{figure}[!ht]
    \centering
    \begin{tikzpicture}[node distance=3cm]
      \node (thm) [block] {\Cref{lem:gcv-den}};
      \node (prop1) [block, below of=thm, node distance=2.5cm] {\Cref{lem:ridge-D}};
      
      \draw [arrow] (prop1) -- (thm);
    \end{tikzpicture}
    \caption{Schematic for the proof of \Cref{lem:gcv-den}.}
    \label{fig:lem:gcv-den}
    \end{figure}

    \item \Cref{app:sec:gcv-num} 
    proves
    \Cref{lem:gcv-num}.
    The main ingredients are a series of lemmas, \Cref{lem:decomp-train-err,lem:conv-test-err,lem:conv-train-err} (proved in \Cref{app:sec:decomp-train-err,app:sec:conv-test-err,app:sec:conv-train-err}).
    These lemmas provide structural decompositions for the ensemble train error and obtain the limiting behaviors of the terms in the decompositions.
    See \Cref{fig:schematic-lem:gcv-num}.
    \begin{itemize}
        \item \Cref{app:sec:decomp-train-err} proves \Cref{lem:decomp-train-err} that shows a certain decomposition of the ensemble train error into in-sample train and out-of-sample test error components.

        \item \Cref{app:sec:conv-test-err} proves \Cref{lem:conv-test-err} on the convergence of out-of-sample test error components.
        The main ingredient is \Cref{thm:ver-with-replacement}.

        \item \Cref{app:sec:conv-train-err} proves \Cref{lem:conv-train-err} on the convergence of in-sample train error components.
        The main ingredients are \Cref{lem:ridge-conv-C0,lem:ridge-conv-V0} (proved in \Cref{app:sec:comp-concen}) and \Cref{lem:ridge-B0,lem:ridge-V0} (proved in \Cref{app:sec:comp-deter}).

        \item \Cref{app:sec:comp-concen} proves \Cref{lem:ridge-conv-C0,lem:ridge-conv-V0} on component concentrations of certain cross and variance terms arising in 
        the decompositions above.
        The main ingredients are results in \Cref{sec:appendix-concerntration}.

        \item \Cref{app:sec:comp-deter} proves \Cref{lem:ridge-B0,lem:ridge-V0} on component deterministic approximations for the concentrated bias and variance functionals
        in the steps above.
        The main ingredients are results in \Cref{sec:calculus_asymptotic_equivalents}.
    \end{itemize}

\begin{figure}[!ht]
\centering
    \begin{tikzpicture}[node distance=3cm]
    \node (lem35) [block] {\Cref{lem:gcv-num}};
    \node (lemd1) [block, below of=lem35, node distance=2.5cm] {\Cref{lem:decomp-train-err}};
    \node (lemd2) [block, right of=lemd1, node distance=3cm] {\Cref{lem:conv-test-err}};
    \node (lemd3) [block, right of=lemd2, node distance=3cm] {\Cref{lem:conv-train-err}};
    \node (lema2) [block, below of=lemd2, node distance=2.5cm] {\Cref{thm:ver-with-replacement}};
    \node (lemd45) [block, below of=lemd3, node distance=2.5cm] {\Cref{lem:ridge-conv-C0,lem:ridge-conv-V0}};
    \node (lemd67) [block, right of=lemd45, node distance=4cm] {\Cref{lem:ridge-B0,lem:ridge-V0}};
    \node (appG) [block, below of=lemd45, node distance=2.5cm] {\Cref{sec:appendix-concerntration}};
    \node (appF) [block, right of=appG, node distance=4cm] {\Cref{sec:calculus_asymptotic_equivalents}};
  \draw [arrow] (lemd1) -- (lem35);
  \draw [arrow] (lemd2) -- (lem35);
  \draw [arrow] (lemd3) -- (lem35);
  \draw [arrow] (lemd45) -- (lemd3);
  \draw [arrow] (lemd67) -- (lemd3);
  \draw [arrow] (appG) -- (lemd45);
  \draw [arrow] (appF) -- (lemd67);
  \draw [arrow] (lema2) -- (lemd2);
    \end{tikzpicture}
    \caption{Schematic for the proof of \Cref{lem:gcv-num}.}
    \label{fig:schematic-lem:gcv-num}
\end{figure}
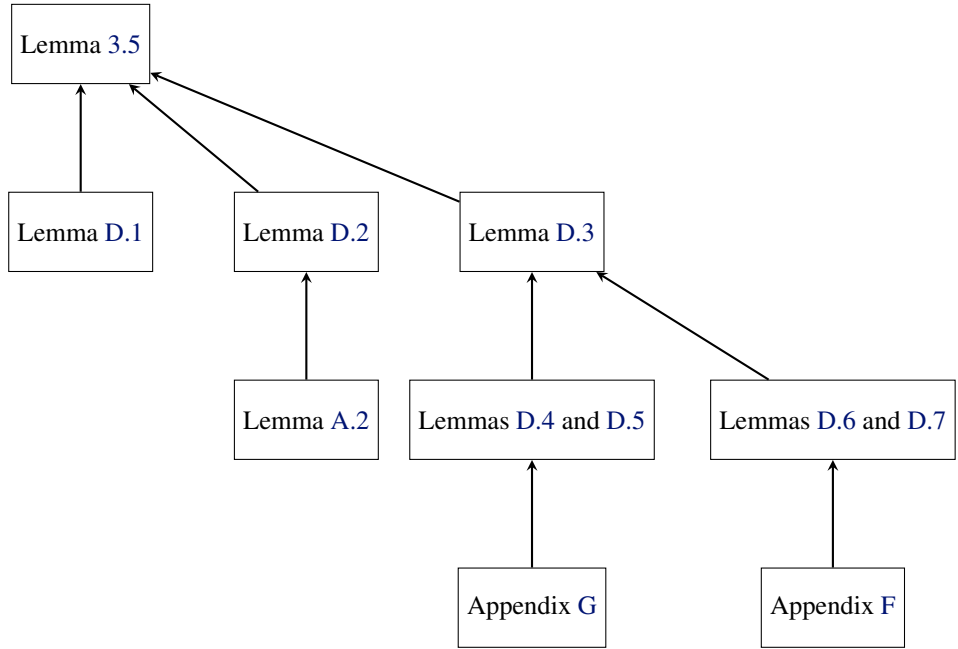

    \item \Cref{app:proof-ridge} 
    proves
    \Cref{prop:gcv}.
    The main ingredients are components proved in \Cref{lem:gcv-den} and \Cref{lem:gcv-num}, \Cref{prop:Rdet-ridge-infinity,prop:Rdet-lam-0} that handle certain boundary cases not covered by \Cref{lem:gcv-den} and \Cref{lem:gcv-num} (proved in \Cref{sec:proof-ridge,sec:proof-ridgeless}), and results in \Cref{sec:appendix-concerntration}.
    See \Cref{fig:prop:gcv}.
    \begin{itemize}
        \item 
        \Cref{sec:proof-ridge} proves \Cref{prop:Rdet-ridge-infinity} that considers the boundary case as the subsampling ratio $\phi_s \to \infty$ for ridge regression ($\lambda > 0$).
        \item
        \Cref{prop:Rdet-lam-0} proves \Cref{prop:Rdet-lam-0} that handles the limiting case of ridgeless regression ($\lambda = 0$),
        by justifying and taking a suitable limit as $\lambda \to 0^{+}$
        of the corresponding results for ridge regression.
    \end{itemize}

\begin{figure}[!ht]
\centering
    \begin{tikzpicture}[node distance=3cm]
    \node (prop36) [block] {\Cref{prop:gcv}};
    \node (lem34) [block, below of=prop36, node distance=2.5cm] {\Cref{lem:gcv-den}};
    \node (lem35) [block, right of=lem34, node distance=3cm] {\Cref{lem:gcv-num}};
    \node (leme1e2) [block, right of=lem35, node distance=4cm] {\Cref{prop:Rdet-ridge-infinity,prop:Rdet-lam-0}};
    \node (appG) [block, right of=leme1e2, node distance=5cm] {\Cref{sec:appendix-concerntration}};
  \draw [arrow] (lem34) -- (prop36);
  \draw [arrow] (lem35) -- (prop36);
  \draw [arrow] (leme1e2) -- (prop36);
  \draw [arrow] (appG) -- (prop36);
    \end{tikzpicture}
    \caption{Schematic for the proof of \Cref{prop:gcv}.}
    \label{fig:prop:gcv}
\end{figure}

    \item \Cref{sec:calculus_asymptotic_equivalents} summarizes auxiliary asymptotic equivalency results used in the proofs throughout.
    \begin{itemize}
        \item \Cref{app:sec:preliminary-background} provides background on the notion of asymptotic matrix equivalents and various calculus rules that such notion of equivalency obeys.
        \item \Cref{append:det-equi-resol} gathers some known asymptotic matrix equivalents and derives some novel asymptotic matrix equivalents that arise in our analysis.
        \item \Cref{app:sec:analytic-properties} gathers various known analytic properties of certain fixed-point equations and proves some additional properties that arise in our analysis.
    \end{itemize}

    \item \Cref{sec:appendix-concerntration} collects several helper concentration results used in the proofs throughout.

    \begin{itemize}
        \item \Cref{sec:size-intersection} provides lemmas deriving the asymptotic proportion of shared observations when subsampling.
        
        \item \Cref{sec:concen-linform-quadfrom} provides lemmas establishing concentrations for linear and quadratic forms of random vectors.
        
        \item \Cref{sec:cesaro-mean-max} provides lemmas for lifting original convergences to converges of Ce\`saro-type mean and max for triangular arrays.
    \end{itemize}

    \item \Cref{app:correct} discusses the bias correction of GCV for finite ensembles [mentioned in \Cref{sec:discussion}].

    \item \Cref{app:numerical-details} describes additional numerical details for experiments [mentioned in \Cref{sec:gcv-consistency}].
\end{itemize}

\section*{Notation}
An overview of some general notation used in the main paper and the appendix is as follows.

\begin{enumerate}
    \item \underline{General notation}: 
    We denote scalars in non-bold lower or upper case, vectors in bold lower case, and matrices in bold upper case.
    For a real number $x$, $(x)_{+}$ denotes its positive part, $\lfloor x \rfloor$ its floor, and $\lceil x \rceil$ its ceiling.
    For a vector $\ba$, $\| \ba \|_{2}$ denotes its $\ell_2$ norm.
    For a pair of vectors $\bb$ and $\bc$, $\langle \bb, \bc \rangle$ denotes their inner product.
    For an event $E$, $\ind_E$ denotes the associated indicator random variable.
    We indicate convergence in probability using ``$\pto$", almost sure convergence using ``$\asto$", and weak convergence using ``$\dto$".

    \item \underline{Set notation}: We denote sets using calligraphic letters. 
    We use blackboard letters to denote some special sets: $\NN$ denotes the set of positive integers, $\RR$ denotes the set of real numbers, $\RR_{+}$ indicates the set of non-negative real numbers, $\CC$ represents the set of complex numbers, $\CC^{+}$ represents the set of complex numbers with positive imaginary parts, and $\CC_{-}$ indicates the set of complex numbers with negative imaginary parts.
    For a natural number $n$, we use $[n]$ to denote the set $\{ 1, \dots, n \}$.

    \item \underline{Matrix notation}: For a matrix $\bA \in \RR^{n \times p}$, $\bA^\top \in \RR^{p \times n}$ denotes its transpose, and $\bA^{+} \in \RR^{p \times n}$ denotes its Moore-Penrose inverse.
    For a square matrix $\bB \in \RR^{p \times p}$, $\tr[\bB]$ denotes its trace, and $\bB^{-1} \in \RR^{p \times p}$ denotes its inverse (assuming it is invertible).
    For a positive semidefinite matrix $\bC$, $\bC^{1/2}$ denotes its principal square root.
    A $p \times p$ identity matrix is denoted $\bI_p$, or simply by $\bI$, whenever it is clear from the context.

    For a real matrix $\bD$, its operator norm (or spectral norm) concerning $\ell_2$ vector norm is denoted by $\| \bD \|_{\mathrm{op}}$, and its trace norm (or nuclear norm) is denoted by $\| \bD \|_{\mathrm{tr}}$ (recall that $\| \bD \|_{\mathrm{tr}} = \tr[(\bD^\top \bD)^{1/2}]$).
    For a positive semidefinite matrix $\bU \in \RR^{p \times p}$ with eigenvalue decomposition $\bU = \bQ \bR \bQ^{-1}$ for an orthonormal matrix $\bQ \in \RR^{p \times p}$ and a diagonal matrix $\bR \in \RR^{p \times p}$ with non-negative entries, and a function $f : \RR_{+} \to \RR_{+}$, we denote by $f(\bU)$ the $p \times p$ positive semidefinite matrix $\bQ f(\bR) \bQ^{-1}$. Here $f(\bR)$ is a $p \times p$ diagonal matrix obtained by applying the function $f$ to each diagonal entry of $\bR$.

    For symmetric matrices $\bV$ and $\bW$, $\bV \preceq \bW$ denotes the Loewner ordering.
    For sequences of matrices $\bY_n$ and $\bZ_n$, $\bY_n \asympequi \bZ_n$ denotes a particular notion of asymptotic equivalence. 
    See \Cref{sec:calculus_asymptotic_equivalents} for more details.
\end{enumerate}

Finally, in the subsequent sections, we will be proving the results where $n,k,p$ are sequence of integers $\{n_m\}_{m=1}^{\infty}$, $\{k_m\}_{m=1}^{\infty}$, $\{p_m\}_{m=1}^{\infty}$.
One can also view $k$ and $p$ as sequences $\{ k_n \}$ and $\{ p_n \}$ that are indexed by $n$.
We will omit the subscripts (denoting the index sets) for notational simplicity whenever it is clear from the context.

\section{Proofs of results in \Cref{sec:equiv}}\label{app:tune-k}

\subsection[Full-ensemble versus limiting M-ensemble]{Full-ensemble versus limiting $M$-ensemble}\label{app:subsec:full-ensemble}
    \begin{lemma}[Almost sure equivalence of full-ensemble and limiting $M$-ensemble]\label{lem:full-ensemble}
        For $k,n$ fixed, for the ensemble estimator defined in \eqref{eq:def-M-ensemble},
        it holds that
        $$\tbeta^{\lambda}_{k,M}(\cD_n;\{I_{\ell}\}_{\ell=1}^M) \asto \EE[\hbeta^{\lambda}_{k}(\cD_{I}) \mid \cD_n]=\frac{1}{|\cI_k|} \sum_{I \in \cI_k} \hbeta^{\lambda}_{k}(\cD_{I}),$$
        as $M\rightarrow\infty$.
    \end{lemma}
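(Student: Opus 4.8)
The plan is to recognize the $M$-ensemble estimator as an empirical average of i.i.d.\ random vectors and invoke a strong law of large numbers (SLLN), observing that conditioning on $\cD_n$ with $k, n$ fixed makes everything finite-dimensional. First I would fix the training data $\cD_n$ and the sample sizes $k \le n$, and work conditionally on $\cD_n$. The index sets $I_1, I_2, \ldots$ are, by construction in \eqref{eq:def-M-ensemble}, i.i.d.\ draws (simple random samples) from the finite set $\cI_k$, which has cardinality $\binom{n}{k} < \infty$. Hence the vectors $\hbeta^{\lambda}_k(\cD_{I_1}), \hbeta^{\lambda}_k(\cD_{I_2}), \ldots \in \RR^p$ are i.i.d.\ (conditionally on $\cD_n$), each taking only finitely many values, so in particular each coordinate has finite first moment conditional on $\cD_n$.

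The key step is then a coordinatewise application of Kolmogorov's SLLN: for each $j \in [p]$, the scalars $(\hbeta^{\lambda}_k(\cD_{I_\ell}))_j$, $\ell = 1, 2, \ldots$, are i.i.d.\ (given $\cD_n$) and bounded (there are at most $\binom{n}{k}$ possible values), hence integrable, so
\[
  \frac{1}{M}\sum_{\ell \in [M]} \bigl(\hbeta^{\lambda}_k(\cD_{I_\ell})\bigr)_j
  \;\asto\; \EE\bigl[\bigl(\hbeta^{\lambda}_k(\cD_{I})\bigr)_j \biggiven \cD_n\bigr]
\]
as $M \to \infty$. Since $p$ is fixed and finite, the almost sure convergence holds simultaneously in all $p$ coordinates (a finite intersection of probability-one events has probability one), which gives $\tbeta^{\lambda}_{k,M}(\cD_n; \{I_\ell\}_{\ell=1}^M) \asto \EE[\hbeta^{\lambda}_k(\cD_I) \mid \cD_n]$ in $\RR^p$. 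Finally, the conditional expectation is explicitly the uniform average over $\cI_k$: since $I$ is a simple random sample from $\cI_k$, it is uniformly distributed on $\cI_k$, so $\EE[\hbeta^{\lambda}_k(\cD_I) \mid \cD_n] = |\cI_k|^{-1} \sum_{I \in \cI_k} \hbeta^{\lambda}_k(\cD_I)$, matching the claimed right-hand side and the definition \eqref{eq:def-full-ensemble}.

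There is essentially no hard step here — the statement is a routine SLLN argument once one notes the conditional i.i.d.\ structure and finite dimensionality. The only point requiring a modicum of care is the order of quantifiers: the convergence is as $M \to \infty$ with $k, n$ (and hence $p$) held fixed, so one does not confront any of the proportional-asymptotics subtleties; the bulk of the work in the paper (where $k, n, p$ all diverge) is deferred to later sections, and this lemma is purely a finite-sample bookkeeping fact that justifies writing $\tbeta^{\lambda}_{k,\infty}$ for the full-ensemble average. If one wished to avoid even invoking boundedness, one could alternatively note that $\hbeta^{\lambda}_k(\cD_I)$ depends on $I$ only through finitely many outcomes and apply the SLLN for i.i.d.\ vectors directly; either route closes the argument immediately.
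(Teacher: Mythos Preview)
Your argument is correct and is in fact the most direct route: conditionally on $\cD_n$, the vectors $\hbeta^{\lambda}_k(\cD_{I_\ell})$ are i.i.d.\ with finitely many possible values, so the SLLN applies coordinatewise and $p$ fixed lets you intersect. The paper's proof takes a slightly different (equivalent) tack: it rewrites $\tbeta^{\lambda}_{k,M} = \sum_{I \in \cI_k} (n_{M,I}/M)\,\hbeta^{\lambda}_k(\cD_I)$ where $n_{M,I}$ counts how often subset $I$ is drawn, and then shows each weight $n_{M,I}/M \asto 1/\binom{n}{k}$ via the SLLN for the binomial. Both are just repackagings of the same SLLN, but the paper's decomposition lets it also treat the case where $I_1,\ldots,I_M$ are drawn from $\cI_k$ \emph{without} replacement (hypergeometric weights, with exact equality once $M = \binom{n}{k}$); your i.i.d.\ framing covers only the with-replacement scheme. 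This is a minor point---the without-replacement case is trivial anyway---and your version is otherwise cleaner.
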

    \begin{proof}[Proof of \Cref{lem:full-ensemble}]
        Note that for $k,n$ fixed, the cardinality of $\cI_k$ is $\binom{n}{k}$. Thus,
        we have
        \begin{align*}
            \tbeta^{\lambda}_{k,M}(\cD_n;\{I_{\ell}\}_{\ell=1}^M) = \sum_{I\in\cI_k} \frac{n_{M,I}}{M}\hbeta^{\lambda}_{k}(\cD_I)
        \end{align*}
        for random variables $n_{M,i}$'s.
        Since when sampling with replacement $n_{M,I}\sim \text{Binomial}(M, 1/\binom{n}{k})$ with mean $M/\binom{n}{k}$, from the strong law of large numbers, we have that as $M\rightarrow\infty$,
        \begin{align}
            \frac{n_{M,I}}{M} \asto \frac{1}{\binom{n}{k}},\qquad \forall\ I\in \cI_{k}.
        \end{align}
        For sampling without replacement, $n_{M,I}\sim \text{Hypergeometric}(M, 1, \binom{n}{k})$ for $M\leq \binom{n}{k}$ (see \Cref{def:hypergeometric}) with mean $M/\binom{n}{k}$.
        When $M= \binom{n}{k}$, ${n_{M,I}}/{M}={1}/{\binom{n}{k}}$.        
        In both cases,
        we have
        \begin{align*}
            \tbeta^{\lambda}_{k,M}(\cD_n;\{I_{\ell}\}_{\ell=1}^{\infty}) := \lim_{M\rightarrow\infty} \tbeta^{\lambda}_{k,M}(\cD_n;\{I_{\ell}\}_{\ell=1}^M) \overset{\as}{=} \frac{1}{\binom{n}{k}}\sum_{I\in\cI_k}\hbeta^{\lambda}_{k}(\cD_I),
        \end{align*}
        which concludes the proof.
    \end{proof}

\subsection{Risk characterization of ridge ensembles}\label{app:sec:risk}

    In analyzing ridge ensembles under proportional asymptotics, we often encounter the solution to a fixed-point equation.
    For every $\lambda > 0$ and $\theta > 0$, let $v(-\lambda; \theta)$ denote the unique nonnegative solution to the following fixed-point equation: 
   \begin{align}
       \label{eq:v_ridge}v(-\lambda;\theta)^{-1} &= \lambda + \theta\int r(1 + v(-\lambda;\theta)r)^{-1}\rd H(r).
    \end{align}
    When $\lambda = 0$, we define $v(0; \theta):=\lim_{\lambda \to 0^{+}} v(-\lambda; \theta)$ for $\theta > 1$ and $+\infty$ otherwise.
    
    Previous work has featured such fixed-point equations. 
    For instance, see \citet{dobriban_wager_2018,hastie2022surprises,mei_montanari_2022} for the context of ridge regression.
    In the context of $M$-estimators, see \citet{karoui_2013,elkaroui_2018,thrampoulidis_oymak_hassibi_2015,thrampoulidis_abbasi_hassibi_2018,sur_chen_candes_2019,miolane_montanari_2021}, among others. The uniqueness of the solution to the fixed-point equation \eqref{eq:v_ridge} is affirmed by \citet[Lemma S.6.14]{patil2022mitigating}.

    We then introduce the nonnegative constants $\tv(-\lambda;\vartheta,\theta)$, and $\tc(-\lambda;\theta)$ based on the following equations:
    \begin{align}
    \label{eq:tv_tc_ridge}
    \tv(-\lambda;\vartheta,\theta)
    &= \ddfrac{\vartheta\int r^2 (1+ v(-\lambda; \theta)r)^{-2}\rd H(r)}{v(-\lambda; \theta)^{-2}-\vartheta \int r^2 (1+ v(-\lambda; \theta)r)^{-2}\rd H(r)}, ~ \text{ and } ~
    \tc(-\lambda;\theta)=\int r (1+v(-\lambda; \theta))r)^{-2} \rd G(r).
    \end{align}
    
    \begin{lemma}[Risk characterization of ridge ensembles, adapted from \citet{patil2022bagging}]\label{thm:ver-with-replacement}
        Suppose 
        Assumptions \ref{asm:rmt-feat}-\ref{asm:lin-mod} hold for the dataset $\cD_n$.
        Then,
        as $k,n,p\rightarrow\infty$
        such that
        $p/n\rightarrow\phi\in(0,\infty)$ and $p/k\rightarrow\phi_s\in[\phi,\infty]$
        (and $\phi_s \neq 1$ if $\lambda = 0$),
        there exist deterministic functions $\RlamM$ for $M \in \NN$, such that for $I_1, \ldots, I_M \overset{\SRS}{\sim} \mathcal{I}_k$, 
        \begin{align}
            \label{eq:ridge-wr-guarantee}
            \sup_{M\in\NN}| R(\tfWR{M}{\cI_k}; \cD_n,\{I_{\ell}\}_{\ell = 1}^{M}) - \RlamM| &\pto 0.
        \end{align}
        Furthermore, the function $\RlamM$
        decomposes as
        \begin{align}
            \RlamM = \sigma^2 + \BlamM{M}{\phi}   + \VlamM{M}{\phi} , \label{eq:risk-det-with-replacement}
        \end{align}
        where 
        the bias and variance terms are given by
        \begin{align}
            \BlamM{M}{\phi}
            &= M^{-1} B_{\lambda}(\phi_s, \phi_s)
            + (1 -  M^{-1}) B_{\lambda}(\phi,\phi_s),
            \label{eq:bias-component-decomposition}\\
            \VlamM{M}{\phi}
            &= M^{-1} V_{\lambda}(\phi_s, \phi_s)
            + (1 - M^{-1})
            V_{\lambda}(\phi,\phi_s)
            \label{eq:var-component-decomposition},
        \end{align}
        and the functions $B_{\lambda}(\cdot, \cdot)$
        and $V_{\lambda}(\cdot, \cdot)$ are defined as
        \begin{align}
            B_{\lambda}(\vartheta, \theta)
            = \rho^2 (1+\tv(-\lambda; \vartheta, \theta)) \tc(-\lambda;\theta),\qquad
            V_{\lambda}(\vartheta, \theta)
            =\sigma^2\tv (-\lambda; \vartheta, \theta),\qquad \theta \in (0, \infty], \, \vartheta \le \theta. \label{eq:Blam_V_lam}
        \end{align}
    \end{lemma}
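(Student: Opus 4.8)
The plan is to reduce the ensemble risk to two ``atoms''---the risk of a single subsampled ridge fit and the cross-covariance of two subsampled fits---establish deterministic equivalents for each via anisotropic random matrix theory, reassemble, and then handle uniformity in the ensemble size $M$ by a soft argument. This parallels the strategy of \citet{patil2022bagging}, and I outline the key steps below.

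\emph{Step 1 (conditional reduction).} Since $R_{k,M}^{\lambda}$ in \eqref{eq:R_M} conditions on $\cD_n$ and the index sets, and the test point $(\bx,y)$ has $\bx$ with covariance $\bSigma$ and independent noise of variance $\sigma^2$, one has the exact identity $R_{k,M}^{\lambda} = \sigma^2 + \| \bSigma^{1/2}(\tbeta_{k,M}^{\lambda} - \bbeta_0) \|_2^2$. Expanding the average $\tbeta_{k,M}^{\lambda} = M^{-1}\sum_{\ell}\hbeta_k^{\lambda}(\cD_{I_\ell})$ inside the squared norm splits this into $M$ diagonal terms $\| \bSigma^{1/2}(\hbeta_k^{\lambda}(\cD_{I_\ell}) - \bbeta_0)\|_2^2$ and $M(M-1)$ off-diagonal terms $\langle \bSigma^{1/2}(\hbeta_k^{\lambda}(\cD_{I_\ell}) - \bbeta_0),\, \bSigma^{1/2}(\hbeta_k^{\lambda}(\cD_{I_{\ell'}}) - \bbeta_0)\rangle$ with $\ell \neq \ell'$, each carrying coefficient $M^{-2}$.

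\emph{Step 2 (deterministic equivalents for the atoms).} For a single index set $I$ of size $k$ with $p/k \to \phi_s$, standard ridge asymptotics---anisotropic Marchenko--Pastur and deterministic equivalents for the resolvent $(\bX^{\top}\bL_I\bX/k + \lambda\bI_p)^{-1}$, together with the fixed point $v(-\lambda;\theta)$ in \eqref{eq:v_ridge} and the calculus assembled in \Cref{sec:calculus_asymptotic_equivalents}---give $\| \bSigma^{1/2}(\hbeta_k^{\lambda}(\cD_I) - \bbeta_0)\|_2^2 \pto B_{\lambda}(\phi_s,\phi_s) + V_{\lambda}(\phi_s,\phi_s)$ with $B_\lambda, V_\lambda$ as in \eqref{eq:Blam_V_lam}. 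For two simple random samples $I_1, I_2$ from $\cI_k$, the overlap fraction $|I_1\cap I_2|/k$ concentrates around its mean (\Cref{sec:size-intersection}); feeding this into a \emph{joint} deterministic equivalent for the pair of resolvents yields $\langle \bSigma^{1/2}(\hbeta_k^{\lambda}(\cD_{I_1}) - \bbeta_0),\, \bSigma^{1/2}(\hbeta_k^{\lambda}(\cD_{I_2}) - \bbeta_0)\rangle \pto B_{\lambda}(\phi,\phi_s) + V_{\lambda}(\phi,\phi_s)$, where the full-data ratio $\phi$ enters the first slot because the coupling between the two fits is governed by their shared observations. Averaging over the $\ell,\ell'$ in Step 1 then gives, for each fixed $M$, $R_{k,M}^{\lambda} \pto \sigma^2 + M^{-1}\bigl[B_\lambda(\phi_s,\phi_s) + V_\lambda(\phi_s,\phi_s)\bigr] + (1 - M^{-1})\bigl[B_\lambda(\phi,\phi_s) + V_\lambda(\phi,\phi_s)\bigr]$, which is exactly the decomposition \eqref{eq:risk-det-with-replacement}--\eqref{eq:Blam_V_lam}.

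\emph{Step 3 (uniformity in $M$ and boundary cases).} Writing $\overline{A}_M$ and $\overline{C}_M$ for the empirical means of the diagonal and off-diagonal atoms over the $M$ ensemble draws, and $a = \sigma^2 + B_\lambda(\phi_s,\phi_s)+V_\lambda(\phi_s,\phi_s)$, $c = \sigma^2 + B_\lambda(\phi,\phi_s)+V_\lambda(\phi,\phi_s)$ for their limits, one has $R_{k,M}^{\lambda} - \RlamM = M^{-1}(\overline{A}_M - a) + (1 - M^{-1})(\overline{C}_M - c)$, so that $\sup_{M}| R_{k,M}^{\lambda} - \RlamM | \le \sup_M |\overline{A}_M - a| + \sup_M |\overline{C}_M - c|$. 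Since the diagonal atoms are i.i.d.\ (and the off-diagonal atoms exchangeable, $U$-statistic-like) draws whose population mean over $\cI_k$ converges to $a$ (resp.\ $c$) by Step 2, a strong-law / maximal-inequality argument---the Ces\`aro-type convergence lemma of \Cref{sec:cesaro-mean-max} (cf.\ \Cref{lem:conv_cond_expectation})---makes both suprema $o_P(1)$. The boundary cases $\phi_s = \infty$ and $\lambda = 0$ (with $\phi_s \neq 1$) follow by taking the limits $\phi_s \to \infty$ and $\lambda \to 0^{+}$ inside the fixed-point quantities, using continuity and monotonicity of $v(-\lambda;\theta)$ in $\lambda$ and well-definedness of $v(0;\theta)$ for $\theta > 1$ (\Cref{app:sec:analytic-properties}), with a dominated-convergence-type argument to interchange the limit with the risk convergence.

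I expect the main obstacle to be the joint deterministic equivalent for two overlapping subsampled resolvents in Step 2: showing the cross term converges at all and pinning its limit down as $B_\lambda(\phi,\phi_s) + V_\lambda(\phi,\phi_s)$ with the \emph{full}-data aspect ratio in the first argument requires anisotropic random-matrix calculus for correlated sample covariance matrices (coupled through shared rows of $\bX$), rather than a single-matrix argument; the rest is either classical ridge asymptotics or soft uniformization. Since \citet{patil2022bagging} already carry out this computation under general data models, in the paper this lemma is imported from there with notation adapted to \Cref{asm:rmt-feat,asm:lin-mod}.
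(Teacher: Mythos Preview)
Your proposal is correct and aligned with the paper: this lemma is not proved in the paper but is imported from \citet{patil2022bagging}, exactly as you recognize in your final paragraph, and your three-step sketch (diagonal/off-diagonal decomposition, deterministic equivalents for the single-fit and cross-fit atoms via anisotropic resolvent calculus with the overlap concentration of \Cref{lem:i0_mean}, then uniformization over $M$ via the Ces\`aro-type argument of \Cref{lem:conv_cond_expectation}) accurately summarizes the strategy of that source. The only point worth sharpening is your heuristic for why the first argument of $B_\lambda(\cdot,\phi_s)$ and $V_\lambda(\cdot,\phi_s)$ becomes $\phi$ in the off-diagonal limit: it is not the raw overlap aspect ratio $p/|I_1\cap I_2|\to\phi_s^2/\phi$ that enters directly, but rather the joint deterministic equivalent for $\bM_1\bSigma\bM_2$ (cf.\ \Cref{lem:resolv-insample}\ref{item:lem-ridge-B0-term-det-1}--\ref{item:lem-ridge-B0-term-det-2}), in which the fixed-point system collapses so that only the \emph{full-data} ratio $\phi$ survives in the $\tv$-coefficient---a nontrivial algebraic cancellation rather than a one-line consequence of the overlap fraction.
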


\subsection{Proof of \Cref{thm:comparison_optimal_ridge}}\label{app:subsec:equiv}
    \begin{proof}[Proof of \Cref{thm:comparison_optimal_ridge}]
        Define $\phi_s^*(\phi):=\argmin_{\phi_s \ge \phi} \sR_{\lambda,\infty}(\phi, \phi_s)$ and $\lambda^*(\phi):=\argmin_{\lambda\geq 0}\RlamMe{1}{\phi}{\phi}$.
        For simplicity, we will write $\phi_s^*$ and $\lambda^*$ and split the proof into different cases.

        \paragraph{Part (1)}
        \underline{Case of $\SNR>0$ ($\rho^2>0,\sigma^2>0$)}:
        
        From \citet[Proposition 5.7]{patil2022bagging} we have that $\phi_s^*\in(\phi\vee 1,\infty)$.
        From \Cref{lem:fixed-point-v-properties}~\ref{lem:fixed-point-v-properties-item-v-properties}, the function $\phi_s\mapsto v(0;\phi_s)$ is strictly decreasing over $\phi_s\in[1,\infty]$ with range
        \begin{align*}
            v(0;\phi_s\vee 1)=\begin{cases}  
            v(0;\phi_s),&\phi\in(1,\infty)\\
            \lim_{\phi_s\rightarrow1^+}v(0;\phi_s)=+\infty,&\phi\in(0,1]
            \end{cases},\qquad v(0;+\infty):=\lim_{\phi_s\rightarrow+\infty}v(0;\phi_s)=0.
        \end{align*}
        From \Cref{lem:fixed-point-v-lambda-properties}~\ref{lem:fixed-point-v-lambda-properties-item-monotonicity}, the function $\lambda\mapsto v(-\lambda;\phi)$ is strictly decreasing over $\lambda\in[0,\infty]$ with range
        \begin{align*}
            v(0;\phi\vee 1)=\begin{cases}  
            v(0;\phi),&\phi\in(1,\infty)\\
            \lim_{\lambda\rightarrow0^+}v(-\lambda;\phi)=+\infty,&\phi\in(0,1]
            \end{cases}
            ,\qquad v(-\infty;\phi):=\lim_{\lambda\rightarrow+\infty}v(-\lambda;\phi)=0.
        \end{align*}
        By the intermediate value theorem, there exists unique $\lambda_0\in(0,\infty)$ such that $v(-\lambda_0;\phi)=v(0;\phi_s^*)$.
        Then we also have $\tc(-\lambda_0;\phi)=\tc(0;\phi_s^*)$ and $\tv(-\lambda_0;\phi,\phi) = \tv(0;\phi_s^*)$.
        Substituting this into the optimal ensemble risk, we have
        \begin{align*}
            \min_{\phi_s\geq \phi}\RzeroM{\infty}{\phi} &=\RlamM[0][\infty][\phi][\phi_s^*]\\
            &=(\sigma^2+\rho^2  \tc(0;\phi_s^*) )(1+\tv(0; \phi, \phi_s^*))\\
            &=(\sigma^2+\rho^2  \tc(-\lambda_0;\phi) )(1+\tv(-\lambda_0; \phi, \phi))\\
            &= \RlamM[\lambda_0][1][\phi][\phi]\\
            &\leq \min_{\lambda\geq 0}\RlamMe{1}{\phi}{\phi}.
        \end{align*}
        On the other hand, there exists unique $\phi_0\in[1,\infty)$ such that $v(-\lambda^*;\phi)=v(0;\phi_0)$, and thus,
        we have
        \begin{align*}
            \min_{\lambda\geq 0}\RlamMe{1}{\phi}{\phi}&= \RlamM[\lambda^*][1][\phi][\phi]\\
            &=(\sigma^2+\rho^2  \tc(-\lambda^*;\phi) )(1+\tv(-\lambda^*; \phi, \phi))\\
            &=(\sigma^2+\rho^2  \tc(0;\phi_0) )(1+\tv(0; \phi, \phi_0))\\
            &= \RlamM[0][\infty][\phi][\phi_0] \\
            &\leq \min_{\phi_s\geq \phi}\RzeroM{\infty}{\phi}.
        \end{align*}
        Combining the above two inequalities, we have that $\min_{\phi_s\geq \phi}\RzeroM{\infty}{\phi}= \min_{\lambda\geq 0}\RlamMe{1}{\phi}{\phi}.$

        \paragraph{Part (2)}
        \underline{Case of $\SNR=0$ ($\rho^2=0,\sigma^2>0$)}:
        
        From \citet[Proposition 5.7]{patil2022bagging} we have that $\phi_s^*=+\infty$, which implies that $v(0;\phi_s^*)=0$.
        Then, from \Cref{lem:fixed-point-v-properties}~\ref{lem:fixed-point-v-properties-item-v-properties} we have $v(0;+\infty):=\lim_{\phi_s\rightarrow+\infty}v(0;\phi_s)=0$,
        \begin{align*}
            \lim_{\phi_s\rightarrow+\infty}\tv(0; \phi, +\infty) &= \lim_{\phi_s\rightarrow+\infty} \ddfrac{\phi \int{r^2}{(1+ v(0; \phi_s)r)^{-2}}\rd H(r)}{v(0; \phi_s)^{-2}-\phi \int{r^2}{(1+ v(0; \phi_s)r)^{-2}}\rd H(r)} \\
            &=\lim_{\phi_s\rightarrow+\infty} \ddfrac{\phi \int{(v(0; \phi_s)r)^2}{(1+ v(0; \phi_s)r)^{-2}}\rd H(r)}{1-\phi \int{(v(0; \phi_s)r)^2}{(1+ v(0; \phi_s)r)^{-2}}\rd H(r)} \\
            &= \ddfrac{\phi \int{(v(0; +\infty)r)^2}{(1+ v(0; +\infty)r)^{-2}}\rd H(r)}{1-\phi \int{(v(0; +\infty)r)^2}{(1+ v(0; +\infty)r)^{-2}}\rd H(r)} \\
            &=0,
        \end{align*}
        and thus,
        \begin{align*}
            \min_{\phi_s\geq \phi}\RzeroM{\infty}{\phi} &=\RlamM[0][\infty][\phi][\infty] = \sigma^2(1+\tv(0; \phi, +\infty)) = \sigma^2.
        \end{align*}
        On the other hand, 
        \begin{align*}
            \min_{\lambda\geq 0}\RlamMe{1}{\phi}{\phi}&= \RlamM[\lambda^*][1][\phi][\phi]=\sigma^2\tv(-\lambda^*; \phi, \phi) \geq \sigma^2
        \end{align*}
        where the equality holds when $\lambda^*=+\infty$ because $\tv(-\lambda^*; \phi, \phi)\geq 0$ from \Cref{lem:ridge-fixed-point-v-properties}~\ref{lem:fixed-point-v-properties-item-vb-properties}.
        Thus, the optimal parameters to the two optimization problems are given by $\phi_s^*=\lambda^*=+\infty$, with $v(0;\phi_s^*)=v(-\lambda^*;\phi)=0$.

        \paragraph{Part (3)} 
        \underline{Case of $\SNR=\infty$ ($\rho^2>0,\sigma^2=0$)}:
        
        When $\phi\leq 1$, from \citet[Proposition 5.7]{patil2022bagging} we have that any $\phi_s^*\in[\phi,1]$ minimizes $\min_{\phi_s\geq \phi}\RzeroM{\infty}{\phi}$ and the minimum is $0$, which is also the smallest possible prediction risk.
        As $\RlamMe{1}{\phi}{\phi}=0$ for $\lambda=0$, the conclusion still holds.
        
        When $\phi\in(1,\infty)$, we know that $\phi_s^*\in(1,\infty)$ from \citet[Proposition 5.7]{patil2022bagging}.
        Analogous to Part (1), we have that $\min_{\phi_s\geq \phi}\RzeroM{\infty}{\phi}= \min_{\lambda\geq 0}\RlamMe{1}{\phi}{\phi}.$

        \paragraph{Part (4)}
        \underline{Relationship between $\phi^*$ and $\lambda^*$}:
        
        Each pair of the optimal solution $(\phi^*,\lambda^*)$ satisfies that $v(0;\phi_s^*) = v(-\lambda^*;\phi)=:v^*$, where $v(0;\phi_s^*)$ and $v(-\lambda^*;\phi)$ are non-negative solutions to the following fixed-point equations:
        \begin{align*}
            \frac{1}{v(0;\phi_s^*)} &= \phi_s^* \int \frac{r}{1+v(0;\phi_s^*)r} \rd H(r),\qquad 
            \frac{1}{v(-\lambda^*;\phi)} = \lambda^* + \phi \int \frac{r}{1+v(-\lambda^*;\phi)r} \rd H(r)
        \end{align*}
        
        From the previous parts, if $\SNR=0$, $\lambda^*=\phi_s^*=+\infty$ and $v^*= 0$.
        Otherwise, we have
        \begin{align*}
            \frac{1}{v^*} &= \phi_s^* \int \frac{r}{1+v^*r} \rd H(r) = \lambda^* + \phi \int \frac{r}{1+v^*r} \rd H(r),
        \end{align*}
        which yields that
        \begin{align*}
            \lambda^* &= (\phi_s^*-\phi) \int \frac{r}{1+v^*r} \rd H(r).
        \end{align*}

        \paragraph{Part (5)} 
        \underline{Individual and joint optimization}:
        
        Note that from \Cref{lem:ridge-fixed-point-v-properties}~\ref{lem:ridge-fixed-point-v-properties-item-v-properties} and \Cref{lem:fixed-point-v-properties}~\ref{lem:fixed-point-v-properties-item-v-properties}, the function $\phi_s\mapsto v(-\lambda;\phi_s)$ is decreasing with the range $[0,\lambda^{-1}]$ for $\lambda\in[0,\infty]$.
        Then the function $(\lambda,\phi_s)\mapsto v(-\lambda;\phi_s)$ has the range $[0, +\infty]$, which is the same as $v(0;\phi_s)$.
        It follows that $\min_{\phi_s\geq \phi}\RzeroM{\infty}{\phi}= \min_{\phi_s\geq\phi, \lambda\geq 0}\RlamMe{1}{\phi}{\phi}$ by the analogous argument in Part (1)-(3).

        When $\lambda^*=0$, the curve reduces to a singleton, which is a trivial case.
        When $\lambda^*>0$, for any $t\in[0,\lambda^*]$, let $\lambda = \lambda^* - t$ and $\phi_s = \phi + t(\phi_s^* - \phi)/\lambda^*$.
        Note that
        \begin{align*}
           \frac{1}{v(\lambda;\phi_s)} &=  \lambda+\phi_s \int \frac{r}{1+v^*r} \rd H(r) \\
           & = \lambda^* - t + (\phi + t(\phi_s^* - \phi)/\lambda^*) \int \frac{r}{1+v(0;\phi_s^*)r} \rd H(r) \\
            &= \lambda^* + \phi \int \frac{r}{1+v(0;\phi_s^*)r} \rd H(r) +\frac{t}{\lambda^*} (\phi_s^* - \phi - \lambda) \int \frac{r}{1+v(0;\phi_s^*)r} \rd H(r)\\
            &=  \frac{1}{v^*} + \frac{t}{\lambda^*} \left(\frac{1}{v^*} - \frac{1}{v^*}\right) \\
            &= \frac{1}{v^*},
        \end{align*}
        which implies that $v(\lambda;\phi_s) = v^*$. Then, we have
        \begin{align*}
            \tc(-\lambda;\phi_s) &= \int  \frac{r}{(1+v(-\lambda; \phi_s))r)^{2}} \rd G(r) = \tc(-\lambda^*;\phi) = \tc(0;\phi_s^*). 
        \end{align*}
        and
        \begin{align*}
            \tv(-\lambda;\phi,\phi_s) &= \ddfrac{\phi\int r^2 (1+ v(-\lambda; \phi_s)r)^{-2}\rd H(r)}{v(-\lambda; \phi_s)^{-2}-\phi \int r^2 (1+ v(-\lambda; \phi_s)r)^{-2}\rd H(r)} = \tv(-\lambda^*;\phi,\phi) = \tv(0;\phi,\phi_s^*).
        \end{align*}
        It then follows that $\RlamM[\lambda][\infty] = \RlamM[\lambda^*][\infty][\phi][\phi] =  \RzeroMe{\infty}{\phi}{\phi_s^*}$, which completes the proof for \Cref{thm:comparison_optimal_ridge}.

        \paragraph{Part (6)$^*$}        
        \underline{Extension of risk equivalence}:
        
        Here we extend the results in \Cref{thm:comparison_optimal_ridge} to a more general equivalence of $(\lambda,\phi_s)$,
        as indicated following \Cref{thm:comparison_optimal_ridge} 
        towards the end of \Cref{subsec:connect-subsample-ridge}.

        For any $\bar{\phi}_s\in[\phi,+\infty]$, let $\bar{\lambda} = (\bar{\phi}_s-\phi) \int r(1+v(0;\phi_s)r)^{-1} \rd H(r)\geq 0. $
        Then,
        we have
        \begin{align*}
            \frac{1}{v(0;\bar{\phi}_s)} &= \bar{\phi}_s \int \frac{r}{1+v(0;\bar{\phi}_s)r} \rd H(r) = \bar{\lambda} + \phi \int \frac{r}{1+v(0;\bar{\phi}_s)r} \rd H(r),
        \end{align*}
        It follows that $v(-\bar{\lambda};\phi) = v(0;\bar{\phi}_s)$, and consequently, $\RzeroMe{\infty}{\phi}{\bar{\phi}_s}= \RlamM[\bar{\lambda}][\infty][\phi][\phi]$.
        
    \end{proof}

\section{Proofs of results in \Cref{sec:gcv-consistency}}\label{app:gcv-consistency}
    
    \subsection{Proof of \Cref{thm:uniform-consistency-k}}\label{app:subsec:thm:uniform-consistency-k}

    To prove \Cref{thm:uniform-consistency-k},
    we first prove pointwise convergence (over $k$ and $\lambda$) 
    as stated in \Cref{lem:gcv-inf}, which is based on \Cref{prop:gcv} proved in \Cref{app:proof-ridge}.

    \begin{lemma}[Consistency of GCV in full ensemble]\label{lem:gcv-inf}
        Under Assumptions \ref{asm:rmt-feat}-\ref{asm:lin-mod}, as $k,n,p\rightarrow\infty$, $p/n\rightarrow \phi\in(0,\infty)$ and $p/k\rightarrow\phi_s\in[\phi,+\infty]$, for $\lambda\geq 0$, it holds that
        \begin{align}
            |\gcv_k^{\lambda} - R_{k,\infty}^{\lambda}| &\asto 0.
        \end{align}
    \end{lemma}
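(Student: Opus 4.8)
The plan is to establish the pointwise (in $k$ and $\lambda$) convergence $|\gcv_k^{\lambda} - R_{k,\infty}^{\lambda}| \asto 0$ by showing that both $\gcv_{k,\infty}^{\lambda}$ and $R_{k,\infty}^{\lambda}$ converge almost surely to the same deterministic limit, and then invoking the triangle inequality. For the risk side, \Cref{thm:ver-with-replacement} (adapted from \citet{patil2022bagging}) already gives $R_{k,\infty}^{\lambda} \asto \RlamM[\lambda][\infty]$ under Assumptions~\ref{asm:rmt-feat}--\ref{asm:lin-mod} as $p/n \to \phi$ and $p/k \to \phi_s$. For the GCV side, \Cref{prop:gcv} gives $\gcv_{k,\infty}^{\lambda} \asto \gcvdet = \sT_{\infty}^{\lambda}(\phi,\phi_s)/\Ddet$. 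So the entire content of the lemma reduces to the algebraic identity
\[
    \frac{\sT_{\infty}^{\lambda}(\phi,\phi_s)}{\Ddet} = \RlamM[\lambda][\infty].
\]

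First I would handle the regular case $\phi_s \in [\phi, \infty)$ and $\lambda > 0$. Here I would expand $\sT_{\infty}^{\lambda}$ using its definition \eqref{eq:Ndet} as $\sum_{m=1}^2 (c_m \sT_{m}^{\lambda} + (1-c_m)\sR_{m}^{\lambda})$ with $c_1 = \phi/\phi_s$ and $c_2 = 2\phi(2\phi_s - \phi)/\phi_s^2$, substitute the explicit forms of $\sT_m^\lambda$ (from \Cref{lem:gcv-num} and the bias/variance decompositions feeding it), the test errors $\sR_m^\lambda = \RlamM[\lambda][m]$ from \Cref{thm:ver-with-replacement}, and the GCV denominator limit $\Ddet$ from \Cref{lem:gcv-den}. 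Then I would verify that the quotient collapses to the $M=\infty$ risk formula $\sigma^2 + B_\lambda(\phi,\phi_s) + V_\lambda(\phi,\phi_s)$. The key mechanism is that the GCV denominator $\Ddet$ is built from the same fixed-point quantity $v(-\lambda;\phi_s)$ that governs the risk, so the ``optimism correction'' in the denominator exactly cancels the downward bias of the training error; this is the random-$X$ analogue of the classical GCV identity for ridge, and I expect it to go through after writing everything in terms of $v(-\lambda;\phi_s)$, $\tv(-\lambda;\phi,\phi_s)$, $\tc(-\lambda;\phi_s)$ and the integrals against $H$ and $G$.

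Next I would dispatch the boundary cases. For $\lambda = 0$ with $\phi_s \neq 1$, I would take limits $\lambda \to 0^+$ in the identity just established, using the continuity/limit statements encoded in \Cref{prop:Rdet-lam-0} and the monotonicity and range properties of $v(-\lambda;\cdot)$ from \Cref{lem:fixed-point-v-lambda-properties} and \Cref{lem:fixed-point-v-properties}; one must be careful at $\phi_s \le 1$ where $v(0;\phi_s) = +\infty$, but in that regime the ridgeless predictor interpolates and both sides are handled by the same limiting bias/variance formulas. For $\phi_s = \infty$ (subsample size $o(p)$), I would invoke \Cref{prop:Rdet-ridge-infinity}, noting that $c_1, c_2 \to 0$, so $\sT_\infty^\lambda \to \sigma^2$ and likewise $\Ddet \to 1$ and $\RlamM[\lambda][\infty] \to \sigma^2$, giving the identity trivially in the limit. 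Finally, since almost-sure convergence of both quantities to a common deterministic limit holds on a probability-one event, the triangle inequality yields $|\gcv_k^\lambda - R_{k,\infty}^\lambda| \asto 0$, completing the proof.

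The main obstacle I anticipate is purely the bookkeeping in verifying the central algebraic identity $\sT_\infty^\lambda/\Ddet = \RlamM[\lambda][\infty]$: it requires threading together the two-term decomposition of the training error, the matching decompositions of the $m=1,2$ test errors, and the denominator asymptotics, all expressed through the fixed-point functionals, and checking that a nontrivial cancellation occurs. There is no conceptual difficulty once the limits from \Cref{lem:gcv-den}, \Cref{lem:gcv-num}, \Cref{prop:Rdet-ridge-infinity}, and \Cref{prop:Rdet-lam-0} are in hand, but the computation is the crux and must be done carefully, especially tracking the weights $c_1, c_2$ which are exactly what make the full-ensemble (as opposed to finite-$M$) GCV consistent.
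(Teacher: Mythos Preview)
Your proposal is correct and follows essentially the same route as the paper: invoke \Cref{thm:ver-with-replacement} and \Cref{prop:gcv} to reduce to the algebraic identity $\sT_\infty^\lambda/\Ddet = \RlamM[\lambda][\infty]$, verify it in the regular regime $\lambda>0$, $\phi_s<\infty$ by expanding everything in terms of $v(-\lambda;\phi_s)$, $\tv$, $\tc$, and then absorb the boundary cases via \Cref{prop:Rdet-ridge-infinity} and \Cref{prop:Rdet-lam-0}. One small slip: in the $\phi_s=\infty$ case the common limit is the null risk $\sigma^2 + \rho^2\int r\,\rd G(r)$, not $\sigma^2$ (the predictor degenerates to zero, not to the Bayes predictor), but since you correctly delegate to \Cref{prop:Rdet-ridge-infinity} this does not affect the argument.
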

    \begin{proof}[Proof of \Cref{lem:gcv-inf}]
        We will first show that proof for $\lambda>0$ and $\phi_s<\infty$ and then extend the results to these boundary cases.
        
        Recall that from \Cref{prop:gcv},
        we have
        \begin{align*}
             \gcvdet =
             \ddfrac{\frac{2\phi(2\phi_s-\phi)}{\phi_s^2}\RlamMtr[\lambda][2] + \frac{2(\phi_s-\phi)^2}{\phi_s^2}\RlamM[\lambda][2]
            - \frac{\phi}{\phi_s}\RlamMtr[\lambda][1] - \frac{\phi_s-\phi}{\phi_s}\RlamM[\lambda][1]}{\Ddet}.
        \end{align*}
        We next simplify the expression of the numerator:
        \begin{align*}
            &\frac{2\phi(2\phi_s-\phi)}{\phi_s^2}\RlamMtr[\lambda][2] + \frac{2(\phi_s-\phi)^2}{\phi_s^2}\RlamM[\lambda][2] - \frac{\phi}{\phi_s}\RlamMtr[\lambda][1] - \frac{\phi_s-\phi}{\phi_s}\RlamM[\lambda][1]\\
            &=  \frac{\phi(\phi_s-\phi)}{\phi_s^2}\RlamM[\lambda][1] + \Ddet[][\lambda][\phi_s]  \left(\frac{\phi}{\phi_s}\RlamM[\lambda][1] + \left(\frac{2\phi(\phi_s-\phi)}{\phi_s^2} \frac{1}{\lambda v(-\lambda;\phi_s)}+ \frac{\phi^2}{\phi_s^2}\right)\RlamM[\lambda][\infty]\right) \\
            &\qquad + \frac{2(\phi_s-\phi)^2}{\phi_s^2}\RlamM[\lambda][2] - \frac{\phi}{\phi_s}\Ddet[][\lambda][\phi_s] \RlamM[\lambda][1] - \frac{\phi_s-\phi}{\phi_s}\RlamM[\lambda][1]\\
            &= \left(\frac{2\phi(\phi_s-\phi)}{\phi_s^2} \lambda v(-\lambda;\phi_s) + \frac{\phi^2}{\phi_s^2}\lambda^2 v(-\lambda;\phi_s)^2\right)\RlamM[\lambda][\infty] + \frac{2(\phi_s-\phi)^2}{\phi_s^2}(\RlamM[\lambda][2] - \RlamM[\lambda][1])\\
            &=\left(\frac{(\phi_s-\phi)^2}{\phi_s^2}+\frac{2\phi(\phi_s-\phi)}{\phi_s^2} \lambda v(-\lambda;\phi_s) + \frac{\phi^2}{\phi_s^2}\lambda^2 v(-\lambda;\phi_s)^2\right)\RlamM[\lambda][\infty]\\
            &= \Ddet  \RlamM[\lambda][\infty].
        \end{align*}
        Then, it follows that $\gcvdet = \RlamM[\lambda][\infty]$.
        From \Cref{thm:ver-with-replacement} and \Cref{prop:gcv}, we have that $\gcv_k^{\lambda} \asto 
 \gcvdet$ and $R_{k,\infty}^{\lambda}\asto \RlamM[\lambda][\infty]$, which finishes the proof.
    \end{proof}

    We are now ready to prove \Cref{thm:uniform-consistency-k}.
    
    \begin{proof}[Proof of \Cref{thm:uniform-consistency-k}]
        Let $R_{n,k} = \gcv_{k,\infty}^{\lambda} - R_{k,\infty}^{\lambda}$ for $n\in\NN$ and $k\in\cK_n$.
        From \Cref{lem:gcv-inf} we have that $R_{n,k}\asto 0$ as $k,n,p\to\infty$, $p/n\to \phi\in(0,\infty)$ and $p/k\to \phi_s\in[\phi,\infty]$.
        Here we view $k$ and $p$ as $k_n$ and $p_n$ indexed by $n$.
        Then from \Cref{lem:conv_cond_expectation}~\ref{lem:conv_cond_expectation_sampling-1} the conclusion follows.
    \end{proof}

    \subsection{Proof of \Cref{cor:gcv-opt-ridge}}\label{app:subsec:gcv-opt-ridge}
    \begin{proof}[Proof of \Cref{cor:gcv-opt-ridge}]
        From \Cref{thm:uniform-consistency-k}, we have 
        $$\max_{k\in\cK_n} |\gcv_{k,\infty}^{\lambda} - R_{k,\infty}^{\lambda}| \asto 0.$$
        This implies that
        \begin{align*}
            \min_{k\in\cK_n} \gcv_{k}^{0} &= \min_{k\in\cK_n} \RlamM[0][\infty][p/n][p/k]\left( 1 + \frac{\gcv_{k}^{0} - \RlamM[0][\infty][p/n][p/k]}{\RlamM[0][\infty][p/n][p/k]} \right)\\
            & \lessgtr \min_{k\in\cK_n} \RlamM[0][\infty][p/n][p/k]\left( 1 \pm \max_{k\in\cK_n}\left|\frac{\gcv_{k}^{0} - \RlamM[0][\infty][p/n][p/k]}{\RlamM[0][\infty][p/n][p/k]} \right|\right)\\
            & \lessgtr \min_{k\in\cK_n} \RlamM[0][\infty][p/n][p/k]\left( 1 \pm \frac{1}{\sigma^2}\max_{k\in\cK_n}|\gcv_{k}^{0} - \RlamM[0][\infty][p/n][p/k]|\right)\\
            &\asto \inf_{\phi_s\geq\phi} \RlamM[0][\infty]\\
            &=\inf_{\phi_s\geq\phi,\lambda\geq 0} \RlamM[\lambda][\infty],
        \end{align*}
        where the last equality is from \Cref{thm:comparison_optimal_ridge}.
        This finishes the proof.
    \end{proof}

    \subsection{Proof of \Cref{prop:inconsistency}}\label{app:subsec:inconsistency}
    \begin{proof}[Proof of \Cref{prop:inconsistency}]
        From the proof of \Cref{lem:gcv-den}, we have
        \begin{align*}
            \frac{1}{k} \tr\left(\bM_{m}\bSigma_{m}\right) &\asto (1 -\lambda v(-\lambda;\phi_s )).
        \end{align*}
        Then, as $k,n,p\rightarrow\infty$, $p/n\rightarrow\phi$ and $p/k\rightarrow\phi_s$, we have
        \begin{align*}
            D_{k,2}^{\lambda} &= \left(1 - \frac{1}{|I_1\cup I_2|} \frac{1}{2}\sum_{m=1}^2\tr\left(\bM_{m}\bSigma_{m}\right) \right)^2\\
            &= \left(1 - \frac{k}{|I_1\cup I_2|} \frac{1}{k}\frac{1}{2}\sum_{m=1}^2\tr\left(\bM_{m}\bSigma_{m}\right) \right)^2\\
            &\asto \left(1 - \frac{\phi_s}{2\phi_s-\phi}(1-\lambda v(-\lambda;\phi_s))\right)^2 =:\sD_{2}^{\lambda}(\phi,\phi_s).
        \end{align*}
        where the convergence of $k/|I_1\cup I_2|$ is from \Cref{lem:i0_mean}.        
        It then follows that
        \begin{align*}
            \gcv_{k,2}^{\lambda} &= \frac{T_{k,2}^{\lambda}}{D_{k,2}^{\lambda}} \asto \gcvdet[\lambda][\phi][\phi_s][2]:=\frac{\sT_{2}^{\lambda}}{ \sD_{2}^{\lambda} },
        \end{align*}
        where $\sT_{2}^{\lambda}$ defined in \eqref{eq:train-err} has the following expression:
        \begin{align*}
            \RlamMtr[\lambda][2]& = \frac{1}{2}\frac{\phi_s-\phi}{2\phi_s-\phi}\RlamM[\lambda][1] + \frac{1}{2}\Ddet[][\lambda][\phi_s] \\
                \left(\frac{\phi_s}{2\phi_s-\phi}\RlamM[\lambda][1] \right.+ &\left(\frac{2(\phi_s-\phi)}{2\phi_s-\phi} \right. \frac{1}{\lambda v(-\lambda;\phi_s)}+\left.\left. \frac{\phi}{2\phi_s-\phi}\right)\RlamM[\lambda][\infty]\right) .
        \end{align*}
        On the other hand,
        we have
        \begin{align*}
            \RlamM[\lambda][2] &= \frac{1}{2}\RlamM[\lambda][1] + \frac{1}{2} \RlamM[\lambda][\infty].
        \end{align*}
        Note that $\RlamM[\lambda][1]>\RlamM[\lambda][\infty]>\sigma^2$ when $\phi<\phi_s<\infty$ because $\rho^2>0$.
        When $\lambda=0$ and $\phi_s>1$, we have $\lambda v(-\lambda;\phi_s) = 0$ and
        \begin{align*}
            \gcvdet[0][\phi][\phi_s][2] &= \ddfrac{\frac{\phi_s-\phi}{2\phi_s-\phi} \RlamM[\lambda][1] + \frac{2(\phi_s-\phi)}{2\phi_s-\phi} \RlamM[\lambda][\infty]}{2\left(1 - \frac{\phi_s}{2\phi_s-\phi}\right)^2} = \frac{1}{2}\cdot \frac{2\phi_s-\phi}{\phi_s-\phi} \left(\RlamM[\lambda][1] +  2\RlamM[\lambda][\infty]\right) .
        \end{align*}
        It follows that
        \begin{align*}
            \gcvdet[0][\phi][\phi_s][2] - \RlamM[\lambda][2] &= \frac{1}{2(\phi_s-\phi)}\left(\phi_s\RlamM[\lambda][1] + (3\phi_s-\phi) \RlamM[\lambda][\infty]\right) =: c,
        \end{align*}
        and $\gcv_{k,2}^{\lambda} - \RlamM[\lambda][2] \pto c>0$, which completes the proof.
    \end{proof}

\section{Proof of \Cref{lem:gcv-den} (convergence of the GCV denominator functional) }\label{app:sec:gcv-den}
    \begin{proof}[Proof of \Cref{lem:gcv-den}]
        By definition, the smooth matrix for $M=\infty$ is given by
        \begin{align*}
            \bS_{\lambda,\infty}(\{\cD_{I_{\ell}}\}_{\ell=1}^M) := \lim_{M\rightarrow\infty}\frac{1}{M}\sum_{\ell=1}^M\bS_{\lambda}(\cD_{I_{\ell}}). 
        \end{align*}
        For the denominator, note that for any fixed $n\in\NN$, as $M\rightarrow\infty$, $I_{1:M}\asto [n]$. Then from \Cref{lem:ridge-D} (stated and proved below),
    \begin{align*}
        \frac{1}{|I_{1:M}|}\tr(\bS_{\lambda}(\cD_{I_{\ell}})) &\asto  \frac{1}{n} \tr\left(\bX\bM_{\ell}\frac{\bX^{\top} \bL_{\ell}}{k}\right) = \frac{1}{n} \tr\left(\bM_{\ell}\bSigma_{\ell}\right) \asto \frac{\phi}{\phi_s}(1 -\lambda v(-\lambda;\phi_s )).
    \end{align*}
    By continuous mapping theorem, 
    we have
    \begin{align}
        \left(1 - \frac{1}{n}\tr(\bS_{\lambda,\infty})\right)^2 \asto \Ddet :=
        \left(\frac{\phi_s-\phi}{\phi_s}+ \frac{\phi}{\phi_s}\lambda v(-\lambda;\phi_s)\right)^2. \label{eq:Ddet}
    \end{align}
    \end{proof}

    \begin{lemma}[Deterministic approximation of the denominator functional]\label{lem:ridge-D}
        Under Assumption \ref{asm:rmt-feat}, for all $m\in[M]$ and $I_m\in \cI_k$, let $\hSigma_m=\bX^{\top}\bL_m\bX/k$, $\bL_m\in\RR^{n\times n}$ be a diagonal matrix with $(\bL_{m})_{ll}=1$ if $l\in I_m$ and 0 otherwise, and $\bM_m= (\bX^{\top}\bL_m\bX/k+\lambda\bI_p)^{-1}$.
        Then, it holds that for all $m\in[M]$ and $I_m\in\cI_k$,
        \begin{align*}
            \frac{1}{n} \tr\left(\bM_{m}\hSigma_{m}\right) \asto \frac{\phi}{\phi_s}(1 -\lambda v(-\lambda;\phi_s )),
        \end{align*}
        as $n,k,p\rightarrow\infty$, $p/n\rightarrow\phi\in(0,\infty)$, and $p/k\rightarrow\phi_s\in[\phi,\infty)$, where the nonnegative constant $\tv(\lambda  ;\phi,\phi_s)$ is as defined in \eqref{eq:tv_tc_ridge}.
    \end{lemma}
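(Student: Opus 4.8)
The plan is to reduce the trace of interest to the trace of a single resolvent and then apply the standard deterministic equivalent for sample-covariance resolvents. Since $\bM_m=(\hSigma_m+\lambda\bI_p)^{-1}$ commutes with $\hSigma_m$, the resolvent identity $\hSigma_m\bM_m=\bI_p-\lambda\bM_m$ gives
\[
\frac{1}{n}\tr(\bM_m\hSigma_m)=\frac{p}{n}\Bigl(1-\frac{\lambda}{p}\tr(\bM_m)\Bigr),
\]
so, using $p/n\to\phi$, it suffices to identify the almost sure limit of $p^{-1}\tr\bigl((\hSigma_m+\lambda\bI_p)^{-1}\bigr)$. Here $\lambda>0$ (so $\bM_m$ is well defined) and $\phi_s<\infty$ by hypothesis, so no boundary cases need to be treated.

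First I would identify $\hSigma_m$ as a genuine sample covariance matrix. For a fixed $I_m\in\cI_k$ one has $\bZ^\top\bL_m\bZ=\bZ_m^\top\bZ_m$, where $\bZ_m\in\RR^{k\times p}$ collects the rows of $\bZ$ indexed by $I_m$, so $\hSigma_m=\bSigma^{1/2}\bZ_m^\top\bZ_m\bSigma^{1/2}/k$ is the sample covariance of $k$ i.i.d.\ samples with population covariance $\bSigma$; by \Cref{asm:rmt-feat}, $\bZ_m$ has i.i.d.\ entries with mean $0$, variance $1$ and bounded $(4+\delta)$-th moments, the empirical spectral distribution of $\bSigma$ converges to $H$, and $p/k\to\phi_s\in[\phi,\infty)$. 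Because the rows of $\bZ$ are i.i.d., this reduction and the limit below are the same for every choice of $I_m$, which is what ``for all $m$ and $I_m$'' refers to.

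Next I would invoke the classical deterministic equivalent for such resolvents (Marchenko--Pastur/Silverstein theory, assembled in \Cref{append:det-equi-resol}): for $\lambda>0$,
\[
(\hSigma_m+\lambda\bI_p)^{-1}\ \asympequi\ \lambda^{-1}\bigl(\bI_p+v(-\lambda;\phi_s)\bSigma\bigr)^{-1},
\]
with $v(-\lambda;\phi_s)$ the unique nonnegative solution of \eqref{eq:v_ridge} at $\theta=\phi_s$; indeed, substituting this equivalent into the self-consistency relation that characterizes it returns precisely \eqref{eq:v_ridge}. Consequently $\hSigma_m\bM_m=\bI_p-\lambda\bM_m\asympequi v(-\lambda;\phi_s)\bSigma\bigl(\bI_p+v(-\lambda;\phi_s)\bSigma\bigr)^{-1}$, and by the trace rule for asymptotic equivalents (tracing against the identity, whose operator norm is bounded; see the calculus rules in \Cref{app:sec:preliminary-background}),
\[
\frac{1}{p}\tr(\bM_m\hSigma_m)\ \asto\ \int\frac{v(-\lambda;\phi_s)\,r}{1+v(-\lambda;\phi_s)\,r}\,\rd H(r).
\]

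The last step is pure algebra with the fixed point. Rearranging \eqref{eq:v_ridge} gives $\int r\bigl(1+v(-\lambda;\phi_s)r\bigr)^{-1}\rd H(r)=\phi_s^{-1}\bigl(v(-\lambda;\phi_s)^{-1}-\lambda\bigr)$, so the integral above equals $\phi_s^{-1}\bigl(1-\lambda v(-\lambda;\phi_s)\bigr)$; multiplying by $p/n\to\phi$ yields $\tfrac{1}{n}\tr(\bM_m\hSigma_m)\asto\tfrac{\phi}{\phi_s}\bigl(1-\lambda v(-\lambda;\phi_s)\bigr)$, as claimed (the $\tv$ appearing in the statement is vestigial and does not enter the limit). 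The only substantive ingredient is the resolvent deterministic equivalent, which is already in place; the ``hard part'' is merely bookkeeping --- correctly reading off that the relevant aspect ratio is $\phi_s=\lim p/k$ rather than $\phi$, and noting that the $(4+\delta)$-th moment condition in \Cref{asm:rmt-feat} is what upgrades convergence from ``in probability'' to almost sure.
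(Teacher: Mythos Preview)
Your proof is correct and follows essentially the same route as the paper: both use the resolvent identity $\bM_m\hSigma_m=\bI_p-\lambda\bM_m$, invoke the deterministic equivalent $\lambda\bM_m\asympequi(v(-\lambda;\phi_s)\bSigma+\bI_p)^{-1}$ from \Cref{cor:asympequi-scaled-ridge-resolvent}, apply the trace rule, and simplify the resulting integral via the fixed-point equation \eqref{eq:v_ridge}. Your additional remarks (that $\hSigma_m$ is a genuine $k$-sample covariance so the relevant aspect ratio is $\phi_s$, and that the $\tv$ in the statement is vestigial) are accurate and just make explicit what the paper leaves implicit.
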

    \begin{proof}[Proof of \Cref{lem:ridge-D}]
        Note that $\bM_{m}\hSigma_{m} = \bI_p-\lambda \bM_m$. From \Cref{cor:asympequi-scaled-ridge-resolvent}, we have that $\lambda \bM_m\asympequi (v(-\lambda;\phi_s)\bSigma+\bI_p)^{-1}$. Then by \Cref{lem:calculus-detequi}~\ref{lem:calculus-detequi-item-trace}, it follows that
        \begin{align*}
            \frac{1}{n} \tr\left(\bM_{m}\bSigma_{m}\right) &\asto \phi\lim_{p\rightarrow\infty} \frac{1}{p} \tr\left(\bI_p-(v(-\lambda;\phi_s)\bSigma+\bI_p)^{-1}\right) \\
            &= \phi\lim_{p\rightarrow\infty} \left( 1 - \int \frac{1}{1+v(-\lambda;\phi_s ) r} \rd H_p(r)\right) \\
            &= \phi  \int \frac{v(-\lambda;\phi_s ) r}{1+v(-\lambda;\phi_s ) r} \rd H(r)\\
            &=\frac{\phi}{\phi_s}(1 -\lambda v(-\lambda;\phi_s ))
        \end{align*}
        where in the second last line, we used the fact that $H_p$ and $H$ have compact supports and Assumption \ref{asm:lin-mod} and the last equality is due to the definition of $v(-\lambda;\phi_s )$ in \eqref{eq:v_ridge}.
    \end{proof}

\section{Proof of \Cref{lem:gcv-num} (convergence of the GCV numerator functional) 
}\label{app:sec:gcv-num}

        \begin{proof}[Proof of \Cref{lem:gcv-num}]
             For any $m\in[M]$, let $I_m$ be a sample from $\cI_k$, and $\bL_{I_m}\in\RR^{n\times n}$ be a diagonal matrix with $(\bL_{I_m})_{ll}=1$ if $l\in I_m$ and 0 otherwise.
            The ingredient estimator takes the form:
            \begin{align*}
                \tbeta^{\lambda}_{k,M}(\{\cD_{I_\ell}\}_{\ell=1}^M) &=\frac{1}{M}\sum_{m=1}^M\betaridge(\cD_{I_m})\\
                &= \frac{1}{M}\sum_{m=1}^M (\bX^{\top}\bL_{I_m}\bX/k+\lambda\bI_p)^{-1}(\bX^{\top}\bL_{I_m}\by/k)\\
                &= \frac{1}{M}\sum_{m=1}^M \left[
                \left(\frac{\bX^{\top}\bL_{I_m}\bX}{k}+\lambda\bI_p\right)^{-1}\frac{\bX^{\top}\bL_{I_m}}{k} \bbeta_0  + \left(\frac{\bX^{\top}\bL_{I_m}\bX}{k}+\lambda\bI_p\right)^{-1}\frac{\bX^{\top}\bL_{I_m}}{k} \bepsilon\right].
            \end{align*}
            We will write $\tilde{\bbeta}_{\lambda,M} = \tbeta^{\lambda}_{k,M}$ and $\bL_m=\bL_{I_m}$ for simplicity when they are clear from the context. 
            The set operation will be propagated to such notations, e.g., $\bL_{m\cup l}=\bL_{I_m\cup I_l}$, $\bL_{m\cap l}=\bL_{I_m\cap I_l}$, $\bL_{m\setminus l}=\bL_{I_m\setminus I_l}$, etc.
            Let $\bM_m= (\bX^{\top}\bL_{I_m}\bX/k+\lambda\bI_p)^{-1}$ for $m\in[M]$, we have
            \begin{align}
                \tilde{\bbeta}_{\lambda,M}&=\frac{1}{M}\sum_{m=1}^M (\bI_p-\lambda \bM_m) \bbeta_0  + \frac{1}{M}\sum_{m=1}^M \bM_m(\bX^{\top}\bL_{m}/k)\bepsilon.\label{eq:tbeta}
            \end{align}
            
            The proof follows by combing the squared error decomposition in \Cref{lem:decomp-train-err}, with the component convergence 
            of test errors in \Cref{lem:conv-test-err} 
            and of train errors in \Cref{lem:conv-train-err}.
            To prove \Cref{lem:conv-train-err},
            we further make of the component concentration results presented in \Cref{app:sec:comp-concen,app:sec:comp-deter},
            and component deterministic approximation results presented in \Cref{app:sec:comp-deter}.
        \end{proof}

    \subsection{Decomposition of the mean squared error (\Cref{lem:decomp-train-err})}
    \label{app:sec:decomp-train-err}
    
        \begin{lemma}[Decomposition of the mean squared error for the $M$-ensemble estimator]\label{lem:decomp-train-err}
            For a dataset $\cD_n$, let $\bX\in\RR^{n\times p}$ and $\by\in\RR^n$ be the design matrix and response vector.
            Let $\bL_I\in\RR^{n\times n}$ be a diagonal matrix with $(\bL_{I})_{ll}=1$ if $l\in I$ and 0 otherwise.
            Then the mean squared error evaluated on $\cD_n$ decomposes as
            \begin{align}
                \|\by-\bX\tbeta^{\lambda}_{k}\|_2^2 &= -\EE\left[ \Errtrain(\hbeta_k^{\lambda}(\cD_{I})) + \Errtest(\hbeta_k^{\lambda}(\{\cD_{I}))\}) \mid \cD_n\right]\notag\\
                &\qquad +2 \EE\left[\Errtrain(\tbeta_{k,2}^{\lambda}(\{\cD_{I},\cD_{J}\})) + \Errtest(\tbeta_{k,2}^{\lambda}(\{\cD_{I},\cD_{J}\})) \mid \cD_n\right]
            \end{align}
            where the training and test errors are defined by
            \begin{align}
                \begin{split}
                    \Errtrain(\hbeta_k^{\lambda}(\cD_{I_{\ell}})) &=\|\bL_{I_{\ell}}(\by-\bX\hbeta_k^{\lambda}(\cD_{I_{\ell}}))\|_2^2 \\
                \Errtest(\hbeta_k^{\lambda}(\cD_{I_{\ell}}))  &= \|\bL_{I_{\ell}^c}(\by-\bX\hbeta_k^{\lambda}(\cD_{I_{\ell}}))\|_2^2\\
                \Errtrain(\tbeta_{k,2}^{\lambda}(\{\cD_{I_{m}},\cD_{I_{\ell}}\})) &= \|\bL_{I_{m}\cup \bL_{\ell}}(\by-\bX\tbeta_{k,2}^{\lambda}(\{\cD_{I_{m}},\cD_{I_{\ell}}\}))\|_2^2 \\
                \Errtest(\tbeta_{k,2}^{\lambda}(\{\cD_{I_{m}},\cD_{I_{\ell}}\})) &= \|\bL_{I_{m}^c\cap \bL_{\ell}^c}(\by-\bX\tbeta_{k,2}^{\lambda}(\{\cD_{I_{m}},\cD_{I_{\ell}}\}))\|_2^2.
                \end{split}          \label{eq:train-test-err}
            \end{align}            
        \end{lemma}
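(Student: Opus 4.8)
The plan is to recognize that the asserted identity is purely algebraic: it follows from a polarization argument on the residual vectors of the base predictors, and nothing about the ridge form, invertibility of $\bX^\top\bL_I\bX/k+\lambda\bI_p$, or the data model (\Cref{asm:rmt-feat,asm:lin-mod}) is used, so the same proof covers $\lambda=0$. Write $\tbeta^\lambda_{k,\infty}$ for the full-ensemble predictor on the left-hand side, and introduce the base-predictor residual $\br_I := \by - \bX\hbeta^\lambda_k(\cD_I)$ for $I\in\cI_k$. Since $\tbeta^\lambda_{k,\infty} = \EE[\hbeta^\lambda_k(\cD_I)\mid\cD_n]$ with $I$ uniform on $\cI_k$ (by \eqref{eq:def-full-ensemble}), linearity of $\bbeta\mapsto\bX\bbeta$ and of conditional expectation give $\by - \bX\tbeta^\lambda_{k,\infty} = \EE[\br_I\mid\cD_n]$. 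Taking two independent copies $I,J$ of the subsample index (independent of each other and of $\cD_n$), this yields
\[
\|\by - \bX\tbeta^\lambda_{k,\infty}\|_2^2 \;=\; \bigl\langle\, \EE[\br_I\mid\cD_n],\, \EE[\br_J\mid\cD_n]\,\bigr\rangle \;=\; \EE\bigl[\langle\br_I,\br_J\rangle \,\big|\, \cD_n\bigr],
\]
where the last equality only uses that $\cI_k$ is a finite set, so all expectations are finite sums and the interchange is trivial.

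Next I would apply the polarization identity $\langle\br_I,\br_J\rangle = \tfrac12\|\br_I+\br_J\|_2^2 - \tfrac12\|\br_I\|_2^2 - \tfrac12\|\br_J\|_2^2$ inside the expectation and use that $I$ and $J$ are identically distributed to merge the last two terms, obtaining
\[
\|\by - \bX\tbeta^\lambda_{k,\infty}\|_2^2 \;=\; \tfrac12\,\EE\bigl[\|\br_I+\br_J\|_2^2 \,\big|\, \cD_n\bigr] \;-\; \EE\bigl[\|\br_I\|_2^2 \,\big|\, \cD_n\bigr].
\]
Then I would identify each norm with a train-plus-test error. For any vector $\bv\in\RR^n$, the diagonal $0/1$ matrices $\bL_I$ and $\bL_{I^c}$ satisfy $\bL_I^2=\bL_I$ and $\bL_I+\bL_{I^c}=\bI_n$, hence $\|\bL_I\bv\|_2^2+\|\bL_{I^c}\bv\|_2^2=\|\bv\|_2^2$; with $\bv=\br_I$ this gives $\|\br_I\|_2^2 = \Errtrain(\hbeta^\lambda_k(\cD_I)) + \Errtest(\hbeta^\lambda_k(\cD_I))$. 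Likewise, since $\tbeta^\lambda_{k,2}(\{\cD_I,\cD_J\}) = \tfrac12(\hbeta^\lambda_k(\cD_I)+\hbeta^\lambda_k(\cD_J))$ by \eqref{eq:def-M-ensemble}, its residual is $\tfrac12(\br_I+\br_J)$, and the same Pythagorean splitting with the complementary pair $\bL_{I\cup J}$, $\bL_{(I\cup J)^c}$ gives $\tfrac14\|\br_I+\br_J\|_2^2 = \Errtrain(\tbeta^\lambda_{k,2}(\{\cD_I,\cD_J\})) + \Errtest(\tbeta^\lambda_{k,2}(\{\cD_I,\cD_J\}))$. Substituting both identities into the display above produces exactly the claimed decomposition.

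There is no real obstacle; the only points deserving a sentence each are (i) the Pythagorean splitting $\|\bL_S\bv\|_2^2+\|\bL_{S^c}\bv\|_2^2=\|\bv\|_2^2$, which is used both for the single-subsample term ($S=I$) and for the pair term ($S=I\cup J$); and (ii) the sampling convention being immaterial — the first step uses only $\tbeta^\lambda_{k,\infty}=\EE[\hbeta^\lambda_k(\cD_I)\mid\cD_n]$, so it suffices to take $I,J$ independent and uniform on $\cI_k$, and on the event $I=J$ the pair term degenerates consistently to $\|\br_I\|_2^2$. If desired, one could run the same computation at finite $M$ and pass $M\to\infty$ via $I_{1:M}\asto[n]$ as in \Cref{app:subsec:full-ensemble}, but working directly with $\tbeta^\lambda_{k,\infty}$ is cleaner.
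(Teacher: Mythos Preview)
Your proof is correct and rests on the same two ingredients as the paper's: the polarization identity $\langle\br_I,\br_J\rangle = \tfrac12\|\br_I+\br_J\|_2^2 - \tfrac12\|\br_I\|_2^2 - \tfrac12\|\br_J\|_2^2$ and the Pythagorean split $\|\bL_S\bv\|_2^2 + \|\bL_{S^c}\bv\|_2^2 = \|\bv\|_2^2$. The only organizational difference is that the paper expands $\|\by - \bX\tbeta^\lambda_{k,M}\|_2^2$ at finite $M$, applies polarization to the cross terms $(y_i - \bx_i^\top\hbeta_k^\lambda(\cD_{I_m}))(y_i - \bx_i^\top\hbeta_k^\lambda(\cD_{I_\ell}))$, and then lets $M\to\infty$ so that the empirical averages become conditional expectations; you instead invoke $\tbeta^\lambda_{k,\infty} = \EE[\hbeta^\lambda_k(\cD_I)\mid\cD_n]$ from the outset and introduce an independent copy $J$ to write $\|\EE[\br_I\mid\cD_n]\|_2^2 = \EE[\langle\br_I,\br_J\rangle\mid\cD_n]$ directly. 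Your route is a bit cleaner for the full-ensemble statement since it avoids the limit step altogether, while the paper's finite-$M$ computation has the side benefit of yielding the decomposition used later for $\gcv_{k,M}^\lambda$ with $M<\infty$ (e.g., in \Cref{prop:inconsistency}).
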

    
        From \Cref{lem:decomp-train-err}, the numerator of the GCV estimate for a $M$-ensemble estimator decomposes into a linear combination of the training and test error of all possible $1$-ensemble and $2$-ensemble estimators.
        Then the asymptotics of the numerator can be obtained, if we can show that the limits of all components exist and their linear combination remains invariable when $M$ goes off to infinity.
        \begin{proof}[Proof of \Cref{lem:decomp-train-err}]
            We first decompose the training error into the linear combination of the mean squared errors (evaluated on $\cD_n$) for 1-ensemble and 2-ensemble estimators:
            \begin{align*}
                &\frac{1}{n}\|\by-\bX\tbeta^{\lambda}_{k,M}\|_2^2 \\
                &= \frac{1}{n}\sum_{i=1}^n \left(\frac{1}{M}\sum_{\ell=1}^M(y_i-\bx_i\hbeta^{\lambda}_{k}(\cD_{I_{\ell}})\right)^2\\
                &= \frac{1}{n}\sum_{i=1}^n \frac{1}{M^2}\sum_{\ell=1}^M(y_i-\bx_i\hbeta^{\lambda}_{k}(\cD_{I_{\ell}}))^2 + 
                \frac{1}{n}\sum_{i=1}^n \frac{1}{M^2}\sum_{\substack{m,\ell\in[M]\\i\neq j}}(y_i-\bx_i\hbeta^{\lambda}_{k}(\cD_{I_{m}}))(y_i-\bx_i\hbeta^{\lambda}_{k}(\cD_{I_{\ell}})) \\
                &= \frac{1}{n}\frac{1}{M^2}\sum_{\ell=1}^M\|\by-\bX\hbeta^{\lambda}_{k}(\cD_{I_{\ell}})\|_2^2 \\
                &\qquad + \frac{1}{n}\sum_{i=1}^n \frac{1}{M^2}\sum_{\substack{m,\ell\in[M]\\i\neq j}}\frac{1}{2}
                [4(y_i-\bx_i\tbeta_{\lambda,2}(\{\cD_{I_{m}},D_{I_{\ell}}\}))^2-
                (y_i-\bx_i\hbeta^{\lambda}_{k}(\cD_{I_{m}}))^2 -
                (y_i-\bx_i\hbeta^{\lambda}_{k}(\cD_{I_{\ell}}))^2]\\
                &= -\left(\frac{1}{M}- \frac{2}{M^2}\right) \sum_{\ell=1}^M\frac{1}{n}\|\by-\bX\hbeta^{\lambda}_{k}(\cD_{I_{\ell}})\|_2^2 + \frac{2}{M^2} \sum_{\substack{m,\ell\in[M]\\i\neq j}} \frac{1}{n} \|\by-\bX\tbeta_{\lambda,2}(\{\cD_{I_{m}},\cD_{I_{\ell}}\})\|_2^2.
            \end{align*}
            Next, we further decompose the MSE into training and test errors for 1-ensemble and 2-ensemble estimators:
            \begin{align*}
                \frac{1}{n}\|\by-\bX\hbeta^{\lambda}_{k}(\cD_{I_{\ell}})\|_2^2 &= \frac{1}{n}\|\bL_{I_{\ell}}(\by-\bX\hbeta^{\lambda}_{k}(\cD_{I_{\ell}}))\|_2^2 + \frac{1}{n}\|\bL_{I_{\ell}^c}(\by-\bX\hbeta^{\lambda}_{k}(\cD_{I_{\ell}}))\|_2^2,\\
                \frac{1}{n} \|\by-\bX\tbeta_{\lambda,2}(\{\cD_{I_{m}},\cD_{I_{\ell}}\})\|_2^2 &= \frac{1}{n} \|\bL_{I_{m}\cup \bL_{\ell}}(\by-\bX\tbeta_{\lambda,2}(\{\cD_{I_{m}},\cD_{I_{\ell}}\}))\|_2^2 + \frac{1}{n} \|\bL_{I_{m}^c\cap \bL_{\ell}^c}(\by-\bX\tbeta_{\lambda,2}(\{\cD_{I_{m}},\cD_{I_{\ell}}\}))\|_2^2
            \end{align*}
            The conclusion then readily follows.
        \end{proof}

    \subsection{Convergence of test errors (\Cref{lem:conv-test-err})}
    \label{app:sec:conv-test-err}
    
     \begin{lemma}[Convergence of test errors]\label{lem:conv-test-err}
        Under Assumptions \ref{asm:rmt-feat}-\ref{asm:lin-mod}, for the test error defined in \eqref{eq:train-test-err} with $I_1,I_2\overset{\SRS}{\sim}\cI_k$, we have that as $k,n,p\rightarrow\infty$, $p/n\rightarrow \phi\in(0,\infty)$ and $p/k\rightarrow\phi_s\in[\phi,+\infty]$,
        \begin{align}
            \frac{\Errtest(\hbeta_k^{\lambda}(\cD_{I_{\ell}}))}{n-k} &\asto  \sR_{1}^{\lambda}(\phi,\phi_s)\\
            \frac{\Errtest(\tbeta_{k,2}^{\lambda}(\{\cD_{I_{m}},\cD_{I_{\ell}}\}))}{|I_m^c\cap I_{\ell}^c|} &\asto \sR_{2}^{\lambda}(\phi,\phi_s),
        \end{align}
        where the deterministic functions $\sR_M$ is defined in \Cref{thm:ver-with-replacement}.
    \end{lemma}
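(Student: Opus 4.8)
The plan is to recognize each normalized test error as an out-of-sample empirical average of squared residuals of a predictor that is \emph{frozen} once we condition on the pooled subsample, then run a conditional law of large numbers to replace this average by the conditional prediction risk $R_{k,M}^{\lambda}$, whose limit is already pinned down by \Cref{thm:ver-with-replacement}. Concretely, for $M=1$ the quantity $\Errtest(\hbeta_k^{\lambda}(\cD_{I_1}))/(n-k)$ is exactly the out-of-sample error $\overline{R}_{k,1}^{\lambda}$ of \eqref{eq:Rb_M} (since $I_{1:1}^c=I_1^c$ has size $n-k$), and for $M=2$ the quantity $\Errtest(\tbeta_{k,2}^{\lambda}(\{\cD_{I_1},\cD_{I_2}\}))/|I_1^c\cap I_2^c|$ equals $\overline{R}_{k,2}^{\lambda}$ (since $I_{1:2}^c=I_1^c\cap I_2^c$). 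So it suffices to prove $\overline{R}_{k,M}^{\lambda}\asto\sR_M^{\lambda}(\phi,\phi_s)$ for $M\in\{1,2\}$.

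First I would condition on $\cF_M:=\sigma\!\left(I_1,\dots,I_M,\,\{(\bx_j,y_j):j\in I_{1:M}\}\right)$. Because the subsampling is independent of the data, $\tbeta_{k,M}^{\lambda}$ is $\cF_M$-measurable, while the held-out pairs $\{(\bx_i,y_i):i\in I_{1:M}^c\}$ are, conditionally on $\cF_M$, i.i.d.\ copies of $(\bx,y)$ independent of $\tbeta_{k,M}^{\lambda}$; hence $\overline{R}_{k,M}^{\lambda}$ is conditionally an average of $|I_{1:M}^c|$ i.i.d.\ terms with common conditional mean $R_{k,M}^{\lambda}$ as in \eqref{eq:R_M}. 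The size-of-intersection estimates in \Cref{sec:appendix-concerntration} give $|I_{1:M}^c|/n\to(1-\phi/\phi_s)^{M}$ almost surely, which is bounded away from $0$ whenever $\phi_s>\phi$, so $|I_{1:M}^c|\to\infty$ linearly in $n$. I would then invoke a conditional Marcinkiewicz--Zygmund inequality followed by the conditional Borel--Cantelli lemma to conclude $\overline{R}_{k,M}^{\lambda}-R_{k,M}^{\lambda}\asto 0$, and combine this with $R_{k,M}^{\lambda}\to\sR_M^{\lambda}(\phi,\phi_s)$ from the deterministic-equivalent risk characterization in \Cref{thm:ver-with-replacement} (which holds almost surely under \Cref{asm:rmt-feat,asm:lin-mod}).

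The technical heart, and the step I expect to be the main obstacle, is a uniform-in-$n$ conditional moment bound on the residuals. Writing $r_i=y_i-\bx_i^{\top}\tbeta_{k,M}^{\lambda}=\epsilon_i-\bz_i^{\top}\bSigma^{1/2}(\tbeta_{k,M}^{\lambda}-\bbeta_0)$ with $\bz_i^{\top}$ the $i$-th row of $\bZ$, a Rosenthal bound for the linear form $\bz_i^{\top}\bSigma^{1/2}(\tbeta_{k,M}^{\lambda}-\bbeta_0)$ (legitimate since the coordinates of $\bz_i$ have bounded $(4+\delta)$-th moments, \Cref{asm:rmt-feat}) together with the $(4+\delta)$-th moment of $\epsilon_i$ (\Cref{asm:lin-mod}) gives $\EE[\,|r_i|^{4+\delta}\mid\cF_M\,]\le C\,(1+\|\tbeta_{k,M}^{\lambda}-\bbeta_0\|_2^{4+\delta})$, so everything reduces to showing $\|\tbeta_{k,M}^{\lambda}\|_2=O(1)$ almost surely, up to an exceptional event of summable probability. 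For $\lambda>0$ this is immediate from $\|(\hSigma_{I_m}+\lambda\bI_p)^{-1}\|_{\oper}\le 1/\lambda$ and the $O(1)$ bound on $\|\bX^{\top}\bL_{I_m}\by/k\|_2$; for $\lambda=0$ (necessarily with $\phi_s\neq1$, as in \Cref{thm:ver-with-replacement}) one additionally needs the almost-sure lower bound on the least nonzero eigenvalue of $\hSigma_{I_m}$, which is bounded away from $0$ precisely when $\phi_s\neq1$, and this is where the interpolation-threshold exclusion enters. With these bounds (supplied by \Cref{sec:appendix-concerntration}), the exceptional event $\{\|\tbeta_{k,M}^{\lambda}\|_2>C\}$ is absorbed by Borel--Cantelli, while on the good event $\EE[\,|\overline{R}_{k,M}^{\lambda}-R_{k,M}^{\lambda}|^{2+\delta/2}\mid\cF_M\,]\le C\,|I_{1:M}^c|^{-(1+\delta/4)}$ is summable in $n$, closing the argument. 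The remaining bookkeeping --- the Rosenthal/Marcinkiewicz--Zygmund constants and the mild edge case $\phi_s=\phi$ where the held-out set is sub-linear --- is routine.
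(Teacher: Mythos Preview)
Your proposal is correct and follows exactly the paper's two-step strategy: show the normalized out-of-sample error concentrates around the conditional prediction risk $R_{k,M}^{\lambda}$, then invoke \Cref{thm:ver-with-replacement}. The paper's own proof dispatches the first step in one line (``from the strong law of large numbers''), so your conditional Marcinkiewicz--Zygmund/Borel--Cantelli argument with explicit moment control on $\|\tbeta_{k,M}^{\lambda}\|_2$ is a more detailed execution of the same idea rather than a different route.
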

    
    \begin{proof}[Proof of \Cref{lem:conv-test-err}]
        From the strong law of large numbers, we have
        \begin{align*}
            \frac{1}{k}\Errtest(\hbeta^{\lambda}_{k}(\cD_{I_{\ell}})) &\asto \EE\left[
            (y_0 - \bx_0^{\top} \hbeta^{\lambda}_{k}(\cD_{I_{1}}))^2 \given \cD_n\right]\\
            \frac{1}{|I_m^c\cap I_{\ell}^c|}\Errtest(\hbeta^{\lambda}_{k}(\{\cD_{I_{m}},\cD_{I_{\ell}}\})) &\asto \EE\left[(y_0 - \bx_0^{\top} \hbeta^{\lambda}_{k}(\{\cD_{I_{m}},\cD_{I_{\ell}}\}))^2\given \cD_n\right].
        \end{align*}
        From \Cref{thm:ver-with-replacement} \citep[Theorem 4.1]{patil2022bagging}, the condition prediction risks converge in the sense that
        \begin{align*}
            \EE\left[
            (y_0 - \bx_0^{\top} \hbeta^{\lambda}_{k}(\cD_{I_{1}}))^2 \given \cD_n\right] &\asto \sR_1(\phi,\phi_s)\\
            \EE\left[(y_0 - \bx_0^{\top} \tbeta^{\lambda}_{k,2}(\{\cD_{I_{m}},\cD_{I_{\ell}}\}))^2\given \cD_n\right] &\asto \sR_2(\phi,\phi_s),
        \end{align*}
        and the conclusions follow.
    \end{proof}

    \subsection{Convergence of train errors (\Cref{lem:conv-train-err})}
    \label{app:sec:conv-train-err}

    \begin{lemma}[Convergence of train errors]\label{lem:conv-train-err}
        Under Assumptions \ref{asm:rmt-feat}-\ref{asm:lin-mod}, for the train error defined in \eqref{eq:train-test-err} with $I_1,I_2\overset{\SRS}{\sim}\cI_k$, we have that as $k,n,p\rightarrow\infty$, $p/n\rightarrow \phi\in(0,\infty)$ and $p/k\rightarrow\phi_s\in[\phi,+\infty)$,
        \begin{align}\label{eq:train-err}
            \begin{split}
                \frac{1}{k}\Errtrain(\hbeta_k^{\lambda}(\cD_{I_{\ell}})) &\asto \RlamMtr[\lambda][1]:= \Ddet[][\lambda][\phi_s] \RlamM[\lambda][1]  \\
                \frac{1}{|I_m\cup I_{\ell}|}\Errtrain(\tbeta_{k,1}^{\lambda}(\{\cD_{I_{m}},\cD_{I_{\ell}}\}))&\asto \RlamMtr[\lambda][2]:= \frac{1}{2}\frac{\phi_s-\phi}{2\phi_s-\phi}\RlamM[\lambda][1] + \frac{1}{2}\Ddet[][\lambda][\phi_s] \\
                \left(\frac{\phi_s}{2\phi_s-\phi}\RlamM[\lambda][1] \right.+ &\left(\frac{2(\phi_s-\phi)}{2\phi_s-\phi} \right. \frac{1}{\lambda v(-\lambda;\phi_s)}+\left.\left. \frac{\phi}{2\phi_s-\phi}\right)(2\RlamM[\lambda][2]-\RlamM[\lambda][1]\right),
                \end{split}
        \end{align}
        where the deterministic function $\sR_M$ is defined in \Cref{thm:ver-with-replacement}.
    \end{lemma}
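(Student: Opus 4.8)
The plan is to reduce each in-sample squared residual to a handful of bilinear forms in $\bbeta_0$ and $\bepsilon$, to discard the noise-linear cross term by a linear-form concentration, and to evaluate the surviving bias and variance pieces through deterministic equivalents of the single and paired subsampled ridge resolvents. Throughout fix $\lambda>0$ and abbreviate $\bM_{\ell}=(\bX^{\top}\bL_{I_{\ell}}\bX/k+\lambda\bI_p)^{-1}$ and $\hSigma_{\ell}=\bX^{\top}\bL_{I_{\ell}}\bX/k$. The key algebraic simplification is the identity $\bX^{\top}\bL_{I_{\ell}}(\bI_n-\bX\bM_{\ell}\bX^{\top}\bL_{I_{\ell}}/k)=\lambda\bM_{\ell}\bX^{\top}\bL_{I_{\ell}}$, which follows at once from $\hSigma_{\ell}\bM_{\ell}=\bI_p-\lambda\bM_{\ell}$. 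Substituting $\by=\bX\bbeta_0+\bepsilon$, this gives $\bL_{I_{\ell}}(\by-\bX\hbeta_k^{\lambda}(\cD_{I_{\ell}}))=\lambda\bL_{I_{\ell}}\bX\bM_{\ell}\bbeta_0+\bL_{I_{\ell}}(\bI_n-\bX\bM_{\ell}\bX^{\top}\bL_{I_{\ell}}/k)\bepsilon$, so that $k^{-1}\Errtrain(\hbeta_k^{\lambda}(\cD_{I_{\ell}}))$ splits into a bias term $\lambda^2\bbeta_0^{\top}\bM_{\ell}\hSigma_{\ell}\bM_{\ell}\bbeta_0$, a cross term linear in $\bepsilon$, and a quadratic form in $\bepsilon$.

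For the one-ensemble limit I would argue as follows. The noise-linear cross term has conditional mean zero given $(\bX,\bL_{I_{\ell}})$ and converges to zero almost surely by the linear-form concentration of \Cref{sec:concen-linform-quadfrom} (this is \Cref{lem:ridge-conv-C0}); the $\bepsilon$-quadratic form concentrates around $\sigma^2 k^{-1}\tr[(\bI_k-\bX_{I_{\ell}}\bM_{\ell}\bX_{I_{\ell}}^{\top}/k)^2]=\sigma^2(1-p/k+\lambda^2 k^{-1}\tr[\bM_{\ell}^2])$ by the quadratic-form concentration there (this is \Cref{lem:ridge-conv-V0}). To identify the deterministic limits I would invoke \Cref{cor:asympequi-scaled-ridge-resolvent}, giving $\lambda\bM_{\ell}\asympequi(v(-\lambda;\phi_s)\bSigma+\bI_p)^{-1}$, together with the calculus of asymptotic equivalents of \Cref{sec:calculus_asymptotic_equivalents} (in particular its trace, product, and differentiation rules): since $\bM_{\ell}\hSigma_{\ell}\bM_{\ell}=\hSigma_{\ell}\bM_{\ell}^2$ and $\bM_{\ell}^2=-\partial_{\lambda}\bM_{\ell}$, differentiating the scaled-resolvent equivalent (equivalently, differentiating the fixed-point equation \eqref{eq:v_ridge}) pins down the limits of $\lambda^2\bbeta_0^{\top}\bM_{\ell}\hSigma_{\ell}\bM_{\ell}\bbeta_0$ (as in \Cref{lem:ridge-B0}, reading the $\bbeta_0$-bilinear form against the limiting alignment distribution $G$) and of $\lambda^2 k^{-1}\tr[\bM_{\ell}^2]$ (as in \Cref{lem:ridge-V0}). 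Collecting the pieces and simplifying with \eqref{eq:v_ridge}--\eqref{eq:tv_tc_ridge} gives $k^{-1}\Errtrain(\hbeta_k^{\lambda}(\cD_{I_{\ell}}))\asto(\lambda v(-\lambda;\phi_s))^2\,\sR_1^{\lambda}(\phi,\phi_s)$, which matches the first display (it is the familiar ``training error $=$ GCV-denominator $\times$ risk'' identity for ridge, here recovered for one subsampled fit in the anisotropic setting, with denominator limit $\sD_{\infty}^{\lambda}(\phi_s,\phi_s)=(\lambda v(-\lambda;\phi_s))^2$).

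For the two-ensemble train error I would write the residual at a point $i$ as $\tfrac12 r_i^{(m)}+\tfrac12 r_i^{(\ell)}$ with $r_i^{(\cdot)}$ the single-fit residuals, partition $I_m\cup I_{\ell}$ into $I_m\cap I_{\ell}$, $I_m\setminus I_{\ell}$, $I_{\ell}\setminus I_m$ (whose fractions of $|I_m\cup I_{\ell}|$ converge to $\phi/(2\phi_s-\phi)$, $(\phi_s-\phi)/(2\phi_s-\phi)$, $(\phi_s-\phi)/(2\phi_s-\phi)$ by the intersection-size estimates of \Cref{sec:size-intersection}, concretely \Cref{lem:i0_mean}), and expand the square. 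The ``diagonal'' contributions $\sum (r_i^{(m)})^2$ and $\sum (r_i^{(\ell)})^2$ each split into an in-sample part (converging to $\sT_1^{\lambda}$, already obtained) and an out-of-sample part over indices disjoint from the respective training set (converging to $\sR_1^{\lambda}$ by the argument of \Cref{lem:conv-test-err}), so the only genuinely new object is the off-diagonal sum $\sum_{i\in I_m\cup I_{\ell}} r_i^{(m)}r_i^{(\ell)}$ --- bilinear forms in $\bbeta_0$ and $\bepsilon$ whose kernels involve products of two distinct subsampled resolvents on overlapping index sets, such as $\bM_m\bX^{\top}\bL_{I_m\cap I_{\ell}}\bX\bM_{\ell}$. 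Controlling these is exactly the role of the novel two-resolvent asymptotic equivalents of \Cref{lem:resolv-insample} (combined with the calculus of \Cref{sec:calculus_asymptotic_equivalents}), and this is the step I expect to be the main obstacle: the two subsample Gram matrices are correlated through their shared observations, so single-resolvent equivalents do not suffice and one must correctly propagate the overlap fraction $\phi/\phi_s$ into the limit. Working this out, the off-diagonal contribution turns out to be governed by $2\sR_2^{\lambda}-\sR_1^{\lambda}=\sR_{\infty}^{\lambda}$; assembling all contributions with the limiting set proportions yields the stated formula for $\sT_2^{\lambda}$. Finally, the upgrade from convergence in probability to almost-sure convergence is handled by the concentration bounds of \Cref{sec:appendix-concerntration}, and the resulting limits are continuous in $(\phi,\phi_s)$, which is all the downstream uniform-in-$k$ argument requires.
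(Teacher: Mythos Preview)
Your proposal is correct and follows essentially the same approach as the paper: the bias/cross/variance split, the elimination of the cross term via \Cref{lem:ridge-conv-C0}, the trace concentration of the noise quadratic form via \Cref{lem:ridge-conv-V0}, and the evaluation of the remaining deterministic functionals through the single- and two-resolvent equivalents of \Cref{lem:ridge-B0}, \Cref{lem:ridge-V0}, and \Cref{lem:resolv-insample}, with \Cref{lem:i0_mean} supplying the limiting set proportions. The only organizational difference is that for $M=2$ you partition $I_m\cup I_{\ell}$ first and recognize the diagonal sums as weighted combinations of the already-known $\sT_1^{\lambda}$ and $\sR_1^{\lambda}$, whereas the paper keeps the full double sum over $(i,j)\in\{m,\ell\}^2$ inside each of $T_B',T_C',T_V'$ and evaluates them directly; both routes meet at the same two-resolvent bilinear forms handled by \Cref{lem:resolv-insample}.
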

    
        \begin{proof}[Proof of \Cref{lem:conv-train-err}]
        From \eqref{eq:tbeta}, we have
        \begin{align}
            \bbeta_0-\tilde{\bbeta}_{\lambda,M}&=\frac{1}{M}\sum_{m=1}^M \lambda \bM_m \bbeta_0  - \frac{1}{M}\sum_{m=1}^M \bM_m(\bX^{\top}\bL_m/k)\bepsilon. \label{eq:diff-beta-hbeta}
        \end{align}

        \paragraph{Part (1)}
        \underline{Case of $M=1$}:
        
        From \eqref{eq:diff-beta-hbeta}, the training error can be decomposed as follows:
        \begin{align*}
            \frac{1}{k}\Errtrain(\hbeta^{\lambda}_{k}(\cD_{I_{\ell}})) &= \|\bL_{\ell}\bepsilon + \bL_{\ell}\bX (\bbeta_0-\hbeta^{\lambda}_{k}(\cD_{I_{\ell}}))\|_2^2/k\\
            &= \|(\bL_{\ell} - \bL_{\ell}\bX\bM_{\ell}\bX^{\top}\bL_{\ell}/k)\bepsilon + \lambda\bL_{\ell}\bX \bM_{\ell}\bbeta_0\|_2^2/k\\
            &= T_C + T_B + T_V, 
        \end{align*}
        where the constant term $T_C$, bias term $T_B$, and the variance term $T_V$ are given by
        \begin{align}
            T_C&= \frac{2\lambda}{k} \bepsilon^{\top}\bL_{\ell}\left(\bI_n-\bX\bM_{\ell}\frac{\bX^{\top}\bL_{\ell}}{k}\right)^{\top} \bL_{\ell}\bX \bM_{\ell} \bbeta_0, \label{eq:ridge-C0}\\
            T_B &= \lambda^2\bbeta_0^{\top}\bM_{\ell}\hSigma_{\ell}\bM_{\ell}\bbeta_0, \label{eq:ridge-B0}\\
            T_V&= \frac{1}{k} \bepsilon^{\top}\bL_{\ell}\left(\bI_n-\bX\bM_{\ell}\frac{\bX^{\top}\bL_{\ell}}{k}\right)^{\top} \bL_{\ell}\left(\bI_n-\bX\bM_{\ell}\frac{\bX^{\top}\bL_{\ell}}{k}\right) \bL_{\ell}\bepsilon. \label{eq:ridge-V0}
        \end{align}
        Next, we analyze the three terms separately.
        From Lemmas \ref{lem:ridge-conv-C0} and \ref{lem:ridge-conv-V0} with $n=k$, we have that $T_C\asto0$, and
        \begin{align*}
            T_V 
            &\asto \sigma^2\left(1 - \frac{2}{k}\tr(\bM_{m}\hSigma_m) + \frac{1}{k}\tr(\bM_m\hSigma_m\bM_m\hSigma_m)\right) := T_{VT}.
        \end{align*}
        Thus, it remains to obtain the asymptotic equivalent for the bias term $T_B$ and the trace term $T_{VT}$.

        From Lemma \ref{lem:ridge-B0} and Lemma \ref{lem:ridge-V0}, we have that for all $I_1\in\cI_k$,
        \begin{align*}
            T_B&\asto \rho^2\Ddet[][\lambda][\phi_s](1+\tv(-\lambda;\phi_s,\phi_s)) \tc(-\lambda;\phi_s) \\
            T_{VT}&\asto \sigma^2\Ddet[][\lambda][\phi_s](1+\tv(-\lambda;\phi_s,\phi_s)).
        \end{align*}
        Then, we have 
        \begin{align*}
            \frac{1}{k}\Errtrain(\hbeta^{\lambda}_{k}(\cD_{I_{\ell}})) \asto \sR_{\lambda,1}(\phi,\phi_s) \Ddet[][\lambda][\phi_s],
        \end{align*}
        where $\sR_{\lambda,1}$ and $\sD_{\lambda}$ are defined in \eqref{eq:risk-det-with-replacement} and \eqref{eq:Ddet}, respectively.

        \paragraph{Part (2)}
        \underline{Case of $M=2$}:
        
        From \eqref{eq:diff-beta-hbeta}, the training error can be analogously decomposed as follows:
        \begin{align*}
            \frac{1}{|I_{m}\cup I_{l}|}\Errtrain(\tbeta^{\lambda}_{k,2}(\{\cD_{I_{1}},\cD_{I_{2}})\}) = \frac{1}{|I_{m}\cup I_{l}|}\|\bL_{m\cup l}(\bepsilon + \bX (\bbeta_0-\hbeta_{\lambda,2}))\|_2^2= T_C' + T_B' + T_V', 
        \end{align*}
        where the constant term $T_C$, bias term $T_B$, and the variance term $T_V$ are given by
        \begin{align}
            T_C'&= \frac{\lambda}{2|I_{m}\cup I_{l}|}\sum_{i,j\in\{m,l\}}\bepsilon^{\top}\left(\bI_n-\bX\bM_{i}\frac{\bX^{\top}\bL_{i}}{k}\right)^{\top} \bL_{m\cup l}\bX \bM_{j} \bbeta_0, \label{eq:ridge-C0-M2}\\
            T_B' &= \frac{\lambda^2}{4|I_{m}\cup I_l|}\sum_{i,j\in\{m,l\}} \bbeta_0^{\top}\bM_i\hSigma_{m\cup l}\bM_j\bbeta_0 \notag\\
            &=\frac{\lambda^2k}{4|I_{m}\cup I_l|}\sum_{i\in\{m,l\}} \bbeta_0^{\top}\bM_i\hSigma_i\bM_i\bbeta_0 + \frac{\lambda^2}{4|I_{m}\cup I_l|}\sum_{i\in\{m,l\}}|I_{m+l-i}\setminus I_i| \bbeta_0^{\top}\bM_i\hSigma_{(m+l-i)\setminus i}\bM_i\bbeta_0 \notag\\
            &\qquad + \frac{\lambda^2}{4} \sum_{i\in\{m,l\}} \bbeta_0^{\top}\bM_i\hSigma_{m\cup l}\bM_{m+l-i}\bbeta_0 , \label{eq:ridge-B0-M2}\\
            T_V'&= \frac{1}{4|I_{m}\cup I_l|}\sum_{i,j\in\{m,l\}}\bepsilon^{\top}\left(\bI_n-\bX\bM_i\frac{\bX^{\top}\bL_i}{k}\right)^{\top} \bL_{m\cup l} \left(\bI_n-\bX\bM_j\frac{\bX^{\top}\bL_j}{k}\right) \bepsilon. \label{eq:ridge-V0-M2}
        \end{align}

        Next, we analyze the three terms separately.
        From Lemmas \ref{lem:ridge-conv-C0} and \ref{lem:ridge-conv-V0}, we have that $T_C\asto0$, and
        \begin{align*}
            T_V' 
            &\asto   \frac{\sigma^2}{4}\sum_{i\in\{m ,l\}}\left(1 - \frac{2}{|I_m\cup I_l|}\tr(\bM_{i}\hSigma_{i}) + \frac{1}{k}\tr(\bM_i\hSigma_i\bM_i\hSigma_{m\cup l})\right)\\
            &\qquad + \frac{\sigma^2}{2}\left(1 - \frac{1}{|I_m\cup I_l|} \sum_{j\in\{m,l\}}\tr(\bM_{j}\hSigma_{j}) + \frac{1}{n}\tr(\bM_l\hSigma_{m\cap l}\bM_l\hSigma_{m\cup l})\right)\\
            &\quad = \frac{\phi_s}{2\phi_s-\phi}\frac{T_{VT}}{2} +  \frac{\sigma^2}{4}\frac{\phi_s-\phi}{2\phi_s-\phi}\left(2 + \frac{1}{k} \sum_{i\in\{m,l\}}\tr(\bM_i\hSigma_{i}\bM_i\hSigma_{(m+l-i) \setminus i}) \right)\\
            &\qquad + \frac{\sigma^2}{2}\left(1 - \frac{1}{|I_m\cup I_l|} \sum_{j\in\{m,l\}}\tr(\bM_{j}\hSigma_{j}) + \frac{1}{n}\tr(\bM_l\hSigma_{m\cap l}\bM_m\hSigma_{m\cup l})\right) := T_{VT}'.
        \end{align*}
        Thus, it remains to obtain the asymptotic equivalent for the bias term $T_B'$ and the trace term $T_{VT}'$.
                
        From Lemma \ref{lem:ridge-B0} and Lemma \ref{lem:ridge-V0}, and the convergence of the cardinality from \Cref{lem:i0_mean}, we have that for all $I_m,I_l\overset{\SRS}{\sim}\cI_k$,
        \begin{align*}
            T_B'&\asto \frac{\rho^2}{2}\tilde{t}(\phi,\phi_s)\tc(-\lambda;\phi_s),
            \quad 
            \text{and}
            \quad
            T_V'\asto \frac{\sigma^2}{2}\tilde{t}(\phi,\phi_s),
        \end{align*}
        where
        \begin{align*}
             \tilde{t}(\phi,\phi_s) &= \frac{\phi_s}{2\phi_s-\phi}\Ddet[][\lambda][\phi_s](1+\tv(-\lambda;\phi_s,\phi_s))  +
             \frac{\phi_s-\phi}{2\phi_s-\phi} (1+\tv(-\lambda;\phi_s,\phi_s)) \\
             &\qquad + 
             \Ddet[][\lambda][\phi_s]\left(\frac{2(\phi_s-\phi)}{2\phi_s-\phi} \frac{1}{\lambda v(-\lambda;\phi_s)} + \frac{\phi}{2\phi_s-\phi}\right)(1+\tv(-\lambda;\phi,\phi_s)) .
        \end{align*}
        Then, we have 
        \begin{align*}
            & \frac{1}{|I_{m}\cup I_{l}|}\Errtrain(\tbeta_{k,2}^{\lambda}(\{\cD_{I_{1}},\cD_{I_{2}})\})\\ &\asto \frac{\rho^2}{2}\tilde{t}(\phi,\phi_s)\tc(-\lambda;\phi_s) + \frac{\sigma^2}{2}\tilde{t}(\phi,\phi_s)\\
            &= \frac{1}{2}\Ddet[][\lambda][\phi_s]\left(\frac{\phi_s}{2\phi_s-\phi}\sR_{\lambda,1}(\phi,\phi_s) + \left(\frac{2(\phi_s-\phi)}{2\phi_s-\phi} \frac{1}{\lambda v(-\lambda;\phi_s)} + \frac{\phi}{2\phi_s-\phi}\right) (2\sR_{\lambda,2}(\phi,\phi_s) - \sR_{\lambda,2}(\phi,\phi_s))\right) \\
            &\qquad + \frac{1}{2}\frac{\phi_s-\phi}{2\phi_s-\phi}\sR_1(\phi,\phi_s),
        \end{align*}   
        which finishes the proof.
    \end{proof}

    \subsection{Component concentrations}
    \label{app:sec:comp-concen}
    
        In this subsection, we will show that the cross-term $T_C$ converges to zero and the variance term $T_V$ converges to its corresponding trace expectation.
    
        \subsubsection{Convergence of the cross term}
        \begin{lemma}[Convergence of the cross term]\label{lem:ridge-conv-C0}
            Under Assumptions \ref{asm:rmt-feat}-\ref{asm:lin-mod},
            for $T_C$ and $T_C'$ as defined in \eqref{eq:ridge-C0} and \eqref{eq:ridge-C0-M2}),
            we have $T_C\asto 0$ and $T_C'\asto 0$ as $k,p\rightarrow\infty$ and $p/k\rightarrow\phi_s$.
        \end{lemma}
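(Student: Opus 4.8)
The plan is to observe that, conditionally on the design $\bX$, both $T_C$ and $T_C'$ are \emph{linear} forms in the noise vector $\bepsilon$ whose coefficient vectors depend only on $\bX$ (and the deterministic signal $\bbeta_0$), and that these coefficient vectors have $\ell_2$-norm of order $k^{-1/2}$ almost surely. Since $\bepsilon$ is independent of $\bX$ with mean $\zero$, variance $\sigma^2$, and bounded moments of order $4+\delta$ under \Cref{asm:lin-mod}, each such form has conditional mean $0$, so it remains only to establish concentration around $0$. Concretely, writing $T_C = \bepsilon^{\top}\bv_{\bX}$ with $\bv_{\bX} = (2\lambda/k)\,\bL_{\ell}(\bI_n - \bX\bM_{\ell}\bX^{\top}\bL_{\ell}/k)^{\top}\bL_{\ell}\bX\bM_{\ell}\bbeta_0 \in \RR^n$, and likewise decomposing $T_C'$ in \eqref{eq:ridge-C0-M2} into its four summands $\bepsilon^{\top}\bv_{\bX}^{(i,j)}$, the claim reduces to an $\ell_2$-norm bound on these coefficient vectors together with a standard moment inequality for linear forms.

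First I would collect the relevant operator-norm estimates. We have $\norm{\bL_{\ell}}_{\oper}\le 1$ and $\norm{\bL_{m\cup l}}_{\oper}\le 1$ trivially, $\norm{\bM_{\ell}}_{\oper}\le\lambda^{-1}$ since $\hSigma_{\ell}\succeq\zero$ forces $\bM_{\ell}=(\hSigma_{\ell}+\lambda\bI_p)^{-1}\preceq\lambda^{-1}\bI_p$, and $\norm{\bbeta_0}_2 = O(1)$ by \Cref{asm:lin-mod}. Under the moment condition in \Cref{asm:rmt-feat}, standard random-matrix theory gives $\limsup \norm{\bX}_{\oper}/\sqrt{n} < \infty$ almost surely, hence $\norm{\bX}_{\oper} = O(\sqrt{k})$ because $n/k\to\phi_s/\phi\in(0,\infty)$; consequently $\norm{\bI_n - \bX\bM_{\ell}\bX^{\top}\bL_{\ell}/k}_{\oper}\le 1 + \norm{\bX}_{\oper}^2\lambda^{-1}/k = O(1)$ almost surely (for the $T_C$ term one can do slightly better, since $\bL_{\ell}(\bI_n - \bX\bM_{\ell}\bX^{\top}\bL_{\ell}/k)^{\top}\bL_{\ell} = \bL_{\ell} - \bL_{\ell}\bX\bM_{\ell}\bX^{\top}\bL_{\ell}/k$ is positive semidefinite with eigenvalues in $[0,1]$). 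Multiplying the factors then yields $\norm{\bv_{\bX}}_2 \le (2\lambda/k)\cdot O(1)\cdot O(\sqrt{k})\cdot\lambda^{-1}\cdot O(1) = O(k^{-1/2})$ almost surely, and the identical bookkeeping applied to each $\bv_{\bX}^{(i,j)}$ in \eqref{eq:ridge-C0-M2}, using $|I_m\cup I_l|/k \to (2\phi_s-\phi)/\phi_s\in(0,\infty)$ from \Cref{lem:i0_mean}, gives $\norm{\bv_{\bX}^{(i,j)}}_2 = O(k^{-1/2})$ almost surely as well.

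Finally, conditioning on $\bX$ and invoking the moment bound for linear forms of independent, mean-zero, bounded-$(4+\delta)$-moment entries from \Cref{sec:concen-linform-quadfrom} (a Marcinkiewicz--Zygmund/Rosenthal-type inequality, which gives $\EE[\,|\bepsilon^{\top}\bv|^{4+\delta}\mid\bX\,]\le C\norm{\bv}_2^{4+\delta}$ for $\bX$-measurable $\bv$ since $\norm{\bv}_{4+\delta}\le\norm{\bv}_2$), we obtain $\EE[\,|T_C|^{4+\delta}\mid\bX\,]\le C\norm{\bv_{\bX}}_2^{4+\delta} = O(k^{-(2+\delta/2)})$ on the almost-sure event carrying the operator-norm estimates. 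By the conditional Markov inequality, $\PP(|T_C|>\varepsilon\mid\bX) = O(k^{-(2+\delta/2)}\varepsilon^{-(4+\delta)})$; since $2+\delta/2 > 1$ these probabilities are summable along the sequence, so the (conditional) Borel--Cantelli lemma yields $T_C\asto 0$, and letting $\varepsilon\downarrow 0$ along a countable sequence finishes the argument. The same reasoning applied termwise to $\bv_{\bX}^{(i,j)}$ gives $T_C'\asto 0$. The only genuinely delicate point is that the $O(\cdot)$ constants in the operator-norm estimates must be controlled \emph{almost surely and uniformly in} $k$, so that the conditional concentration can be summed; this is exactly where the almost-sure random-matrix norm bound of \Cref{asm:rmt-feat} and the independence of $\bepsilon$ from $\bX$ are used in tandem.
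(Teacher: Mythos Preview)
Your proposal is correct and follows essentially the same route as the paper: write the cross term as a linear form in the noise vector $\bepsilon$, bound the $\ell_2$-norm of the coefficient vector via operator-norm estimates on $\bL_\ell$, $\bM_\ell$, $\bX$, and $\bbeta_0$, and then concentrate using the $(4+\delta)$-moment assumption on $\bepsilon$. The only cosmetic difference is that the paper packages your Rosenthal/Marcinkiewicz--Zygmund plus Borel--Cantelli step into the pre-stated \Cref{lem:concen-linform} (which asks for $\|\ba\|_2^2/n = O(1)$ almost surely and concludes $\ba^{\top}\bepsilon/n \asto 0$), whereas you spell out that concentration argument explicitly; and the paper phrases its norm bound in terms of $\|\hSigma_\ell\|_{\oper}$ rather than $\|\bX\|_{\oper}$, which are interchangeable here since $n/k\to\phi_s/\phi<\infty$.
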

        \begin{proof}[Proof of Lemma \ref{lem:ridge-conv-C0}]
            We first prove the result for $T_C'$.
            Note that
            \begin{align*}
                T_C' &= - \frac{\lambda}{M^2} \cdot \frac{1}{|I_1\cup I_2|}\left\langle \sum_{m=1}^2\left(\bI_n-\bX\frac{\bM_m\bX^{\top}\bL_m}{k}\right)^{\top}\bL_{m\cup l} \bX \sum_{m=1}^2\bM_m\bbeta_0
                , \bepsilon\right\rangle.
            \end{align*}
            We next bound the squared norm:
            \begin{align*}
                &\frac{1}{|I_m\cup I_l|}\norm{\frac{1}{2}\sum_{m=1}^2\left(\bI_n-\frac{\bX\bM_m\bX^{\top}\bL_m}{k}\right)^{\top} \bL_{m\cup l}\bX \sum_{m=1}^2\bM_m\bbeta_0}_2^2\\
                & \leq  \sum_{j=1}^2\sum\limits_{l=1}^2\left[\frac{|I_m\cup I_l|}{4k^2}\norm{( \bM_j\bX^{\top}\bL_j)^{\top} \hSigma_{m\cup l} \bM_l \bbeta_0}_2^2 + \frac{1}{4|I_1\cup I_2|}\norm{ \bL_j \bX \bM_l \bbeta_0}_2^2\right]\\
                & \leq 
                \frac{\norm{\bbeta_0}_2^2}{4}\cdot \sum_{j=1}^2\sum\limits_{l=1}^2\left[\frac{|I_m\cup I_l|}{k^2}
                \norm{\bM_l \hSigma_{m\cup l}  \bM_j  \bX^{\top} \bL_j\bX\bM_j \hSigma_{m\cup l} \bM_l}_{\oper} + \frac{k}{|I_1\cup I_2|}\norm{\hSigma_j}_{\oper}\norm{  \bM_l }_{\oper} \right]\\
                & \leq \frac{\norm{\bbeta_0}_2^2}{4} \cdot \sum_{j=1}^2\sum\limits_{l=1}^2\left[\frac{|I_m\cup I_l|}{k}
                \norm{\bM_l}_{\oper}^2 \norm{\hSigma_l}_{\oper}^2\norm{ \bM_j (\bX^{\top} \bL_j\bX/k)\bM_j }_{\oper} + \frac{k}{|I_1\cup I_2|} \norm{\bM_l}_{\oper} \norm{\hSigma_l}_{\oper}\right]\\
                & = \frac{\norm{\bbeta_0}_2^2}{4}\cdot \sum_{j=1}^2\sum\limits_{l=1}^2\left[
                \frac{|I_m\cup I_l|}{k}
                \norm{\bM_l}_{\oper}^2 \norm{\hSigma_l}_{\oper}^2\norm{\bM_j}_{\oper}\norm{ \bI_p - \lambda\bM_j }_{\oper}+ \frac{k}{|I_1\cup I_2|} \norm{\bM_l}_{\oper} \norm{\hSigma_l}_{\oper}\right]\\
                & \leq \frac{\norm{\bbeta_0}_2^2}{\lambda }\left(\frac{|I_m\cup I_l|}{k\lambda^2}
                 + \frac{k}{|I_1\cup I_2|} \right) \norm{\hSigma_l}_{\oper}^2, 
            \end{align*}
            where the last inequality is due to the fact that $\|\bM_j\|_{\oper}\leq1/\lambda$ and $\norm{ \bI_p - \lambda\bM_j }_{\oper}\leq 1$.
            By Assumption \ref{asm:lin-mod}, $\norm{\bbeta_0}_2^2$ is uniformly bounded in $p$.
            From \citet{bai2010spectral}, we have $\limsup\norm{\hSigma}_{\oper}\leq \limsup\max_{1\leq i\leq p} s_i^2\leq r_{\max}(1+\sqrt{\phi_s})^2$ almost surely as $k,p\rightarrow\infty$ and $p/k\rightarrow\phi_s\in(0,\infty)$.
            From \Cref{lem:i0_mean}, we have that $ k/|I_1\cup I_2| \asto k/(2k-k^2/n)$, which is $\phi_s/(2\phi_s-\phi)$ almost surely.
            Then we have that the square norm is almost surely upper bounded by some constant.
            Applying 
            \Cref{lem:concen-linform},
            we thus have that $T_C'\asto 0$.

            Note that when $I_1=I_2$, $T_C'$ reduces to $T_C$; thus, the conclusion for $T_C$ also holds.
        \end{proof}

        \subsubsection{Convergence of the variance term}
        \begin{lemma}[Convergence of the variance term]\label{lem:ridge-conv-V0}
            Under Assumptions \ref{asm:rmt-feat}-\ref{asm:lin-mod}, let $M\in\NN$ and $\hSigma=\bX^{\top}\bX/n$. For all $m\in[M]$ and $I_m\in \cI_k$, let $\hSigma_m=\bX^{\top}\bL_m\bX/k$, $\bL_m\in\RR^{n\times n}$ be a diagonal matrix with $(\bL_{m})_{ll}=1$ if $l\in I_m$ and 0 otherwise, and $\bM_m= (\bX^{\top}\bL_m\bX/k+\lambda\bI_p)^{-1}$.
            Then, for all
            $m,l\in[M]$ and $m\neq l$, it holds that:

            \begin{align}
                &\frac{1}{k} \bepsilon^{\top}\left(\bI_n-\bX\bM_m\frac{\bX^{\top}\bL_m}{k}\right)^{\top} \bL_{m}\left(\bI_n-\bX\bM_m\frac{\bX^{\top}\bL_m}{k}\right) \bepsilon\notag\\
                &\qquad -
                \sigma^2\left(1 - \frac{2}{k}\tr(\bM_{m}\hSigma_m) + \frac{1}{k}\tr(\bM_m\hSigma_m\bM_m\hSigma_m)\right)
                \asto 0, \label{eq:TV_tr_1}\\                
                &\frac{1}{|I_m\cup I_l|} \bepsilon^{\top}\left(\bI_n-\bX\bM_i\frac{\bX^{\top}\bL_i}{k}\right)^{\top} \bL_{m\cup l}\left(\bI_n-\bX\bM_i\frac{\bX^{\top}\bL_i}{k}\right) \bepsilon\notag\\
                &\qquad -\sigma^2\left(1- \frac{2}{|I_m\cup I_l|} \tr(\bM_{i}\hSigma_{i}) + \frac{1}{k}\tr(\bM_i\hSigma_{i}\bM_i\hSigma_{m\cup l})\right)\asto 0,\label{eq:TV_tr_2}\\
                &\frac{1}{|I_m\cup I_l|} \bepsilon^{\top}\left(\bI_n-\bX\bM_m\frac{\bX^{\top}\bL_m}{k}\right)^{\top} \bL_{m\cup l}\left(\bI_n-\bX\bM_l\frac{\bX^{\top}\bL_l}{k}\right) \bepsilon\notag\\
                &\qquad -\sigma^2\left(1- \frac{1}{|I_m\cup I_l|} \sum_{\ell\in\{i,j\}}\tr(\bM_{\ell}\hSigma_{\ell}) + \frac{1}{n}\tr(\bM_i\hSigma_{i\cap j}\bM_j\hSigma_{m\cup l})\right)\asto 0,\label{eq:TV_tr_3}
            \end{align}
            where $i,j\in\{m,l\}$, $i\neq j$, and  $\hSigma_{m\cup l} = \bX^{\top}\bL_{m\cup l}\bX/|I_m\cup I_l|$, as $n,k,p\rightarrow\infty$, $p/n\rightarrow\phi\in(0,\infty)$, and $p/k\rightarrow\phi_s\in[\phi,\infty)$.
        \end{lemma}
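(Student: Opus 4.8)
The plan is to observe that all three displays are instances of a single fact: each left-hand expression is, up to the indicated normalization, a quadratic (or bilinear) form $\bepsilon^\top \bA_n \bepsilon$ in the noise vector, where $\bA_n$ is a measurable function of $\bX$ only, while the subtracted quantity is $\sigma^2$ times $\tr(\bA_n)$ in the same normalization. So the three steps are: (i) identify $\bA_n$ and verify the trace identity by a direct expansion; (ii) show that $\|\bA_n\|_{\oper}$ is almost surely bounded, uniformly in $n$ and over the (random) index sets; and (iii) invoke a standard quadratic-form concentration lemma from \Cref{sec:concen-linform-quadfrom} to get $n^{-1}|\bepsilon^\top \bA_n \bepsilon - \sigma^2 \tr(\bA_n)| \asto 0$, which suffices since the ratios $n/k$ and $n/|I_m\cup I_l|$ converge to finite constants.

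For step (i), I would write $\bN_m := \bI_n - \bX\bM_m\bX^\top\bL_m/k$ (a symmetric-type factor, since $\bM_m$ is symmetric). For \eqref{eq:TV_tr_1} one has $\bA_n = \bN_m^\top\bL_m\bN_m$; expanding and repeatedly using $\bL_m^2 = \bL_m$, $\bX^\top\bL_m\bX = k\hSigma_m$, and cyclic invariance of the trace gives $\tr(\bA_n)/k = 1 - 2k^{-1}\tr(\bM_m\hSigma_m) + k^{-1}\tr(\bM_m\hSigma_m\bM_m\hSigma_m)$, exactly the stated target. For \eqref{eq:TV_tr_2}, $\bA_n = \bN_i^\top\bL_{m\cup l}\bN_i$, and the same computation, now also using $\bL_{m\cup l}\bL_i = \bL_i$ (valid since $i\in\{m,l\}$) and $\bX^\top\bL_{m\cup l}\bX = |I_m\cup I_l|\hSigma_{m\cup l}$, reproduces the claimed expression exactly. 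For \eqref{eq:TV_tr_3} the form is bilinear, so I would first symmetrize: $\bepsilon^\top\bN_m^\top\bL_{m\cup l}\bN_l\bepsilon = \bepsilon^\top\tfrac12(\bN_m^\top\bL_{m\cup l}\bN_l + \bN_l^\top\bL_{m\cup l}\bN_m)\bepsilon$, which does not change the trace. Expanding (now also using $\bL_m\bL_l = \bL_{m\cap l}$) produces a cross term carrying the coefficient $|I_m\cap I_l|/k^2$ in front of $\tr(\bM_i\hSigma_{i\cap j}\bM_j\hSigma_{m\cup l})$. Since this trace factor is $O(k)$ and $|I_m\cap I_l|/k^2 - 1/n = k^{-1}(|I_m\cap I_l|/k - k/n) \asto 0$ by \Cref{lem:i0_mean}, the coefficient may be replaced by $1/n$ with asymptotically vanishing error, recovering the displayed form.

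For step (ii), note $\bM_m = (\hSigma_m + \lambda\bI_p)^{-1}$ with $\hSigma_m\succeq 0$ and $\lambda > 0$, so $\|\bM_m\|_{\oper}\le\lambda^{-1}$ and $\|\bI_p - \lambda\bM_m\|_{\oper} = \|\bM_m\hSigma_m\|_{\oper}\le 1$; also $\|\bL_S\|_{\oper}\le 1$ for any $S$. The key point for uniformity is that $\bL_S\preceq\bI_n$ forces $\hSigma_S = \bX^\top\bL_S\bX/|S|\preceq (n/|S|)\,\hSigma$, hence $\|\hSigma_S\|_{\oper}\le (n/|S|)\|\hSigma\|_{\oper}$, and $\limsup\|\hSigma\|_{\oper}$ is a.s.\ finite by \citet{bai2010spectral} while $n/|S|$ stays bounded; this avoids any union bound over the $\binom{n}{k}$ possible index sets. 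Consequently $\|\bX\bM_m\bX^\top\bL_m/k\|_{\oper}\le\|\bX/\sqrt k\|_{\oper}\|\bM_m\|_{\oper}\|\bL_m\bX/\sqrt k\|_{\oper}$ is a.s.\ bounded, so $\|\bN_m\|_{\oper}$, and therefore all the $\bA_n$'s, are a.s.\ bounded in operator norm.

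Step (iii) is then immediate: with $\|\bA_n\|_{\oper}$ a.s.\ bounded and $\bepsilon$ independent of $\bX$ with i.i.d.\ entries of finite $(4+\delta)$-th moment (\Cref{asm:lin-mod}), the almost-sure quadratic-form (trace) concentration lemma of \Cref{sec:concen-linform-quadfrom} — whose Borel--Cantelli argument is exactly where the extra $\delta$ moment is spent — yields $n^{-1}|\bepsilon^\top\bA_n\bepsilon - \sigma^2\tr(\bA_n)|\asto 0$ in all three cases; rescaling $1/n$ to $1/k$ or $1/|I_m\cup I_l|$ is harmless since those ratios converge. There is no genuinely deep analytic step here; the only places that need care are the bookkeeping of the set-intersection combinatorics in \eqref{eq:TV_tr_3} and making the operator-norm bounds hold uniformly over the random index sets, both of which are handled above.
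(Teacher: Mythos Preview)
Your proposal is correct and follows essentially the same route as the paper: bound the operator norm of the sandwich matrix, apply the quadratic-form concentration lemma from \Cref{sec:concen-linform-quadfrom} (conditionally on $\bX$), and then expand the trace using the set identities $\bL_{m\cup l}\bL_i=\bL_i$ and $\bL_m\bL_l=\bL_{m\cap l}$, finally replacing $|I_m\cap I_l|/k^2$ by $1/n$ via \Cref{lem:i0_mean}. The one small but genuine improvement in your version is the bound $\hSigma_S\preceq (n/|S|)\hSigma$, which makes the operator-norm control uniform over all admissible index sets $S$ without invoking \citet{bai2010spectral} separately on each subsample; the paper instead appeals to $\limsup\|\hSigma_m\|_{\oper}\le r_{\max}(1+\sqrt{\phi_s})^2$ per subsample. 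Your symmetrization step in \eqref{eq:TV_tr_3} is harmless but unnecessary: the concentration lemma in \Cref{sec:concen-linform-quadfrom} does not require $\bD_p$ to be symmetric, and the paper applies it directly to the asymmetric form.
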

        \begin{proof}[Proof of Lemma \ref{lem:ridge-conv-V0}]
            We first prove the last convergence result.
            Note that
            \begin{align*}
                & \norm{\left(\bI_n-\bX\bM_m\frac{\bX^{\top}\bL_m}{k}\right)^{\top} \bL_{m\cup l} \left(\bI_n-\bX\bM_l\frac{\bX^{\top}\bL_l}{k}\right)}_{\oper} \\
                &= \norm{\bL_{m\cup l} - \frac{1}{k}\bL_m\bX\bM_m\bX^{\top}\bL_{m\cup l} - \frac{1}{k}\bL_{m\cup l}\bX\bM_l\bX^{\top}\bL_{l} +\frac{1}{k^2} \bL_m\bX\bM_m\bX^{\top}\bL_{m\cup l}\bX\bM_{l}\bX^{\top}\bL_l}_{\oper}\\
                &\leq 1 + \sqrt{\frac{|I_m\cup I_l|}{k}}\sum_{j\in\{m,l\}}\norm{\hSigma_j}_{\oper}^{\frac{1}{2}}  \norm{\bM_j}_{\oper} \norm{\hSigma_{m\cup l}}_{\oper}^{\frac{1}{2}} + \frac{|I_m\cup I_l|}{k}\norm{\hSigma_m}_{\oper}^{\frac{1}{2}}\norm{\bM_m}_{\oper}\norm{\hSigma_{m\cup l} }_{\oper}\norm{ \bM_l}_{\oper}\norm{\hSigma_l}_{\oper}^{\frac{1}{2}}\\
                &\leq 1 + \frac{2}{\lambda}\sqrt{\frac{|I_m\cup I_l|}{k}}\norm{\hSigma_m}_{\oper}^{\frac{1}{2}} \norm{\hSigma_l}_{\oper}^{\frac{1}{2}} +  \frac{1}{\lambda^2}\frac{|I_m\cup I_l|}{k}\norm{\hSigma_m}_{\oper}^{\frac{1}{2}}\norm{\hSigma_{m\cup l} }_{\oper}\norm{\hSigma_l}_{\oper}^{\frac{1}{2}}.
            \end{align*}
            Now, we have $\limsup\|\hSigma\|_{\oper}\leq \limsup\max_{1\leq i\leq p} s_i^2\leq r_{\max}(1+\sqrt{\phi})^2$ almost surely as $n,p\rightarrow\infty$ and $p/n\rightarrow\phi\in(0,\infty)$ from \citet{bai2010spectral}.
            Similarly, $\limsup\|\hSigma_m\|_{\oper}\leq r_{\max}(1+\sqrt{\phi_s})^2$ almost surely.
            From \Cref{lem:i0_mean}, $|I_m\cup I_l|/k\asto (2\phi_s-\phi)/\phi_s$.
            Then the above quantity is asymptotically upper bounded by some constant as $n,k,p\rightarrow\infty$, $p/n\rightarrow\phi\in(0,\infty)$ and $p/k\rightarrow\phi_s\in[\phi,\infty)$.
            From 
            \Cref{lem:concen-quadform},
            it follows that 
            \begin{align*}
                &\frac{1}{|I_m\cup I_l|} \bepsilon^{\top}\left(\bI_n-\bX\bM_i\frac{\bX^{\top}\bL_i}{k}\right)^{\top} \bL_{m\cup l} \left(\bI_n-\bX\bM_j\frac{\bX^{\top}\bL_j}{k}\right) \bepsilon\\
                &\quad- \frac{\sigma^2}{|I_m\cup I_l|}\tr\left(\left(\bI_n-\bX\bM_i\frac{\bX^{\top}\bL_i}{k}\right)^{\top} \bL_{m\cup l}\left(\bI_n-\bX\bM_j\frac{\bX^{\top}\bL_j}{k}\right)\right) \asto 0.
            \end{align*}
            Expanding the trace term above, we have
            \begin{align}
                &\frac{\sigma^2}{|I_m\cup I_l|}\tr\left(\left(\bI_n-\bX\bM_i\frac{\bX^{\top}\bL_i}{k}\right)^{\top} \bL_{m\cup l}\left(\bI_n-\bX\bM_j\frac{\bX^{\top}\bL_j}{k}\right)\right)\notag\\
                &= \sigma^2\left(1 - \frac{1}{|I_m\cup I_l|}\sum_{\ell\in\{i,j\}}\tr(\bM_{\ell}\hSigma_{\ell}) + \frac{|I_i\cap I_j|}{k^2}\tr(\bM_i\hSigma_{i\cup j}\bM_j\hSigma_{m\cap l})\right).\label{eq:eq:TV_tr_2_0}
            \end{align}            
            Since $I_m,I_l\overset{\SRS}{\sim}\cI_k$, from \Cref{lem:i0_mean} we have that $|I_m\cap I_l|/k \asto k/n$.
            Then,
            we have
            \begin{align*}
                &\frac{1}{|I_m\cup I_l|} \bepsilon^{\top}\left(\bI_n-\bX\bM_i\frac{\bX^{\top}\bL_i}{k}\right)^{\top} \bL_{m\cup l} \left(\bI_n-\bX\bM_j\frac{\bX^{\top}\bL_j}{k}\right) \bepsilon\\
                &\quad - \sigma^2\left(1 - \frac{1}{|I_m\cup I_l|} \sum_{\ell\in\{i,j\}}\tr(\bM_{\ell}\hSigma_{\ell}) + \frac{1}{n}\tr(\bM_i\hSigma_{i\cap j}\bM_j\hSigma_{m\cup l})\right)\asto 0,
            \end{align*}
            and thus \eqref{eq:TV_tr_3} follows.
            
            Setting $i=j$ in \eqref{eq:eq:TV_tr_2_0} yields \eqref{eq:TV_tr_2}.
            
            Finally, setting $i=j=l=m$ in \eqref{eq:eq:TV_tr_2_0} finishes the proof for \eqref{eq:TV_tr_1}.
        \end{proof}

    \subsection{Component deterministic approximations}
    \label{app:sec:comp-deter}
    
        \subsubsection{Deterministic approximation of the bias functional}
        \begin{lemma}[Deterministic approximation of the bias functional]\label{lem:ridge-B0}
            Under Assumptions \ref{asm:rmt-feat}-\ref{asm:lin-mod}, for all $m\in[M]$ and $I_m\in \cI_k$, let $\hSigma_m=\bX^{\top}\bL_m\bX/k$, $\bL_m\in\RR^{n\times n}$ be a diagonal matrix with $(\bL_{m})_{ll}=1$ if $l\in I_m$ and 0 otherwise, and $\bM_m= (\bX^{\top}\bL_m\bX/k+\lambda\bI_p)^{-1}$.
            Then, it holds that:
            \begin{enumerate}
                \item For all $m\in[M]$,
                $$\lambda^2 \bbeta_0^{\top}\bM_m\hSigma_m\bM_m\bbeta_0 \asto \rho^2\lambda^2 v(-\lambda;\phi_s)^2(1+\tv(-\lambda;\phi_s,\phi_s)) \tc(-\lambda;\phi_s).$$

                \item For all $m,l\in[M]$, $m\neq l$ and $I_m,I_l\overset{\texttt{\textup{SRSWR}}}{\sim}\cI_k$,
                \begin{align}
                    \lambda^2 \bbeta_0^{\top}\bM_l\hSigma_{m\setminus l}\bM_l\bbeta_0 \asto \rho^2 (1+\tv(-\lambda;\phi_s,\phi_s))\tc(-\lambda;\phi_s).
                \end{align}

                \item For all $m,l\in[M]$, $m\neq l$ and $I_m,I_l\overset{\texttt{\textup{SRSWR}}}{\sim}\cI_k$,
                \begin{align}
                    \lambda^2 \bbeta_0^{\top}\bM_m\hSigma_{m\cup l}\bM_l\bbeta_0 \asto \rho^2\lambda^2\left(\frac{2(\phi_s-\phi)}{2\phi_s-\phi} \frac{1}{\lambda v(-\lambda;\phi_s)} + \frac{\phi}{2\phi_s-\phi}\right)v(-\lambda;\phi_s)^2(1+\tv(-\lambda;\phi,\phi_s))\tc(-\lambda;\phi_s),
                \end{align}

            \end{enumerate}
            where $\hSigma_{m\cup l} = |I_m\cup I_l|^{-1}\bX^{\top}\bL_{m\cup l}\bX$, as $n,k,p\rightarrow\infty$, $p/n\rightarrow\phi\in(0,\infty)$, and $p/k\rightarrow\phi_s\in[\phi,\infty)$, where $\phi_0=\phi_s^2/\phi$, 
            $T_B$ is as defined in \eqref{eq:ridge-B0}, and the nonnegative constants $\tv(-\lambda  ;\phi,\phi_s)$ and $\tc(-\lambda;\phi_s)$ are as defined in \eqref{eq:tv_tc_ridge}.
        \end{lemma}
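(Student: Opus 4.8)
Each of the three displays in \Cref{lem:ridge-B0} is a quadratic form $\lambda^2\bbeta_0^\top\bN\bbeta_0$ in which $\bN$ is a product of ridge resolvents $\bM_m=(\hSigma_m+\lambda\bI_p)^{-1}$ and sample covariances $\hSigma_S:=|S|^{-1}\bX^\top\bL_S\bX$ (matching the normalizations in the statement). The plan is to replace $\bN$ by a deterministic matrix using the calculus of asymptotic matrix equivalents in \Cref{sec:calculus_asymptotic_equivalents} and then read off the limit as an integral against $G$ via \Cref{asm:lin-mod}. Three facts will be used repeatedly: (i) the scaled resolvent equivalent $\lambda\bM_m\asympequi(v(-\lambda;\phi_s)\bSigma+\bI_p)^{-1}$ of \Cref{cor:asympequi-scaled-ridge-resolvent} (I abbreviate $v:=v(-\lambda;\phi_s)$); (ii) since $\bbeta_0\bbeta_0^\top$ has trace norm $\|\bbeta_0\|_2^2=O(1)$ under \Cref{asm:lin-mod}, an equivalence $\bN\asympequi\bar\bN$ yields $\bbeta_0^\top(\bN-\bar\bN)\bbeta_0\asto 0$, after which \Cref{asm:lin-mod} gives $\bbeta_0^\top g(\bSigma)\bbeta_0\asto\rho^2\int g(r)\,\rd G(r)$ for bounded continuous $g$; and (iii) the sandwiched squared-resolvent equivalent of \Cref{append:det-equi-resol}, which for deterministic symmetric $\bA$ of bounded norm evaluates $\lambda^2\bbeta_0^\top\bM_m\bA\bM_m\bbeta_0$, and which in the special case $\bA=\bSigma$ gives the $M=1$ prediction-bias limit $\lambda^2\bbeta_0^\top\bM_m\bSigma\bM_m\bbeta_0\asto\rho^2(1+\tv(-\lambda;\phi_s,\phi_s))\,\tc(-\lambda;\phi_s)=B_\lambda(\phi_s,\phi_s)$ of \Cref{thm:ver-with-replacement} (consistent with $\bbeta_0-\EE_{\bepsilon}[\hbeta^{\lambda}_{k}(\cD_{I_m})]=\lambda\bM_m\bbeta_0$).

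\textbf{Parts 1 and 2.} For Part 1, the identity $\hSigma_m\bM_m=\bI_p-\lambda\bM_m$ gives $\bM_m\hSigma_m\bM_m=\partial_\lambda(\lambda\bM_m)$, so $\lambda^2\bbeta_0^\top\bM_m\hSigma_m\bM_m\bbeta_0=\lambda^2\,\partial_\lambda\!\bigl(\lambda\,\bbeta_0^\top\bM_m\bbeta_0\bigr)$. Since $\lambda\,\bbeta_0^\top\bM_m\bbeta_0=\bbeta_0^\top(\lambda\bM_m)\bbeta_0\asto\rho^2\!\int(1+v(-\lambda;\phi_s)r)^{-1}\,\rd G(r)$ and both sides are analytic on $\lambda\in(0,\infty)$, the differentiation rule for asymptotic equivalents (\Cref{sec:calculus_asymptotic_equivalents}) together with analyticity of $v(-\lambda;\phi_s)$ in $\lambda$ (\Cref{app:sec:analytic-properties}) transfers the convergence to $\lambda$-derivatives, producing the limit $-\lambda^2\rho^2\,\partial_\lambda v(-\lambda;\phi_s)\,\tc(-\lambda;\phi_s)$; differentiating the fixed-point equation \eqref{eq:v_ridge} in $\lambda$ shows $-\partial_\lambda v(-\lambda;\phi_s)=v(-\lambda;\phi_s)^2(1+\tv(-\lambda;\phi_s,\phi_s))$, which is exactly the claimed limit. (Alternatively, write $\bM_m\hSigma_m\bM_m=\bM_m-\lambda\bM_m^2$, apply the squared-resolvent equivalent with $\bA=\bI_p$, and collapse onto $\tc$ using $\int vr(1+vr)^{-2}\,\rd G=\int(1+vr)^{-1}\,\rd G-\int(1+vr)^{-2}\,\rd G$ and \eqref{eq:v_ridge}.) For Part 2, $\hSigma_{m\setminus l}=|I_m\setminus I_l|^{-1}\bX^\top\bL_{m\setminus l}\bX$ is built from the rows of $\bX$ indexed by $I_m\setminus I_l$, which is disjoint from $I_l$ and hence independent of $\bM_l$; conditioning on the rows in $I_l$, the quadratic form has conditional mean $\lambda^2\bbeta_0^\top\bM_l\bSigma\bM_l\bbeta_0$ and the concentration lemmas of \Cref{sec:concen-linform-quadfrom} show the conditional fluctuation vanishes almost surely, so the limit is the prediction-bias limit in fact~(iii), namely $\rho^2(1+\tv(-\lambda;\phi_s,\phi_s))\,\tc(-\lambda;\phi_s)$; the cardinality ratios used here (e.g.\ $|I_m\setminus I_l|/k\asto(\phi_s-\phi)/\phi_s$) are supplied by \Cref{lem:i0_mean}.

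\textbf{Part 3 (the main obstacle).} The cross term $\lambda^2\bbeta_0^\top\bM_m\hSigma_{m\cup l}\bM_l\bbeta_0$ is the hard one: it couples two resolvents $\bM_m,\bM_l$ from overlapping subsamples. I would split $\bX^\top\bL_{m\cup l}\bX=\bX^\top\bL_{m\setminus l}\bX+\bX^\top\bL_{l\setminus m}\bX+\bX^\top\bL_{m\cap l}\bX$ and treat the three blocks separately. The two private blocks --- $I_l\setminus I_m$ is independent of $\bM_m$, and $I_m\setminus I_l$ of $\bM_l$ --- reduce as in Part 2 (conditioning and using the concentration lemmas of \Cref{sec:concen-linform-quadfrom}) to a cross-resolvent form $\bbeta_0^\top\bM_m\bSigma\bM_l\bbeta_0$ times a cardinality weight. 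For the shared block I would use the resolvent identities $\bX^\top\bL_{m\cap l}\bX=k\hSigma_m-\bX^\top\bL_{m\setminus l}\bX=k\hSigma_l-\bX^\top\bL_{l\setminus m}\bX$ along with $\bM_m\hSigma_m=\bI_p-\lambda\bM_m$ and $\hSigma_l\bM_l=\bI_p-\lambda\bM_l$ to rewrite $\bM_m(\bX^\top\bL_{m\cap l}\bX)\bM_l$ in terms of $\bM_m$, $\bM_l$, $\bM_m\bM_l$, and again $\bM_m\bSigma\bM_l$. Everything thus reduces to a deterministic equivalent for the two-resolvent product $\bM_m\bSigma\bM_l$ with overlapping subsamples, which I would establish in \Cref{append:det-equi-resol}: because the two resolvents are coupled only through their shared rows --- of number $|I_m\cap I_l|\approx k^2/n$, i.e.\ a proportion $\phi/\phi_s$ of a subsample --- the self-consistency loop that produces the correction is governed by the aspect ratio $\phi$ rather than $\phi_s$, which is exactly why the limit carries $\tv(-\lambda;\phi,\phi_s)$ (first argument $\phi$) and why the private and shared blocks enter with weights $\tfrac{2(\phi_s-\phi)}{2\phi_s-\phi}$ and $\tfrac{\phi}{2\phi_s-\phi}$, the limiting values of $(|I_m\setminus I_l|+|I_l\setminus I_m|)/|I_m\cup I_l|$ and $|I_m\cap I_l|/|I_m\cup I_l|$ from \Cref{lem:i0_mean}, the private blocks additionally picking up the extra $\tfrac{1}{\lambda v(-\lambda;\phi_s)}$ seen in the statement. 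Collecting the terms, converting the remaining traces into $H$-integrals via~(i), and simplifying with \eqref{eq:v_ridge} gives the stated limit. The delicate step --- and where I expect the real work to lie --- is proving the two-resolvent equivalent for overlapping subsamples with the correct $(\phi,\phi_s)$-dependence; once that is in hand, the remainder is the bookkeeping already used for the non-ensemble ($M=1$) case.
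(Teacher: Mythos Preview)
Your treatment of Parts 1 and 2 is fine. Part 1 via $\bM_m\hSigma_m\bM_m=\partial_\lambda(\lambda\bM_m)$ together with the differentiation rule is a legitimate alternative to the paper's direct invocation of the squared-resolvent equivalent with $\bA=\bI_p$; Part 2 is essentially the paper's argument.

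Part 3, however, has a real gap. You split $\bL_{m\cup l}=\bL_{m\setminus l}+\bL_{l\setminus m}+\bL_{m\cap l}$ and claim the private blocks ``reduce as in Part 2'' to $\bM_m\bSigma\bM_l$. But in Part 2 the middle covariance $\hSigma_{m\setminus l}$ was independent of \emph{both} flanking resolvents (they were the same $\bM_l$, built from $I_l$, disjoint from $I_m\setminus I_l$). Here that fails: $\hSigma_{m\setminus l}$ uses rows $I_m\setminus I_l\subseteq I_m$, so it is \emph{correlated} with $\bM_m$; symmetrically $\hSigma_{l\setminus m}$ is correlated with $\bM_l$. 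The replacement $\bM_m\hSigma_{m\setminus l}\bM_l\asympequi\bM_m\bSigma\bM_l$ is therefore not justified by a conditioning/concentration argument, and in fact it is false --- just as $\bM_m\hSigma_m\bM_m\not\asympequi\bM_m\bSigma\bM_m$ (compare $\lambda^2\tv_v(-\lambda;\phi_s)$ with $1+\tv(-\lambda;\phi_s,\phi_s)$, which differ unless $\lambda v=1$). Your shared-block rewriting then re-introduces these same private blocks, so the error does not cancel.

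The paper avoids the private blocks altogether by a different inclusion--exclusion: write $\hSigma_{m\cup l}=\tfrac{k}{2k-i_0}(\hSigma_m+\hSigma_l)-\tfrac{i_0}{2k-i_0}\hSigma_{m\cap l}$ and immediately collapse $\bM_m\hSigma_m\bM_l=\bM_l-\lambda\bM_m\bM_l$ and $\bM_m\hSigma_l\bM_l=\bM_m-\lambda\bM_m\bM_l$. This leaves only $\bM_m,\bM_l$, the product $\bM_m\bM_l$, and the intersection term $\bM_m\hSigma_{m\cap l}\bM_l$, for which the two-resolvent equivalents of \Cref{lem:resolv-insample} apply directly. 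Your instinct that the hard lemma is the cross-resolvent equivalent for overlapping subsamples is right; the fix is to route the decomposition through $\hSigma_m,\hSigma_l,\hSigma_{m\cap l}$ rather than through the private pieces.
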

        \begin{proof}[Proof of Lemma \ref{lem:ridge-B0}]
            We split the proof into different parts.
            
            \paragraph{Part (1)}
            From Lemma \ref{lem:deter-approx-generalized-ridge}~\ref{eq:detequi-ridge-genvar} (with $\bA=\bI_p$), we have that %
            \begin{align}
                \lambda^2
                \bM_m \hSigma_m \bM_m
                \asympequi 
                v(-\lambda;\phi_s)^2(1+\tv(-\lambda;\phi_s,\phi_s)) \cdot (v(-\lambda; \phi_s) \bSigma + \bI_p)^{-1}
                 \bSigma 
                (v(-\lambda; \phi_s) \bSigma + \bI_p)^{-1}.\label{eq:lem-det-approx-B0-0}
            \end{align}
            By the definition of asymptotic equivalent, we have
            \begin{align}
                \lambda^2\bbeta_0^{\top} \bM_m \hSigma_m \bM_m\bbeta_0 &\asto \lim\limits_{p\rightarrow\infty}v(-\lambda;\phi_s)^2(1+\tv(-\lambda;\phi_s,\phi_s))\sum\limits_{i=1}^p \frac{r_i}{(1 + r_i v(-\lambda; \phi_s))^2} (\bbeta_0^{\top}w_i)^2 \notag\\
                &= \lim\limits_{p\rightarrow\infty}\|\bbeta_0\|_2^2v(-\lambda;\phi_s)^2(1+\tv(-\lambda;\phi_s,\phi_s))\int \frac{r}{(1 + v(-\lambda; \phi_s)r)^2} \rd G_{p}(r)\notag\\
                &= \rho^2v(-\lambda;\phi_s)^2(1+\tv(-\lambda;\phi_s,\phi_s)) \int \frac{r}{(1 + v(-\lambda; \phi_s)r)^2} \rd G(r),\label{eq:lem-det-approx-B0-1}
            \end{align}
            where the last equality holds since $G_p$ and $G$ have compact supports and invoking Assumption \ref{asm:lin-mod}.

        \paragraph{Part (2)}
        From \Cref{lem:resolv-insample}~\ref{item:lem-ridge-ind}, we have
        \begin{align*}
            \bM_l\hSigma_{m\setminus l}\bM_l \asympequi \bM_l\bSigma\bM_l .
        \end{align*}
        Then, from \citet[Lemma S.2.4]{patil2022bagging}, the conclusion follows.
        
        \paragraph{Part (3)}            
        For the cross term, it suffices to derive the asymptotic equivalent of $\bbeta_0^{\top}\bM_1\hSigma_{1\cup 2}\bM_2\bbeta_0$. 
        We begin with analyzing the asymptotic equivalent of $\bM_1\hSigma_{1\cup 2}\bM_2$.
        Let $i_0=\tr(\bL_1\bL_2)$ be the number of shared samples between $\cD_{I_1}$ and $\cD_{I_2}$, we use the decomposition
        \begin{align*}
            \bM_j^{-1}&=\frac{i_0}{k}(\hat{\bSigma}_0+\lambda\bI_p) +  \frac{k-i_0}{k}(\hat{\bSigma}_j^{\text{ind}} + \lambda\bI_p),\qquad j=1,2,
        \end{align*} where $\hat{\bSigma}_0=\bX^{\top}\bL_1\bL_2\bX/i_0$ and $\hat{\bSigma}_{j}^{\text{ind}}=\bX^{\top}(\bL_j-\bL_1\bL_2)\bX/(k-i_0)$ are the common and individual covariance estimators of the two datasets.
        Let $\bN_0=(\hat{\bSigma}_0+\lambda\bI_p)^{-1}$ and $\bN_j=(\hat{\bSigma}_j^{\text{ind}}+\lambda\bI_p)^{-1}$ for $j=1,2$. Then
        \begin{align}
            \bM_j&= \left(\frac{i_0}{k}\bN_0^{-1}+\frac{k-i_0}{k}\bN_j^{-1}\right)^{-1}, \qquad j=1,2, \label{eq:ridge-bias-term1}
        \end{align}
        where the equalities hold because $\bN_0$ is invertible when $\lambda>0$.
        Note that 
        \begin{align*}
            \hSigma_{1\cup 2} &= \frac{k}{2k-i_0}\hSigma_{1} + \frac{k}{2k-i_0}\hSigma_{2} - \frac{i_0}{2k-i_0}\hSigma_{0},\\
            &= \frac{k}{2k-i_0}\sum_{j=1}^2(\bM_{j}^{-1}-\lambda\bI_p)  - \frac{i_0}{2k-i_0}\hSigma_{0}.
        \end{align*}
        We have that
        \begin{align}
            \lambda^2\bM_1\hSigma_{1\cup 2}\bM_2 &=  \frac{k}{2k-i_0}\lambda^2\sum_{j=1}^2 \bM_j - \frac{2k}{2k-i_0}\lambda^3 \bM_1\bM_2 - \frac{i_0}{2k-i_0}\lambda^2\bM_1\hSigma_{0}\bM_2 .\label{eq:bias-M1ShM2}
        \end{align}
        Next, we derive the asymptotic equivalents for the three terms in \eqref{eq:bias-M1ShM2}.
        From \Cref{cor:asympequi-scaled-ridge-resolvent}, the first term admits 
        \begin{align}
            \lambda\bM_j\asympequi (v(-\lambda;\phi_s)\bSigma+\bI_p)^{-1}.\label{eq:bias-M1ShM2-term-1}
        \end{align}

        Note that
        \begin{align}
            \lambda^2\bM_1\bM_2 &\asympequi \left(v(-\lambda; \phi_s) \bSigma + \bI_p\right)^{-1} (\tv(-\lambda;\phi,\phi_s,\bI_p)\bSigma+\bI_p)\left(v(-\lambda; \phi_s) \bSigma + \bI_p\right)^{-1}\\
            &\asympequi \left(v(-\lambda; \phi_s) \bSigma + \bI_p\right)^{-2} (\tv(-\lambda;\phi,\phi_s,\bI_p)\bSigma+\bI_p),\label{eq:bias-M1ShM2-term-2}
        \end{align}
        where
        \begin{align*}
            \tv(-\lambda;\phi,\phi_s,\bI_p) &=\ddfrac{\phi \int\frac{r}{(1+v(-\lambda; \phi_s)r)^2}\rd H(r) }{v(-\lambda; \phi_s)^{-2}-\phi \int\frac{r^2}{(1+v(-\lambda; \phi_s)r)^2}\rd H(r)} .
        \end{align*}

        For the third term, 
         \begin{align}
            \bM_1 \hSigma_0 \bM_2 &\asympequi \tv_v(-\lambda;\phi,\phi_s)( v(-\lambda; \phi_s) \bSigma + \bI_p)^{-2}\bSigma,\label{eq:bias-M1ShM2-term-3}
        \end{align}
        where 
        \begin{align*}
            \tv_v(-\lambda;\phi,\phi_s) &:= \ddfrac{1}{v(-\lambda; \phi_s)^{-2}-\phi \int\frac{r^2}{(1+ v(-\lambda; \phi_s)r)^2}\,\rd H(r)}. 
        \end{align*}
        Combining \eqref{eq:bias-M1ShM2-term-1}-\eqref{eq:bias-M1ShM2-term-3}, we get 
        \begin{align}
            \lambda^2\bM_1\hSigma_{1\cup 2}\bM_2 &\asympequi  \left(\frac{2\phi_s}{2\phi_s-\phi}  (v(-\lambda; \phi_s)-\tv(-\lambda;\phi,\phi_s,\bI_p))  - \frac{\phi}{2\phi_s-\phi} \lambda\tv_v(-\lambda;\phi,\phi_s) \right)
            \lambda( v(-\lambda; \phi_s) \bSigma + \bI_p)^{-2}\bSigma\notag\\
            &= \lambda^2v(-\lambda; \phi_s)^2(1+\tv(-\lambda;\phi,\phi_s))\left(\frac{2(\phi_s-\phi)}{2\phi_s-\phi} \frac{1}{\lambda v(-\lambda;\phi_s)} + \frac{\phi}{2\phi_s-\phi}\right)( v(-\lambda; \phi_s) \bSigma + \bI_p)^{-2}\bSigma.
        \end{align}
        The last conclusion follows analogously as in \eqref{eq:lem-det-approx-B0-1}.
        \end{proof}

        \subsubsection{Deterministic approximation of the variance functional}
        \begin{lemma}[Deterministic approximation of the variance functional]\label{lem:ridge-V0}
            Under Assumptions \ref{asm:rmt-feat}-\ref{asm:lin-mod}, for all $m\in[M]$ and $I_m\in \cI_k$, let $\hSigma_m=\bX^{\top}\bL_m\bX/k$, $\bL_m\in\RR^{n\times n}$ be a diagonal matrix with $(\bL_{m})_{ll}=1$ if $l\in I_m$ and 0 otherwise,  and $\bM_m= (\bX^{\top}\bL_m\bX/k+\lambda\bI_p)^{-1}$.
            Then, it holds that:            
            \begin{enumerate}
                \item For all $m\in[M]$ and $I_m\in\cI_k$,
                \begin{align}
                    1 - \frac{2}{k}\tr(\bM_{m}\hSigma_m) + \frac{1}{k}\tr(\bM_m\hSigma_m\bM_m\hSigma_m) \asto \lambda^2v(-\lambda; \phi_s)^2(1+\tv(-\lambda;\phi_s,\phi_s)). \label{eq:TV-1}
                \end{align}

                \item For all $m,l\in[M]$, $m\neq l$ and $I_m,I_l\overset{\texttt{\textup{SRSWR}}}{\sim}\cI_k$,
                \begin{align}
                    \frac{1}{k}\tr(\bM_m\hSigma_{m}\bM_m\hSigma_{l\setminus m}) \asto \tv(-\lambda;\phi_s,\phi_s). \label{eq:TV-2}
                \end{align}

                \item For all $m,l\in[M]$, $m\neq l$ and $I_m,I_l\overset{\texttt{\textup{SRSWR}}}{\sim}\cI_k$,
                \begin{align}
                    1 - \frac{1}{|I_m\cup I_l|}&\sum_{j\in\{m,l\}}\tr(\bM_{j}\hSigma_{j}) + \frac{1}{n}\tr(\bM_l\hSigma_{m\cap l}\bM_m\hSigma_{m\cup l}) \asto\notag\\ 
                    & \Ddet[][\lambda][\phi_s]\left(\frac{2(\phi_s-\phi)}{2\phi_s-\phi} \frac{1}{\lambda v(-\lambda;\phi_s)} + \frac{\phi}{2\phi_s-\phi}\right)(1+\tv(-\lambda;\phi,\phi_s)),\label{eq:TV-3}
                \end{align}

            \end{enumerate}
            where $\hSigma_{l\setminus m} = |I_l\setminus I_m|^{-1}\bX^{\top}\bL_{l\setminus m}\bX$ and $\hSigma_{m\cup l} = |I_m\cup I_l|^{-1}\bX^{\top}\bL_{m\cup l}\bX$, as $n,k,p\rightarrow\infty$, $p/n\rightarrow\phi\in(0,\infty)$, and $p/k\rightarrow\phi_s\in[\phi,\infty)$, where the nonnegative constant $\tv(\lambda  ;\phi,\phi_s)$ is as defined in \eqref{eq:tv_tc_ridge}.
        \end{lemma}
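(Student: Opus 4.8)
The plan is to reduce each of the three displayed limits to a normalized trace of a product of the ridge resolvent $\bM_m$ (in its primal or dual form) against $\bSigma$ and its sample blocks, substitute the deterministic asymptotic equivalents collected in \Cref{sec:calculus_asymptotic_equivalents}, pass to normalized traces via \Cref{lem:calculus-detequi}, and finally collapse the resulting $H$-integrals using the fixed-point equation \eqref{eq:v_ridge}; throughout, the cardinalities $|I_m\cap I_l|$ and $|I_m\cup I_l|$ are replaced by their almost sure limits from \Cref{lem:i0_mean}, so that $|I_m\cap I_l|/k\to\phi/\phi_s$ and $|I_m\cup I_l|/k\to(2\phi_s-\phi)/\phi_s$. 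I abbreviate $v=v(-\lambda;\phi_s)$. For \textbf{Part (1)}, I would first use $\bM_m\hSigma_m=\hSigma_m\bM_m=\bI_p-\lambda\bM_m$ together with the push-through identity $\bI_k-\bX_{I_m}\bM_m\bX_{I_m}^{\top}/k=\lambda(\bX_{I_m}\bX_{I_m}^{\top}/k+\lambda\bI_k)^{-1}$ to rewrite the left side of \eqref{eq:TV-1} as $\lambda^2 k^{-1}\tr[(\bX_{I_m}\bX_{I_m}^{\top}/k+\lambda\bI_k)^{-2}]$, equivalently as a fixed linear combination of $p/k$, $\lambda k^{-1}\tr\bM_m$ and $\lambda^2 k^{-1}\tr\bM_m^2$. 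Using $\lambda\bM_m\asympequi(v\bSigma+\bI_p)^{-1}$ from \Cref{cor:asympequi-scaled-ridge-resolvent} and the generalized-variance equivalent of \Cref{lem:deter-approx-generalized-ridge} at $\bA=\bI_p$ for $\lambda^2\bM_m\hSigma_m\bM_m$ (hence for $\lambda^2\bM_m^2=\lambda\bM_m-\lambda^2\bM_m\hSigma_m\bM_m$), \Cref{lem:calculus-detequi} gives the limits $\phi_s\int(1+vr)^{-1}\rd H(r)$ and $\phi_s v^2(1+\tv(-\lambda;\phi_s,\phi_s))\int r(1+vr)^{-2}\rd H(r)$ for these traces; substituting $\phi_s\int r(1+vr)^{-1}\rd H(r)=v^{-1}-\lambda$ and the relation it induces on $\phi_s\int r^2(1+vr)^{-2}\rd H(r)$ (so the denominator of $\tv$ in \eqref{eq:tv_tc_ridge} is exposed), the pieces telescope to $\lambda^2 v^2(1+\tv(-\lambda;\phi_s,\phi_s))$, which is \eqref{eq:TV-1}. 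This is the same bookkeeping that yields $\frac{1}{k}\Errtrain(\hbeta_k^{\lambda}(\cD_{I_\ell}))\asto\RlamMtr[\lambda][1]$ in \Cref{lem:conv-train-err}.

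For \textbf{Part (2)} I would condition on $\bX$ and on $I_m,I_l$: since $I_l\setminus I_m\subseteq I_m^c$, the rows of $\bX$ indexed by $I_l\setminus I_m$ are disjoint from, hence i.i.d.\ and independent of, the $\bM_m$-measurable matrix $\bM_m\hSigma_m\bM_m$. Writing $\tr(\bM_m\hSigma_m\bM_m\hSigma_{l\setminus m})=|I_l\setminus I_m|^{-1}\sum_{i\in I_l\setminus I_m}\bx_i^{\top}\bM_m\hSigma_m\bM_m\bx_i$ and applying the quadratic-form concentration of \Cref{sec:concen-linform-quadfrom} (as in the proof of \Cref{lem:ridge-conv-V0}), this is asymptotically $\tr(\bM_m\hSigma_m\bM_m\bSigma)$; equivalently one may invoke $\bM_m\hSigma_{l\setminus m}\bM_m\asympequi\bM_m\bSigma\bM_m$ from \Cref{lem:resolv-insample}, exactly as in Part (2) of \Cref{lem:ridge-B0}. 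It remains to evaluate $k^{-1}\tr(\bM_m\hSigma_m\bM_m\bSigma)$, which by the generalized-variance equivalent and \Cref{lem:calculus-detequi} tends to $\phi_s v^2(1+\tv(-\lambda;\phi_s,\phi_s))\int r^2(1+vr)^{-2}\rd H(r)$; recognizing the numerator and denominator of \eqref{eq:tv_tc_ridge} identifies this with $\tv(-\lambda;\phi_s,\phi_s)$, which is \eqref{eq:TV-2}.

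\textbf{Part (3)} is the heaviest step, and I would follow the three-block decomposition from Part (3) of \Cref{lem:ridge-B0}. With $i_0=|I_m\cap I_l|$ and $\hSigma_0=\hSigma_{m\cap l}=\bX^{\top}\bL_{m\cap l}\bX/i_0$, write $\bM_j^{-1}=\frac{i_0}{k}(\hSigma_0+\lambda\bI_p)+\frac{k-i_0}{k}(\hSigma_j^{\mathrm{ind}}+\lambda\bI_p)$ with $\hSigma_j^{\mathrm{ind}}=\bX^{\top}(\bL_j-\bL_{m\cap l})\bX/(k-i_0)$, and $\hSigma_{m\cup l}=\frac{k}{2k-i_0}\big((\bM_m^{-1}-\lambda\bI_p)+(\bM_l^{-1}-\lambda\bI_p)\big)-\frac{i_0}{2k-i_0}\hSigma_0$, so that $\lambda^2\bM_l\hSigma_{m\cup l}\bM_m$ expands, with coefficients rational in $i_0/k$, into a combination of $\lambda\bM_l$, $\lambda\bM_m$, $\lambda^2\bM_l\bM_m$, and $\lambda^2\bM_l\hSigma_0\bM_m$. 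The first two terms of \eqref{eq:TV-3}, namely $1-|I_m\cup I_l|^{-1}\sum_{j}\tr(\bM_j\hSigma_j)$, are handled as in Part (1) after reweighting $k/|I_m\cup I_l|\to\phi_s/(2\phi_s-\phi)$; for the cross term $n^{-1}\tr(\bM_l\hSigma_{m\cap l}\bM_m\hSigma_{m\cup l})$ I would substitute the above expansion and invoke the coupled-resolvent equivalents already derived in the proof of \Cref{lem:ridge-B0}~Part~(3) via \Cref{lem:resolv-insample} --- $\lambda\bM_j\asympequi(v\bSigma+\bI_p)^{-1}$, $\lambda^2\bM_l\bM_m\asympequi(v\bSigma+\bI_p)^{-2}(\tv(-\lambda;\phi,\phi_s,\bI_p)\bSigma+\bI_p)$, $\bM_l\hSigma_0\bM_m\asympequi\tv_v(-\lambda;\phi,\phi_s)(v\bSigma+\bI_p)^{-2}\bSigma$ --- noting that the ``middle'' covariance $\hSigma_{m\cap l}$ carries $i_0\asymp(\phi/\phi_s)p$ rows, which is why the inflation factor appearing is $1+\tv(-\lambda;\phi,\phi_s)$ at aspect ratio $\phi$ rather than at $\phi_s$. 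Taking normalized traces via \Cref{lem:calculus-detequi} and collapsing the $H$-integrals with \eqref{eq:v_ridge} should reassemble to $\Ddet[][\lambda][\phi_s]\big(\frac{2(\phi_s-\phi)}{2\phi_s-\phi}\frac{1}{\lambda v(-\lambda;\phi_s)}+\frac{\phi}{2\phi_s-\phi}\big)(1+\tv(-\lambda;\phi,\phi_s))$, which is \eqref{eq:TV-3}.

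The conceptual machinery is entirely supplied by the asymptotic-equivalence calculus and the concentration lemmas, so the difficulty is confined to Part (3). The hard part will be keeping simultaneous track of the three covariance blocks (common, individual-of-$m$, individual-of-$l$) and the four normalizations ($k$, $i_0$, $|I_m\cup I_l|$, $n$), ensuring that every non-concentrating sample covariance appears only where a valid asymptotic equivalent exists --- either sandwiched by resolvents built from \emph{disjoint} rows (so \Cref{sec:concen-linform-quadfrom} applies) or, when it is coupled to the resolvents through shared rows, only inside a product covered by the leave-block-out resolvent equivalents of \Cref{lem:resolv-insample} --- and then carrying out the long but mechanical simplification against \eqref{eq:v_ridge} so that the cross term produces precisely the $\phi$-aspect-ratio inflation factor $(1+\tv(-\lambda;\phi,\phi_s))$ and not a $\phi_s$-version.
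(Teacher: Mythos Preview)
Your Parts (1) and (2) are correct and match the paper's argument essentially verbatim: the paper also rewrites Part (1) as $1-p/k+\lambda k^{-1}\tr\bM_m-\lambda k^{-1}\tr(\bM_m^2\hSigma_m)$ (algebraically the same as your dual-resolvent form) and then feeds in $\lambda\bM_m\asympequi(v\bSigma+\bI_p)^{-1}$ and $\bM_m\hSigma_m\bM_m\asympequi\tv_v(v\bSigma+\bI_p)^{-2}\bSigma$; for Part (2) it likewise replaces $\hSigma_{l\setminus m}$ by $\bSigma$ via \Cref{lem:resolv-insample}\ref{item:lem-ridge-ind} and cites the known limit of $k^{-1}\tr(\bM_m\hSigma_m\bM_m\bSigma)$.

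For Part (3), your strategy (expand $\hSigma_{m\cup l}$, reduce via $\bM_j\hSigma_j=\bI_p-\lambda\bM_j$, invoke \Cref{lem:resolv-insample}) is the paper's strategy, but the specific equivalents you list are the wrong ones. You have transplanted the building blocks from the \emph{bias} computation $\bM_l\hSigma_{m\cup l}\bM_m$ (\Cref{lem:ridge-B0}~Part~(3)), where indeed $\lambda\bM_j$, $\lambda^2\bM_l\bM_m$, and $\bM_l\hSigma_0\bM_m$ appear. Here the target is $\tr(\bM_l\,\hSigma_{m\cap l}\,\bM_m\,\hSigma_{m\cup l})$ with an \emph{extra} $\hSigma_{m\cap l}$ in the middle. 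After you expand $\hSigma_{m\cup l}=\frac{k}{2k-i_0}(\hSigma_m+\hSigma_l)-\frac{i_0}{2k-i_0}\hSigma_0$ and collapse $\bM_m\hSigma_m$ and $\hSigma_l\bM_l$ (using the cyclic trace for the latter), the surviving pieces are
\[
\tr(\bM_j\hSigma_{m\cap l}),\qquad \tr(\bM_l\hSigma_{m\cap l}\bM_m),\qquad \tr(\bM_l\hSigma_{m\cap l}\bM_m\hSigma_{m\cap l}),
\]
which are governed by parts \ref{item:lem-ridge-V0-term-det-1}, \ref{item:lem-ridge-B0-term-det-2}, and \ref{item:lem-ridge-V0-term-det-2} of \Cref{lem:resolv-insample}, respectively. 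The equivalent $\lambda^2\bM_l\bM_m\asympequi\cdots$ that you list never enters, and your first listed equivalent $\lambda\bM_j\asympequi(v\bSigma+\bI_p)^{-1}$ alone cannot deliver $\tr(\bM_j\hSigma_{m\cap l})$ because $\hSigma_{m\cap l}$ is coupled to $\bM_j$ through shared rows (product rule fails). Once you swap in parts \ref{item:lem-ridge-V0-term-det-1} and \ref{item:lem-ridge-V0-term-det-2}, the long simplification against \eqref{eq:v_ridge} goes through exactly as you anticipate; that is precisely how the paper closes Part (3).
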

        \begin{proof}[Proof of Lemma \ref{lem:ridge-V0}]
            We split the proof into different parts.
            
            \paragraph{Part (1)} Note that
            \begin{align*}
                \tr(\bM_m\hSigma_m\bM_m\hSigma_m) &= \tr(\bM_m\hSigma_m) - \lambda \tr(\bM_n^2\hSigma_m).
            \end{align*}
            We now have
            \begin{align*}
                1 - \frac{2}{k}\tr(\bM_{m}\hSigma_m) + \frac{1}{k}\tr(\bM_m\hSigma_m\bM_m\hSigma_m) &= 1 - \frac{1}{k}\tr(\bM_{m}\hSigma_m) - \frac{\lambda}{k}\tr(\bM_{m}^2\hSigma_m)\\
                &= 1 - \frac{p}{k} + \frac{\lambda}{k} \tr(\bM_m) - \frac{\lambda}{k} \tr(\bM_{m}^2\hSigma_m).
            \end{align*}
            From \Cref{cor:asympequi-scaled-ridge-resolvent} we have that $\lambda \bM_m\asympequi (v(-\lambda;\phi_s)\bSigma+\bI_p)^{-1}$.
            From \Cref{lem:deter-approx-generalized-ridge}~\ref{eq:detequi-ridge-genvar} (with $\bA=\bI$), we have that for $j\in[M]$,
            \begin{align}
                \bM_{m}\hSigma_m\bM_{m} \asympequi \tv_v(-\lambda; \phi_s) (v(-\lambda; \phi_s) \bSigma + \bI_p)^{-2} \bSigma. \label{eq:lem-V0-term-1}
            \end{align}
            By the trace rule \Cref{lem:calculus-detequi}~\ref{lem:calculus-detequi-item-trace} , we have
            \begin{align}
                \frac{\lambda}{k} \tr(\bM_m) - \frac{\lambda}{k} \tr(\bM_{m}^2\hSigma_m) &\asto \lim\limits_{p\rightarrow\infty}\frac{p}{k}\cdot \frac{1}{p}\left(\tr((v(-\lambda;\phi_s)\bSigma+\bI_p)^{-1}) -
                \tr(\lambda\tv_v(-\lambda; \phi_s) (v(-\lambda; \phi_s) \bSigma + \bI_p)^{-2} \bSigma) \right)\notag\\
                &= \phi_s
                 \lim\limits_{p\rightarrow\infty}\frac{1}{p}\sum\limits_{i=1}^p\frac{1 + v(-\lambda; \phi_s)r_i-\lambda\tv_v(-\lambda; \phi_s) r_i}{(v(-\lambda; \phi_s) r_i + 1)^2}\notag\\
                &=\phi_s \lim\limits_{p\rightarrow\infty}\int \frac{1 + v(-\lambda; \phi_s)r - \lambda\tv_v(-\lambda; \phi_s)r}{(1 + v(-\lambda; \phi_s) r)^2}\rd H_p(r)\notag\\
                &=\phi_s\int \frac{1 + v(-\lambda; \phi_s)r - \lambda\tv_v(-\lambda; \phi_s)r}{(1 + v(-\lambda; \phi_s) r)^2}\rd H(r),\qquad j=1,2, \label{eq:lem:ridge-V0-1}
            \end{align}
            where in the last line we used the fact that $H_p$ and $H$ have compact supports and Assumption \ref{asm:lin-mod}.
            Then, we have
            \begin{align*}
                &1 - \frac{2}{k}\tr(\bM_{m}\hSigma_m) + \frac{1}{k}\tr(\bM_m\hSigma_m\bM_m\hSigma_m)\\ &\asto 1 - \phi_s + \phi_s\int \frac{1 + v(-\lambda; \phi_s)r - \lambda\tv_v(-\lambda; \phi_s)r}{(1 + v(-\lambda; \phi_s) r)^2}\rd H(r) \\
                &= 1 - \phi_s\int \frac{v(-\lambda;\phi_s)r}{1 + v(-\lambda; \phi_s) r}\rd H(r) - \phi_s\int \frac{\lambda\tv_v(-\lambda; \phi_s)r}{(1 + v(-\lambda; \phi_s) r)^2}\rd H(r) \\
                &= \lambda v(-\lambda;\phi_s) - \phi_s\int \frac{\lambda\tv_v(-\lambda; \phi_s)r}{(1 + v(-\lambda; \phi_s) r)^2}\rd H(r) \\
                &= \lambda \tv_v(-\lambda; \phi_s) \left(v(-\lambda; \phi_s)^{-1} - \phi_s \int \frac{ v(-\lambda; \phi_s)r^2}{(1 + v(-\lambda; \phi_s) r)^2}\rd H(r) - \phi_s\int \frac{r}{(1 + v(-\lambda; \phi_s) r)^2}\rd H(r)\right)\\
                &= \lambda \tv_v(-\lambda; \phi_s) \left(v(-\lambda; \phi_s)^{-1} - \phi_s \int \frac{r}{1 + v(-\lambda; \phi_s) r}\rd H(r) \right)\\
                &= \lambda^2 \tv_v(-\lambda; \phi_s)\\
                &= \lambda^2v(-\lambda; \phi_s)^2(1+\tv(-\lambda;\phi_s,\phi_s)) ,
            \end{align*}
            and thus, \eqref{eq:TV-1} follows.

            \paragraph{Part (2)} Since $\bM_m\hSigma_{m}\bM_m$ and $\hSigma_{l\setminus m}$ are independent, from \Cref{lem:resolv-insample}  \ref{item:lem-ridge-ind}, we have
            \begin{align*}
                \bM_m\hSigma_{m}\bM_m\hSigma_{l\setminus m} \asympequi \bM_m\hSigma_{m}\bM_m\bSigma.
            \end{align*}
            Then, by the definition of asymptotic equivalents, it follows that
            \begin{align*}
                \frac{1}{k}\tr(\bM_m\hSigma_{m}\bM_m\hSigma_{l\setminus m}) &= \frac{1}{k}\tr( \bM_m\hSigma_{m}\bM_m\bSigma) \asto \tv(-\lambda;\phi_s,\phi_s),
            \end{align*}
            where the convergence is due to \citet[Lemma S.2.5]{patil2022bagging}.
            
            \paragraph{Part (3)} 
            Let $i_0=|I_m\cap I_l|$. The first two terms in \eqref{eq:lem:ridge-V0-1} satisfy that
            \begin{align*}
                1 - \frac{1}{|I_m\cup I_l|} \sum_{j\in\{m,l\}}\tr(\bM_j\hSigma_j) &= 1 - \frac{p}{2k-i_0} \sum_{j\in\{m,l\}}\frac{1}{p}\tr(\bM_j\hSigma_j)
                \asto 1 - \frac{2\phi_s^2}{2\phi_s-\phi} \int \frac{v(-\lambda;\phi_s)r}{1 + v(-\lambda;\phi_s)r}\rd H(r),
            \end{align*}
            where the convergence is from Part 1.
            The last term can be further decomposed because
            \begin{align*}
                \tr(\bM_l\hSigma_{m\cap l}\bM_m\hSigma_{m\cup l}) &= \frac{k}{2k-i_0}\sum_{j\in\{m,l\}}\tr(\bM_l\hSigma_{m\cap l}\bM_m\hSigma_{j}) - \frac{i_0}{2k-i_0} \tr(\bM_l\hSigma_{m\cap l}\bM_m\hSigma_{m\cap l})\\
                &= \frac{k}{2k-i_0}\sum_{j\in\{m,l\}}[\tr(\bM_j\hSigma_{m\cap l}) - \lambda \tr(\bM_m\hSigma_{m\cap l}\bM_l)] - \frac{i_0}{2k-i_0} \tr(\bM_l\hSigma_{m\cap l}\bM_m\hSigma_{m\cap l}).
            \end{align*}
            From \Cref{lem:resolv-insample}  \ref{item:lem-ridge-V0-term-det-1}, \ref{item:lem-ridge-B0-term-det-2}, and \ref{item:lem-ridge-V0-term-det-2}, we have
            \begin{align*}
                \bM_j\hSigma_{m\cap l} &\asympequi \bI_p - (v(-\lambda;\phi_s) \bSigma+\bI_p)^{-1} \\
                \bM_m\hSigma_{m\cap l}\bM_l&\asympequi \tv_v(-\lambda;\phi,\phi_s)( v(-\lambda; \phi_s) \bSigma + \bI_p)^{-2}\bSigma\\
                \bM_l\hSigma_{m\cap l}\bM_m\hSigma_{m\cap l} &\asympequi \left(\frac{\phi_s}{\phi} v(-\lambda;\phi_s)-\frac{\phi_s-\phi}{\phi}\lambda \tv_v(-\lambda;\phi,\phi_s) \right)(v(-\lambda;\phi_s)\bSigma+\bI_p)^{-1}\bSigma \\
                &\qquad - \lambda \tv_v(-\lambda;\phi,\phi_s)(v(-\lambda;\phi_s)\bSigma+\bI_p)^{-2}\bSigma.
            \end{align*}
            Combining the above terms and by Assumption \ref{asm:lin-mod}, we have
            \begin{align*}
                &1- \frac{1}{|I_m\cup I_l|}\sum_{j\in\{m,l\}}\tr(\bM_{j}\hSigma_{j}) + \frac{1}{n}\tr(\bM_l\hSigma_{m\cap l}\bM_m\hSigma_{m\cup l})\\ 
                &\quad \asto 1 - \frac{2\phi_s^2}{2\phi_s-\phi} \int \frac{v(-\lambda;\phi_s)r}{1+v(-\lambda;\phi_s)r}\rd H(r) \\
                &\quad \qquad  + \phi \frac{2\phi_s}{2\phi_s-\phi}\left(\int \frac{v(-\lambda;\phi_s)r}{1+v(-\lambda;\phi_s)r}\rd H(r)  - \lambda\tv_v(-\lambda;\phi,\phi_s) \int \frac{r}{(1+v(-\lambda;\phi_s)r)^2}\rd H(r)  \right)\\
                &\qquad \qquad - \phi \frac{\phi}{2\phi_s-\phi}\left(\frac{\phi_s}{\phi}\int \frac{r}{1+v(-\lambda;\phi_s)r}\rd H(r)  - \frac{\phi_s-\phi}{\phi} \lambda\tv_v(-\lambda;\phi,\phi_s) \int \frac{r}{1+v(-\lambda;\phi_s)r}\rd H(r) \right.\\
                &\qquad \qquad \qquad  \left. -\lambda\tv_v(-\lambda;\phi,\phi_s)\int \frac{r}{(1+v(-\lambda;\phi_s)r)^2}\rd H(r) \right)\\
                &\quad = 1 - \phi_s \int \frac{v(-\lambda;\phi_s)r}{1+v(-\lambda;\phi_s)r}\rd H(r) + \frac{\phi(\phi_s-\phi)}{2\phi_s-\phi}\lambda\tv_v(-\lambda;\phi,\phi_s) \int \frac{r}{1+v(-\lambda;\phi_s)r}\rd H(r) \\
                &\qquad - \phi \lambda\tv_v(-\lambda;\phi,\phi_s)\int \frac{r}{(1+v(-\lambda;\phi_s)r)^2}\rd H(r)\\
                &\quad = \lambda v(-\lambda;\phi_s) + \frac{\phi(\phi_s-\phi)}{2\phi_s-\phi} \int \frac{\lambda\tv_v(-\lambda;\phi,\phi_s)r}{1+v(-\lambda;\phi_s)r}\rd H(r)  - \phi \int \frac{\lambda\tv_v(-\lambda;\phi,\phi_s)r}{(1+v(-\lambda;\phi_s)r)^2}\rd H(r)\\
                &\quad = \frac{\lambda^2 \tv_v(-\lambda;\phi,\phi_s)}{2\phi_s-\phi}\left(\frac{2\phi_s-\phi}{\lambda v(-\lambda;\phi_s)} - \frac{\phi (2\phi_s-\phi)}{\lambda}\int \frac{v(-\lambda;\phi_s)r^2 }{(1+v(-\lambda;\phi_s) r)^2}\rd H(r) \right.\\
                &\qquad\left. + \frac{\phi(\phi_s-\phi)}{\lambda} \int \frac{r}{1+v(-\lambda;\phi_s)r}\rd H(r)  - \frac{\phi}{\lambda} \int \frac{r}{(1+v(-\lambda;\phi_s)r)^2}\rd H(r) \right)\\
                &\quad=\frac{\lambda^2 \tv_v(-\lambda;\phi,\phi_s)}{2\phi_s-\phi}\left(\frac{2\phi_s-\phi}{\lambda v(-\lambda;\phi_s)} - \frac{\phi (2\phi_s-\phi)}{\lambda}\int \frac{r }{1+v(-\lambda;\phi_s) r}\rd H(r) \right.\\
                &\qquad\left. + \frac{\phi(\phi_s-\phi)}{\lambda} \int \frac{r}{1+v(-\lambda;\phi_s)r}\rd H(r)   \right)\\
                &\quad = \lambda^2\tv_v(-\lambda;\phi, \phi_s)^2\left(\frac{2(\phi_s-\phi)}{2\phi_s-\phi} \frac{1}{\lambda v(-\lambda;\phi_s)}+\frac{\phi}{2\phi_s-\phi}\right)\\
                &\quad = \Ddet[][\lambda][\phi_s]\left(\frac{2(\phi_s-\phi)}{2\phi_s-\phi} \frac{1}{\lambda v(-\lambda;\phi_s)} + \frac{\phi}{2\phi_s-\phi}\right)(1+\tv(-\lambda;\phi,\phi_s)).
            \end{align*}
        \end{proof}

\section{Proof of \Cref{prop:gcv}}\label{app:proof-ridge}

        \begin{proof}[Proof of \Cref{prop:gcv}]
            From \Cref{lem:decomp-train-err}, we have
            \begin{align*}
                \frac{1}{n}\|\by-\bX\tbeta^{\lambda}_{k}\|_2^2 &= - \frac{1}{n} \EE_{I\overset{\SRS}{\sim}\cI_k}\left[ \Errtrain(\hbeta_k^{\lambda}(\cD_{I})) + \Errtest(\hbeta_k^{\lambda}(\{\cD_{I}\})) \right]\notag\\
                &\qquad + \frac{2}{n}\EE_{(I_m,I_{\ell})\overset{\SRS}{\sim}\cI_k}  \left[\Errtrain(\tbeta_k^{\lambda}(\{\cD_{I_{m}},\cD_{I_{\ell}}\})) + \Errtest(\tbeta_k^{\lambda}(\{\cD_{I_{m}},\cD_{I_{\ell}}\})) \right].
            \end{align*}
            Since by \Cref{lem:conv-test-err}, \Cref{lem:conv-train-err} and \Cref{lem:conv_cond_expectation_sampling}, each expectation converges, we have that
            \begin{align*}
                \frac{1}{n}\|\by-\bX\tbeta^{\lambda}_{k}\|_2^2 &\asto 
                \frac{2\phi(2\phi_s-\phi)}{\phi_s^2}\RlamMtr[\lambda][2] + \frac{2(\phi_s-\phi)^2}{\phi_s^2}\RlamM[\lambda][2]  - \frac{\phi}{\phi_s}\RlamMtr[\lambda][1] - \frac{\phi_s-\phi}{\phi_s}\RlamM[\lambda][1] ,
            \end{align*}
            where the convergence of the averages is from \Cref{lem:conv_cond_expectation} and the convergence of coefficients is from \Cref{lem:i0_mean}.
            Since the denominator converges from \Cref{lem:gcv-den}, we further have
            \begin{align*}
                \gcv_{k,\infty}^{\lambda} \asto \gcvdet = \ddfrac{\frac{2\phi(2\phi_s-\phi)}{\phi_s^2}\RlamMtr[\lambda][2] + \frac{2(\phi_s-\phi)^2}{\phi_s^2}\RlamM[\lambda][2] - \frac{\phi}{\phi_s}\RlamMtr[\lambda][1] - \frac{\phi_s-\phi}{\phi_s}\RlamM[\lambda][1]}{\Ddet},
            \end{align*}
            for $\lambda>0$ and $\phi_s\in[\phi,+\infty)$.

            For the boundary case when $\lambda>0$ but $\phi_s=+\infty$, we require \Cref{prop:Rdet-ridge-infinity}; for the boundary case when $\lambda=0$, we require \Cref{prop:Rdet-lam-0}.
            Applying \Cref{prop:Rdet-ridge-infinity} and \Cref{prop:Rdet-lam-0} finishes the proof.
        \end{proof}  
    
    \subsection{Boundary case: diverging subsample aspect ratio for the ridge predictor}
    \label{sec:proof-ridge}
    
    \begin{proposition}[Risk approximation when $\phi_s\rightarrow+\infty$]\label{prop:Rdet-ridge-infinity}
    Under Assumptions \ref{asm:rmt-feat}-\ref{asm:lin-mod}, for all $\lambda>0$, we have $$\gcv_k^{\lambda}\asto \gcvdet[\lambda][\phi][\infty],$$
    as $k,n,p\rightarrow\infty$, $p/n\rightarrow\phi\in(0,\infty)$ and $p/k\rightarrow\infty$, where $\gcvdet[\lambda][\cdot][\cdot]$ is defined in \Cref{prop:gcv}.
    \end{proposition}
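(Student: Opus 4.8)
The plan is to handle the GCV numerator $T_{k,\infty}^{\lambda}$ and the denominator $D_{k,\infty}^{\lambda}$ separately and then divide. The driving observation is that $p/k\to\infty$ together with $p/n\to\phi\in(0,\infty)$ forces $k/n\to 0$, so every subsampled base predictor sees a vanishing fraction of the $n$ observations; hence the in-sample training contributions to $T_{k,\infty}^{\lambda}$ are asymptotically negligible and $T_{k,\infty}^{\lambda}$ is driven entirely by out-of-sample test errors, which are already covered at $\phi_s=\infty$ by \Cref{thm:ver-with-replacement} and \Cref{lem:conv-test-err}.

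For the denominator I would bypass \Cref{lem:gcv-den} (which is stated only for finite $\phi_s$) and argue directly. Since $\tr(\bS^{\lambda}_{k,\infty})=\EE_{I\sim\cI_k}[\tr((\bX^{\top}\bL_I\bX/k+\lambda\bI_p)^{-1}\bX^{\top}\bL_I\bX/k)]$ and each summand equals $\sum_{j}s_j/(s_j+\lambda)\le \mathrm{rank}(\bX^{\top}\bL_I\bX)\le k$, one gets $0\le n^{-1}\tr(\bS^{\lambda}_{k,\infty})\le k/n\to 0$ deterministically, whence $D_{k,\infty}^{\lambda}=(1-n^{-1}\tr(\bS^{\lambda}_{k,\infty}))^{2}\to 1=\sD_{\infty}^{\lambda}(\phi,\infty)$.

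For the numerator I would start from the exact identity of \Cref{lem:decomp-train-err}, which writes $T_{k,\infty}^{\lambda}=n^{-1}\|\by-\bX\tbeta_{k,\infty}^{\lambda}\|_2^2$ as a fixed linear combination of $n^{-1}\EE_I[\Errtrain(\hbeta_k^{\lambda}(\cD_I))]$, $n^{-1}\EE_I[\Errtest(\hbeta_k^{\lambda}(\cD_I))]$ and the corresponding quantities for the pair estimator $\tbeta_{k,2}^{\lambda}(\{\cD_I,\cD_J\})$. After rescaling by $|I^c|/n\to 1$ and $|I^c\cap J^c|/n\to 1$, the two test-error terms converge to $\sR_1^{\lambda}(\phi,\infty)$ and $\sR_2^{\lambda}(\phi,\infty)$ via \Cref{lem:conv-test-err} and the Ces\`aro-mean lemma \Cref{lem:conv_cond_expectation}. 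The two training-error terms vanish: optimality of ridge gives $\Errtrain(\hbeta_k^{\lambda}(\cD_I))\le\sum_{i\in I}y_i^{2}$, so $n^{-1}\EE_I[\Errtrain(\hbeta_k^{\lambda}(\cD_I))]\le (k/n)\,n^{-1}\|\by\|_2^2\asto 0$; and for the pair term convexity gives $\Errtrain(\tbeta_{k,2}^{\lambda})\le\tfrac{1}{2}\sum_{i\in I\cup J}(y_i-\bx_i^{\top}\hbeta_k^{\lambda}(\cD_I))^{2}+\tfrac{1}{2}\sum_{i\in I\cup J}(y_i-\bx_i^{\top}\hbeta_k^{\lambda}(\cD_J))^{2}$, and splitting each sum into its in-sample part ($\le\sum_{i\in I}y_i^{2}$) and its out-of-sample part (a sub-sum of $\Errtest(\hbeta_k^{\lambda}(\cD_I))$) yields $n^{-1}\EE_{I,J}[\Errtrain(\tbeta_{k,2}^{\lambda})]\le (k/n)\big(n^{-1}\|\by\|_2^2+n^{-1}\EE_I[\Errtest(\hbeta_k^{\lambda}(\cD_I))]\big)\asto 0$, using $\PP(i\in J\mid I)=k/n$ and boundedness of $n^{-1}\EE_I[\Errtest(\hbeta_k^{\lambda}(\cD_I))]$. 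Collecting terms, $T_{k,\infty}^{\lambda}\asto 2\sR_2^{\lambda}(\phi,\infty)-\sR_1^{\lambda}(\phi,\infty)$, which by the bias/variance decomposition in \Cref{thm:ver-with-replacement} (so that $\sR_2^{\lambda}=\tfrac{1}{2}\sR_1^{\lambda}+\tfrac{1}{2}\RlamM[\lambda][\infty]$) equals $\RlamM[\lambda][\infty][\phi][\infty]=\sT_{\infty}^{\lambda}(\phi,\infty)$. Dividing by $D_{k,\infty}^{\lambda}\asto 1$ gives $\gcv_k^{\lambda}\asto\RlamM[\lambda][\infty][\phi][\infty]=\gcvdet[\lambda][\phi][\infty]$.

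The hard part will be the control of the averaged in-sample training errors. Because $p/k\to\infty$, the per-subsample training errors $k^{-1}\Errtrain(\hbeta_k^{\lambda}(\cD_I))$ are not uniformly bounded over $I$ (only bounded on average), so one cannot pass to an almost-sure limit subsample by subsample as in the finite-$\phi_s$ analysis of \Cref{lem:conv-train-err}. The remedy is to take the joint expectation over $(I,J)$ from the outset and exploit that a fixed index lands in a given subsample with probability $k/n\to 0$, so the training-error mass is asymptotically negligible while the out-of-sample residuals remain controlled by the already-established test-error asymptotics. The remaining steps — computing the limits of the ratios $|I\cup J|/n$ and $|I^c\cap J^c|/n$ from \Cref{lem:i0_mean} and interchanging averaging with the limit via \Cref{lem:conv_cond_expectation} — are routine.
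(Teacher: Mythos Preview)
Your argument is correct, but it takes a different and more elaborate route than the paper's.

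The paper does not use the train/test decomposition of \Cref{lem:decomp-train-err} at all for this boundary case. Instead it argues directly that the full-ensemble estimator collapses: since each base predictor satisfies
\[
\|\hbeta_k^{\lambda}(\cD_{I})\|_2
\;\le\;
\big\|(\hSigma_I+\lambda\bI_p)^{-1}\bX^{\top}\bL_I/\sqrt{k}\big\|_{\oper}\cdot\|\bL_I\by/\sqrt{k}\|_2
\;\le\; \frac{1}{s_{\min}}\cdot C\sqrt{\rho^2+\sigma^2},
\]
and the smallest nonzero singular value $s_{\min}$ of $\bL_I\bX/\sqrt{k}$ diverges when $p/k\to\infty$, one gets $\|\tbeta^{\lambda}_{k,\infty}\|_2\to 0$ almost surely. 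Then
\[
\tfrac{1}{n}\|\by-\bX\tbeta^{\lambda}_{k,\infty}\|_2^2
=(\bbeta_0-\tbeta^{\lambda}_{k,\infty})^{\top}\hSigma(\bbeta_0-\tbeta^{\lambda}_{k,\infty})
+\tfrac{1}{n}\bepsilon^{\top}\bepsilon
+\tfrac{2}{n}(\bbeta_0-\tbeta^{\lambda}_{k,\infty})^{\top}\bX^{\top}\bepsilon
\asto \bbeta_0^{\top}\bSigma\bbeta_0+\sigma^2,
\]
and the denominator is handled by the same collapsing.

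Your route instead leverages \Cref{lem:decomp-train-err}, kills the in-sample pieces using $k/n\to 0$ together with the ridge-optimality bound $\Errtrain\le\sum_{i\in I}y_i^2$ and a convexity/averaging argument for the pair term, and lets the out-of-sample pieces converge via \Cref{lem:conv-test-err} and \Cref{lem:conv_cond_expectation_sampling}, finally using $2\sR_2^{\lambda}-\sR_1^{\lambda}=\RlamM[\lambda][\infty]$. This is heavier but has the virtue of reusing exactly the machinery already developed for finite $\phi_s$, rather than a bespoke singular-value argument. Your rank bound $\tr((\hSigma_I+\lambda\bI_p)^{-1}\hSigma_I)\le k$ for the denominator is also cleaner than the paper's treatment. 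One small point: at the end you identify $\RlamM[\lambda][\infty][\phi][\infty]$ with $\gcvdet[\lambda][\phi][\infty]$; since the latter is \emph{defined} as $\lim_{\phi_s\to\infty}\gcvdet$, you still need to check (as the paper does explicitly via \Cref{lem:ridge-fixed-point-v-properties}) that this limit equals $\sigma^2+\rho^2\int r\,\rd G(r)$, i.e.\ $\RlamM[\lambda][\infty][\phi][\infty]$.
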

    \begin{proof}[Proof of \Cref{prop:Rdet-ridge-infinity}]
        Recall that
        \begin{align*}
            \frac{1}{n}\|\by-\bX\tbeta_k^{\lambda}\|_2^2 &= \lim_{M\rightarrow\infty}\frac{1}{n}\|\by-\bX\tbeta^{\lambda}_{k,M}\|_2^2 = (\bbeta_0 -\tbeta^{\lambda}_{k})^{\top} \hSigma (\bbeta_0 -\tbeta^{\lambda}_{k}) + \frac{1}{n}\bepsilon^{\top}\bepsilon + \frac{2}{n}(\bbeta_0 -\tbeta^{\lambda}_{k})^{\top} \hSigma \bepsilon
        \end{align*}
        From \Cref{lem:concen-linform} and \Cref{lem:concen-quadform}, we have that $\bbeta_0^{\top} \bX^{\top} \bepsilon/n\asto 0$ and $\bepsilon^{\top}\bepsilon/n\asto \sigma^2$ as $n\rightarrow\infty$.
        For the other term, note that for any $(I_1,\ldots,I_M)\overset{\SRS}{\sim}\cI_k$,
        \begin{align*}
            \|\tbeta^{\lambda}_{k}\|_2 &\leq \lim_{M\rightarrow\infty}\EE_{(I_1,\ldots,I_M)\overset{\SRS}{\sim}\cI_k}\left[ \frac{1}{M}\sum_{m=1}^M\|(\bX^{\top}\bL_m\bX/k+\lambda\bI_p)^{-1}(\bX^{\top}\bL_m\by/k)\|_2\right] \\
            &\leq \lim_{M\rightarrow\infty}\EE_{(I_1,\ldots,I_M)\overset{\SRS}{\sim}\cI_k}\left[ \frac{1}{M}\sum_{m=1}^M\|(\bX^{\top}\bL_m\bX/k+\lambda\bI_p)^{-1}\bX^{\top}\bL_m/\sqrt{k}\|\cdot\|\bL_m\by/\sqrt{k}\|_2\right]\\
            &\leq C\sqrt{\rho^2+\sigma^2}\cdot \max_{I_m\in\cI_k} \|(\bX^{\top}\bL_m\bX/k+\lambda\bI_p)^{-1}\bX^{\top}\bL_m/\sqrt{k}\|_{\oper},
        \end{align*}
        where the last inequality holds eventually almost surely since Assumptions \ref{asm:rmt-feat}-\ref{asm:lin-mod} imply that the entries of $\by$ have bounded $4$-th moment, and thus from the strong law of large numbers, $\| \bL_m \by / \sqrt{k} \|_2$ is eventually almost surely
        bounded above by $C\sqrt{\EE[y_1^2]} = C\sqrt{\rho^2 + \sigma^2}$ for some constant $C$.
        Observe that operator norm of the matrix $(\bX^\top \bL_m\bX / k+ \lambda\bI_p)^{-1} \bX\bL_m / \sqrt{k}$ is upper bounded $\max_i s_i/(s_i^2+\lambda)\leq 1/s_{\min}$ where $s_i$'s are the singular values of $\bX$ and $s_{\min}$ is the smallest nonzero singular value.
        As $k, p \to \infty$ such that $p / k \to \infty$, $s_{\min} \to \infty$ almost surely (e.g., from results of \citet{bloemendal2016principal}) and therefore, $\| \tbeta^{\lambda}_{k} \|_2 \to 0$ almost surely.
        Because $\|\hSigma\|_{\oper}$ is upper bounded almost surely, we further have $\tbeta_{k}^{\top} \bX^{\top} \bepsilon/n\asto 0$.
        Consequently we have $(\bbeta_0-\tbeta_{k})^{\top} \bX^{\top} \bepsilon/n\asto 0$ and
        \begin{align*}
            \frac{1}{n}\|\by-\bX\tbeta^{\lambda}_{k}\|_2^2 & \asto \bbeta_0^{\top}\hSigma \bbeta_0 + \sigma^2 .
        \end{align*}
        Finally, from \Cref{lem:resolv-insample}~\ref{item:lem-ridge-ind} $\bbeta_0^{\top}\hSigma \bbeta_0\asto \bbeta_0^{\top}\bSigma \bbeta_0$ and from \Cref{asm:lin-mod}, we have
        \begin{align*}
            \frac{1}{n}\|\by-\bX\tbeta^{\lambda}_{k}\|_2^2 & \asto  \sigma^2 + \rho^2 \int r \rd G(r).
        \end{align*}
        Since $\bS_{k}^{\lambda} = \bX \tbeta^{\lambda}_{k}$, we have that $\tr(\bS_{k}^{\lambda} )/n\asto 0$.
        So the denominator converges to 1, almost surely.
        
        From \Cref{lem:ridge-fixed-point-v-properties}, we have $\gcvdet[\lambda][\phi][\infty]:=\lim_{\phi_s\rightarrow+\infty}\gcvdet= \sigma^2 + \rho^2\int r\rd G(r)$, which is also the limit of the GCV estimate.
        Thus, $\gcvdet[\lambda][\phi][\infty]$ is well defined and $\gcvdet$ is right continuous at $\phi_s=+\infty$.
    \end{proof}

    \subsection{Boundary case: the ridgeless predictor}
    \label{sec:proof-ridgeless}
        \begin{proposition}[Risk approximation when $\lambda=0$]\label{prop:Rdet-lam-0}
            Under Assumptions \ref{asm:rmt-feat}-\ref{asm:lin-mod}, suppose that the conclusion of \Cref{prop:gcv} holds for $\lambda>0$. Then, we have 
            $$\gcv_k^0 \asto \gcvdet[0]:= \lim_{\lambda\rightarrow0^+} \gcvdet,$$
            as $k,n,p\rightarrow\infty$, $p/n\rightarrow\phi\in(0,\infty) $ and $p/k\rightarrow[\phi,+\infty]$, where $\gcvdet[\lambda][\cdot][\cdot]$ is defined in \Cref{prop:gcv}.
        \end{proposition}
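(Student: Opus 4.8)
The plan is to obtain the ridgeless ($\lambda=0$) statement by interchanging the limits $n,k,p\to\infty$ and $\lambda\to0^{+}$ in the ridge statement, which is available for every $\lambda>0$. Throughout, $\phi_s\in[\phi,\infty]$ is a fixed limit with $\phi_s\neq1$ (the exclusion at $\lambda=0$ inherited from \Cref{thm:ver-with-replacement}). Recall $\gcv_k^{\lambda}=T^{\lambda}_{k,\infty}/D^{\lambda}_{k,\infty}$ with $T^{\lambda}_{k,\infty}=n^{-1}\norm{\by-\bS^{\lambda}_{k,\infty}\by}_2^2$ and $D^{\lambda}_{k,\infty}=(1-n^{-1}\tr(\bS^{\lambda}_{k,\infty}))^2$, and that for $\lambda>0$ we have $T^{\lambda}_{k,\infty}\asto\sT^{\lambda}_{\infty}$ and $D^{\lambda}_{k,\infty}\asto\Ddet$ by \Cref{lem:gcv-num} and \Cref{lem:gcv-den}. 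I would proceed in three steps: (i) show that the $\lambda\to0^{+}$ limits $\sT^{0}_{\infty}:=\lim_{\lambda\to0^{+}}\sT^{\lambda}_{\infty}$ and $\sD^{0}_{\infty}:=\lim_{\lambda\to0^{+}}\Ddet$ exist, so that $\gcvdet[0]=\lim_{\lambda\to0^{+}}\gcvdet$ is well defined; (ii) transfer the random convergences to the boundary, $T^{0}_{k,\infty}\asto\sT^{0}_{\infty}$ and $D^{0}_{k,\infty}\asto\sD^{0}_{\infty}$; and (iii) conclude via the continuous mapping theorem whenever $\sD^{0}_{\infty}>0$.

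For (i), I would use the identity $\gcvdet=\RlamM[\lambda][\infty]$, valid for $\lambda>0$ as a purely formal simplification of the definition of $\gcvdet$ carried out in the proof of \Cref{lem:gcv-inf}, together with the right-continuity at $\lambda=0$ of the asymptotic risk $\RlamM[\lambda][\infty]$ and of the fixed-point quantities $v(-\lambda;\cdot)$, $\lambda v(-\lambda;\cdot)$, $\tv$, $\tc$ recorded in \Cref{thm:ver-with-replacement}, \Cref{lem:ridge-fixed-point-v-properties} and \Cref{lem:fixed-point-v-properties}. This gives $\gcvdet[0]=\RlamM[0][\infty]$, and it also identifies $\sD^{0}_{\infty}$: it equals $((\phi_s-\phi)/\phi_s)^2$ when $\phi_s>1$ (since then $\lambda v(-\lambda;\phi_s)\to0$) and $(1-\phi)^2$ when $\phi_s<1$ (since then $\lambda v(-\lambda;\phi_s)\to1-\phi_s$); in particular $\sD^{0}_{\infty}>0$ except in the single degenerate sub-case $\phi_s=\phi>1$, treated at the end.

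Step (ii) rests on an $O(\lambda)$ perturbation bound, and this is where the hypothesis $\phi_s\neq1$ enters. Because $\phi_s$ is a fixed limit distinct from $1$, uniform (over $I\in\cI_k$) conditioning estimates for the subsampled designs are available — operator-norm upper bounds follow for free from a single random-matrix bound on $\bX$ by principal-submatrix eigenvalue monotonicity, and the matching lower bound on the smallest nonzero singular value of $\bL_I\bX/\sqrt{k}$ follows via Cauchy eigenvalue interlacing together with the estimates of \Cref{sec:appendix-concerntration} — giving a constant $c>0$ such that, eventually almost surely, every $\bL_I\bX/\sqrt{k}$ has smallest nonzero singular value at least $c$ and operator norm at most $1/c$, while also $\norm{\bX}_{\oper}\le\sqrt{n}/c$ and $\norm{\bL_I\by}_2\le\norm{\by}_2\le\sqrt{k}/c$ for all $I$ (here $\phi_s<\infty$, so $n/k\to\phi_s/\phi<\infty$, and $\EE[y_1^2]<\infty$; the case $\phi_s=\infty$ is addressed below). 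Since $\bX^{\top}\bL_I\by/k$ lies in the range of $\bX^{\top}\bL_I\bX/k$, the elementary comparison $\norm{\bigl[(\bX^{\top}\bL_I\bX/k+\lambda\bI_p)^{-1}-(\bX^{\top}\bL_I\bX/k)^{+}\bigr]u}_2\le(\lambda/c^{4})\norm{u}_2$ for $u$ in that range yields $\norm{\hbeta^{\lambda}_k(\cD_I)-\hbeta^{0}_k(\cD_I)}_2\le C_1\lambda$ uniformly in $I$, hence, after averaging over $\cI_k$, $\norm{\tbeta^{\lambda}_{k,\infty}-\tbeta^{0}_{k,\infty}}_2\le C_1\lambda$ and $\norm{\bS^{\lambda}_{k,\infty}-\bS^{0}_{k,\infty}}_{\oper}\le C_2\lambda$; propagating these through the definitions of $T$ and $D$, together with the uniform boundedness of $\norm{\tbeta^{\lambda}_{k,\infty}}_2$, $\norm{\by}_2/\sqrt{n}$ and $n^{-1}\tr(\bS^{\lambda}_{k,\infty})$, gives $|T^{\lambda}_{k,\infty}-T^{0}_{k,\infty}|\le C_3\lambda$ and $|D^{\lambda}_{k,\infty}-D^{0}_{k,\infty}|\le C_4\lambda$ with deterministic constants. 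Combining with $T^{\lambda}_{k,\infty}\asto\sT^{\lambda}_{\infty}$, $D^{\lambda}_{k,\infty}\asto\Ddet$ for each fixed $\lambda>0$ and with $\sT^{\lambda}_{\infty}\to\sT^{0}_{\infty}$, $\Ddet\to\sD^{0}_{\infty}$ from (i), and intersecting the almost-sure events over a countable sequence $\lambda_j\downarrow0$, one gets $\limsup_n|T^{0}_{k,\infty}-\sT^{0}_{\infty}|\le C_3\lambda_j+|\sT^{\lambda_j}_{\infty}-\sT^{0}_{\infty}|\to0$ as $j\to\infty$, and similarly for $D^{0}_{k,\infty}$; thus $T^{0}_{k,\infty}\asto\sT^{0}_{\infty}$ and $D^{0}_{k,\infty}\asto\sD^{0}_{\infty}$.

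For (iii): when $\sD^{0}_{\infty}>0$ the continuous mapping theorem gives $\gcv_k^{0}\asto\sT^{0}_{\infty}/\sD^{0}_{\infty}$, and since $\gcvdet=\sT^{\lambda}_{\infty}/\Ddet$ with $\Ddet\to\sD^{0}_{\infty}>0$, the limit equals $\gcvdet[0]$, which proves the claim. The case $\phi_s=\infty$ is handled exactly as in \Cref{prop:Rdet-ridge-infinity} but with $\lambda=0$: the nonzero singular values of the subsampled designs diverge, so $\tbeta^{0}_{k,\infty}\to0$ and $n^{-1}\tr(\bS^{0}_{k,\infty})\to0$, whence $T^{0}_{k,\infty}\asto\sigma^2+\rho^2\int r\rd G(r)$ and $D^{0}_{k,\infty}\asto1$, matching $\lim_{\lambda\to0^{+}}\gcvdet[\lambda][\phi][\infty]$. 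I expect the genuinely delicate point to be the lone sub-case $\phi_s=\phi>1$, where $\sD^{0}_{\infty}=0$ and both $T^{0}_{k,\infty}$ and $D^{0}_{k,\infty}$ vanish in the limit, so the crude $O(\lambda)$ comparison can no longer control their ratio; dispatching it requires instead re-running the proofs of \Cref{lem:gcv-num} and \Cref{lem:gcv-den} directly at $\lambda=0$, with their resolvent ingredients degenerating to the ridgeless pseudoinverse equivalents, so as to extract matched rates for numerator and denominator and identify $\lim\gcv_k^{0}$ with $\RlamM[0][\infty]$.
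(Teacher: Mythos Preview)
Your overall architecture matches the paper's: both arguments prove the $\lambda=0$ case by interchanging the limits $n\to\infty$ and $\lambda\to0^{+}$, treating numerator and denominator separately. The paper carries this out via the Moore--Osgood theorem, establishing equicontinuity of $\lambda\mapsto T^{\lambda}_{k,\infty}$ and $\lambda\mapsto D^{\lambda}_{k,\infty}$ on $[0,\lambda_{\max}]$ by bounding their $\lambda$-derivatives (which involves controlling $\norm{\partial\bS^{\lambda}_{k,M}/\partial\lambda}_{\oper}$); it also verifies directly that $\Ddet[][0]=\lim_{\lambda\to0^+}\Ddet$ exists, computing it to be $((\phi_s-\phi)/\phi_s)^2$ for $\phi_s>1$ and $(1-\phi_s)^2$ for $\phi_s\le1$ via the relation $\lambda v(-\lambda;\phi_s)\to1-\phi_s$. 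Your explicit $O(\lambda)$ Lipschitz estimate and countable-sequence $\lambda_j\downarrow0$ device are the integrated form of the same mechanism, so step (i) and the skeleton of step (ii) are on the right track and morally equivalent to the paper's route.

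The concrete gap is your justification of the uniform-in-$I$ lower bound on the smallest nonzero singular value of $\bL_I\bX/\sqrt{k}$. Cauchy interlacing applied to the principal submatrix $\bX_I\bX_I^{\top}$ of $\bX\bX^{\top}$ gives $\sigma_j(\bX_I)\ge\sigma_{j+(n-k)}(\bX)$. When $\phi\ge1$ this is enough: taking $j=k$ yields $\sigma_k(\bX_I)\ge\sigma_n(\bX)$, and $\sigma_n(\bX/\sqrt{n})$ is bounded away from zero almost surely. But when $\phi<1$ the full matrix $\bX$ has rank $p<n$, so $\sigma_n(\bX)=0$ and interlacing gives nothing; similarly, for $\phi<\phi_s<1$ you need $\sigma_p(\bX_I)$ bounded below, and interlacing returns $\sigma_{p+n-k}(\bX)=0$ whenever $k<n$. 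Thus your bound $\norm{\hbeta^{\lambda}_k(\cD_I)-\hbeta^{0}_k(\cD_I)}_2\le C_1\lambda$ uniformly over the exponentially many $I\in\cI_k$ is not delivered by interlacing in the underparameterized regime $\phi<1$, and the subsequent averaging step breaks down. The paper's derivative-boundedness argument faces the same underlying issue (it too needs control on $\max_{I}\norm{\bX(\hSigma_I)^{+2}\bX^{\top}\bL_I/k}_{\oper}$ at $\lambda=0$) and is terse at this point, but it does not claim interlacing as the mechanism.

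A minor correction: in your computation of $\sD^{0}_{\infty}$ for $\phi_s<1$, the limit $\lambda v(-\lambda;\phi_s)\to1-\phi_s$ gives $\sD^{0}_{\infty}=((\phi_s-\phi)/\phi_s+\phi(1-\phi_s)/\phi_s)^2=((\phi_s-\phi\phi_s)/\phi_s)^2=(1-\phi)^2$, which is what you wrote; however the degenerate case you flag as ``$\phi_s=\phi>1$'' is the one instance where $\sD^{0}_{\infty}=0$, and there the full ensemble collapses to the single ridgeless predictor on all of $\cD_n$, so one can appeal directly to known GCV consistency for ordinary ridgeless rather than re-running \Cref{lem:gcv-num,lem:gcv-den} at $\lambda=0$.
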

        \begin{proof}[Proof of \Cref{prop:Rdet-lam-0}]
            We analyze the numerator and the denominator separately.

            \paragraph{Part (1)} For the denominator, note that
            \begin{align*}
                P_{n,\lambda}:=(1 - \tr(\bS^{\lambda}_{k})/n )^2 &= \lim_{M\rightarrow\infty}(1 - \tr(\bS^{\lambda}_{k,M})/n )^2,
            \end{align*}
            where $\bS^{\lambda}_{k} = \lim_{M\rightarrow\infty}\bS^{\lambda}_{k,M}$ is the smoothing matrix.
            Since $\bS^{\lambda}_{k}\succeq \zero_{n\times n}$ and
            \begin{align}
                \|\bS^{\lambda}_{k}\|_{\oper} &\leq \max_{I_m\in\cI_k} \|\bX(\bX^{\top}\bL_m\bX/k+\lambda\bI_p)^{-1}\bX^{\top}\bL_m/\sqrt{k}\|_{\oper},\label{eq:sec:proof-ridgeless-1}
            \end{align}
            which is also upper bounded almost surely from the proof in \Cref{prop:Rdet-ridge-infinity} (when $\lambda=0$, the inverse in the above display is replaced by pseudo-inverse). Thus, we have $P_{n,\lambda}$ is almost surely upper bounded $\lambda\in\Lambda:=[0,\lambda_{\max}]$ for any $\lambda_{\max}\in(0,\infty)$ fixed.

            Next we inspect the boundedness of the derivative of $P_{n,\lambda}$:
            \begin{align*}
                \frac{\partial}{\partial \lambda}P_{n,\lambda} &= \frac{\partial}{\partial \lambda} \lim_{M\rightarrow\infty}(1 - \tr(\bS^{\lambda}_{k,M})/n )^2 =: \frac{\partial}{\partial \lambda} \lim_{M\rightarrow\infty} Q_{M,\lambda}.
            \end{align*}
            We claim that $$\frac{\partial}{\partial \lambda} \lim_{M\rightarrow\infty} Q_{M,\lambda} =  \lim_{M\rightarrow\infty} \frac{\partial}{\partial \lambda}Q_{M,\lambda}.$$
            To see this, we need to show that $Q_{M,\lambda}$ is equicontinuous in $\lambda$ over $\Lambda$.
            First we know that $Q_{M,\lambda}$ is differentiable in $\lambda$.
            From \eqref{eq:sec:proof-ridgeless-1}, we have that $Q_{M,\lambda}$ is uniformly upper bounded over $\lambda\in\Lambda$ almost surely.
            Note that
            \begin{align*}
                \frac{\partial}{\partial \lambda}Q_{M,\lambda} &= 
                2(1-\tr(\bS^{\lambda}_{k,M})) \tr\left(\frac{\partial}{\partial \lambda}\bS^{\lambda}_{k,M}\right) ,
            \end{align*}
            where $$\frac{\partial}{\partial \lambda}\bS^{\lambda}_{k,M}  =\frac{1}{M}\sum_{m=1}^M \bX \left(\frac{\bX^{\top}\bL_m\bX}{k}+\lambda\bI\right)^{-2} \frac{\bX^{\top}\bL_m}{k} .$$
            By the similar arguments as in \Cref{prop:Rdet-ridge-infinity}, we have that $\norm{\partial\bS^{\lambda}_{k,M}/\partial\lambda}_{\oper}$, and $\|\bS^{\lambda}_{k,M}\|_2^2$ are uniformly upper bounded almost surely over $\Lambda$, the equicontinuity conclusion follows.
            Then by Moore-Osgood theorem, we have 
            \begin{align*}
                \frac{\partial}{\partial \lambda}P_{n,\lambda} 
                &= \lim_{M\rightarrow\infty} 2(1-\tr(\bS^{\lambda}_{k,M})) \tr\left(\frac{\partial}{\partial \lambda}\bS^{\lambda}_{k,M}\right) 
            \end{align*}
            is uniformly upper bounded almost surely over $[0,+\infty]$ independent of $\lambda$ and $M$.
            Therefore, we conclude that $|\partial P_{n,\lambda}/\partial\lambda|$ is upper bounded almost surely over $\lambda\in\Lambda$.

            On the other hand, we know that $P_{n,\lambda}\asto \Ddet$ for $\lambda>0$.
            Define $\Ddet[][0]:=\lim_{\lambda\rightarrow0^+}\Ddet$.
            When $\lambda=0$ and $\phi_s>1$, we know that $\Ddet[][0]$ is well-defined because $v(-\lambda; \phi_s)$ is finite and continuous from \Cref{lem:fixed-point-v-lambda-properties}.
            When $\lambda=0$ and $\phi_s\in(0,1]$, from the definition of fixed-point solution \eqref{eq:v_ridge}, we have
            \begin{align*}
                1 &= v(-\lambda;\phi_s)\lambda + \phi_s \int \frac{v(-\lambda;\phi_s)r}{1+v(-\lambda;\phi_s)r}\rd H(r).
            \end{align*}
            In this case, $v(0;\phi_s)=+\infty$ from \Cref{lem:fixed-point-v-lambda-properties}.
            Let $\lambda\to 0^+$, we have 
            \begin{align*}
                1 &= \lim_{\lambda\to 0^+}v(-\lambda;\phi_s)\lambda + \phi_s \lim_{\lambda\to 0^+}\int \frac{v(-\lambda;\phi_s)r}{1+v(-\lambda;\phi_s)r}\rd H(r) = \lim_{\lambda\to 0^+}v(-\lambda;\phi_s)\lambda + \phi_s.
            \end{align*}
            Then we have $\lim_{\lambda\to 0^+}v(-\lambda;\phi_s)\lambda = 1 - \phi_s$ and $\Ddet = (1 - \phi_s)^2$.
            Thus, $\Ddet[][0]$ is always well-defined.

            From \Cref{lem:fixed-point-v-lambda-properties}, there exists $M'>0$ such that the magnitudes of $v(-\lambda; \phi_s)$ and its derivative with respect to $\lambda$ are continuous and bounded by $M'$ for all $\lambda\in[0,+\infty]$.
            It follows that $|\Ddet|$ and $|\partial \Ddet/ \partial \lambda|$ are uniformly upper bounded almost surely.
            From Moore-Osgood theorem and the continuity property from \Cref{lem:fixed-point-v-lambda-properties}, we have
            \begin{align*}
                \lim_{n\rightarrow \infty}\lim_{\lambda\rightarrow 0^+} P_{n,\lambda} &= \lim_{\lambda\rightarrow 0^+}\lim_{n\rightarrow \infty} P_{n,\lambda}
                =\lim_{\lambda\rightarrow 0^+}\Ddet
                =\Ddet[][0].
            \end{align*}

            \paragraph{Part (2)} For the numerator, note that
            \begin{align*}
                P_{n,\lambda}':=\frac{1}{n}\|\by-\bX\tbeta^{\lambda}_{k}\|_2^2 &= \frac{1}{n}\|(\bI_n-\bS^{\lambda}_{k})\by\|_2^2 .
            \end{align*}
            Assumptions \ref{asm:rmt-feat}-\ref{asm:lin-mod} imply that the entries of $\by$ have bounded $4$-th moment, and thus from the strong law of large numbers, $\| \by / \sqrt{n} \|_2$ is eventually almost surely bounded above by $C\sqrt{\EE[y_1^2]} = C\sqrt{\rho^2 + \sigma^2}$ for some constant $C$.
            On the other hand, $\bS^{\lambda}_{k}\succeq \zero_{n\times n}$ and $\|\bS^{\lambda}_{k}\|_{\oper}$ is also upper bounded almost surely from Part (1). Thus, we have $P_{n,\lambda}'$ is almost surely upper bounded $\lambda\in\Lambda$.

            Next we inspect the boundedness of the derivative of $P_{n,\lambda}'$:
            \begin{align*}
                \frac{\partial}{\partial \lambda}P_{n,\lambda}' &= \frac{2}{n}(\by-\bX\tbeta^{\lambda}_{k})^{\top} \frac{\partial}{\partial \lambda}\bS^{\lambda}_{k}\by\\
                &= \frac{\partial}{\partial \lambda} \lim_{M\rightarrow\infty}\frac{2}{n}(\by-\bX\tbeta^{\lambda}_{k,M})^{\top} \bS^{\lambda}_{k,M}\by\\
                &= \frac{\partial}{\partial \lambda} \lim_{M\rightarrow\infty}\frac{2}{n}\by^{\top}(\bI_n-\bS^{\lambda}_{k,M}) \bS^{\lambda}_{k,M}\by =: \frac{\partial}{\partial \lambda} \lim_{M\rightarrow\infty} Q_{M,\lambda}'.
            \end{align*}
            We claim that $$\frac{\partial}{\partial \lambda} \lim_{M\rightarrow\infty} Q_{M,\lambda}' =  \lim_{M\rightarrow\infty} \frac{\partial}{\partial \lambda}Q_{M,\lambda}'.$$
            To see this, we need to show that $Q_{M,\lambda}'$ is equicontinuous in $\lambda$ over $\Lambda$.
            First we know that $Q_{M,\lambda}'$ is differentiable in $\lambda$.
            From \eqref{eq:sec:proof-ridgeless-1}, we have that $Q_{M,\lambda}'$ is uniformly upper bounded over $\lambda\in\Lambda$ almost surely.
            Similarly, we have
            \begin{align*}
                \frac{\partial}{\partial \lambda}Q_{M,\lambda}' &= 
                \frac{2}{n} \by^{\top} (\bI_n- 2\bS^{\lambda}_{k,M}) \frac{\partial}{\partial \lambda} \bS^{\lambda}_{k,M} \by,
            \end{align*}
            and
            \begin{align*}
                \left|\frac{\partial}{\partial \lambda}Q_{M,\lambda}' \right| &\leq \norm{\bI_n- 2\bS^{\lambda}_{k,M}}_{\oper} \norm{\frac{\partial}{\partial \lambda}\bS^{\lambda}_{k,M}}_{\oper} \frac{1}{n}\|\by\|_2^2.
            \end{align*}
            and the equicontinuity conclusion follows analogously as in Part (1).
            Therefore, we conclude that $|\partial P_{n,\lambda}'/\partial\lambda|$ is upper bounded almost surely over $\lambda\in[0,+\infty]$.

            On the other hand, we know that $P_{n,\lambda}'\asto \gcvdet$ for $\lambda>0$.
            Define $\Ddet[][0]\gcvdet[0]:=\lim_{\lambda\rightarrow0^+}(\Ddet\gcvdet) = \Ddet[][0] \RlamM[0]$, which is well defined from Part (1) and \Cref{thm:ver-with-replacement}.
            From \Cref{lem:fixed-point-v-lambda-properties}, there exists $M'>0$ such that the magnitudes of $v(-\lambda; \phi_s)$, $\tv(\lambda;\phi_s,\phi)$ and $\tc(\lambda;\phi_s)$, and their derivatives with respect to $\lambda$ are continuous and bounded by $M'$ for all $\lambda\in[0,+\infty]$.
            It follows that $|\gcvdet|$ is upper bounded almost surely.
            Analogously, we have that $|\partial (\Ddet\gcvdet)/ \partial \lambda|$ is also upper bounded almost surely on $\lambda\in\Lambda$.
            From Moore-Osgood theorem and the continuity property from \Cref{lem:fixed-point-v-lambda-properties}, we have
            \begin{align*}
                \lim_{n\rightarrow \infty}\lim_{\lambda\rightarrow 0^+} P_{n,\lambda}' &= \lim_{\lambda\rightarrow 0^+}\lim_{n\rightarrow \infty} P_{n,\lambda}'
                =\lim_{\lambda\rightarrow 0^+}\gcvdet
                =\gcvdet[0].
            \end{align*}

        \end{proof}

\section{Auxiliary results on asymptotic equivalents}
\label{sec:calculus_asymptotic_equivalents}

\subsection{Preliminary background}
\label{app:sec:preliminary-background}

We use the notion of asymptotic equivalence of sequences of random matrices in various proofs.
This section provides a basic review of the related definitions and corresponding calculus rules.
See \citet{dobriban_wager_2018,dobriban_sheng_2021,patil2022mitigating,patil2022bagging} for more details.

\begin{definition}[Asymptotic equivalence]
    \label{def:deterministic-equivalent}
    Consider sequences $\{ \bA_p \}_{p \ge 1}$ and $\{ \bB_p \}_{p \ge 1}$ of (random or deterministic) matrices of growing dimensions.
    We say that $\bA_p$ and $\bB_p$ are asymptotically equivalent and write $\bA_p \asympequi \bB_p$ if $\lim_{p \to \infty} | \tr[\bC_p (\bA_p - \bB_p)] | = 0$ almost surely for any sequence of random matrices $\bC_p$ independent to $\bA_p$ and $\bB_p$, with bounded trace norm such that $\limsup_{p\rightarrow\infty} \| \bC_p \|_{\mathrm{tr}} < \infty$ almost surely.
\end{definition}

    The notion of asymptotic equivalence of two sequences of random matrices from \Cref{def:deterministic-equivalent} can be further extended to incorporate conditioning on another sequence of random matrices.

    \begin{definition}[Conditional asymptotic equivalence]
        \label{def:cond-deterministic-equivalent}
        Consider sequences $\{ \bA_p \}_{p \ge 1}$, $\{ \bB_p \}_{p \ge 1}$ and $\{ \bD_p \}_{p \ge 1}$ of (random or deterministic) matrices of growing dimensions.
        We say that $\bA_p$ and $\bB_p$ are equivalent given $\bD_p$ and write $\bA_p \asympequi \bB_p\mid \bD_p$ if $\lim_{p \to \infty} | \tr[\bC_p (\bA_p - \bB_p)] | = 0$ almost surely conditional on $\{\bD_p\}_{p\ge 1}$. In other words,
        \begin{align*}
            \PP\left(\lim\limits_{p\rightarrow\infty}|\tr[\bC_p(\bA_p-\bB_p)]| =0\given \{\bD_p\}_{p\ge 1}\right) =1,
        \end{align*}
        for any sequence of random matrices $\bC_p$, independent to $\bA_p$ and $\bB_p$ conditional on $\bD_p$, with bounded trace norm
        such that $\limsup \| \bC_p \|_{\mathrm{tr}} < \infty$
        as $p \to \infty$.
    \end{definition}

    Below we summarize the calculus rules for conditional asymptotic equivalence \Cref{def:cond-deterministic-equivalent} adapted from \citet[Lemma S.7.4 and S.7.6]{patil2022bagging}.
    \begin{lemma}[Calculus of asymptotic equivalents]
        \label{lem:calculus-detequi}
        Let $\bA_p$, $\bB_p$, $\bC_p$ and $\bD_p$ be sequences of random matrices.
        The calculus of asymptotic equivalents ($\asympequi_D$ and $\asympequi_R$) satisfies the following properties:
        \begin{enumerate}[label={(\arabic*)}]
            \item 
            \label{lem:calculus-detequi-item-equivalence}
            Equivalence:
            The relation $\asympequi$ is an equivalence relation.
            \item 
            \label{lem:calculus-detequi-item-sum}
            Sum:
            If $\bA_p \asympequi \bB_p\mid \bE_p$ and $\bC_p \asympequi \bD_p\mid \bE_p$, then $\bA_p + \bC_p \asympequi \bB_p + \bD_p\mid \bE_p$.
            \item 
            \label{lem:calculus-detequi-item-product}
            Product: If $\bA_p$ has bounded operator norms such that $\limsup_{p\rightarrow\infty}\| \bA_p \|_{\oper} < \infty$, $\bA_p$ is conditional independent to $\bB_p$ and $\bC_p$ given $\bE_p$ for $p\ge 1$, and $\bB_p \asympequi \bC_p\mid \bE_p$, then $\bA_p \bB_p \asympequi \bA_p \bC_p\mid \bE_p$.
            \item 
            \label{lem:calculus-detequi-item-trace}
            Trace:
            If $\bA_p \asympequi \bB_p\mid \bE_p$, then $\tr[\bA_p] / p - \tr[\bB_p] / p \to 0$ almost surely when conditioning on $ \bE_p$.
            \item 
            \label{lem:calculus-detequi-item-differentiation}
            Differentiation:
            Suppose $f(z, \bA_p) \asympequi g(z, \bB_p)\mid \bE_p$ where the entries of $f$ and $g$
            are analytic functions in $z \in S$ and $S$ is an open connected subset of $\CC$.
            Suppose for any sequence $\bC_p$ of deterministic matrices with bounded trace norm
            we have $| \tr[\bC_p (f(z, \bA_p) - g(z, \bB_p))] | \le M$ for every $p$ and $z \in S$.
            Then we have $f'(z, \bA_p) \asympequi g'(z, \bB_p)\mid \bE_p$ for every $z \in S$,
            where the derivatives are taken entrywise with respect to $z$.
            
            \item \label{lem:cond-calculus-detequi-item-uncond} Unconditioning: If $ \bA_p\asympequi \bB_p\mid \bE_p$, then $ \bA_p\asympequi \bB_p$.
            
            \item \label{lem:cond-calculus-detequi-item-substitute} 
            Substitution:
            Let $v:\RR^{p\times p}\rightarrow\RR$ and $f(v(\bC),\bC):\RR^{p\times p}\rightarrow\RR^{p\times p}$ be a matrix function for matrix $\bC\in\RR^{p\times p}$ and $p\in\NN$,  that is continuous in the first augment with respect to operator norm. If $v(\bC)\stackrel{\as}{=} v(\bD)$ such that $\bC$ is independent to $\bD$, then $f(v(\bC),\bC)\asympequi f(v(\bD),\bC)\mid \bC$.
        \end{enumerate}
    \end{lemma}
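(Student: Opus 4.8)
The plan is to establish each of the seven properties by unwinding \Cref{def:cond-deterministic-equivalent} and using linearity of the trace together with submultiplicativity of matrix norms, essentially reproducing the argument of \citet[Lemma S.7.4 and S.7.6]{patil2022bagging}. Since conditional asymptotic equivalence given $\{\bE_p\}_{p\ge 1}$ is by definition a statement that holds almost surely on the conditional probability space, each property can be proved by running the corresponding unconditional argument after fixing $\bE_p$; I will therefore describe the unconditional reasoning and note in passing that every step remains valid conditional on $\{\bE_p\}_{p\ge1}$. The bulk of the work is routine; the one genuinely analytic point, and the step I expect to be the main obstacle, is the differentiation rule \ref{lem:calculus-detequi-item-differentiation}.

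First I would dispatch \ref{lem:calculus-detequi-item-equivalence}, \ref{lem:calculus-detequi-item-sum}, and \ref{lem:calculus-detequi-item-trace}: reflexivity is trivial, and for symmetry and transitivity one writes, for an arbitrary test matrix $\bT_p$ with $\limsup_p \norm{\bT_p}_{\tr} < \infty$, the additive identity $\tr[\bT_p(\bA_p - \bC_p)] = \tr[\bT_p(\bA_p - \bB_p)] + \tr[\bT_p(\bB_p - \bC_p)]$ and lets $p \to \infty$; the same splitting applied to $\tr[\bT_p((\bA_p+\bC_p) - (\bB_p+\bD_p))]$ gives the sum rule, and the trace rule follows by specializing to $\bT_p = p^{-1}\bI_p$, which has $\norm{\bT_p}_{\tr} = 1$, so that $\tr[\bT_p(\bA_p - \bB_p)] = p^{-1}(\tr[\bA_p] - \tr[\bB_p]) \to 0$. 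For the product rule \ref{lem:calculus-detequi-item-product}, given a test matrix $\bT_p$ conditionally independent of $\bB_p,\bC_p$ given $\bE_p$ of bounded trace norm, I would rewrite $\tr[\bT_p\bA_p(\bB_p - \bC_p)] = \tr[(\bT_p\bA_p)(\bB_p - \bC_p)]$ and observe that $\norm{\bT_p\bA_p}_{\tr} \le \norm{\bA_p}_{\oper}\norm{\bT_p}_{\tr}$ is asymptotically bounded and that $\bT_p\bA_p$ inherits conditional independence from $\bA_p$ and $\bT_p$; the hypothesis $\bB_p \asympequi \bC_p \mid \bE_p$ then applies with $\bT_p\bA_p$ as the test matrix. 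The unconditioning rule \ref{lem:cond-calculus-detequi-item-uncond} is the tower property: the event that $\tr[\bT_p(\bA_p - \bB_p)] \to 0$ has conditional probability one given $\{\bE_p\}_{p\ge1}$ almost surely, hence unconditional probability one. Finally, for the substitution rule \ref{lem:cond-calculus-detequi-item-substitute} I would condition on $\bC$: then $v(\bC)$ is a fixed scalar, $v(\bD)$ equals it almost surely, and continuity of the first argument of $f$ in operator norm gives $f(v(\bD),\bC) = f(v(\bC),\bC)$ almost surely conditional on $\bC$, so the two sides are conditionally almost surely equal and \emph{a fortiori} asymptotically equivalent given $\bC$.

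The hard part will be the differentiation rule \ref{lem:calculus-detequi-item-differentiation}, which cannot be handled by algebra alone. Fix a test matrix sequence $\bC_p$ and set $h_p(z) := \tr[\bC_p(f(z,\bA_p) - g(z,\bB_p))]$; each $h_p$ is analytic on the open connected set $S$, being a finite linear combination of the analytic entries of $f - g$, and by hypothesis $\sup_p \sup_{z\in S} |h_p(z)| \le M$. Intersecting, over a countable dense subset $D \subset S$, the probability-one events on which $h_p(z) \to 0$, I obtain on an event of probability one (conditional on $\{\bE_p\}$) a sequence $\{h_p\}$ that is locally uniformly bounded and converges to $0$ pointwise on $D$; the Vitali--Porter theorem then upgrades this to locally uniform convergence $h_p \to 0$ on all of $S$. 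Differentiating under the finite trace and invoking the Cauchy integral representation $h_p'(z) = \frac{1}{2\pi i}\oint_{\gamma}\frac{h_p(w)}{(w - z)^2}\,\rd w$ for a small circle $\gamma \subset S$ centered at $z$, local uniform convergence of $h_p$ forces $h_p'(z) = \tr[\bC_p(f'(z,\bA_p) - g'(z,\bB_p))] \to 0$ for every $z \in S$; since the test sequence $\bC_p$ was arbitrary, this yields $f'(z,\bA_p) \asympequi g'(z,\bB_p) \mid \bE_p$. The subtlety to handle carefully is making the passage to the hypotheses of Vitali's theorem uniform in the realization, which is exactly why the reduction to a countable dense set and the uniform bound $M$ are both essential.
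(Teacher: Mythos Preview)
Your proof sketch is correct and complete. Note, however, that the paper does not actually prove this lemma: it is stated as a summary of calculus rules ``adapted from \citet[Lemma S.7.4 and S.7.6]{patil2022bagging}'' with no accompanying proof in the present paper. You have supplied what the paper only cites, and your arguments---linearity and the trace-norm/operator-norm inequality for the algebraic rules, the tower property for unconditioning, and the Vitali--Porter normal-families argument for differentiation---are the standard ones and match what one finds in the cited source.
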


\subsection{Standard ridge resolvents and various extensions}
\label{append:det-equi-resol}

In this section,
we gather various asymptotic matrix equivalents.
\Cref{subsubsec:resolvent-standard} introduces the basic concepts and definitions.
The extended equivalents developed in the work of \citet{patil2022bagging} are summarized in \Cref{sec:asympequi-extended-subsample-resolvents}.
Based on the results in \Cref{subsubsec:resolvent-standard,sec:asympequi-extended-subsample-resolvents}, we prove some useful asymptotic equivalent relations in \Cref{subsubsec:resolvent-train}, which are subsequently used in the proof of \Cref{lem:conv-train-err} (that further relies on \Cref{lem:ridge-B0,lem:ridge-V0}).

\subsubsection{Standard ridge resolvents}\label{subsubsec:resolvent-standard}

The following lemma provides an asymptotic equivalent
for the standard ridge resolvent and implies \Cref{cor:asympequi-scaled-ridge-resolvent}. 
It is adapted from Theorem 1 of \citet{rubio_mestre_2011}.
See also Theorem 3 of \citet{dobriban_sheng_2021}.

\begin{lemma}[Asymptotic equivalent for standard ridge resolvent]
    \label{lem:basic-ridge-resolvent-equivalent}
    Suppose $\bx_i \in \RR^{p}$ for $i \in [n]$ are i.i.d.\ random vectors such that each $\bx_i = \bz_{i} \bSigma^{1/2}$, where $\bz_i$ is a random vector consisting of i.i.d.\ entries $z_{ij}$ for $j \in [p]$ satisfying $\EE[z_{ij}] = 0$, $\EE[z_{ij}^2] = 1$, and $\EE[|z_{ij}|^{8+\alpha}] \le M_\alpha$ for some constants $\alpha > 0$ and $M_\alpha < \infty$, and $\bSigma \in \RR^{p \times p}$ is a positive semidefinite matrix satisfying $0 \preceq \bSigma \preceq r_{\max} I_p$ for some constant $r_{\max} < \infty$ that is independent of $p$.
    Let $\bX \in \RR^{n \times p}$ the concatenated matrix with $\bx_i^\top$ for $i \in [n]$ as rows, and let $\hSigma \in \RR^{p \times p}$ denote the random matrix $\bX^\top \bX / n$.
    Let $\gamma := p / n$.
    Then, for $z \in \CC^{+}$, as $n, p \to \infty$ such that $0 < \liminf \gamma \le \limsup \gamma < \infty$, we have the following asymptotic equivalence:
    \begin{equation}
        (\hSigma - z \bI_p)^{-1}
        \asympequi
        (c(e(z; \gamma)) \bSigma - z \bI_p)^{-1}.
    \end{equation}
    Here the scalar $c(e(z; \gamma))$ is defined in terms of another scalar $e(z; \gamma)$ by the equation:
    \begin{equation}
        \label{eq:basic-ridge-equivalence-c-e-relation}
        c(e(z; \gamma))
        = \frac{1}{ 1 + \gamma e(z; \gamma)},
    \end{equation}
    and $e(z; \gamma)$ is the unique solution in $\CC^{+}$ to the following fixed-point equation:
    \begin{equation}
        \label{eq:basic-ridge-equivalence-e-fixed-point}
        e(z; \gamma)
        = 
        \tr[ \bSigma (c(e(z; \gamma)) \bSigma  - z I_p)^{-1} ] / p.
    \end{equation}
\end{lemma}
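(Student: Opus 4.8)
The plan is to recognize this as a classical anisotropic Marchenko--Pastur deterministic equivalent: under the stated hypotheses (i.i.d.\ entries, mean zero, unit variance, bounded $(8+\alpha)$-th moment, $0\preceq\bSigma\preceq r_{\max}\bI_p$, bounded aspect ratio) the statement is a special case of Theorem~1 of \citet{rubio_mestre_2011} (equivalently Theorem~3 of \citet{dobriban_sheng_2021}), so one route is simply to check that our assumptions fall within their setting. For a self-contained argument I would proceed along the standard leave-one-out/resolvent-expansion lines. First I would fix $z\in\CC^{+}$ and a bounded range of $\gamma=p/n$ and establish existence and uniqueness in $\CC^{+}$ of the solution $e(z;\gamma)$ to \eqref{eq:basic-ridge-equivalence-c-e-relation}--\eqref{eq:basic-ridge-equivalence-e-fixed-point}: one checks that the map $e\mapsto p^{-1}\tr[\bSigma\,(c(e)\bSigma-z\bI_p)^{-1}]$ sends $\CC^{+}$ into $\CC^{+}$ (since $c(e)\in\CC_{-}$ and $\bSigma\succeq 0$), so it is a holomorphic self-map of $\CC^{+}$; not being the identity, it is a strict hyperbolic contraction on compacta (Schwarz--Pick), forcing uniqueness, and existence then follows by a normal-families argument. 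Write $\overline{\bQ}(z):=(c(e(z;\gamma))\bSigma-z\bI_p)^{-1}$ and $\bQ(z):=(\hSigma-z\bI_p)^{-1}$.

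By \Cref{def:deterministic-equivalent} it suffices to prove $\tr[\bC_p(\bQ-\overline{\bQ})]\to0$ almost surely for every deterministic $\bC_p$ with $\limsup_p\|\bC_p\|_{\mathrm{tr}}<\infty$. Using the resolvent identity $\bQ-\overline{\bQ}=\bQ(c\bSigma-\hSigma)\overline{\bQ}$ and $\hSigma=n^{-1}\sum_{i=1}^{n}\bx_i\bx_i^{\top}$, this reduces to controlling
\begin{align*}
\tr[\bC_p(\bQ-\overline{\bQ})]=c\,\tr[\bSigma\overline{\bQ}\bC_p\bQ]-\frac{1}{n}\sum_{i=1}^{n}\bx_i^{\top}\overline{\bQ}\bC_p\bQ\bx_i.
\end{align*}
For each summand I would decouple $\bx_i$ from $\bQ$ by Sherman--Morrison: with $\bQ_{-i}$ the leave-one-out resolvent (independent of $\bx_i$),
\begin{align*}
\bx_i^{\top}\overline{\bQ}\bC_p\bQ\bx_i=\frac{\bx_i^{\top}\overline{\bQ}\bC_p\bQ_{-i}\bx_i}{1+n^{-1}\bx_i^{\top}\bQ_{-i}\bx_i}.
\end{align*}
A Hanson--Wright/Bai--Silverstein trace lemma (using the $(8+\alpha)$-th moment bound, plus Borel--Cantelli for the almost-sure conclusion) gives $n^{-1}\bx_i^{\top}\bQ_{-i}\bx_i\approx n^{-1}\tr[\bSigma\bQ_{-i}]$ and $\bx_i^{\top}\overline{\bQ}\bC_p\bQ_{-i}\bx_i\approx\tr[\bSigma\overline{\bQ}\bC_p\bQ_{-i}]$ uniformly in $i$, and a rank-one perturbation estimate replaces $\bQ_{-i}$ by $\bQ$ throughout. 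Setting $\hat{e}_n:=p^{-1}\tr[\bSigma\bQ]$ and running the analogous computation for this scalar directly (now with operator-norm, $p$-normalized estimates) shows $\hat{e}_n$ satisfies the fixed-point equation \eqref{eq:basic-ridge-equivalence-e-fixed-point} up to an additive $o(1)$; since that equation defines a contraction, its solution is stable, so $\hat{e}_n\to e(z;\gamma)$ and $(1+\gamma\hat{e}_n)^{-1}\to c$. Substituting back, $\tr[\bC_p(\bQ-\overline{\bQ})]=\bigl(c-(1+\gamma\hat{e}_n)^{-1}\bigr)\tr[\bSigma\overline{\bQ}\bC_p\bQ]+o(1)=o(1)$, which is the claim; analyticity of all objects in $z$ then propagates the estimate locally uniformly and, through \Cref{cor:asympequi-scaled-ridge-resolvent}, to real $z=-\lambda$.

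The hardest part will be the concentration step, carried out against a general trace-class test matrix $\bC_p$ rather than only $\bSigma$, in the almost-sure rather than in-probability sense, and uniformly over $i=1,\dots,n$. This is exactly where the $(8+\alpha)$-th moment hypothesis earns its keep: it provides a high enough moment bound on the quadratic-form fluctuation $\bx_i^{\top}M_i\bx_i-\tr[\bSigma M_i]$ to survive a union bound over $i$ and a Borel--Cantelli argument, and it requires the usual preliminary truncation of the entries of the $\bz_i$ (preserving the first two moments asymptotically) together with a martingale-difference bound on $n^{-1}\sum_i(\bx_i^{\top}M_i\bx_i-\tr[\bSigma M_i])$, where each $M_i$ depends on $\bx_i$ only through $\bQ_{-i}$. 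The remaining bookkeeping is routine.
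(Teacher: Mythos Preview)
Your proposal is correct and in fact strictly subsumes what the paper does. The paper does not give a proof of this lemma at all: it simply records that the statement ``is adapted from Theorem~1 of \citet{rubio_mestre_2011}'' (with a secondary pointer to Theorem~3 of \citet{dobriban_sheng_2021}) and moves on. Your first paragraph already identifies exactly this citation and notes that the stated hypotheses fall within their setting, which is all the paper requires.

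The remainder of your proposal, the self-contained leave-one-out/resolvent-expansion sketch, is additional material that the paper does not attempt. The outline is the standard one (Sherman--Morrison decoupling, quadratic-form concentration via the $(8+\alpha)$-th moment plus Borel--Cantelli, rank-one perturbation to replace $\bQ_{-i}$ by $\bQ$, stability of the fixed point), and it is a reasonable roadmap. One point to be careful about if you pursued it in full: the test matrix $\bC_p$ in \Cref{def:deterministic-equivalent} is only trace-class, not operator-norm bounded, so the quadratic-form concentration for $\bx_i^\top \overline{\bQ}\bC_p\bQ_{-i}\bx_i$ has to be organized so that $\bC_p$ sits between operator-norm-bounded resolvents and the Hanson--Wright bound is applied to the sandwich rather than to $\bC_p$ directly; your write-up hints at this but does not make it explicit. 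For the purposes of this paper, however, the citation suffices.
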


The following corollary is a simple consequence of \Cref{lem:basic-ridge-resolvent-equivalent}.
It supplies an asymptotic equivalent for the (regularization) scaled ridge resolvent.

\begin{corollary}[Asymptotic equivalent for scaled ridge resolvent]\label{cor:asympequi-scaled-ridge-resolvent}
    Assume the setting of \Cref{lem:basic-ridge-resolvent-equivalent}.
    For $\lambda > 0$, we have the following asymptotic equivalence:
    \[
        \lambda (\hSigma + \lambda \bI_p)^{-1}
        \asympequi
        (v(-\lambda; \gamma) \bSigma + \bI_p)^{-1}.
    \]
    Here $v(-\lambda; \gamma) > 0$ is the unique solution to the following fixed-point equation:
    \begin{align}
        \label{eq:basic-ridge-equivalence-v-fixed-point}
        \frac{1}{v(-\lambda; \gamma)}
        =
        \lambda
        + \gamma \int \frac{r}{1+v(-\lambda; \gamma)r} \, \rd H_n(r),
    \end{align}
    where $H_n$ is the empirical distribution of the eigenvalues of $\bSigma$ that is supported on $\RR_{+}$.
\end{corollary}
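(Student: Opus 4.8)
The plan is to obtain Corollary~\ref{cor:asympequi-scaled-ridge-resolvent} directly from Lemma~\ref{lem:basic-ridge-resolvent-equivalent} by specializing the spectral parameter to $z=-\lambda$ and then rewriting the scalar fixed point. First I would note that, although Lemma~\ref{lem:basic-ridge-resolvent-equivalent} is stated for $z\in\CC^{+}$, the asymptotic equivalence in fact holds for every $z$ in the slit plane $S:=\CC\setminus[0,\infty)$, and in particular at $z=-\lambda$ with $\lambda>0$. Indeed, since $\hSigma\succeq 0$ and $0\preceq\bSigma\preceq r_{\max}\bI_p$, both $z\mapsto(\hSigma-z\bI_p)^{-1}$ and $z\mapsto(c(e(z;\gamma))\bSigma-z\bI_p)^{-1}$ are analytic on $S$, and for any fixed test sequence $\bC_p$ with $\limsup_p\|\bC_p\|_{\mathrm{tr}}<\infty$ almost surely the maps $z\mapsto\tr[\bC_p((\hSigma-z\bI_p)^{-1}-(c(e(z;\gamma))\bSigma-z\bI_p)^{-1})]$ are holomorphic on $S$ and, a.s., locally uniformly bounded there (using $\|(\hSigma-z\bI_p)^{-1}\|_{\oper}\le\operatorname{dist}(z,[0,\infty))^{-1}$). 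Taking a countable dense subset of $S$ and applying a Vitali/normal-families argument then upgrades the a.s.\ pointwise convergence to $0$ on $\CC^{+}$ to a.s.\ local uniform convergence on all of $S$, hence to convergence at $z=-\lambda$. (Alternatively one may simply invoke the negative-real-argument version of this equivalence recorded in \citet{dobriban_sheng_2021}.)

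Next, evaluating Lemma~\ref{lem:basic-ridge-resolvent-equivalent} at $z=-\lambda$ gives $(\hSigma+\lambda\bI_p)^{-1}\asympequi(c(e(-\lambda;\gamma))\bSigma+\lambda\bI_p)^{-1}$. Multiplying both sides by the fixed scalar $\lambda$ — which preserves asymptotic equivalence, e.g.\ by the product rule \Cref{lem:calculus-detequi}~\ref{lem:calculus-detequi-item-product} applied with the deterministic bounded-operator-norm matrix $\lambda\bI_p$ — and then absorbing $\lambda$ inside the inverse, I get
\[
    \lambda(\hSigma+\lambda\bI_p)^{-1}\asympequi\Big(\tfrac{c(e(-\lambda;\gamma))}{\lambda}\,\bSigma+\bI_p\Big)^{-1}.
\]
It therefore remains to identify $v:=c(e(-\lambda;\gamma))/\lambda$ as $v(-\lambda;\gamma)$, i.e.\ to show that $v$ is the unique positive solution of \eqref{eq:basic-ridge-equivalence-v-fixed-point}.

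The identification is a short computation with the defining scalar relations. From $c(e)=(1+\gamma e)^{-1}$ in \eqref{eq:basic-ridge-equivalence-c-e-relation} we obtain $1/v=\lambda(1+\gamma e(-\lambda;\gamma))=\lambda+\lambda\gamma e(-\lambda;\gamma)$ and $c(e(-\lambda;\gamma))=\lambda v$. Substituting $z=-\lambda$ into \eqref{eq:basic-ridge-equivalence-e-fixed-point} and expanding the trace via the eigendecomposition of $\bSigma$ yields
\[
    e(-\lambda;\gamma)=\int\frac{r}{c(e(-\lambda;\gamma))\,r+\lambda}\,\rd H_n(r)=\int\frac{r}{\lambda v r+\lambda}\,\rd H_n(r)=\frac{1}{\lambda}\int\frac{r}{1+v r}\,\rd H_n(r),
\]
so $\lambda\gamma e(-\lambda;\gamma)=\gamma\int r(1+vr)^{-1}\,\rd H_n(r)$; plugging this into the expression for $1/v$ gives precisely $1/v=\lambda+\gamma\int r(1+vr)^{-1}\,\rd H_n(r)$. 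Finally, uniqueness and well-definedness of the positive solution follow because $v\mapsto\lambda v+\gamma\int vr(1+vr)^{-1}\,\rd H_n(r)$ is continuous and strictly increasing on $(0,\infty)$ with limits $0$ and $\infty$ at the endpoints, so $v\cdot\big(\lambda+\gamma\int r(1+vr)^{-1}\,\rd H_n(r)\big)=1$ has exactly one root. This completes the proof.

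I expect the only genuinely delicate point to be the extension of Lemma~\ref{lem:basic-ridge-resolvent-equivalent} from $\CC^{+}$ to the negative real axis (the Vitali step above); once that is granted, the remainder is routine bookkeeping with \eqref{eq:basic-ridge-equivalence-c-e-relation}–\eqref{eq:basic-ridge-equivalence-e-fixed-point} and the scalar multiplication rule.
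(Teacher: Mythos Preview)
Your proposal is correct and follows exactly the approach the paper implicitly has in mind: the paper does not give an explicit proof but simply states that the corollary is ``a simple consequence'' of \Cref{lem:basic-ridge-resolvent-equivalent}, and your derivation (specialize to $z=-\lambda$, multiply by $\lambda$, and rewrite the scalar fixed point via $v=c(e(-\lambda;\gamma))/\lambda$) is precisely that simple consequence. The one point you treat more carefully than the paper is the extension from $z\in\CC^{+}$ to $z=-\lambda<0$; the paper tacitly relies on the underlying result from \citet{rubio_mestre_2011}/\citet{dobriban_sheng_2021} holding on the full resolvent set, whereas you spell out the Vitali argument --- this is a reasonable addition but not a different route.
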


It is worth mentioning that the parameter $v(-\lambda; \gamma)$ in \Cref{cor:asympequi-scaled-ridge-resolvent} is the companion Stieltjes transform of the spectral distribution of the sample covariance matrix $\hSigma$. 
It is also the Stieltjes transform of the spectral distribution of the gram matrix $\bX \bX^\top / n$.

The following lemma uses \Cref{cor:asympequi-scaled-ridge-resolvent} along with calculus of asymptotic equivalents (from \Cref{lem:calculus-detequi}). It provides asymptotic equivalents for resolvents needed to obtain asymptotic bias and variance of standard ridge regression.
It is adapted from Lemma S.6.10 of \citet{patil2022mitigating}.

\begin{lemma}[Asymptotic equivalents for ridge resolvents associated with generalized bias and variance]
    \label{lem:deter-approx-generalized-ridge}
    Suppose $\bx_i \in \RR^{p}$ for $i \in [n]$ are i.i.d.\ random vectors with each $\bx_i = \bz_{i} \bSigma^{1/2}$, where $\bz_i \in \RR^{p}$ is a random vector that contains i.i.d.\ random variables $z_{ij}$ for $j \in [p]$ each with $\EE[z_{ij}] = 0$, $\EE[z_{ij}^2] = 1$, and $\EE[|z_{ij}|^{8+\alpha}] \le M_\alpha$ for some constants $\alpha > 0$ and $M_\alpha < \infty$, and $\bSigma \in \RR^{p \times p}$ is a positive semidefinite matrix with $r_{\min} \bI_p \preceq \bSigma \preceq r_{\max} \bI_p$ for some constants $r_{\min} > 0$ and $r_{\max} < \infty$  that is independent of $p$.
    Let $\bX \in \RR^{n \times p}$ be the concatenated random matrix with $\bx_i$, $1 \le i \le n$, as its rows, and define $\hSigma:=\bX^\top \bX / n \in \RR^{p \times p}$.
    Let $\gamma:= p / n$.
    Then, for $\lambda > 0$, as $n, p \to \infty$ with $0 < \liminf \gamma \le \limsup \gamma < \infty$, we have the following asymptotic equivalents:
    \begin{enumerate}[label={(\arabic*)}]
        \item\label{eq:detequi-ridge-genbias}  Bias of ridge regression:
        \begin{equation}
            \lambda^2
            (\hSigma + \lambda \bI_p)^{-1} \bA (\hSigma + \lambda \bI_p)^{-1}
            \asympequi 
            (v(-\lambda; \gamma,\bSigma) \bSigma + \bI_p)^{-1}
            (\tv_b(-\lambda; \gamma,\bSigma,\bA) \bSigma + \bA)
            (v(-\lambda; \gamma,\bSigma) \bSigma + \bI_p)^{-1}.
        \end{equation}

        \item\label{eq:detequi-ridge-genvar} Variance of ridge regression:
        \begin{equation}
            (\hSigma + \lambda \bI_p)^{-2} \hSigma \bA
            \asympequi
            \tv_v(-\lambda; \gamma,\bSigma) (v(-\lambda; \gamma,\bSigma) \bSigma + \bI_p)^{-2} \bSigma\bA.
        \end{equation}
    \end{enumerate}
    Here $v(-\lambda; \gamma,\bSigma) > 0$ is the unique solution to the fixed-point equation
    \begin{equation}
        \label{eq:def-v-ridge}
        \frac{1}{v(-\lambda; \gamma,\bSigma)}
        = \lambda+\int \frac{\gamma r}{1+v(-\lambda; \gamma,\bSigma) r} \, \rd H_n(r;\bSigma),
    \end{equation}
    and $\tv_b(-\lambda; \gamma,\bSigma)$ and $\tv_v(-\lambda; \gamma,\bSigma)$
    are defined through $v(-\lambda; \gamma,\bSigma)$ by the following equations:
    \begin{align}
        \tv_b(-\lambda; \gamma,\bSigma,\bA)
        &=
        \ddfrac{\gamma\tr[\bA \bSigma(v(-\lambda; \gamma,\bSigma)\bSigma + \bI_p)^{-2}]/p
        }{v(-\lambda; \gamma,\bSigma)^{-2}- \int \gamma r^2(1+v(-\lambda; \gamma,\bSigma) r)^{-2} \, \rd H_n(r;\bSigma)}
        , \label{eq:def-v-b-ridge}\\
        \tv_v(-\lambda; \gamma,\bSigma)^{-1}
       & = 
            v(-\lambda; \gamma,\bSigma)^{-2}
            - \int \gamma r^2(1+v(-\lambda; \gamma,\bSigma) r)^{-2} \, \rd H_n(r;\bSigma),
            \label{eq:def-tv-v-ridge}
    \end{align}
    where $H_n(\cdot;\bSigma)$ is the empirical distribution of the eigenvalues of $\bSigma$ that is supported on $[r_{\min}, r_{\max}]$.
\end{lemma}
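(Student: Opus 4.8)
The plan is to bootstrap from the first‑order equivalent $\lambda(\hSigma+\lambda\bI_p)^{-1}\asympequi(v(-\lambda;\gamma,\bSigma)\bSigma+\bI_p)^{-1}$ of \Cref{cor:asympequi-scaled-ridge-resolvent} and to obtain both second‑order resolvents by differentiation, using the calculus of asymptotic equivalents in \Cref{lem:calculus-detequi}; this is the route of Lemma S.6.10 of \citet{patil2022mitigating}, adapted here (and, where needed, to the conditional version of the equivalence). Throughout, $\bA$ is taken deterministic with bounded operator norm, as required for the product rule.

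\textbf{Variance term.} I would first record the algebraic identities $(\hSigma+\lambda\bI_p)^{-2}\hSigma=(\hSigma+\lambda\bI_p)^{-1}-\lambda(\hSigma+\lambda\bI_p)^{-2}$ and $(\hSigma+\lambda\bI_p)^{-2}=-\partial_\lambda(\hSigma+\lambda\bI_p)^{-1}$, so that $(\hSigma+\lambda\bI_p)^{-2}\hSigma$ is an explicit linear combination of $(\hSigma+\lambda\bI_p)^{-1}$ and its $\lambda$‑derivative. The differentiation rule \Cref{lem:calculus-detequi}~\ref{lem:calculus-detequi-item-differentiation} then transfers the first‑order equivalent through $\partial_\lambda$: one checks that both $\lambda(\hSigma+\lambda\bI_p)^{-1}$ and $(v(-\lambda;\gamma,\bSigma)\bSigma+\bI_p)^{-1}$ extend analytically to a complex neighborhood of any real $\lambda>0$ (using $\hSigma\succeq 0$ and $r_{\min}\bI_p\preceq\bSigma\preceq r_{\max}\bI_p$ to keep the resolvents and the scalar $v$ bounded there), and that the trace pairing against any $\bC_p$ with $\limsup\|\bC_p\|_{\mathrm{tr}}<\infty$ is uniformly bounded on that neighborhood. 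Differentiating the scalar fixed‑point equation \eqref{eq:def-v-ridge} in $\lambda$ produces exactly the constant $\tv_v(-\lambda;\gamma,\bSigma)$ of \eqref{eq:def-tv-v-ridge}. Finally, multiplying on the right by $\bA$ is legitimate by the product rule \Cref{lem:calculus-detequi}~\ref{lem:calculus-detequi-item-product}, which gives the claimed equivalent.

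\textbf{Bias term.} For $\lambda^2(\hSigma+\lambda\bI_p)^{-1}\bA(\hSigma+\lambda\bI_p)^{-1}$ the relevant identity is $-\partial_z(\hSigma+\lambda\bI_p+z\bA)^{-1}\big|_{z=0}=(\hSigma+\lambda\bI_p)^{-1}\bA(\hSigma+\lambda\bI_p)^{-1}$, so it suffices to produce an asymptotic equivalent $\bR^\star(z)$ for the matrix‑regularized resolvent $\bR(z):=(\hSigma+\lambda\bI_p+z\bA)^{-1}$ that is analytic in a complex disc around $z=0$ with uniformly bounded trace pairings, and then invoke the differentiation rule once more. To obtain $\bR^\star(z)$ I would set $\bB_z:=\lambda\bI_p+z\bA$, invertible for $|z|$ small, write $\bR(z)=\bB_z^{-1/2}(\bB_z^{-1/2}\hSigma\bB_z^{-1/2}+\bI_p)^{-1}\bB_z^{-1/2}$, observe that $\bB_z^{-1/2}\hSigma\bB_z^{-1/2}$ is a sample covariance formed from i.i.d.\ vectors with population covariance $\bB_z^{-1/2}\bSigma\bB_z^{-1/2}$ up to a polar rotation that the Rubio–Mestre equivalent of \Cref{lem:basic-ridge-resolvent-equivalent} does not see (it depends only on the spectrum), apply that lemma, and conjugate back by $\bB_z^{-1/2}$. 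This yields $\bR^\star(z)$ governed by a $z$‑dependent fixed point $v_z$ with $v_0=v(-\lambda;\gamma,\bSigma)$; differentiating the $z$‑fixed‑point relation at $z=0$ produces the coefficient $\tv_b(-\lambda;\gamma,\bSigma,\bA)$ of \eqref{eq:def-v-b-ridge}, and $-\partial_z\bR^\star(z)|_{z=0}$ simplifies to $(v(-\lambda;\gamma,\bSigma)\bSigma+\bI_p)^{-1}(\tv_b(-\lambda;\gamma,\bSigma,\bA)\bSigma+\bA)(v(-\lambda;\gamma,\bSigma)\bSigma+\bI_p)^{-1}$. Multiplying through by the fixed constant $\lambda^2$ finishes the bias claim.

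\textbf{Main obstacle.} The delicate step is licensing the differentiation rule: one must upgrade the first‑order and matrix‑regularized equivalents from pointwise real statements to statements holding uniformly on a complex neighborhood, with a uniform bound on $|\tr[\bC_p(\bR(z)-\bR^\star(z))]|$ over that neighborhood for every admissible $\bC_p$. This requires (i) that the conjugation/polar reduction above remain valid for complex $z$, and (ii) uniform operator‑norm control of $\bR(z)$, $\bR^\star(z)$ and of the fixed points $v_z$, which in turn rests on $\hSigma\succeq 0$, the well‑conditioning of $\bSigma$, and $|z|$ being small. Everything else — the scalar fixed‑point differentiations producing $\tv_v$ and $\tv_b$, and the bookkeeping matching \eqref{eq:def-v-b-ridge}–\eqref{eq:def-tv-v-ridge} — is routine calculus.
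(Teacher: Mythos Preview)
The paper does not supply its own proof of this lemma; it is stated as adapted from Lemma~S.6.10 of \citet{patil2022mitigating}, whose argument is precisely the differentiation-based route you outline (differentiate the first-order equivalent of \Cref{cor:asympequi-scaled-ridge-resolvent} in $\lambda$ for the variance term, and in an auxiliary perturbation parameter $z$ for the bias term, via \Cref{lem:calculus-detequi}\ref{lem:calculus-detequi-item-differentiation}). Your sketch is correct and matches that reference's approach; the obstacle you flag about licensing the differentiation is handled there exactly as you describe, and the polar-rotation point is resolved either by invoking the more general Rubio--Mestre theorem with an additive deterministic perturbation (so no conjugation is needed) or by the rotation-invariance of the underlying Stieltjes-transform argument.
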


Although \Cref{lem:deter-approx-generalized-ridge} states the dependency on $\bSigma$ explicitly, we will simply write $H_n(r)$, $v(-\lambda; \gamma)$, $\tv_b(-\lambda; \gamma, \bA)$, and $\tv_v(-\lambda; \gamma)$ to denote $H_n(r;\bSigma)$, $v(-\lambda; \gamma,\bSigma)$, $\tv_b(-\lambda; \gamma,\bSigma,\bA)$, and $\tv_v(-\lambda; \gamma,\bSigma)$, respectively, for simplifying notations when it is clear from the context.
When $\bA=\bSigma$, we simply write $\tv_b(-\lambda; \gamma) = \tv_b(-\lambda; \gamma,\bA)$.
The moment assumption of order $8 + \alpha$ for some $\alpha > 0$ in the above lemma can be relaxed to only requiring the existence of moments of order $4 + \alpha$ by a truncation argument as in the proof of Theorem 6 of \citet{hastie2022surprises} (in Appendix A.4 therein). We omit the details and refer the readers to \citet{hastie2022surprises}.

\subsubsection{Extended ridge resolvents}
\label{sec:asympequi-extended-subsample-resolvents}

The lemma below extends the asymptotic equivalents of the ridge resolvents in \Cref{lem:deter-approx-generalized-ridge} to provide asymptotic equivalents for Tikhonov resolvents, where the regularization matrix $\lambda \bI_p$ is replaced with $\lambda(\bI_p+ \bC)$ and $\bC\in\RR^{p\times p}$ is an arbitrary positive semidefinite random matrix.

\begin{lemma}[Tikhonov resolvents, adapted from \citet{patil2022bagging}]\label{lem:deter-approx-ridge-extend}
    Suppose the conditions in Lemma \ref{lem:deter-approx-generalized-ridge} holds.
    Let $\bC\in\RR^{p\times p}$ be any symmetric and positive semidefinite random matrix with uniformly bounded operator norm in $p$ that is independent to $\bX$ for all $n,p\in\NN$, and let $\bN=(\hSigma + \lambda \bI_p)^{-1}$.
    Then, for $\lambda > 0$, as $n, p \to \infty$ with $0 < \liminf \gamma \le \limsup \gamma < \infty$, we have the following asymptotic equivalents:
    \begin{enumerate}[label={(\arabic*)}]

        \item \label{eq:lem:deter-approx-ridge-extend}Tikhonov resolvent:
        \begin{align}
            \lambda(\bN^{-1} + \lambda\bC )^{-1} &\asympequi \tilde{\bSigma}_{\bC}^{-1}. 
        \end{align}
        
        \item \label{eq:lem:deter-approx-ridge-extend-gbias}Bias of Tikhonov regression:
        \begin{align}
            \lambda^2 (\bN^{-1} + \lambda\bC )^{-1}\bSigma (\bN^{-1} + \lambda\bC )^{-1} &\asympequi    \tilde{\bSigma}_{\bC}^{-1} (\tv_b(-\lambda; \gamma,\bSigma_{\bC})\bSigma+\bSigma)\tilde{\bSigma}_{\bC}^{-1}.
        \end{align}
        
        \item \label{eq:lem:deter-approx-ridge-extend-gvar}Variance of Tikhonov regression:
        \begin{align}
            (\bN^{-1} + \lambda\bC )^{-1}\hSigma(\bN^{-1} + \lambda\bC )^{-1}\bSigma  &\asympequi \tv_v(-\lambda; \gamma,\bSigma_{\bC})\tilde{\bSigma}_{\bC}^{-1}\bSigma\tilde{\bSigma}_{\bC}^{-1}\bSigma,
        \end{align}
    \end{enumerate}
    where $\bSigma_{\bC}= (\bI_p+ \bC)^{-\frac{1}{2}}\bSigma  (\bI_p+ \bC)^{-\frac{1}{2}}$, $\tilde{\bSigma}_{\bC}=v(-\lambda; \gamma,\bSigma_{\bC}) \bSigma + \bI_p+\bC $.
    Here $v(-\lambda; \gamma,\bSigma_{\bC})$, $\tilde{v}_b(-\lambda; \gamma,\bSigma_{\bC})$, and $\tilde{v}_v(-\lambda; \gamma,\bSigma_{\bC})$ defined by \eqref{eq:def-v-ridge}-\eqref{eq:def-tv-v-ridge} simplify to the following:
    \begin{align}
        \frac{1}{v(-\lambda; \gamma,\bSigma_{\bC})}
             &= \lambda
            + \gamma \tr[ (v(-\lambda; \gamma,\bSigma_{\bC}) \bSigma + \bI_p +\bC)^{-1}\bSigma] / p, \label{eq:def-v-c-ridge} \\
            \frac{1}{\tilde{v}_v(-\lambda; \gamma,\bSigma_{\bC})} &= \frac{1}{v(-\lambda; \gamma,\bSigma_{\bC})^2} - \gamma \tr[ (v(-\lambda; \gamma,\bSigma_{\bC}) \bSigma + \bI_p +\bC)^{-2}\bSigma^2] / p, \label{eq:def-tv-v-c-ridge}\\
            \tilde{v}_b(-\lambda; \gamma,\bSigma_{\bC}) &=  \gamma \tr[ (v(-\lambda; \gamma,\bSigma_{\bC}) \bSigma + \bI_p +\bC)^{-2}\bSigma^2] / p \cdot \tilde{v}_v(-\lambda; \gamma,\bSigma_{\bC}). \label{eq:def-tv-b-c-ridge}
    \end{align}
    \end{lemma}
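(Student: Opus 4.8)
The plan is to reduce \Cref{lem:deter-approx-ridge-extend} to the scalar‑regularization equivalents \Cref{cor:asympequi-scaled-ridge-resolvent} and \Cref{lem:deter-approx-generalized-ridge} by absorbing the matrix regularizer $\lambda(\bI_p+\bC)$ into a similarity transform. Set $\bT := (\bI_p+\bC)^{1/2}$; since $\bC\succeq\zero$ has operator norm uniformly bounded in $p$ and is independent of $\bX$, the matrices $\bT$ and $\bT^{-1}$ have uniformly bounded operator norms and, conditionally on $\bC$, are deterministic. Writing
\[
   \bN^{-1}+\lambda\bC \;=\; \hSigma + \lambda(\bI_p+\bC) \;=\; \bT\big(\hSigma_{\bC}+\lambda\bI_p\big)\bT,
   \qquad \hSigma_{\bC} := \bT^{-1}\hSigma\bT^{-1},
\]
gives $\lambda(\bN^{-1}+\lambda\bC)^{-1} = \bT^{-1}\,\lambda(\hSigma_{\bC}+\lambda\bI_p)^{-1}\,\bT^{-1}$, and the crucial observation is that $\hSigma_{\bC} = \tilde\bX^{\top}\tilde\bX/n$ with $\tilde\bX := \bX\bT^{-1} = \bZ\,\bSigma^{1/2}\bT^{-1}$; that is, the rows of $\tilde\bX$ are $\bz_i$ (i.i.d.\ entries satisfying the moment conditions of \Cref{asm:rmt-feat}) times a deterministic shaping matrix whose right Gram matrix is $(\bSigma^{1/2}\bT^{-1})^{\top}(\bSigma^{1/2}\bT^{-1}) = \bT^{-1}\bSigma\bT^{-1} = \bSigma_{\bC}$. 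I would first record that $\bSigma_{\bC}$ obeys the spectral hypotheses of \Cref{lem:deter-approx-generalized-ridge} uniformly in $p$: $\bSigma_{\bC}\preceq r_{\max}\bI_p$ since $\bT^{-2}\preceq\bI_p$, and $\bSigma_{\bC}\succeq r_{\min}(1+\|\bC\|_{\oper})^{-1}\bI_p$.

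Conditioning on $\bC$, I would then apply \Cref{cor:asympequi-scaled-ridge-resolvent} and \Cref{lem:deter-approx-generalized-ridge} to $\hSigma_{\bC}$ with population covariance $\bSigma_{\bC}$: for part~\ref{eq:lem:deter-approx-ridge-extend} this gives $\lambda(\hSigma_{\bC}+\lambda\bI_p)^{-1}\asympequi(v(-\lambda;\gamma,\bSigma_{\bC})\bSigma_{\bC}+\bI_p)^{-1}\mid\bC$; for part~\ref{eq:lem:deter-approx-ridge-extend-gbias} I would use \Cref{lem:deter-approx-generalized-ridge}~\ref{eq:detequi-ridge-genbias} with matrix argument $\bSigma_{\bC}$, and for part~\ref{eq:lem:deter-approx-ridge-extend-gvar} I would use \Cref{lem:deter-approx-generalized-ridge}~\ref{eq:detequi-ridge-genvar} with matrix argument $\bT^{-1}\bSigma$. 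Conjugating each equivalent by $\bT^{-1}$ on the left and right — legitimate by the product rule \Cref{lem:calculus-detequi}~\ref{lem:calculus-detequi-item-product}, since $\bT^{-1}$ is conditionally deterministic with bounded operator norm — and simplifying via the algebraic identities
\[
   \bT^{-1}\bSigma\bT^{-1} = \bSigma_{\bC},
   \qquad
   v(-\lambda;\gamma,\bSigma_{\bC})\bSigma_{\bC}+\bI_p \;=\; \bT^{-1}\tilde{\bSigma}_{\bC}\,\bT^{-1},
\]
so that $(v(-\lambda;\gamma,\bSigma_{\bC})\bSigma_{\bC}+\bI_p)^{-1} = \bT\,\tilde{\bSigma}_{\bC}^{-1}\,\bT$, collapses the right‑hand sides to $\tilde{\bSigma}_{\bC}^{-1}$ and its bias/variance analogues, establishing parts~\ref{eq:lem:deter-approx-ridge-extend}--\ref{eq:lem:deter-approx-ridge-extend-gvar}; a final unconditioning via \Cref{lem:calculus-detequi}~\ref{lem:cond-calculus-detequi-item-uncond} removes $\bC$. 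The fixed‑point identities \eqref{eq:def-v-c-ridge}--\eqref{eq:def-tv-b-c-ridge} then follow by specializing the defining equations \eqref{eq:def-v-ridge}--\eqref{eq:def-tv-v-ridge} to $\bSigma_{\bC}$ and rewriting every normalized trace $\tfrac{1}{p}\tr[\,\cdot\,(v(-\lambda;\gamma,\bSigma_{\bC})\bSigma_{\bC}+\bI_p)^{-j}]$ in terms of $\bSigma$ and $\bC$ using cyclicity of the trace together with the same identity $(v(-\lambda;\gamma,\bSigma_{\bC})\bSigma_{\bC}+\bI_p)^{-1}=\bT\tilde{\bSigma}_{\bC}^{-1}\bT$.

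The step I expect to be the main obstacle is justifying that \Cref{lem:deter-approx-generalized-ridge} (and \Cref{lem:basic-ridge-resolvent-equivalent} behind it) genuinely applies to $\hSigma_{\bC}=\tilde\bX^{\top}\tilde\bX/n$: the shaping matrix $\bSigma^{1/2}\bT^{-1}$ is in general \emph{not} a symmetric positive semidefinite square root, whereas those lemmas are phrased for features $\bz_i\bSigma^{1/2}$ with a symmetric root. This is resolved by the fact that the deterministic equivalent of a sample‑covariance resolvent depends on the shaping only through its right Gram $\bSigma_{\bC}$ — by universality of the Marchenko--Pastur law / the separable‑profile form of \citet{rubio_mestre_2011} — so that, via a polar decomposition $\bSigma^{1/2}\bT^{-1} = \bO\,\bSigma_{\bC}^{1/2}$ with $\bO$ orthogonal, one may replace $\bZ\bO$ by a fresh i.i.d.\ matrix without changing the limit. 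A secondary but real point is the matrix algebra in parts~\ref{eq:lem:deter-approx-ridge-extend-gbias}--\ref{eq:lem:deter-approx-ridge-extend-gvar}, where the $\bT^{\pm1}$ factors introduced by conjugation cancel only after invoking the commutation relation $\bSigma\,\tilde{\bSigma}_{\bC}^{-1}(\bI_p+\bC) = (\bI_p+\bC)\,\tilde{\bSigma}_{\bC}^{-1}\bSigma$, which holds precisely because $\tilde{\bSigma}_{\bC} = v(-\lambda;\gamma,\bSigma_{\bC})\bSigma + (\bI_p+\bC)$; choosing the right matrix argument $\bA$ before applying \Cref{lem:deter-approx-generalized-ridge} (namely $\bT^{-1}\bSigma$ for the variance piece) keeps this bookkeeping clean.
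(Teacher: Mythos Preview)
The paper does not provide its own proof of this lemma: it is stated in \Cref{sec:asympequi-extended-subsample-resolvents} as ``adapted from \citet{patil2022bagging}'' and simply quoted, so there is no in-paper argument to compare against. That said, your proposal is the natural reduction and is essentially how such extensions are proved in the cited source: absorb the matrix penalty via the similarity $\bN^{-1}+\lambda\bC=\bT(\hSigma_{\bC}+\lambda\bI_p)\bT$ with $\bT=(\bI_p+\bC)^{1/2}$, apply \Cref{cor:asympequi-scaled-ridge-resolvent} and \Cref{lem:deter-approx-generalized-ridge} to the ``whitened'' sample covariance $\hSigma_{\bC}$ with population $\bSigma_{\bC}$, and then undo the conjugation using $(v\bSigma_{\bC}+\bI_p)^{-1}=\bT\tilde{\bSigma}_{\bC}^{-1}\bT$.

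Your bookkeeping is correct. In particular, the commutation identity you invoke for part~\ref{eq:lem:deter-approx-ridge-extend-gvar},
\[
(\bI_p+\bC)\,\tilde{\bSigma}_{\bC}^{-1}\bSigma \;=\; \bSigma\,\tilde{\bSigma}_{\bC}^{-1}(\bI_p+\bC),
\]
does hold (both sides equal $v^{-1}\bigl((\bI_p+\bC)-(\bI_p+\bC)\tilde{\bSigma}_{\bC}^{-1}(\bI_p+\bC)\bigr)$ by writing $v\bSigma=\tilde{\bSigma}_{\bC}-(\bI_p+\bC)$), which is exactly what makes the $\bT^{\pm1}$ factors collapse. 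The one genuine technicality you flag---that the shaping matrix $\bSigma^{1/2}\bT^{-1}$ is not the principal square root of $\bSigma_{\bC}$---is indeed the only place requiring care; your polar-decomposition remedy (replacing $\bZ\bO$ by $\bZ$, which leaves the spectral law of $\hSigma_{\bC}$ unchanged) is the standard fix and is implicit in the Rubio--Mestre framework underlying \Cref{lem:basic-ridge-resolvent-equivalent}. Nothing is missing.
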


\subsubsection{Resolvents for training error}\label{subsubsec:resolvent-train}    
The following lemma concerns the asymptotic equivalents of quantities that arise in the proof for \Cref{lem:conv-train-err}.

\begin{lemma}[Resolvents for in-sample error]\label{lem:resolv-insample} Suppose the conditions in Lemma \ref{lem:deter-approx-generalized-ridge} holds.
    Let $\bC\in\RR^{p\times p}$ be any symmetric and positive semidefinite random matrix with uniformly bounded operator norm in $p$ that is independent to $\bX$ for all $n,p\in\NN$.
    Let $I_1,I_2\overset{\SRS}{\sim} \cI_k$ and $\hSigma_j$ be the sample covariance matrix computed using $k$ observations of $\bX$ indexed by $I_j$ ($j=0,1$).
    For $j=1,2$, let $\bM_j=(\hSigma_j + \lambda \bI_p)^{-1}$ be the resolvent for $\hSigma_j$.
    Then, as $k,n,p\rightarrow\infty$ such that $p/n\rightarrow\phi\in(0,\infty)$ and $p/k\rightarrow\phi_s\in[\phi,\infty)$, we have the following asymptotic equivalents:
\begin{enumerate}[(1)]
    \item\label{item:lem-ridge-ind} Independent product with sample covariance:
    \begin{align*}
        \bC \hSigma_j \asympequi \bC \bSigma.
    \end{align*}

    \item\label{item:lem-ridge-B0-term-det-1} Bias term 1:
    \begin{align}
        \lambda^2\bM_1\bC\bM_2 \asympequi \left(v(-\lambda; \phi_s) \bSigma + \bI_p\right)^{-1} (\tv(-\lambda;\phi,\phi_s,\bC)\bSigma+\bC)\left(v(-\lambda; \phi_s) \bSigma + \bI_p\right)^{-1}. \label{eq:lem-ridge-B0-term-det-1}
    \end{align}

    \item\label{item:lem-ridge-B0-term-det-2} Bias term 2:

    \begin{align}
        \bM_1 \hSigma_{1\cap 2} \bM_2 \bC&\asympequi \tv_v(-\lambda;\phi,\phi_s)( v(-\lambda; \phi_s) \bSigma + \bI_p)^{-2}\bSigma\bC,\label{eq:lem-ridge-B0-term-det-2}
    \end{align}

    \item\label{item:lem-ridge-V0-term-det-1} Variance term 1:
    \begin{align}
        \bM_1 \hSigma_{1\cap 2} &\asympequi  \bI_p - (v(-\lambda;\phi_s) \bSigma+\bI_p)^{-1} ,\label{eq:lem-ridge-V0-term-det-1}
    \end{align}

    \item\label{item:lem-ridge-V0-term-det-2} Variance term 2:
    \begin{align}
        \bM_1 \hSigma_{1\cap 2} \bM_2 \hSigma_{1\cap 2} &\asympequi \frac{\phi_s}{\phi}\left(v(-\lambda;\phi_s)-\frac{\phi_s-\phi}{\phi_s}\lambda \tv_v(-\lambda;\phi,\phi_s) \right)(v(-\lambda;\phi_s)\bSigma+\bI_p)^{-1}\bSigma \notag\\
        &\qquad - \lambda \tv_v(-\lambda;\phi,\phi_s)(v(-\lambda;\phi_s)\bSigma+\bI_p)^{-2}\bSigma,\label{eq:lem-ridge-V0-term-det-2}
    \end{align}
\end{enumerate}
where 
    \begin{align*}
        \tv(-\lambda;\phi,\phi_s,\bC) &=\ddfrac{\lim\limits_{k,n,p} \phi \tr[\bC\bSigma(v(-\lambda; \phi_s)\bSigma+\bI_p)^{-2}]/p}{v(-\lambda; \phi_s)^{-2}-\phi \int\frac{r^2}{(1+v(-\lambda; \phi_s)r)^2}\rd H(r)},\\
        \tv_v(-\lambda;\phi,\phi_s) &:= \ddfrac{1}{v(-\lambda; \phi_s)^{-2}-\phi \int\frac{r^2}{(1+ v(-\lambda; \phi_s)r)^2}\,\rd H(r)}. 
    \end{align*}
\end{lemma}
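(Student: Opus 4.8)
All five equivalents will be reduced to the scaled ridge resolvent equivalent (\Cref{cor:asympequi-scaled-ridge-resolvent}), the generalized bias/variance equivalents (\Cref{lem:deter-approx-generalized-ridge}), and the anisotropic Tikhonov resolvent equivalents (\Cref{lem:deter-approx-ridge-extend}), manipulated through the calculus rules of \Cref{lem:calculus-detequi} and the cardinality limits of \Cref{lem:i0_mean}. The structural device used throughout is the block decomposition induced by the overlap of $I_1$ and $I_2$: with $i_0 := |I_1\cap I_2|$, $\hSigma_{1\cap 2} := \bX^\top\bL_{I_1\cap I_2}\bX/i_0$, and $\hSigma_j^{\mathrm{ind}} := \bX^\top\bL_{I_j\setminus I_{3-j}}\bX/(k-i_0)$, the three matrices $\hSigma_{1\cap 2},\hSigma_1^{\mathrm{ind}},\hSigma_2^{\mathrm{ind}}$ are built from pairwise disjoint rows of $\bX$ and are therefore mutually independent, while
\[
\bM_j^{-1} = \tfrac{i_0}{k}\big(\hSigma_{1\cap 2}+\lambda\bI_p\big) + \tfrac{k-i_0}{k}\big(\hSigma_j^{\mathrm{ind}}+\lambda\bI_p\big),\qquad j=1,2.
\]
By \Cref{lem:i0_mean}, $i_0/k\asto\phi/\phi_s$ and $(k-i_0)/k\asto(\phi_s-\phi)/\phi_s$, so $\hSigma_{1\cap 2}$ has limiting aspect ratio $p/i_0\to\phi_s^2/\phi$ and each $\hSigma_j^{\mathrm{ind}}$ has $p/(k-i_0)\to\phi_s^2/(\phi_s-\phi)$; these almost-sure scalar limits get substituted into the deterministic equivalents via \Cref{lem:calculus-detequi}~\ref{lem:cond-calculus-detequi-item-substitute}. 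The $\hSigma_{1\cap 2}$ factors are converted into resolvent factors using the identities $\bM_j\hSigma_j = \bI_p - \lambda\bM_j$ and $\hSigma_{1\cap 2} = \tfrac{k}{i_0}\hSigma_j - \tfrac{k-i_0}{i_0}\hSigma_j^{\mathrm{ind}}$.

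Part~\ref{item:lem-ridge-ind} is the base case: since the rows of $\bX$ are i.i.d.\ with $\bx_i = \bz_i\bSigma^{1/2}$, a direct second-moment estimate gives $\hSigma_j\asympequi\bSigma$, and as $\bC$ is independent of $\bX$ with bounded operator norm, \Cref{lem:calculus-detequi}~\ref{lem:calculus-detequi-item-product} gives $\bC\hSigma_j\asympequi\bC\bSigma$. The two ``atomic'' resolvent products are handled next. For \ref{item:lem-ridge-V0-term-det-1}, write $\bM_1\hSigma_{1\cap 2} = \tfrac{k}{i_0}(\bI_p-\lambda\bM_1) - \tfrac{k-i_0}{i_0}\bM_1\hSigma_1^{\mathrm{ind}}$ and view $\bM_1$ as a Tikhonov resolvent in which $\hSigma_{1\cap 2}$ is the data and $\tfrac{k-i_0}{k}\hSigma_1^{\mathrm{ind}}+\lambda\bI_p$ (after rescaling) is the independent anisotropic penalty, so \Cref{lem:deter-approx-ridge-extend}~\ref{eq:lem:deter-approx-ridge-extend} applies; multiplying by the independent $\hSigma_1^{\mathrm{ind}}\asympequi\bSigma$, substituting the block-fraction limits, and using the fixed-point equation \eqref{eq:v_ridge} to collapse the Tikhonov scalars to $v(-\lambda;\phi_s)$ and $\tv_v(-\lambda;\phi,\phi_s)$ gives the stated form. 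For \ref{item:lem-ridge-B0-term-det-1}, condition on the common block $\hSigma_{1\cap 2}$, which renders $\bM_1$ and $\bM_2$ conditionally independent functions of $\hSigma_1^{\mathrm{ind}}$ and $\hSigma_2^{\mathrm{ind}}$; apply \Cref{lem:deter-approx-ridge-extend}~\ref{eq:lem:deter-approx-ridge-extend} to each (now $\hSigma_j^{\mathrm{ind}}$ is the data and $\tfrac{i_0}{k}\hSigma_{1\cap 2}+\lambda\bI_p$ the penalty), sandwich the independent $\bC$ using the product rule (\Cref{lem:calculus-detequi}~\ref{lem:calculus-detequi-item-product}), and finally integrate out $\hSigma_{1\cap 2}$ (replacing it by $\bSigma$ via part~\ref{item:lem-ridge-ind} and the continuity of the fixed-point scalars); $\tv(-\lambda;\phi,\phi_s,\bC)$ emerges as the collapsed Tikhonov parameter.

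Parts~\ref{item:lem-ridge-B0-term-det-2} and \ref{item:lem-ridge-V0-term-det-2} follow by applying the same template to the longer products $\bM_1\hSigma_{1\cap 2}\bM_2\bC$ and $\bM_1\hSigma_{1\cap 2}\bM_2\hSigma_{1\cap 2}$: expanding the $\hSigma_{1\cap 2}$ factors via the two identities above turns each product into a linear combination --- with coefficients converging by \Cref{lem:i0_mean} --- of $\bM_j$, of $\bM_1\bM_2$ (the case $\bC=\bI_p$ of \ref{item:lem-ridge-B0-term-det-1}), and of $\bM_1\hSigma_{1\cap 2}\bM_2$; substituting the equivalents from \Cref{cor:asympequi-scaled-ridge-resolvent} and parts~\ref{item:lem-ridge-ind},~\ref{item:lem-ridge-B0-term-det-1}, collecting the deterministic matrices $(v(-\lambda;\phi_s)\bSigma+\bI_p)^{-1}$, $\bSigma$, $\bC$, and simplifying with the defining fixed-point equations --- exactly the manipulation carried out in the proof of \Cref{lem:ridge-V0} --- yields the stated coefficients. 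The main obstacle is the dependence bookkeeping forced by $I_1\cap I_2\neq\emptyset$: since $\bM_1$, $\bM_2$, $\hSigma_{1\cap 2}$ share common randomness, no single resolvent equivalent applies, and one must peel the shared block off one layer at a time, each time rewriting the resolvent in question as a Tikhonov resolvent whose penalty is built from the (independent) non-matching blocks and checking that the residual factor has bounded trace norm and is conditionally independent of the block being integrated out, so that \Cref{lem:calculus-detequi}~\ref{lem:calculus-detequi-item-product} and~\ref{lem:cond-calculus-detequi-item-substitute} remain applicable; keeping the conditional-equivalence qualifications of \Cref{def:cond-deterministic-equivalent} correct throughout, and checking that the Tikhonov fixed-point parameters collapse after substitution exactly to the $v(-\lambda;\phi_s)$ and $\tv_v(-\lambda;\phi,\phi_s)$ of the statement, are the delicate steps; the $\lambda\to0^+$ and $\phi_s\to\phi$ limits provide useful consistency checks.
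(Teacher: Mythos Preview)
Your overall framework --- block decomposition via $i_0=|I_1\cap I_2|$, Tikhonov resolvents, conditional equivalence, and the calculus of \Cref{lem:calculus-detequi} --- is exactly the machinery the paper uses, and the paper in fact dispatches Parts~\ref{item:lem-ridge-B0-term-det-1}, \ref{item:lem-ridge-B0-term-det-2}, and the first reduction in \ref{item:lem-ridge-V0-term-det-2} by citing the companion paper \citep{patil2022bagging}, so your self-contained outline for those parts is if anything more complete.

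The one place your plan slips is Part~\ref{item:lem-ridge-V0-term-det-1}. You propose to write $\bM_1\hSigma_{1\cap 2}=\tfrac{k}{i_0}(\bI_p-\lambda\bM_1)-\tfrac{k-i_0}{i_0}\bM_1\hSigma_1^{\mathrm{ind}}$, view $\bM_1$ as a Tikhonov resolvent with $\hSigma_{1\cap 2}$ as data and $\hSigma_1^{\mathrm{ind}}$ in the penalty, and then ``multiply by the independent $\hSigma_1^{\mathrm{ind}}\asympequi\bSigma$''. But after conditioning on $\hSigma_1^{\mathrm{ind}}$ the Tikhonov equivalent of $\lambda\bM_1$ \emph{depends} on $\hSigma_1^{\mathrm{ind}}$ through the penalty $\bC$, so $\hSigma_1^{\mathrm{ind}}$ is no longer independent of it and Part~\ref{item:lem-ridge-ind} does not apply. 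The paper avoids this by conditioning in the opposite direction: condition on $\hSigma_{1\cap 2}$, treat $\hSigma_1^{\mathrm{ind}}$ as the data in \Cref{lem:deter-approx-ridge-extend}, and multiply by $\hSigma_{1\cap 2}$ (legitimate, since it is the conditioning variable). The resulting expression is then rewritten as a Tikhonov resolvent with $\hSigma_{1\cap 2}$ as data, and a \emph{second} application of \Cref{lem:deter-approx-ridge-extend} gives a second fixed-point scalar. The crux --- which your phrase ``collapse the Tikhonov scalars to $v(-\lambda;\phi_s)$'' papers over --- is that the two nested fixed-point parameters $v_0,v_1$ satisfy a \emph{symmetric} coupled system, so if $(v_0,v_1)$ solves it then so does $(v_1,v_0)$; uniqueness forces $v_0=v_1$, and substituting back into \eqref{eq:v_ridge} identifies the common value as $v(-\lambda;\phi_s)$. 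This symmetry/uniqueness argument is the genuinely new idea in the paper's proof of Part~\ref{item:lem-ridge-V0-term-det-1}, and your plan as written does not supply a substitute for it.
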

\begin{proof}[Proof of \Cref{lem:resolv-insample}]
    We split the proof into different parts below.
    
    \paragraph{Part (1)} 
    
    Note that $\tr(\hSigma_j) =  \sum_{i\in I_j}\|\bx_i\|_2^2/k$ and $\bx_i = \bz_i^{\top}\bSigma \bz_i$.
    By \Cref{lem:concen-quadform}, we have that $\tr(\hSigma_j)/p-\tr(\bSigma)/p\asto 0$.
    Since $\norm{\bC}_{\oper}$ is uniformly upper bounded and
    \begin{align*}
        \left|\frac{1}{p}\tr(\bC\hSigma_j) -\frac{1}{p}\tr(\bC\bSigma)\right| & \leq \frac{1}{p}|\tr(\bC(\hSigma_j-\bSigma))| \leq \frac{1}{p} \norm{\bC}_{\oper} |\tr(\hSigma_j-\bSigma)|,
    \end{align*}
    it follows that $\frac{1}{p}\tr(\bC\hSigma_j) -\frac{1}{p}\tr(\bC\bSigma)\asto 0$, which implies that $\bC\hSigma_j\asympequi \bC\bSigma$

    \paragraph{Part (2)} This is a direct consequence of \citet[Part (c) of the proof for Lemma S.24]{patil2022bagging}.
    
    \paragraph{Part (3)} This is a direct consequence of \citet[Part (c) of the proof for Lemma S.25]{patil2022bagging}.

    \paragraph{Part (4)} Let $i_0=|I_1\cap I_2|$.
         Conditioning on $\hSigma_{1\cap 2}$ and $i_0$, from \Cref{def:cond-deterministic-equivalent} and \Cref{lem:deter-approx-ridge-extend}~\ref{eq:lem:deter-approx-ridge-extend} we have
        \begin{align*} 
            \lambda\bM_1 \asympequi\bM^{\det}_{\bM_{1\cap 2},i_0}&:= \frac{k}{k-i_0}\left(v_1\bSigma +\bI_p+ \bC_1\right)^{-1} \,\Big|\, i_0,
        \end{align*}
        where $v_1=v(-\lambda; \gamma_1,\bSigma_{\bC_1})$, $\bSigma_{\bC_1}=(\bI_p+\bC_1)^{-\frac{1}{2}}\bSigma (\bI_p+\bC_1)^{-\frac{1}{2}}$, $\bC_1=i_0(\lambda(k-i_0))^{-1}\bM_{1\cap 2}^{-1}$, and $\gamma_1=p/(k-i_0)$. Here the subscripts of $v_1$ and $\bC_1$ are related to the aspect ratio $\gamma_1$. 
        Because
        \begin{align*}
            \limsup\norm{\hSigma_{1\cap 2}}_{\oper} \leq r_{\max}(1+\sqrt{\phi_s^2/\phi})^2,
        \end{align*}
        almost surely as $k,n,p\rightarrow\infty$ such that $p/n\rightarrow \phi$ and $p/k\rightarrow \phi_s$, by \Cref{lem:calculus-detequi}~\ref{lem:calculus-detequi-item-product}, we have $$\bM_1\hSigma_{1\cap 2} \asympequi \lambda^{-1}\bM^{\det}_{\bM_{1\cap 2},i_0}\hSigma_{1\cap 2}\mid i_0.$$
        That is,
        \begin{align*}
            \bM_1\hSigma_{1\cap 2} \asympequi  \frac{k}{i_0} (\bM_{1\cap 2}^{-1}+\lambda\bC_0)^{-1}\hSigma_{1\cap 2} \mid i_0,
        \end{align*}
        where $\bC_0=(k-i_0)/i_0\cdot(v_1\bSigma+\bI_p)$.
        Define $\bSigma_{\bC_0}=(\bI+\bC_0)^{-\frac{1}{2}}\bSigma(\bI+\bC_0)^{-\frac{1}{2}}$.
        Conditioning on $i_0$, by \Cref{lem:deter-approx-ridge-extend}~\ref{eq:lem:deter-approx-ridge-extend}, 
        we have
        \begin{align*}
            \tr[\bSigma_{\bC_1}(v_1\bSigma_{\bC_1}+\bI_p)^{-1}] &= \tr[\bSigma(v_1\bSigma+\bI_p+\bC_1)^{-1}]\\
            &= \frac{\lambda(k-i_0)}{i_0}\tr\left[\bSigma\left(\bM_{1\cap 2}^{-1}+ \frac{\lambda(k-i_0)}{i_0}( v_1\bSigma+\bI_p)\right)^{-1}\right]\\
            &\stackrel{\as}{=} \frac{k-i_0}{i_0}\tr\left[\bSigma\left(v_0\bSigma+\bI_p+ \frac{k-i_0}{i_0}( v_1\bSigma+\bI_p)\right)^{-1}\right]\\
            &= \tr\left[\bSigma\left(
            \left(\frac{i_0}{k-i_0}v_0 + v_1 \right)\bSigma+ \frac{k}{k-i_0}\bI_p\right)^{-1}\right],
        \end{align*}
        where $v_0=v(-\lambda;\gamma_0,\bSigma_{\bC_0})$and $\gamma_0=p/i_0$.
        Note that the fixed-point solution $v_0$ depends on $v_1$. The fixed-point equations reduce to
        \begin{align*}
            \frac{1}{v_0} &=\lambda + \gamma_0 \tr[\bSigma_{\bC_0}(v_0\bSigma_{\bC_0}+\bI_p)^{-1}]/p= \lambda + \frac{p}{k}\tr \left[\bSigma\left(\left(\frac{i_0}{k}v_0+\frac{k-i_0}{k}v_1\right)\bSigma + \bI_p\right)^{-1}\right]/p \\
            \frac{1}{v_1} &= \lambda + \gamma_1 \tr[\bSigma_{\bC_1}(v_1\bSigma_{\bC_1}+\bI_p)^{-1}]/p= \lambda + \frac{p}{k} \tr\left[\bSigma\left(
            \left(\frac{i_0}{k}v_0 +\frac{k-i_0}{k} v_1\right)\bSigma+ \bI_p\right)^{-1}\right]/p
        \end{align*}
        almost surely.
        Note that the solution $(v_0, v_1)$ to the above equations is a pair of positive numbers and does not depend on samples.
        If $(v_0, v_1)$ is a solution to the above system, then $(v_1, v_0)$ is also a solution. Thus, any solution to the above equations must be unique.
        On the other hand, since $v_0=v_1=v(-\lambda;p/k)$ satisfies the above equations, it is the unique solution.
        By \Cref{lem:calculus-detequi}~\ref{lem:cond-calculus-detequi-item-substitute}, we can replace $v(-\lambda;\gamma_1,\bSigma_{\bC_1})$ by the solution $v_0=v_1=v(-\lambda; p/k)$ of the above system, which does not depend on samples. Thus,
        \begin{align}
            \bM_1\hSigma_{1\cap 2} \asympequi = \frac{k}{i_0} (\bM_{1\cap 2}^{-1}+\lambda\bC^*)^{-1}\hSigma_{1\cap 2} \mid i_0,\label{eq:ridge-B0-cross-term}
        \end{align}
        where $\bC^*= (k-i_0)/i_0\cdot(v(-\lambda; p/k)\bSigma+\bI_p)$.
        Again from \Cref{lem:deter-approx-ridge-extend}~\ref{eq:lem:deter-approx-ridge-extend} we have
        \begin{align*}
            (\bM_{1\cap 2}^{-1}+\lambda\bC^*)^{-1}\hSigma_{1\cap 2} &= \bI_p - \lambda (\bM_{1\cap 2}^{-1}+\lambda\bC^*)^{-1}(\bI_p+\bC^*)\\
            &\asympequi \bI_p - (v(-\lambda;p/k) \bSigma+\bI_p+\bC^*)^{-1}(\bI_p+\bC^*) \mid i_0\\
            &= \frac{i_0}{k}(\bI_p - (v(-\lambda;p/k) \bSigma+\bI_p)^{-1}).
        \end{align*}
        Finally, from \Cref{lem:calculus-detequi}~\ref{lem:cond-calculus-detequi-item-uncond}, we have
        $$\bM_1 \hSigma_{1\cap 2} \asympequi \bI_p - (v(-\lambda;p/k) \bSigma+\bI_p)^{-1} \asympequi \bI_p - (v(-\lambda;\phi_s) \bSigma+\bI_p)^{-1}.$$

    \paragraph{Part (5)} From \citet[Part (c) of the proof for Lemma S.2.5]{patil2022bagging}, we have that
    \begin{align*}
        \bM_1\hSigma_{1\cap 2}\bM_2\hSigma_{1\cap 2} \asympequi \frac{k^2}{i_0^2} (\bM_{1\cap 2}^{-1} + \lambda \bC^*)^{-1}\hSigma_{1\cap 2} (\bM_{1\cap 2}^{-1} + \lambda \bC^*)^{-1}\hSigma_{1\cap 2},
    \end{align*}
    where $\bM_{1\cap 2}= (\hSigma_{1\cap 2}+\lambda\bI_p)^{-1}$ and $\bC^*=(k-i_0)/i_0(v(-\lambda; \phi_s)\bSigma +\bI_p)$.
    Since
    \begin{align*}
        (\bM_{1\cap 2}^{-1} + \lambda \bC^*)^{-1}\hSigma_{1\cap 2}  =  \bI_p - \lambda (\bM_{1\cap 2}^{-1} + \lambda \bC^*)^{-1}  (\bI_p+\bC^*),
    \end{align*}
    we have
    \begin{align}
         &\bM_1\hSigma_{1\cap 2}\bM_2\hSigma_{1\cap 2} \notag\\
         &\asympequi  \frac{k^2}{i_0^2} (\bM_{1\cap 2}^{-1} + \lambda \bC^*)^{-1}\hSigma_{1\cap 2} - \lambda \frac{k^2}{i_0^2} (\bM_{1\cap 2}^{-1} + \lambda \bC^*)^{-1}\hSigma_{1\cap 2} (\bM_{1\cap 2}^{-1} + \lambda \bC^*)^{-1}  (\bI_p+\bC^*) \notag\\
         &= \frac{k^2}{i_0^2}(\bI_p - \lambda (\bM_{1\cap 2}^{-1} + \lambda \bC^*)^{-1}  (\bI_p+\bC^*))  - \lambda \frac{k^2}{i_0^2} (\bM_{1\cap 2}^{-1} + \lambda \bC^*)^{-1}\hSigma_{1\cap 2} (\bM_{1\cap 2}^{-1} + \lambda \bC^*)^{-1}  (\bI_p+\bC^*)\label{eq:item-5-1}
    \end{align}
    From \Cref{lem:deter-approx-ridge-extend} \ref{eq:lem:deter-approx-ridge-extend} and \ref{eq:lem:deter-approx-ridge-extend-gvar}, we have that 
    \begin{align*}
        \lambda (\bM_{1\cap 2}^{-1} + \lambda \bC^*)^{-1} &\asympequi \frac{\phi}{\phi_s} (v(-\lambda;\phi_s)\bSigma+\bI_p)^{-1} \\
        (\bM_{1\cap 2}^{-1} + \lambda \bC^*)^{-1}\hSigma_{1\cap 2} (\bM_{1\cap 2}^{-1} + \lambda \bC^*)^{-1}  &\asympequi \frac{\phi^2}{\phi_s^2}\tv_v(-\lambda;\phi,\phi_s) (v(-\lambda;\phi_s)\bSigma+\bI_p)^{-2}\bSigma.
    \end{align*}
    Combing the above two equivalents, the expression in \eqref{eq:item-5-1} can be further simplified as:
    \begin{align*}
        \bM_1\hSigma_{1\cap 2}\bM_2\hSigma_{1\cap 2} &\asympequi \frac{\phi_s}{\phi}\left(v(-\lambda;\phi_s)-\frac{\phi_s-\phi}{\phi_s}\lambda \tv_v(-\lambda;\phi,\phi_s) \right)(v(-\lambda;\phi_s)\bSigma+\bI_p)^{-1}\bSigma \\
        &\qquad - \lambda \tv_v(-\lambda;\phi,\phi_s)(v(-\lambda;\phi_s)\bSigma+\bI_p)^{-2}\bSigma.
    \end{align*}
\end{proof}

\subsection{Analytic properties of associated fixed-point equations}
\label{app:sec:analytic-properties}

In this section, we compile results related to the analytical properties of the fixed-point solution $v(-\lambda;\phi)$, as defined in \eqref{eq:basic-ridge-equivalence-v-fixed-point}.

The subsequent lemma establishes the existence and uniqueness of the solution $v(-\lambda;\phi)$. 
The properties of the derivatives outlined in Lemma \ref{lem:properties-sol} correspond with the properties of $\tv_v(-\lambda;\phi)$, as defined in Lemma \ref{lem:ridge-fixed-point-v-properties}. 

\begin{lemma}[Properties of the solution to the fixed-point equation, adapted from \citet{patil2022bagging}]\label{lem:properties-sol}
    Let $\lambda,\phi,a > 0$ and $b < \infty$ be real numbers.
    Let $P$ be a probability measure supported
    on $[a, b]$.
    Define the function $f$ such that
    \begin{align}
        f(x) = \frac{1}{x} - \phi \int\frac{r}{1 + r x}\rd P(r) - \lambda .\label{eq:lem:properties-sol:fx}
    \end{align}
    Then the following properties hold:
    \begin{enumerate}[label={(\arabic*)}]
        \item \label{lem:properties-sol-item-f-ridgeless}For $\lambda=0$ and $\phi\in(1,\infty)$, there is a unique $x_0\in(0,\infty)$ such that $f(x_0)=0$. The function $f$ is positive and strictly decreasing over $(0, x_0)$ and negative over $(x_0,\infty)$, with $\lim_{x \to 0^{+}} f(x) = \infty$ and $\lim_{x \to \infty} f(x) = 0$.
        
        \item \label{lem:properties-sol-item-f-ridge}For $\lambda>0$ and $\phi\in(0,\infty)$, there is a unique $x_0^{\lambda}\in(0,\infty)$ such that $f(x_0^{\lambda})=0$. The function $f$ is positive and strictly decreasing over $(0, x_0^{\lambda})$ and negative over $(x_0^{\lambda},\infty)$, with $\lim_{x \to 0^{+}} f(x) = \infty$ and $\lim_{x \to \infty} f(x) = -\lambda$.

        \item \label{lem:properties-sol-item-f'-ridgeless}For $\lambda=0$ and $\phi\in(1,\infty)$, $f$ is differentiable on $(0,\infty)$ and its derivative $f'$ is strictly increasing over $(0, x_0)$, with $\lim_{x \to 0^{+}} f'(x) = - \infty$ and $f'(x_0) < 0$.
        
        \item \label{lem:properties-sol-item-f'-ridge}For $\lambda>0$ and $\phi\in(0,\infty)$, $f$ is differentiable on $(0,\infty)$ and its derivative $f'$ is strictly increasing over $(0, \infty)$,
        with $\lim_{x \to 0^{+}} f'(x) = - \infty$
        and $f'(x_0^{\lambda}) < 0$.
        
    \end{enumerate}
\end{lemma}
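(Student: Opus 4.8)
The plan is to reduce every assertion to the behaviour of the single auxiliary function $g(x) := x\, f(x)$, which I expect to be strictly decreasing on $(0,\infty)$; all the stated sign, monotonicity, and limit properties of $f$ and $f'$ then fall out by dividing back by $x$ and by differentiating once more, with only one slightly nonroutine estimate needed to sign $f''$.

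First I would rewrite $g$ transparently. Using $\tfrac{rx}{1+rx} = 1 - \tfrac{1}{1+rx}$,
\[
  g(x) = 1 - \phi \int \frac{rx}{1+rx}\,\rd P(r) - \lambda x = (1-\phi) + \phi \int \frac{1}{1+rx}\,\rd P(r) - \lambda x .
\]
Since $P$ is supported on $[a,b]$ with $a>0$, dominated convergence gives $g(0^{+}) = 1$, and $g(x) \to 1-\phi$ as $x\to\infty$ when $\lambda = 0$ while $g(x)\to-\infty$ when $\lambda>0$; differentiating under the integral (legitimate by the uniform bounds on $[a,b]$) gives $g'(x) = -\phi\int r(1+rx)^{-2}\,\rd P(r) - \lambda < 0$, so $g$ is continuous and strictly decreasing. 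Consequently $g$ has a necessarily unique zero in $(0,\infty)$ exactly when $g(+\infty)<0$, i.e.\ exactly when $\phi>1$ in the ridgeless case and for every $\phi>0$ in the ridge case---precisely the dichotomy of items~\ref{lem:properties-sol-item-f-ridgeless} and~\ref{lem:properties-sol-item-f-ridge}. Denote this zero $x_0$ (resp.\ $x_0^\lambda$). Since $f = g/x$ with $x>0$, $f$ has the sign of $g$: strictly positive on $(0,x_0)$, strictly negative on $(x_0,\infty)$. The boundary limits of $f$ come from the $1/x$ term: $f(0^{+}) = +\infty$, and $f(\infty) = 0$ when $\lambda = 0$, $f(\infty) = -\lambda$ when $\lambda > 0$, because $\phi\int r(1+rx)^{-1}\,\rd P(r)\to 0$.

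Next I would treat $f'$, which exists on $(0,\infty)$ since the integrand is smooth there. Directly, $f'(x) = -x^{-2} + \phi\int r^2(1+rx)^{-2}\,\rd P(r)$, so $f'(0^{+}) = -\infty$ as the integral stays bounded. From $f=g/x$ one also has $f'(x) = (x\,g'(x) - g(x))/x^2$, which is negative on $(0,x_0]$ because there $g'<0$ and $g\ge 0$; in particular $f'(x_0) = g'(x_0)/x_0 < 0$ (and likewise at $x_0^\lambda$). For strict monotonicity of $f'$ I would differentiate again, $f''(x) = 2x^{-3} - 2\phi\int r^3(1+rx)^{-3}\,\rd P(r)$, and observe
\[
  \tfrac12 x^3 f''(x) = 1 - \phi\int\Bigl(\tfrac{rx}{1+rx}\Bigr)^{3}\rd P(r) \;\ge\; 1 - \phi\int\frac{rx}{1+rx}\,\rd P(r) \;=\; g(x) + \lambda x ,
\]
using $t^3\le t$ for $t\in[0,1]$. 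Since $g(x)>0$ and $\lambda x\ge 0$ on the interval up to the zero, $f''>0$ there and $f'$ is strictly increasing, completing items~\ref{lem:properties-sol-item-f'-ridgeless} and~\ref{lem:properties-sol-item-f'-ridge}.

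I do not anticipate a serious obstacle here: once $g=xf$ is introduced the argument is elementary. The only step needing an idea rather than a computation is signing $f''$, where the inequality $t^3\le t$ on $[0,1]$ turns the second-derivative condition back into the positivity of $g$ already established; beyond that, the one thing to be careful about is keeping the two $\phi$-regimes straight (for $\lambda=0$ the zero exists only when $\phi>1$, whereas for $\lambda>0$ it exists for every $\phi$), and noting that the strict monotonicity of $f'$ is what one obtains on the interval up to the zero of $f$---which is all that is used downstream---while $f''$ can turn negative farther out when $\phi>1$.
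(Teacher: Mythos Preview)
Your argument is correct and follows essentially the same route the paper takes (the lemma itself is not proved separately there---it is cited as adapted from \citet{patil2022bagging}---but the identical arguments appear verbatim in the proof of \Cref{lem:fixed-point-v-lambda-properties}): both pivot on the auxiliary function $h(x)=1-\phi\int\tfrac{rx}{1+rx}\,\rd P(r)$ (your $g$ is just $h(x)-\lambda x$), show it is strictly decreasing, and read off the properties of $f$, $f'$, $f''$ by factoring out powers of $1/x$. The only cosmetic difference is that you sign $f''$ via the inequality $t^{3}\le t$ on $[0,1]$, whereas the paper factors $-f'$ as a product of two positive strictly decreasing functions; your closing caveat that item~(4)'s ``strictly increasing on $(0,\infty)$'' can fail for $\phi>1$ is accurate and matches what the paper's own argument actually delivers.
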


The properties of the function $\phi \mapsto v(-\lambda;\phi)$, its continuity and limiting behavior, are provided for ridge regression (when $\lambda>0$), in Lemma \ref{lem:ridge-fixed-point-v-properties}, and ridgeless regression (when $\lambda=0$), in Lemma \ref{lem:fixed-point-v-properties}.

\begin{lemma}[Continuity properties in the aspect ratio for ridge regression, adapted from \citet{patil2022bagging}]
    \label{lem:ridge-fixed-point-v-properties}
    Let $\lambda,a > 0$ and $b < \infty$ be real numbers.
    Let $P$ be a probability measure supported on $[a, b]$.
    Consider the function $v(-\lambda; \cdot) : \phi \mapsto v(-\lambda; \phi)$, over $(0, \infty)$, where $v(-\lambda; \phi) > 0$ is the unique solution to the following fixed-point equation:
    \begin{equation}
        \label{eq:ridge-fixed-point-gen-phi}
        \frac{1}{v(-\lambda; \phi)}
         = \lambda + \phi \int \frac{r}{1+rv(-\lambda; \phi)} \rd P(r).
    \end{equation}
    Then the following properties hold:
    \begin{enumerate}[label={(\arabic*)}]
        \item 
        \label{lem:ridge-fixed-point-v-properties-item-v-bound}
        The range of the function $v(-\lambda; \cdot)$ is a subset of $(0,\lambda^{-1})$.
        
        \item 
        \label{lem:ridge-fixed-point-v-properties-item-v-properties}
        The function $v(-\lambda; \cdot)$ is continuous and strictly decreasing over $(0, \infty)$. Furthermore, $\lim_{\phi \to 0^{+}} v(-\lambda; \phi) = \lambda^{-1}$, and $\lim_{\phi \to \infty} v(-\lambda; \phi) = 0$.

        \item
        \label{lem:ridge-fixed-point-v-properties-item-vv-properties}
        The function 
        $\tv_v(-\lambda; \cdot) : \phi \mapsto \tv_v(-\lambda; \phi)$,
        where
        \[
           \tv_v(-\lambda; \phi) =
           \left(
                v(-\lambda; \phi)^{-2}
                - \int \phi  r^2(1 + r v(-\lambda; \phi))^{-2} 
                \, \mathrm{d}P(r)
           \right)^{-1},
        \]
        is positive and continuous over $(0, \infty)$.
        Furthermore,
        $\lim_{\phi \to 0^{+}} \tv_v(-\lambda; \phi) = \lambda^{-2}$,
        and $\lim_{\phi \to \infty} \tv_v(-\lambda; \phi) = 0$.
        
        \item
        \label{lem:ridge-fixed-point-v-properties-item-vb-properties}
        The function 
        $\tv_b(-\lambda; \cdot) : \phi \mapsto \tv_b(-\lambda; \phi)$,
        where
        \[
            \tv_b(-\lambda; \phi)
            = \tv_v(-\lambda; \phi)
            \int
            \phi r^2(1 + v(-\lambda; \phi) r)^{-2}
            \, \mathrm{d}P(r),
        \]
        is positive and continuous over $(0, \infty)$.
        Furthermore,
        $\lim_{\phi \to 0^{+}} \tv_b(-\lambda; \phi) =\lim_{\phi \to \infty} \tv_b(-\lambda; \phi) = 0$.
    \end{enumerate}
\end{lemma}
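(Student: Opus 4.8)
The plan is to prove the four items in order, working directly from the defining fixed-point equation \eqref{eq:ridge-fixed-point-gen-phi} and leaning on the structural facts in \Cref{lem:properties-sol}. Throughout I would abbreviate $v = v(-\lambda;\phi)$ and set $g(x,\phi) := x^{-1} - \lambda - \phi\int r(1+rx)^{-1}\,\rd P(r)$, so that $g(v,\phi)=0$. Item (1) is immediate: since $\phi>0$, $r\ge a>0$ and $v>0$, the integral term is strictly positive, hence $v^{-1} = \lambda + (\text{positive}) > \lambda$, i.e.\ $v\in(0,\lambda^{-1})$. For item (2), existence, uniqueness and strict monotonicity of $x\mapsto g(x,\phi)$ (in particular $\partial_x g(v,\phi)<0$) are exactly \Cref{lem:properties-sol}\ref{lem:properties-sol-item-f-ridge},\ref{lem:properties-sol-item-f'-ridge}. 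Monotonicity in $\phi$ follows because $\phi_1<\phi_2$ gives $g(x,\phi_1)>g(x,\phi_2)$ for all $x>0$, so $g(v(-\lambda;\phi_2),\phi_1)>0$, and since $g(\cdot,\phi_1)$ is strictly decreasing with unique root $v(-\lambda;\phi_1)$ we get $v(-\lambda;\phi_2)<v(-\lambda;\phi_1)$. Continuity of $v(-\lambda;\cdot)$ then comes from the implicit function theorem applied to $g$, using $\partial_x g\ne 0$ and joint continuity (differentiation under the integral being justified by compactness of $[a,b]$). The endpoint limits use item (1): as $\phi\to 0^+$ the integral term is $\le \phi b\to 0$ so $v^{-1}\to\lambda$; as $\phi\to\infty$, the bound $r(1+rv)^{-1}\ge a(1+b\lambda^{-1})^{-1}=:c_0>0$ gives $v^{-1}\ge \lambda+\phi c_0\to\infty$, so $v\to 0$.

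For items (3) and (4), positivity of $\tv_v(-\lambda;\phi)$ is precisely the statement $\partial_x g(v,\phi)<0$ rewritten as $v^{-2} > \phi\int r^2(1+rv)^{-2}\,\rd P(r)$; then $\tv_b(-\lambda;\phi)>0$ as a product of positive quantities, and continuity of both follows from continuity of $v(-\lambda;\cdot)$ and of the (locally nonvanishing) denominator. The limits are the subtle part. The key device is the identity obtained by multiplying the fixed-point equation by $v^{-1}$ and simplifying via $\tfrac{r/v}{1+rv}-\tfrac{r^2}{(1+rv)^2}=\tfrac{r}{v(1+rv)^2}$:
\[
    \tv_v(-\lambda;\phi)^{-1} = v^{-2} - \phi\int \frac{r^2}{(1+rv)^2}\,\rd P(r) = \frac{1}{v}\Bigl(\lambda + \phi\int\frac{r}{(1+rv)^2}\,\rd P(r)\Bigr),
\]
so that $\tv_v(-\lambda;\phi) = v\bigl(\lambda + \phi\int r(1+rv)^{-2}\,\rd P(r)\bigr)^{-1}\le v/\lambda$. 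As $\phi\to 0^+$ the bracket tends to $\lambda$ and $v\to\lambda^{-1}$, giving $\tv_v\to\lambda^{-2}$; as $\phi\to\infty$ the bracket stays $\ge\lambda$ and $v\to 0$, giving $\tv_v\to 0$. Substituting this identity into $\tv_b=\tv_v\cdot\phi\int r^2(1+rv)^{-2}\,\rd P(r)$ yields
\[
    \tv_b(-\lambda;\phi) = \frac{v\,\phi\int r^2(1+rv)^{-2}\,\rd P(r)}{\lambda + \phi\int r(1+rv)^{-2}\,\rd P(r)},
\]
and bounding the numerator with $r^2\le br$ by $b\bigl(\lambda+\phi\int r(1+rv)^{-2}\,\rd P(r)\bigr)$ gives $\tv_b(-\lambda;\phi)\le b\,v(-\lambda;\phi)\to 0$ as $\phi\to\infty$; the limit as $\phi\to 0^+$ follows directly from the displayed expression, since its numerator tends to $0$ while its denominator tends to $\lambda>0$.

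The one genuinely non-routine step is the $\phi\to\infty$ asymptotics of $\tv_v$ and $\tv_b$: naively both $v^{-2}$ and $\phi\int r^2(1+rv)^{-2}\,\rd P(r)$ diverge, an apparent $\infty-\infty$, and the divergent parts must be cancelled using the identity above. Everything else—monotonicity, continuity via the implicit function theorem, and the elementary bounds exploiting $r\in[a,b]$—is bookkeeping that builds directly on \Cref{lem:properties-sol}. (Since the statement is adapted from \citet{patil2022bagging}, the write-up can alternatively be compressed to a pointer to the corresponding lemma there, with the identity above supplied as the only additional ingredient.)
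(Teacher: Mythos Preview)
The paper does not supply its own proof of this lemma: it is stated as ``adapted from \citet{patil2022bagging}'' and left uncited, so there is no in-paper argument to compare against. Your proposal is therefore strictly more than what the paper provides, and your final parenthetical already anticipates this.

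That said, your argument is correct and self-contained. A couple of minor remarks: (i) in the monotonicity step for item (2), \Cref{lem:properties-sol}\ref{lem:properties-sol-item-f-ridge} only gives that $g(\cdot,\phi_1)$ is strictly decreasing on $(0,v(-\lambda;\phi_1))$, not on all of $(0,\infty)$; your inference still goes through because $g(v(-\lambda;\phi_2),\phi_1)>0$ places $v(-\lambda;\phi_2)$ in the positive region, hence strictly to the left of the unique root $v(-\lambda;\phi_1)$, but it is worth stating this cleanly. (ii) The algebraic identity you isolate,
\[
\tv_v(-\lambda;\phi)^{-1} \;=\; \frac{1}{v}\Bigl(\lambda + \phi\int\frac{r}{(1+rv)^2}\,\rd P(r)\Bigr),
\]
is exactly the right device to resolve the apparent $\infty-\infty$ at $\phi\to\infty$, and your bound $\tv_b(-\lambda;\phi)\le b\,v(-\lambda;\phi)$ is clean. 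Everything else is routine bookkeeping on top of \Cref{lem:properties-sol}.
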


\begin{lemma}[Continuity properties in the aspect ratio for ridgeless regression, adapted from \citet{patil2022mitigating}]
    \label{lem:fixed-point-v-properties}
    Let $a > 0$ and $b < \infty$ be real numbers.
    Let $P$ be a probability measure supported on $[a, b]$.
    Consider the function $v(0; \cdot) : \phi \mapsto v(0; \phi)$, over $(1, \infty)$, where $v(0; \phi) > 0$ is the unique solution to the following fixed-point equation:
    \begin{equation}
       \label{eq:fixed-point-gen-phi}
        \frac{1}{\phi}
        = \int \frac{v(0; \phi) r}{1 + v(0; \phi) r} \, \mathrm{d}P(r).
    \end{equation}
    Then the following properties hold:
    \begin{enumerate}[label={(\arabic*)}]
        \item 
        \label{lem:fixed-point-v-properties-item-v-properties}
        The function $v(0; \cdot)$ is continuous and strictly decreasing over $(1, \infty)$.
        Furthermore, $\lim_{\phi \to 1^{+}} v(0; \phi) = \infty$,
        and $\lim_{\phi \to \infty} v(0; \phi) = 0$.
        
        \item \label{lem:fixed-point-v-properties-item-phivinverse-properties}
        The function 
        $\phi \mapsto (\phi v(0; \phi))^{-1}$ is strictly increasing over $(1, \infty)$.
        Furthermore,
        $\lim_{\phi \to 1^{+}} (\phi v(0; \phi))^{-1} = 0$
        and $\lim_{\phi \to \infty} (\phi v(0; \phi))^{-1} = 1$.
        
        \item
        
        \label{lem:fixed-point-v-properties-item-vv-properties}
        The function 
        $\tv_v(0; \cdot) : \phi \mapsto \tv_v(0; \phi)$,
        where
        \[
           \tv_v(0; \phi) =
           \left(v(0; \phi)^{-2}
                - \phi
                \int r^2(1 + r v(0; \phi))^{-2} 
                \, \mathrm{d}P(r)
           \right)^{-1},
        \]
        is positive and continuous over $(1, \infty)$.
        Furthermore,
        $\lim_{\phi \to 1^{+}} \tv_v(0; \phi) = \infty$,
        and $\lim_{\phi \to \infty} \tv_v(0; \phi) = 0$.
        \item
        \label{lem:fixed-point-v-properties-item-vb-properties}
        The function 
        $\tv_b(0; \cdot) : \phi \mapsto \tv_b(0; \phi)$,
        where
        \[
            \tv_b(0; \phi)
            = \tv_v(0; \phi
            )
            \int
            r^2(1 + v(0; \phi) r)^{-2}
            \, \mathrm{d}P(r),
        \]
        is positive and continuous over $(1, \infty)$.
        Furthermore,
        $\lim_{\phi \to 1^{+}} \tv_b(0; \phi) = \infty$, and $\lim_{\phi \to \infty} \tv_b(0; \phi) = 0$.
    \end{enumerate}
\end{lemma}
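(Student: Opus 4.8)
The plan is to follow \citet{patil2022mitigating} and reduce everything to elementary properties of a single scalar bijection. Set $g(x):=\int \frac{xr}{1+xr}\,\rd P(r)$ for $x\in(0,\infty)$. Since $P$ is supported on the compact set $[a,b]\subset(0,\infty)$, one may differentiate under the integral to get $g'(x)=\int r(1+xr)^{-2}\,\rd P(r)>0$, so $g$ is continuous and strictly increasing, with $g(0^+)=0$ and $g(\infty)=1$ (the latter by dominated convergence, using $a>0$). Thus $g$ is a continuous increasing bijection from $(0,\infty)$ onto $(0,1)$, and \eqref{eq:fixed-point-gen-phi} reads $g(v(0;\phi))=1/\phi$. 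For $\phi\in(1,\infty)$ we have $1/\phi\in(0,1)$, so the unique solution is $v(0;\phi)=g^{-1}(1/\phi)$, and as a composition of continuous strictly monotone maps $\phi\mapsto v(0;\phi)$ is continuous and strictly decreasing, with $v(0;\phi)\to g^{-1}(1^-)=\infty$ as $\phi\to1^+$ and $v(0;\phi)\to g^{-1}(0^+)=0$ as $\phi\to\infty$. This establishes part~(1).

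For part~(2), dividing \eqref{eq:fixed-point-gen-phi} through by $v(0;\phi)$ gives the representation $(\phi v(0;\phi))^{-1}=\int r(1+v(0;\phi)r)^{-1}\,\rd P(r)$. For each fixed $r\in[a,b]$ the integrand is strictly increasing in $\phi$ because $v(0;\cdot)$ is strictly decreasing, so the whole expression is strictly increasing; the two limiting values then follow by dominated convergence from this representation, using that as $\phi\to1^+$ the integrand is dominated by $1/v(0;\phi)\to0$, and as $\phi\to\infty$ it increases pointwise to $r$.

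Parts~(3) and~(4) rest on one algebraic identity obtained by using \eqref{eq:fixed-point-gen-phi} to eliminate $\phi$. Writing $v=v(0;\phi)$, $I_1=\int r(1+vr)^{-1}\,\rd P(r)$, $I_2=\int r^2(1+vr)^{-2}\,\rd P(r)$, and $I_3=\int r(1+vr)^{-2}\,\rd P(r)$, an immediate computation gives $I_1-vI_2=I_3$, while $\phi^{-1}=vI_1$, so that
\[
  v^{-2}-\phi\int \frac{r^2}{(1+rv)^2}\,\rd P(r)
  \;=\;\frac{1}{v^2}\Big(1-\frac{vI_2}{I_1}\Big)
  \;=\;\frac{I_3}{v^2 I_1}\;>\;0 .
\]
This yields positivity of $\tv_v(0;\phi)=v^2 I_1/I_3$ and hence of $\tv_b(0;\phi)=\tv_v(0;\phi)\,I_2=v^2 I_1 I_2/I_3$; continuity of both then follows from continuity of $v(0;\cdot)$ together with dominated convergence for $I_1,I_2,I_3$. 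For the boundary behavior I would substitute these closed forms and track the orders of $I_1,I_2,I_3$ in $v$: as $\phi\to1^+$ we have $v\to\infty$, each $I_j\to0$ at a definite algebraic rate, and the resulting quotients diverge, so $\tv_v,\tv_b\to\infty$; as $\phi\to\infty$ we have $v\to0$ with $I_1,I_3\to\int r\,\rd P(r)$ and $I_2\to\int r^2\,\rd P(r)$, so the overall factor $v^2$ forces $\tv_v,\tv_b\to0$.

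The main obstacle is the $\phi\to1^+$ limit in parts~(3)--(4): it is an $\infty\cdot 0$ (equivalently $\infty-\infty$) indeterminate form, and bounding the two pieces of $v^{-2}-\phi\int r^2(1+rv)^{-2}\,\rd P(r)$ separately does not suffice — one must extract the exact cancellation forced by the fixed-point equation, which is precisely what the displayed identity $I_3/(v^2 I_1)$ records. Everything else (interchanging limits, differentiation under the integral, and dominated convergence) is routine because $P$ is supported on a compact interval bounded away from $0$.
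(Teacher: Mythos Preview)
The paper does not supply its own proof of this lemma; it is simply stated as ``adapted from \citet{patil2022mitigating}'' with no argument given. So there is nothing in the paper to compare against directly. Your approach --- writing the fixed-point equation as $g(v)=1/\phi$ for the strictly increasing bijection $g(x)=\int xr/(1+xr)\,\rd P(r)$, reading off part~(1) from $v=g^{-1}(1/\phi)$, and then for parts~(3)--(4) eliminating $\phi$ via the identity $v^{-2}-\phi I_2 = I_3/(v^2 I_1)$ --- is correct and in the same spirit as the proof the paper \emph{does} give for the neighboring \Cref{lem:fixed-point-v-lambda-properties}. The identification of the $\phi\to 1^+$ limits in (3)--(4) as the only genuine obstacle, and the use of the closed form $\tv_v=v^2 I_1/I_3$ to resolve the $\infty\cdot 0$ indeterminacy, is exactly the right idea.

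One point to flag in part~(2): your argument shows $(\phi v(0;\phi))^{-1}=\int r(1+v(0;\phi)r)^{-1}\,\rd P(r)\to \int r\,\rd P(r)$ as $\phi\to\infty$, not $1$ as the lemma asserts. Your derivation is correct; the stated limit $1$ appears to be a leftover from the isotropic case $P=\delta_1$ and is not true for general $P$ supported on $[a,b]$. You should either note this discrepancy explicitly or state the correct limit $\int r\,\rd P(r)$, since ``the two limiting values then follow'' glosses over the fact that your computation does not reproduce the claimed value. (This part of the lemma is not used elsewhere in the paper, so the discrepancy is harmless for the downstream results.)
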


The continuity and differentiabilty properties of the function $\lambda\mapsto v(-\lambda;\phi)$ on a closed interval $[0,\lambda_{\max}]$ (for certain constant $\lambda_{\max}$) for $\phi\in(1,\infty)$ are detailed in \Cref{lem:fixed-point-v-lambda-properties}. 
The lemma is adapted from \citet{patil2022mitigating}.
This guarantees that $v(0;\phi)=\lim_{\lambda\rightarrow0^+}v(-\lambda;\phi)$ is well-defined for $\phi>1$, and additionally also implies that the related functions are bounded.

\begin{lemma}[Differentiability properties in the regularization parameter]
    \label{lem:fixed-point-v-lambda-properties}
    Let $0<a \leq b<\infty$ be real numbers.
    Let $P$ be a probability measure supported on $[a, b]$.
    Let $\phi>0$ be a real number.
    Let $\Lambda = [0, \lambda_{\max}]$ for some constant $\lambda_{\max}\in(0,\infty)$.
    For $\lambda \in \Lambda$, let $v(-\lambda; \phi) > 0$ denote the solution to the fixed-point equation
    \[
        \frac{1}{v(-\lambda; \phi)}
        =  \lambda
        + \phi \int \frac{r}{v(-\lambda; \phi) r + 1} \, \mathrm{d}P(r).
    \]
    When $\lambda=0$ and $\phi\in(0,1]$, $v(-\lambda;\phi):=+\infty$.
    Then the following properties hold:
    \begin{enumerate}[(1)]
        \item\label{lem:fixed-point-v-lambda-properties-item-monotonicity} (Monotonicity) For $\phi\in(0,\infty)$, the function $\lambda \mapsto v(-\lambda; \phi)$ is strictly decreasing in $\lambda\in[0,\infty)$.
        
        \item\label{lem:fixed-point-v-lambda-properties-item-differentiability} (Differentiability) 
        For $\phi\in(1,\infty)$, the function $\lambda \mapsto v(-\lambda; \phi)$ is twice differentiable over $\Lambda$.
        
        \item\label{lem:fixed-point-v-lambda-properties-item-boundedness} (Boundedness of the second derivative) 
        For $\phi\in(1,\infty)$, $v(-\lambda; \phi)$, $\partial / \partial \lambda [v(-\lambda; \phi)]$, and $\partial^2 / \partial \lambda^2 [v(-\lambda; \phi)]$ are bounded over $\Lambda$.
        
    \end{enumerate}
\end{lemma}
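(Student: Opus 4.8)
The plan is to derive all three parts from the implicit characterization $\psi(v(-\lambda;\phi)) = \lambda$, where
$\psi(x) := x^{-1} - \phi\int r(1+rx)^{-1}\,\rd P(r)$ is exactly the function of Lemma~\ref{lem:properties-sol} with its additive $\lambda$-term removed; the engine throughout will be the implicit function theorem, powered by the observation that $\psi'(v) \neq 0$ at every solution. For monotonicity (part~(1)), apply Lemma~\ref{lem:properties-sol}\ref{lem:properties-sol-item-f-ridge} with the role of its ``$\lambda$'' played by an arbitrary $\mu>0$: $\psi$ is positive and strictly decreasing on the interval $(0,x_0^\mu)$ with $\psi(x_0^\mu)=\mu$, and letting $\mu\downarrow 0$ (using $\lim_{x\to\infty}\psi(x)=0$) shows $\psi$ is strictly decreasing on all of $(0,\infty)$ when $\phi\le 1$ and on $(0,x_0)$ with $\psi(x_0)=0$ when $\phi>1$ (Lemma~\ref{lem:properties-sol}\ref{lem:properties-sol-item-f-ridgeless}). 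In either case $\lambda\mapsto v(-\lambda;\phi)$ is the inverse of a strictly decreasing bijection and hence strictly decreasing on $(0,\infty)$; together with the boundary values $v(0;\phi)=x_0<\infty$ (for $\phi>1$) and $v(0;\phi)=+\infty$ (for $\phi\le1$, by definition), this gives strict monotonicity on $[0,\infty)$.

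For differentiability (part~(2), $\phi>1$), set $G(\lambda,v):= v^{-1}-\lambda-\phi\int r(1+rv)^{-1}\,\rd P(r)$. Since $P$ is supported on $[a,b]$ with $a>0$, one may differentiate under the integral sign arbitrarily often, so $G$ is $C^\infty$ jointly on $(0,\infty)\times(0,\infty)$. The crucial computation is that, at a solution $v=v(-\lambda;\phi)$, substituting the fixed-point relation $v^{-1}=\lambda+\phi\int r(1+rv)^{-1}\,\rd P(r)$ into $\partial_v G=-v^{-2}+\phi\int r^2(1+rv)^{-2}\,\rd P(r)$ and simplifying the combined integrand gives
\[
  \partial_v G(\lambda,v) \;=\; -\frac{1}{v}\Bigl(\lambda + \phi\!\int \frac{r}{(1+rv)^2}\,\rd P(r)\Bigr) \;<\;0,
\]
so $\partial_v G$ never vanishes along the solution branch, including at $\lambda=0$ (where $v(0;\phi)=x_0$ is finite, cf.\ Lemma~\ref{lem:properties-sol}\ref{lem:properties-sol-item-f'-ridgeless}). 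The implicit function theorem then yields, near every $\lambda_*\in[0,\lambda_{\max}]$, a $C^\infty$ (hence $C^2$) function solving $G=0$, which coincides with $v(-\lambda;\phi)$ by uniqueness of the fixed point; restricting to $[0,\lambda_{\max}]$ gives the required second derivative (one-sided at the endpoints).

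For the uniform bounds (part~(3), $\phi>1$), monotonicity provides $0<v(-\lambda_{\max};\phi)=:v_{\min}\le v(-\lambda;\phi)\le v(0;\phi)=x_0<\infty$ for all $\lambda\in\Lambda$, so $v(-\lambda;\phi)$ is bounded. The IFT relation $\partial_\lambda v = -\partial_\lambda G/\partial_v G = 1/\psi'(v)$ and the displayed formula give $|\partial_\lambda v| = v\bigl(\lambda+\phi\int r(1+rv)^{-2}\,\rd P(r)\bigr)^{-1}\le x_0\bigl(\phi a(1+bx_0)^{-2}\bigr)^{-1}<\infty$, using $r\ge a$ and $1+rv\le 1+bx_0$. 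Differentiating again, $\partial_\lambda^2 v = -\psi''(v)/(\psi'(v))^3$ with $\psi''(x)=2x^{-3}-2\phi\int r^3(1+rx)^{-3}\,\rd P(r)$; here $|\psi''(v)|\le 2v_{\min}^{-3}+2\phi b^3$ is bounded above while $|\psi'(v)|\ge \phi a\,x_0^{-1}(1+bx_0)^{-2}=:c_0>0$ is bounded below, whence $|\partial_\lambda^2 v|\le(2v_{\min}^{-3}+2\phi b^3)/c_0^3<\infty$. The work here is not conceptual but organizational: the only real obstacle is obtaining the uniform-in-$\lambda$ sandwich $v_{\min}\le v(-\lambda;\phi)\le x_0$ (both ends from part~(1), the finiteness of $x_0$ being exactly where $\phi>1$ is used) so that the pointwise nondegeneracy $\psi'(v)<0$ translates into uniform control of all the reciprocals appearing above.
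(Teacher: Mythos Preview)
Your proposal is correct and follows essentially the same approach as the paper: both define $\psi(x)=x^{-1}-\phi\int r(1+rx)^{-1}\,\rd P(r)$, recognize $v(-\lambda;\phi)=\psi^{-1}(\lambda)$, and use the inverse/implicit function theorem together with $\psi'(v)<0$ along the solution branch to obtain monotonicity, differentiability, and uniform derivative bounds. The one minor presentational difference is that you establish $\psi'(v)<0$ by substituting the fixed-point relation to get $\psi'(v)=-v^{-1}\bigl(\lambda+\phi\int r(1+rv)^{-2}\,\rd P(r)\bigr)$, whereas the paper factors $\psi'(x)=-x^{-2}\bigl(1-\phi\int (xr/(1+xr))^2\,\rd P(r)\bigr)$ and bounds the integral via $x\le x_0$; your route is arguably a bit cleaner but the content is the same.
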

\begin{proof}[Proof of \Cref{lem:fixed-point-v-lambda-properties}]
    Start by re-writing the fixed-point equation as
    \[
        \lambda
        = \frac{1}{v(-\lambda; \phi)}
        - \phi  \int \frac{r}{v(-\lambda; \phi) r  + 1} \, \mathrm{d}P(r).
    \]
    Define a function $f$ by
    \[
        f(x) = \frac{1}{x} - \phi \int \frac{r}{x r + 1} \, \mathrm{d}P(r).
    \]
    Observe that $v(-\lambda; \phi) = f^{-1}(\lambda)$.
    We next study various properties of $f$ and prove the different parts in the statement.

    \paragraph{Part (1)}
    \underline{Properties of $f$ and $f^{-1}$:}

        Observe that
        \[
            f(x)
            =
            \frac{1}{x}
            - \phi \int \frac{r}{xr + 1}
            \, \mathrm{d}P(r)
            =
            \frac{1}{x}
            \left(
                1 - \phi \int \frac{xr}{xr + 1}
                \, \mathrm{d}P(r)
            \right).
        \]
        The function $g: x \mapsto 1/x$ is positive and strictly decreasing over $(0, \infty)$ with $\lim_{x \to 0^{+}} g(x) = \infty$ and $\lim_{x \to \infty} g(x) = 0$, while the function
        \[
            h: x \mapsto
            1 - \phi \int \frac{xr}{xr + 1}
            \, \mathrm{d}P(r)
        \]
        is strictly decreasing over $(0, \infty)$ with $h(0) = 1$ and $\lim_{x \to \infty} h(x) = 1 - \phi$.
        
        Thus, there is a unique $0 < x_0 < \infty$ when $\phi>1$ such that $h(x_0) = 0$, and consequently $f(x_0) = 0$; and $x_0=+\infty$ when $\phi\in(0,1]$ such that $g(x_0)=0$, and consequently $f(x_0)=0$.
        Because $h$ and $g$ are positive over $[0, x_0)$, $f$, a product of two positive strictly decreasing functions, is strictly decreasing over $(0, x_0)$, with $\lim_{x \to 0^{+}} f(x) = \infty$ and $f(x_0) = 0$.
        
        Because $f$ is strictly decreasing over $(0, x_0)$, $f^{-1}$ is strictly decreasing (see, e.g., Problem 2, Chapter 5 of \citet{rudin_1976}). 
       Since $f(x_0) = 0$, $f^{-1}(0) = x_0$, and since $\lim_{x \to 0^{+}} f(x) = \infty$, $\lim_{y \to \infty} f^{-1}(y) = 0$.
       Hence, $f^{-1}$ is strictly decreasing over $[0, \infty)$ for all $\phi>0$ and bounded above by $x_0 < \infty$ for all $\phi>1$.

    \paragraph{Parts (2) and (3)}
    We will prove the remaining two parts together.
    
   \underline{Properties of $f'$ and $(f^{-1})'$}:
   
    The derivative $f'$ at $x$ is given by
    \[
        f'(x)
        = - \frac{1}{x^2}
        + \phi \int \frac{r^2}{(x r + 1)^2}
        \, \mathrm{d}P(r)
        = 
        -
        \frac{1}{x^2}
        \left(
            1 - \phi \int \left(\frac{xr}{xr + 1}\right)^2
            \, \mathrm{d}P(r)
        \right).
    \]
    The function $g: x \mapsto 1/x^2$ is positive and strictly decreasing over $(0, \infty)$ with $\lim_{x \to 0^{+}} g(x) = \infty$ and $\lim_{x \to \infty} g(x) = 0$.
    On the other hand, the function
    \[
        h: 
        x \mapsto
        1 - \phi \int \left( \frac{xr}{xr + 1} \right)^2
        \, \mathrm{d}P(r)
    \]
    is strictly decreasing over $(0, \infty)$ with $h(0) = 1$ and $h(x_0) > 0$.
    This follows because for $x \in [0, x_0]$,
    \begin{equation}
        \begin{split}
            \label{eq:bound-deriv-v-in-lambda-part-2}
            \phi \int \left( \frac{xr}{xr + 1} \right)^2 \, \mathrm{d}P(r)
            &\le
            \left( \frac{x_0 b}{x_0 b + 1} \right)
            \phi 
            \int \left( \frac{xr}{xr + 1} \right) \, \mathrm{d}P(r) \\
            &<
            \phi \int \frac{xr}{xr + 1} \, \mathrm{d}P(r)
            \le
            \phi \int \frac{x_0 r}{x_0 r + 1} \, \mathrm{d}P(r)
            =
            1,
        \end{split}
    \end{equation}
    where the first inequality in the chain above follows as the support of $P$ is $[a, b]$, and the last inequality follows since $f(x_0) = 0$ and $x_0 > 0$, which implies that
    \[
        \frac{1}{x_0}
        = \phi \int \frac{r}{x_0 r + 1} \, \mathrm{d}P(r),
        \quad
        \text{ or equivalently that}
        \quad
        1
        = \phi \int \frac{x_0 r}{x_0 r + 1} \, \mathrm{d}P(r).
    \]
    Thus, $-f'$, a product of two positive strictly decreasing functions, is strictly decreasing, and in turn, $f'$ is strictly increasing.
    Moreover, $\lim_{x \to 0^{+}} f'(x) = -\infty$; when $\phi>1$, $f'(x_0) < 0$ and when $\phi\in(0,1]$, $f'(x)$ approaches zero from below as $x\rightarrow+\infty$.

    When $\phi>1$, because $f'(x)\neq$ over $(0, x_0)$, by the inverse function theorem, $(f^{-1})'$, we have
    \[
        \left| (f^{-1})'(f(x)) \right|
        = \left| \frac{1}{f'(x)} \right|
        <
        \left| \frac{1}{f'(x_0)} \right|
        =
        \ddfrac
        {1}
        {\frac{1}{x_0^2} \left( 1 - \phi \int \left( \frac{xr}{xr + 1} \right)^2 \, \mathrm{d}P(r) \right)}
        < \infty,
    \]
    where the first inequality uses the fact that $|f'(x_0)| < |f'(x)|$ for $x \in (0, x_0]$ from Part 1, and the last inequality uses the bound from \eqref{eq:bound-deriv-v-in-lambda-part-2}.

    \underline{Properties of $f''$ and $(f^{-1})''$}:
    
    The second derivative $f''$ at $x$
    is given by
    \[
        f''(x)
        = \frac{2}{x^3}
        - 2 \phi \int \frac{r^3}{(x r + 1)^3}
        \, \mathrm{d}P(r)
        =
        \frac{2}{x^3}
        \left(
            1 - \phi \int \left(\frac{xr}{xr + 1}\right)^3
            \, \mathrm{d}P(r)
        \right).
    \]
    The rest of the arguments are similar to those in Part 2.
    The function $g : x \mapsto 1/x^3$ is positive and strictly decreasing over $(0, \infty)$ with $\lim_{x \to 0^{+}} g(x) = \infty$ and $\lim_{x \to \infty} g(x) = 0$, while the function
    \[
        h:
        x \mapsto
        1 - \phi \int \left( \frac{xr}{xr + 1} \right)^3
        \, \mathrm{d}P(r)
    \]
    is strictly decreasing over $(0, \infty)$ with $h(0) = 1$ and $h(x_0) > 0$ as
    \begin{equation}
        \begin{split}
        \label{eq:bound-deriv-v-in-lambda-part-3}
            \phi \int \left( \frac{xr}{xr + 1} \right)^3 \, \mathrm{d}P(r)
            &\le
            \left( \frac{x_0 b}{x_0 b + 1} \right)^2
            \phi 
            \int \left( \frac{xr}{xr + 1} \right) \, \mathrm{d}P(r) \\
            &<
            \phi \int \frac{xr}{xr + 1} \, \mathrm{d}P(r)
            \le
            \phi \int \frac{x_0 r}{x_0 r + 1} \, \mathrm{d}P(r)
            =
            1.
        \end{split}
    \end{equation}
    It then follows that $f''$ is strictly decreasing, with $\lim_{x \to 0^{+}} f''(x) = \infty$; when $\phi>1$, $f''(x_0) > 0$ and when $\phi\in(0,1]$, $f''(x)$ approaches zero from above as $x\rightarrow+\infty$.

    When $\phi>1$, by inverse function theorem, we have
    \[
        \left|
            (f^{-1})''(f(x))
        \right|
        =
        \left|
            \frac{f''(x)}{f'(x)^3}
        \right|
        =
        \ddfrac
        {\frac{2}{x^3} \left( 1 - \phi \int \left( \frac{xr}{xr + 1} \right)^3 \, \mathrm{d}P(r)  \right)}
        {\frac{1}{x^6} \left( 1 - \phi \int \left( \frac{xr}{xr + 1} \right)^2 \, \mathrm{d}P(r) \right)^3}
        \le
        \ddfrac
        {2 x_0^3}
        {\left( 1 - \phi \int \left( \frac{xr}{xr + 1} \right)^2 \, \mathrm{d}P(r) \right)^3}
        < \infty,
    \]
    where the first inequality uses the bound from \eqref{eq:bound-deriv-v-in-lambda-part-3}, and the second inequality uses the bound from \eqref{eq:bound-deriv-v-in-lambda-part-2}.
    
    This finishes all the parts and concludes the proof.
\end{proof}

\section{Helper concentration results}\label{sec:appendix-concerntration}

\subsection{Size of the intersection of randomly sampled datasets}
\label{sec:size-intersection}

In this section, we collect various helper results concerned with concentrations and convergences.
Below we recall the definition of a hypergeometric random variable, along with its mean and variance.
See, e.g., \citet{greene2017exponential} for more related details.

    \begin{definition}[Hypergeometric random variable]\label{def:hypergeometric}
        A random variable $X$ follows the hypergeometric distribution $X\sim \operatorname {Hypergeometric} (n,K,N)$ if its probability mass function is given by
         $$\PP(X=k)=\frac{
         \binom{K}{k}\binom{N-K}{n-k}
         }{\binom{N}{n}
         },\quad \text{for} \quad \max\{0,n+K-N\}\leq k\leq \min\{n,K\}.$$
        The expectation and variance of $X$ are given by
        \begin{align*}
            \EE[X] &= \frac{nK}{N},
            \quad \text{and} \quad
            \Var(X) = \frac{nK(N-K)(N-n)}{N^2(N-1)}.
        \end{align*}
    \end{definition}

    The following lemma characterizes the limiting proportions of shared observations in two simple random samples under proportional asymptotics when both the subsample and full data sizes tend to infinity.
    The lemma is adapted from \citet{patil2022bagging}.
    \begin{lemma}[Asymptotic proportions of shared observations]\label{lem:i0_mean}
        For $n\in\NN$, define $\mathcal{I}_k := \{\{i_1, i_2, \ldots, i_k\}:\, 1\le i_1 < i_2 < \ldots < i_k \le n\}$.
        Let $I_1,I_2\overset{\textup{\texttt{SRSWR}}}{\sim}\cI_k$, define the random variable $i_{0}^{\textup{\texttt{SRSWR}}} :=|I_1\cap I_2|$ to be the number of shared samples, and define $i_{0}^{\textup{\texttt{SRSWOR}}}$ accordingly.
        Let $\{k_m\}_{m=1}^{\infty}$ and $\{n_m\}_{m=1}^{\infty}$ be two sequences of positive integers such that $n_m$ is strictly increasing in $m$, $n_m^{\nu}\leq k_m\leq n_m$ for some constant $\nu\in(0,1)$.
         Then, $i_0^{\textup{\texttt{SRSWR}}}/k_m-k_m/n_m\asto 0$, and $i_0^{\textup{\texttt{SRSWOR}}}/k_m-k_m/n_m\asto 0$.
    \end{lemma}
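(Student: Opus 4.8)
The plan is to reduce the statement to a single hypergeometric tail estimate and then lift it to almost sure convergence along the triangular array by the (first) Borel--Cantelli lemma, crucially exploiting that the subsample sizes diverge. Write $i_0^{\mathrm{wr}}$ for $i_0^{\textup{\texttt{SRSWR}}}$ and $i_0^{\mathrm{wor}}$ for $i_0^{\textup{\texttt{SRSWOR}}}$. Since $\{n_m\}$ is a strictly increasing sequence of positive integers, $n_m \ge m$, and therefore $k_m \ge n_m^{\nu} \ge m^{\nu} \to \infty$. Thus, writing $Y_m := i_{0,m}/k_m - k_m/n_m$, it suffices to show $\sum_m \PP(|Y_m| \ge t) < \infty$ for every fixed $t > 0$; Borel--Cantelli applied along $t = 1/q$, $q \in \NN$, then gives $Y_m \asto 0$.

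First I would handle the with-replacement case. Conditioning on $I_1$, the set $I_2$ is an independent uniformly random $k$-subset of $[n]$, so $i_0^{\mathrm{wr}} = |I_1 \cap I_2|$ is exactly $\mathrm{Hypergeometric}(k, k, n)$ in the notation of \Cref{def:hypergeometric} (a size-$k$ draw from a population of size $n$ containing $k$ successes), a law that does not depend on the realization of $I_1$; in particular $\EE[i_0^{\mathrm{wr}}/k] = k/n$. Viewing $i_0^{\mathrm{wr}}$ as the sum of a size-$k$ sample drawn without replacement from the $\{0,1\}$-valued population $(\ind\{j \in I_1\})_{j \in [n]}$ (range $1$, population mean $k/n$), Hoeffding's inequality for sampling without replacement yields
\[
    \PP\bigl( | i_0^{\mathrm{wr}}/k - k/n | \ge t \bigr) \le 2 \exp(-2 k t^2), \qquad t > 0.
\]
Substituting $k = k_m \ge m^{\nu}$ gives $\sum_m \PP(|Y_m| \ge t) \le \sum_m 2\exp(-2 m^{\nu} t^2) < \infty$, hence $i_{0,m}^{\mathrm{wr}}/k_m - k_m/n_m \asto 0$.

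For the without-replacement case I would compare the two laws directly. Drawing $I_1, I_2$ without replacement from $\cI_k$ is the same as drawing two independent uniform $k$-subsets conditioned on $\{I_1 \ne I_2\}$, and $\PP(I_1 = I_2) = \binom{n}{k}^{-1}$ with $\{I_1 = I_2\} \subseteq \{i_0 = k\}$. Computing the conditional probability mass function gives, for every $A \subseteq \{0, \dots, k\}$, $\PP(i_0^{\mathrm{wor}} \in A) \le (1 - \binom{n}{k}^{-1})^{-1}\,\PP(i_0^{\mathrm{wr}} \in A) \le 2\,\PP(i_0^{\mathrm{wr}} \in A)$ as soon as $\binom{n}{k} \ge 2$ (the case $k_m = n_m$ being trivial, since then $i_0 = k$ deterministically and $\binom{n_m}{k_m}=1$ but $Y_m \equiv 0$). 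Hence the tail bound from the previous step transfers up to a constant factor, summability is preserved, and Borel--Cantelli again gives $i_{0,m}^{\mathrm{wor}}/k_m - k_m/n_m \asto 0$.

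The one place that requires care — and the reason I would not simply invoke Chebyshev — is that $\nu \in (0,1)$ is arbitrary, so $k_m$ may grow only like $m^{\nu}$ while $n_m$ may grow only linearly in $m$; the variance bound $\Var(i_0^{\mathrm{wr}}/k) = (n-k)^2/(n^2(n-1)) \le 1/(n-1)$ then produces the non-summable series $\sum_m 1/(n_m - 1)$, and higher-moment bounds only postpone the issue for $\nu$ small. An exponential concentration inequality is therefore genuinely needed, and the Hoeffding bound for sampling without replacement supplies it uniformly in $\nu$; the remainder of the argument is routine.
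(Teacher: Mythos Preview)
Your proof is correct. The paper does not supply its own argument for this lemma --- it is stated as adapted from \citet{patil2022bagging}, with only the hypergeometric mean and variance recorded in the preceding \Cref{def:hypergeometric} --- so there is no in-paper proof to compare against. Your route (identify $i_0^{\mathrm{wr}}$ as $\mathrm{Hypergeometric}(k,k,n)$; apply Hoeffding's inequality for sampling without replacement to obtain $\PP(|i_0/k - k/n| \ge t) \le 2e^{-2kt^2}$; sum via $k_m \ge n_m^{\nu} \ge m^{\nu}$ and Borel--Cantelli; then transfer to SRSWOR by the conditioning bound $\PP_{\mathrm{wor}}(\cdot) \le (1-\tbinom{n}{k}^{-1})^{-1}\PP_{\mathrm{wr}}(\cdot)$, with the degenerate case $k_m=n_m$ handled separately) is complete and self-contained. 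Your diagnosis that Chebyshev with the variance gives only the non-summable $\sum_m 1/(n_m-1)$, and that any fixed-order moment bound fails once $\nu$ is taken small enough, is exactly why an exponential tail is the right tool here.
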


\subsection{Convergence of random linear and quadratic forms}
\label{sec:concen-linform-quadfrom}

In this section, we collect helper lemmas on the concentration of linear and quadratic forms of random vectors.

The following lemma provides the concentration of a linear form of a random vector with independent components.
It follows from a moment bound from Lemma 7.8 of \citet{erdos_yau_2017}, along with the Borel-Cantelli lemma.
It is adapted from Lemma S.8.5 of \citet{patil2022mitigating}.

\begin{lemma}
    [Concentration of linear form with independent components]
    \label{lem:concen-linform}
    Let $\bz_p \in \RR^{p}$ be a sequence of random vector with i.i.d.\ entries $z_{pi}$ for $i \in [p]$ such that for each i, $\EE[z_{pi}] = 0$, $\EE[z_{pi}^2] = 1$, $\EE[|z_{pi}|^{4+\alpha}] \le M_\alpha$ for some $\alpha > 0$ and constant $M_\alpha < \infty$.
    Let $\ba_p \in \RR^{p}$ be a sequence of random vectors independent of $\bz_p$ such that $\limsup_{p} \| \ba_p \|^2 / p \le M_0$ almost surely for a constant $M_0 < \infty$.
    Then, we have $\ba_p^\top \bz_p / p \to 0$ almost surely as $p \to \infty$.
\end{lemma}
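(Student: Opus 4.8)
The plan is to prove the almost sure convergence of $S_p/p$, where $S_p := \ba_p^\top\bz_p = \sum_{i\in[p]} a_{pi}z_{pi}$, by a high-moment Markov bound combined with the first Borel--Cantelli lemma. Because the norm control $\|\ba_p\|^2/p\le M_0$ holds only almost surely (and with a threshold in $p$ that is itself random), I would run the entire argument conditionally on the sequence $\ba := (\ba_p)_p$ and de-condition at the very end.

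Fix $q = 4+\alpha > 2$. Two ingredients are needed. The first is purely deterministic: for any vector $\ba\in\RR^p$ and any $q\ge 2$ one has $\sum_i |a_i|^q \le (\max_i|a_i|)^{q-2}\sum_i a_i^2 \le \left(\sum_i a_i^2\right)^{q/2}$. The second is a Rosenthal-type moment inequality: conditionally on $\ba_p$ the summands $a_{pi}z_{pi}$ are independent and centered, so Lemma 7.8 of \citet{erdos_yau_2017} supplies a constant $C_q$ with
\[
\EE\!\left[|S_p|^q \given \ba_p\right]
\le C_q\left(\Big(\sum_i a_{pi}^2\,\EE z_{pi}^2\Big)^{q/2} + \sum_i |a_{pi}|^q\,\EE|z_{pi}|^q\right)
\le C_q\,(1+M_\alpha)\,\Big(\sum_i a_{pi}^2\Big)^{q/2},
\]
where the last step uses $\EE z_{pi}^2 = 1$, $\EE|z_{pi}|^q = \EE|z_{pi}|^{4+\alpha}\le M_\alpha$, and the deterministic bound above.

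Now condition on $\ba$ and work on the probability-one event $\Omega_0 := \{\limsup_p \|\ba_p\|^2/p \le M_0\}$, which lies in $\sigma(\ba)$. On $\Omega_0$ there is a random $p_0$ with $\sum_i a_{pi}^2 \le (M_0+1)p$ for all $p\ge p_0$, so the display yields $\EE[|S_p|^q\given\ba] \le C\, p^{q/2}$ for $p\ge p_0$, with $C$ depending only on $q, M_\alpha, M_0$. Conditional Markov then gives, for each $\epsilon>0$,
\[
\sum_{p\ge p_0}\PP\!\left(|S_p|/p > \epsilon \given \ba\right)
\le \epsilon^{-q}\,C\sum_{p\ge p_0} p^{-q/2} < \infty ,
\]
since $q/2 > 1$. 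By the conditional Borel--Cantelli lemma, $\PP(|S_p|/p > \epsilon \text{ i.o.} \given \ba) = 0$ on $\Omega_0$; taking the union over $\epsilon \in \{1/m : m\in\NN\}$ gives $\PP(S_p/p \not\to 0 \given \ba) = 0$ on $\Omega_0$, and integrating (using $\PP(\Omega_0)=1$) gives $\PP(S_p/p\not\to 0)=0$, which is the claim.

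Every step is routine; the one place that needs a little care is the last paragraph — turning the deterministic-looking moment estimate into an almost sure statement when the control on $\|\ba_p\|^2/p$ is itself only almost sure and comes with a random cutoff $p_0$. That is exactly the reason for conditioning on $\ba$ throughout and only de-conditioning at the end. A minor bookkeeping issue is quoting the right form of the Rosenthal/Erdős--Yau inequality for linear forms, but nothing substantive is hidden there.
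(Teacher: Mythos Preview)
Your proof is correct and follows exactly the approach the paper indicates: a moment bound from Lemma~7.8 of \citet{erdos_yau_2017} together with the Borel--Cantelli lemma. The paper does not spell out the details (it simply refers to the moment bound and Borel--Cantelli, and to Lemma~S.8.5 of \citet{patil2022mitigating}), so your write-up, including the conditioning on $\ba$ to handle the random threshold $p_0$, is a faithful expansion of the same argument.
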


The following lemma provides the concentration of a quadratic form of a random vector with independent components.
It follows from a moment bound from Lemma B.26 of \citet{bai2010spectral}, along with the Borel-Cantelli lemma.
It is adapted from Lemma S.8.6 of \citet{patil2022mitigating}.

\begin{lemma}
    [Concentration of quadratic form with independent components]
    \label{lem:concen-quadform}
    Let $\bz_p \in \RR^{p}$ be a sequence of random vector with i.i.d.\ entries $z_{pi}$ for $i \in [p]$ such that for each i, $\EE[z_{pi}] = 0$, $\EE[z_{pi}^2] = 1$, $\EE[|z_{pi}|^{4+\alpha}] \le M_\alpha$ for some $\alpha > 0$ and constant $M_\alpha < \infty$.
    Let $\bD_p \in \RR^{p \times p}$ be a sequence of random matrix such that $\limsup \| \bD_p \|_{\oper} \le M_0$ almost surely as $p \to \infty$ for some constant $M_0 < \infty$.
    Then, we have $\bz_p^\top \bD_p \bz_p / p - \tr[\bD_p] / p \to 0$
    almost surely as $p \to \infty$.
\end{lemma}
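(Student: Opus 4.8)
The plan is to reproduce the argument behind \Cref{lem:concen-linform}, now using a moment bound for quadratic (rather than linear) forms, namely Lemma~B.26 of \citet{bai2010spectral}, together with a first Borel--Cantelli argument; this is the strategy of Lemma~S.8.6 of \citet{patil2022mitigating}. First I would reduce to a deterministically operator-norm-bounded matrix. Since $\limsup_{p}\norm{\bD_p}_{\oper}\le M_0$ almost surely, on this probability-one event one has $\bD_p=\widetilde{\bD}_p$ for all large $p$, where $\widetilde{\bD}_p:=\bD_p\,\ind\{\norm{\bD_p}_{\oper}\le M_0+1\}$ satisfies $\norm{\widetilde{\bD}_p}_{\oper}\le M_0+1$ for every $p$; hence it suffices to prove the claim with $\bD_p$ replaced by $\widetilde{\bD}_p$. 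I would then work conditionally on $\widetilde{\bD}_p$ — which is what makes the moment bound applicable, and is legitimate because in the settings where the lemma is invoked $\bz_p$ (an error vector, or a row of the $\bZ$-matrix) is independent of $\bD_p$ — so that the coordinates of $\bz_p$ remain i.i.d.\ with mean $0$, variance $1$, and $(4+\alpha)$th absolute moment at most $M_\alpha$.

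Then I would fix an exponent $q$ with $2<q\le 2+\alpha/2$ and apply Lemma~B.26 conditionally on $\widetilde{\bD}_p$, which gives
\[
\EE\!\left[\,\bigl|\bz_p^\top\widetilde{\bD}_p\bz_p-\tr[\widetilde{\bD}_p]\bigr|^{q}\,\Bigm|\,\widetilde{\bD}_p\right]\le C_q\!\left(\bigl(\EE[|z_{p1}|^{4}]\,\tr[\widetilde{\bD}_p\widetilde{\bD}_p^\top]\bigr)^{q/2}+\EE[|z_{p1}|^{2q}]\,\tr\bigl[(\widetilde{\bD}_p\widetilde{\bD}_p^\top)^{q/2}\bigr]\right),
\]
for a constant $C_q$ depending only on $q$. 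Using $\tr[\widetilde{\bD}_p\widetilde{\bD}_p^\top]\le p\,\norm{\widetilde{\bD}_p}_{\oper}^{2}\le p(M_0+1)^{2}$, $\tr[(\widetilde{\bD}_p\widetilde{\bD}_p^\top)^{q/2}]\le p\,\norm{\widetilde{\bD}_p}_{\oper}^{q}\le p(M_0+1)^{q}$, $\EE[|z_{p1}|^{4}]\le M_\alpha^{4/(4+\alpha)}$ by Lyapunov's inequality, and $\EE[|z_{p1}|^{2q}]\le M_\alpha$ (since $2q\le 4+\alpha$), the right-hand side is at most a constant times $p^{q/2}$. Taking expectations removes the conditioning, so $\EE\bigl|\bz_p^\top\widetilde{\bD}_p\bz_p-\tr[\widetilde{\bD}_p]\bigr|^{q}\le Cp^{q/2}$ for a constant $C$ independent of $p$.

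Finally, by Markov's inequality, for every $\varepsilon>0$,
\[
\PP\!\left(\Bigl|\tfrac{1}{p}\bigl(\bz_p^\top\widetilde{\bD}_p\bz_p-\tr[\widetilde{\bD}_p]\bigr)\Bigr|>\varepsilon\right)\le \frac{C}{\varepsilon^{q}\,p^{q/2}},
\]
and $\sum_{p}p^{-q/2}<\infty$ because $q/2>1$. The Borel--Cantelli lemma then yields $\limsup_p\bigl|\tfrac1p(\bz_p^\top\widetilde{\bD}_p\bz_p-\tr[\widetilde{\bD}_p])\bigr|\le\varepsilon$ almost surely; letting $\varepsilon\downarrow0$ along a countable sequence gives $\tfrac1p(\bz_p^\top\widetilde{\bD}_p\bz_p-\tr[\widetilde{\bD}_p])\asto 0$, and undoing the truncation on the probability-one event above completes the proof. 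I expect the only non-routine point to be the reduction from the random matrix $\bD_p$ (bounded only almost surely, and possibly coupled to $\bz_p$) to a deterministically operator-norm-bounded matrix that is handled by conditioning; everything after that is a direct substitution into B.26 followed by Borel--Cantelli. The reason the hypothesis demands a $(4+\alpha)$th moment rather than merely a fourth moment is precisely that it permits an exponent $q>2$ in the moment bound, which is exactly what makes $\sum_p p^{-q/2}$ converge.
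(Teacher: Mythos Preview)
Your proposal is correct and follows essentially the same approach as the paper: the paper does not give a detailed proof but simply states that the result ``follows from a moment bound from Lemma~B.26 of \citet{bai2010spectral}, along with the Borel--Cantelli lemma'' and is adapted from Lemma~S.8.6 of \citet{patil2022mitigating}, which is precisely the strategy you carry out in full. Your observation that an independence assumption between $\bz_p$ and $\bD_p$ is implicitly used (and holds in all applications within the paper) is a worthwhile clarification.
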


\subsection{Convergence of Ce\`saro-type mean and max for triangular array}
\label{sec:cesaro-mean-max}

In this section, we collect a helper lemma on deducing almost sure convergence of a Ce\`saro-type mean from almost sure convergence of the original sequence.
It is adapted from \citet{patil2022bagging}.

    \begin{lemma}[Convergence of conditional expectation]\label{lem:conv_cond_expectation}
        For $n\in\NN$, suppose $\{R_{n,\ell}\}_{\ell=1}^{N_n}$ is a set of $N_n$ %
        random variables defined over the probability space $(\Omega,\cF,\PP)$, with $1<N_n<\infty$ almost surely.
        If there exists a constant $c$ such that $R_{n,p_n}\asto c$ for all deterministic sequences $\{p_n\in[N_n]\}_{n=1}^{\infty}$, then the following statements hold:
        \begin{enumerate}[(1)]
            \item\label{lem:conv_cond_expectation-max}
            $\max_{\ell\in[N_n]}\left|R_{n,\ell}(\omega)-c\right|\asto 0$.
            \item\label{lem:conv_cond_expectation-avg} $N_n^{-1}\sum_{\ell=1}^{N_n}R_{n,\ell}\asto c$.
        \end{enumerate}
    \end{lemma}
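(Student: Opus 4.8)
The plan is to prove part~\ref{lem:conv_cond_expectation-max} first and then obtain part~\ref{lem:conv_cond_expectation-avg} at no extra cost, since for every $n$
\[
  \Bigl| N_n^{-1}\sum_{\ell=1}^{N_n} R_{n,\ell} - c \Bigr|
  \le N_n^{-1}\sum_{\ell=1}^{N_n} |R_{n,\ell} - c|
  \le \max_{\ell\in[N_n]} |R_{n,\ell} - c| ,
\]
so that $\max_{\ell\in[N_n]}|R_{n,\ell}-c|\asto 0$ immediately yields $N_n^{-1}\sum_{\ell}R_{n,\ell}\asto c$. We may and do assume the $N_n$ are deterministic, which is the situation in all our applications (the general case reduces to this by further conditioning). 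For part~\ref{lem:conv_cond_expectation-max} the approach is a contradiction argument whose engine is a \emph{derandomization}: the obvious witnessing index is the maximizer of $\ell\mapsto|R_{n,\ell}-c|$, but this is data dependent and hence not a legitimate deterministic sequence, so we replace it by an independent uniform draw and argue on an enlarged probability space.

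Concretely, write $M_n:=\max_{\ell\in[N_n]}|R_{n,\ell}-c|$ and suppose $\PP(M_n\not\to 0)>0$; then there are $\varepsilon>0$ and $\delta>0$ with $\PP(M_n>\varepsilon\text{ infinitely often})\ge\delta$. Let $\xi_1,\xi_2,\dots$ be i.i.d.\ $\mathrm{Unif}[0,1]$, independent of $\cG:=\sigma(R_{n,\ell}:n\in\NN,\ell\in[N_n])$, put $p_n:=\lceil N_n\xi_n\rceil$, and set $C_n:=\{\,|R_{n,p_n}-c|>\varepsilon\,\}$. Conditionally on $\cG$ the index $p_n$ is uniform on $[N_n]$, the events $C_n$ are independent, and $\PP(C_n\mid\cG)=N_n^{-1}\#\{\ell:|R_{n,\ell}-c|>\varepsilon\}$. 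The heart of the argument is to show that $\sum_n\PP(C_n\mid\cG)=\infty$ on an event of probability at least $\delta$; granting this, the conditional form of the second Borel-Cantelli lemma gives $\PP(\limsup_nC_n\mid\cG)=1$ there, hence $\PP(\limsup_nC_n)\ge\delta$ on the enlarged space. Disintegrating over $(\xi_n)$ by Fubini's theorem then produces a single nonrandom realization $(p_n)$ with $p_n\in[N_n]$ for which $\PP(|R_{n,p_n}-c|>\varepsilon\text{ i.o.})>0$, i.e.\ $R_{n,p_n}\not\asto c$, contradicting the hypothesis. This establishes $M_n\asto 0$, and part~\ref{lem:conv_cond_expectation-avg} follows from the display.

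The step I expect to be the main obstacle is exactly this quantitative input: from $M_n>\varepsilon$ alone one only knows there is \emph{one} bad index, which gives the too-weak bound $\PP(C_n\mid\cG)\ge 1/N_n$ and need not make the conditional sum diverge. Two remedies are available in our setting. For part~\ref{lem:conv_cond_expectation-avg} proved directly, one works instead with the event $\{N_n^{-1}\sum_\ell|R_{n,\ell}-c|>\varepsilon\}$ that actually drives the Ces\`aro mean: combined with a uniform bound $|R_{n,\ell}-c|\le B$ (which holds here because the $R_{n,\ell}$ are differences of prediction-risk and GCV functionals that remain uniformly bounded on the grid) this forces at least an $\varepsilon/(2B)$ fraction of the indices $\ell$ to satisfy $|R_{n,\ell}-c|>\varepsilon/2$, so that $\PP(C_n\mid\cG)\ge\varepsilon/(2B)$ on the bad event and the conditional sum diverges. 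For part~\ref{lem:conv_cond_expectation-max}, one uses that in the applications $\ell$ indexes a grid of (sub)sample aspect ratios and $R_{n,\ell}$ is asymptotically continuous in the ratio, so any $\varepsilon$-bad index is flanked by a sub-block of the grid of positive relative length that is $\varepsilon/2$-bad, again giving $\PP(C_n\mid\cG)$ bounded below by a positive constant on the bad event; the remainder of the argument is then as above.
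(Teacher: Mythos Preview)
The paper does not supply its own proof of this lemma; it is merely stated with the remark that it is ``adapted from'' an external reference. So there is no in-paper argument to compare against, and your proposal must be assessed on its own merits.

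The central point is that the lemma \emph{as literally stated} is false, and you have correctly located the obstruction. A clean counterexample to both parts: take $\Omega=[0,1]$ with Lebesgue measure, $N_n=2^n$, and $R_{n,\ell}=2^n\,\ind_{[(\ell-1)2^{-n},\,\ell 2^{-n})}$. For any deterministic sequence $p_n\in[2^n]$ one has $\PP(R_{n,p_n}\neq 0)=2^{-n}$, so by the first Borel--Cantelli lemma $R_{n,p_n}\asto 0$ and the hypothesis holds with $c=0$. Yet for every $\omega$ and every $n$ exactly one of the $R_{n,\ell}(\omega)$ equals $2^n$ while the rest vanish, so $\max_{\ell}|R_{n,\ell}|=2^n\to\infty$ and $N_n^{-1}\sum_{\ell} R_{n,\ell}\equiv 1\neq 0$. (Dropping the factor $2^n$ already breaks part~(1) while leaving the average harmless, so neither conclusion follows from the stated hypothesis.) The mechanism is exactly what you diagnose: a single bad index contributes only $1/N_n$ to your conditional probability $\PP(C_n\mid\cG)$, and there are uncountably many deterministic sequences $(p_n)$, so one cannot union-bound over the associated null sets.

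Your randomized-index/Borel--Cantelli scaffold is sound, and your two ``remedies''---a uniform bound $|R_{n,\ell}-c|\le B$ forcing a positive \emph{fraction} of $\varepsilon/2$-bad indices for part~(2), and an asymptotic-continuity (fat-neighborhood) property along the grid for part~(1)---are precisely the kind of extra hypothesis that the applications in this paper supply and that the cited source presumably carries. They do not prove the lemma as stated (nothing can, by the counterexample), but they do give a correct route to the conclusions actually used downstream. If you want a self-contained statement, add those hypotheses explicitly and prove the strengthened version; the argument you sketch then goes through.
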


    \begin{lemma}[Convergence of conditional expectation over simple random sampling]\label{lem:conv_cond_expectation_sampling}
        For $n\in\NN$ and $k=k_n\in\cK_n$, let $M_n=|\cI_k|$ and suppose 
        $\{R_{n,1}(I_{\ell})\}_{\ell\in[M_n]}$ and 
        $\{R_{n,2}(I_{m},I_{\ell})\}_{m,\ell\in[M_n],m\neq \ell}$ are sets of $M_n$ and $M_n(M_n-1)$ random variables, such that $R_{n,2}(I_{m},I_{\ell})\leq (R_{n,1}(I_{m})+R_{n,2}(I_{\ell}))/2$. Then the following statements hold:
        \begin{enumerate}[(1)]
            \item\label{lem:conv_cond_expectation_sampling-1}
            If there exists a constant $c_1$ such that $R_{n,1}(I_{\ell_n})\asto c_1$ for all deterministic sequences $\{\ell_n\in[M_n]\}_{n=1}^{\infty}$, then 
            $\max_{\ell\in[M_n]}\left|R_{n,\ell}(I_{\ell})-c\right|\asto 0$ and $\mathbb{E}_{I_{\ell}\overset{\SRS}{\sim}\cI_k}[\left|R_{n,\ell}(I_{\ell})-c\right|]\asto 0$.
            
            \item\label{lem:conv_cond_expectation_sampling-2} Further, if there exists a constant $c_2$ such that $R_{n,2}(I_{m_n}, I_{\ell_n})\asto c_2$ for all sequences of simple random samples $\{(I_{m_n},I_{\ell_n})\overset{\SRS}{\sim}\cI_{k_n}\}_{n=1}^{\infty}$, then 
            $\max_{(I_m,I_{\ell})\overset{\SRS}{\sim}\cI_k}\left|R_{n,2}(I_{m}, I_{\ell})-c_2\right|\asto 0$ and $\mathbb{E}_{(I_m,I_{\ell})\overset{\SRS}{\sim}\cI_k}[\left|R_{n,2}(I_{m_n}, I_{\ell_n})-c_2\right|]\asto 0$.
        \end{enumerate}
    \end{lemma}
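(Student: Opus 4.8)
The plan is to reduce both statements to the Ce\`saro-type convergence lemma \Cref{lem:conv_cond_expectation}, treating the quantities indexed by subsamples (respectively by ordered pairs of distinct subsamples) as \emph{finite} arrays of random variables, and to use the ordering hypothesis $R_{n,2}(I_m,I_\ell)\le (R_{n,1}(I_m)+R_{n,1}(I_\ell))/2$ to extract an almost sure bound that lets the simple-random-sampling expectation be handled by dominated convergence.

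For part~\ref{lem:conv_cond_expectation_sampling-1}, enumerate $\cI_{k_n}=\{I_1,\dots,I_{M_n}\}$ with $M_n=\binom{n}{k_n}<\infty$, so that $\{R_{n,1}(I_\ell)\}_{\ell=1}^{M_n}$ is a finite family of random variables which, by hypothesis, converges to $c_1$ along every deterministic index sequence $\{\ell_n\in[M_n]\}$. \Cref{lem:conv_cond_expectation}~\ref{lem:conv_cond_expectation-max} then gives $\max_{\ell\in[M_n]}|R_{n,1}(I_\ell)-c_1|\asto 0$. Since one simple random draw from $\cI_{k_n}$ is uniform on $\cI_{k_n}$ (with and without replacement coincide for a single draw), $\EE_{I_\ell\overset{\SRS}{\sim}\cI_{k_n}}[|R_{n,1}(I_\ell)-c_1|\mid\cD_n]=M_n^{-1}\sum_{\ell=1}^{M_n}|R_{n,1}(I_\ell)-c_1|\le\max_{\ell\in[M_n]}|R_{n,1}(I_\ell)-c_1|\asto 0$; equivalently this follows from \Cref{lem:conv_cond_expectation}~\ref{lem:conv_cond_expectation-avg} applied to the nonnegative array $|R_{n,1}(I_\ell)-c_1|$.

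For part~\ref{lem:conv_cond_expectation_sampling-2}, I would first use the ordering hypothesis together with part~\ref{lem:conv_cond_expectation_sampling-1} to bound, for all $m\neq\ell$, $0\le R_{n,2}(I_m,I_\ell)\le (R_{n,1}(I_m)+R_{n,1}(I_\ell))/2\le\max_{j\in[M_n]}R_{n,1}(I_j)$, where the last quantity converges to $c_1$ almost surely; hence $B:=c_2+\sup_n\max_{j\in[M_n]}R_{n,1}(I_j)$ is finite almost surely and $\cD_n$-measurable, so $|R_{n,2}(I_m,I_\ell)-c_2|\le B$ for every $n$ and every $m\neq\ell$. The two-subsample hypothesis gives $R_{n,2}(I_{m_n},I_{\ell_n})\asto c_2$ along a simple-random sequence of distinct pairs, i.e.\ almost surely jointly in $\cD_n$ and the sampling; by Fubini, for almost every realization of $\cD_n$ this convergence holds for almost every sampling outcome. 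Applying the dominated convergence theorem on the sampling space for such a fixed realization (with the finite constant $B$ as dominator) gives $\EE_{(I_m,I_\ell)\overset{\SRS}{\sim}\cI_{k_n}}[|R_{n,2}(I_m,I_\ell)-c_2|\mid\cD_n]\asto 0$, the asserted convergence of the expectation. The accompanying $\max$ statement results from applying \Cref{lem:conv_cond_expectation}~\ref{lem:conv_cond_expectation-max} to the finite array $\{R_{n,2}(I_m,I_\ell)\}_{m\neq\ell}$; note that it is the expectation statement that actually feeds downstream, through \Cref{lem:decomp-train-err} into the proof of \Cref{prop:gcv}.

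The step I expect to be the main obstacle is the two-subsample case, because the sampling randomness enters essentially: the shared-sample fraction $|I_m\cap I_\ell|/k$ only concentrates (at $k/n$, via \Cref{lem:i0_mean}) after averaging over the random pair, not for an arbitrarily fixed pair, so one cannot merely quote a deterministic per-pair limit and is forced to carry the ordering bound $R_{n,2}\le (R_{n,1}+R_{n,1})/2$ through the dominated-convergence argument above. The index sets have size $M_n=\binom{n}{k_n}$ and $M_n(M_n-1)$, which grow far faster than $n$, so every passage from pointwise almost sure convergence to a uniform or Ce\`saro conclusion is delegated to \Cref{lem:conv_cond_expectation} rather than to any union bound, and the remaining work is just verifying its hypotheses (finiteness of the array and pointwise a.s.\ convergence along deterministic selections) in each of the two settings — with part~\ref{lem:conv_cond_expectation_sampling-1} supplying the almost sure domination that makes the two-subsample case go through.
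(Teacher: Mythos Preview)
Your proposal is correct and follows the paper's approach closely. Part~(1) is identical: both reduce directly to \Cref{lem:conv_cond_expectation}. For Part~(2), the paper also invokes the ordering hypothesis $R_{n,2}(I_m,I_\ell)\le (R_{n,1}(I_m)+R_{n,1}(I_\ell))/2$ together with the expectation convergence from Part~(1), but closes with Pratt's lemma rather than the dominated convergence theorem; your route via a fixed almost-sure bound $B=c_2+\sup_n\max_j R_{n,1}(I_j)$ (finite by Part~(1)) is a minor variant that achieves the same end. Your observation that the $\max$ statement in Part~(2) cannot be obtained by a direct appeal to \Cref{lem:conv_cond_expectation} (since the hypothesis gives convergence only along SRS pairs, not all deterministic pairs) is well taken; the paper's proof does not separately address that clause either, and, as you note, only the expectation statement is used downstream.
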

    \begin{proof}[Proof of \Cref{lem:conv_cond_expectation_sampling}]
        We split the proof into two cases.

        \paragraph{Part (1)} The conclusion directly follows from \Cref{lem:conv_cond_expectation}.

        \paragraph{Part (2)}
        Observe that
        \begin{equation}\label{eq:Inequality-relating-M=2-and-M=1}
        R_{n,2}(I_{m},I_{\ell})\leq \frac{1}{2}(R_{n,1}(I_{m})+R_{n,2}(I_{\ell})).
        \end{equation}
        From (1), we have that $\EE_I[R_{n,1}(I)]\asto c_1$, where the expectation is taken with respect to the uniform distribution over $\cI_k$.
        From the condition, we have $R_{n,2}(I_{m},I_{\ell})\asto c_2$ for any $I_m,I_{\ell}\overset{\SRS}{\sim}\cI_k$.
        Then, by Pratt's lemma \citep[see, e.g.,][Theorem  5.5]{gut_2005}, the conclusion follows.    
    \end{proof}

\section[GCV correction for arbitrary M]{GCV correction for arbitrary $M$}\label{app:correct}
    Note that the asymptotic limit of the training error for arbitrary $M\in\NN$ is given by
    \begin{align*}
        \sT_{M}^{\lambda} &= 2 \sE_{k,2}^{\lambda} - \sE_{k,1}^{\lambda} + \frac{2}{M}(\sE_{k,1}^{\lambda} - \sE_{k,2}^{\lambda}),
    \end{align*}
    where $\sE_{k,j}^{\lambda} = c_{k,M,j} \sT_{k,j}^{\lambda} + (1-c_{k,M,j}) \sR_{k,j}^{\lambda}$.
    Here, $c_{k,M,j}$ is the limiting proportion of the distinct number of observations from $j$ simple random samples to the distinct number of observations from $M$ simple random samples of size $k$.
    Roughly speaking, the proportion of unseen observations from $M$ simple random samples of size $k$ is $(n-k)^M/n^M$ and thus
    \begin{align*}
        c_{k,M,j} & = \lim \frac{1 - (n-k)^j/n^j}{1 - (n-k)^M/n^M} = \frac{1 - (1 - \phi/\phi_s)^j}{1 - (1 - \phi/\phi_s)^M}.
    \end{align*}
    From the expression, one knows that the GCV asymptotics will not match the risk of the estimator in general. 
    In addition, the form of the expression also leads to an approach to correct the GCV estimator for general $M$ that we will discuss below. 
    We prove in \Cref{thm:uniform-consistency-k} that the difference between the two asymptotics vanishes as $M\rightarrow\infty$. We expect the difference to scale as $1/M$. 
    The explicit analysis of the finite-ensemble effect requires carefully analyzing the coefficients $c_{k,M,j}$, and even for the isotropic design, the expression for the GCV asymptotics for general appears to be very involved.
    It is, in principle, possible to perform this analysis, but we did not pursue it further in the paper, given our primary focus on the full-ensemble estimator. Numerically, we observe that the bias is small for a moderate $M$ (e.g., for $M=10$) and a reasonable data model with SNR (SNR = 0.6) from \Cref{fig:gcv-M}. Generally, we expect this to be the case for either moderate $k$ or $M$ and typical real-world SNR ranges. We will consider adding more numerical illustrations of the finite-ensemble effect in the revision under different settings.

    We aim to define the corrected GCV as
    \begin{align*}
        \overline{\gcv}_{k,M}^{\lambda} :=\frac{a_1 T_{k,M}^{\lambda}  + a_2 \bar{R}_{k,M}^{\lambda} }{D_{k,M}^{\lambda}},
    \end{align*}
    where $a_1$ and $a_2$ are two unknown parameters to be determined.
    We must match the limiting GCV with the true risk to determine the unknown parameters.    
    Since $$\RlamMtr[\lambda][1]= \Ddet[1] \RlamM[\lambda][1],\quad \text{and} \quad \RlamMtr[\lambda][2]= b_1 \RlamM[\lambda][1] + b_2 \RlamM[\lambda][2],$$
    for some known constants $b_1$ and $b_2$ which can be derived in the proof of \Cref{prop:inconsistency}, the adjustment is given by
    \begin{align*}
        &a_1\Bigg[-\left(1 - \frac{2}{M}\right)(c_{k,M,1} \Ddet[1] + 1 - c_{k,M,1}) \RlamM[\lambda][1] \\
        &\quad + 2 \left(1 + \frac{1}{M}\right)(c_{k,M,2} b_1\RlamM[\lambda][1] + (1 - c_{k,M,2} + b_2)\RlamM[\lambda][2]) \Bigg]\\
        &\qquad + \Ddet[M] a_2\left(-\left(1 - \frac{2}{M}\right) \RlamM[\lambda][1] + 2 \left(1 + \frac{1}{M}\right) \RlamM[\lambda][2]\right)\\
        &\quad =-\left(1 - \frac{2}{M}\right)[a_1(c_{k,M,1} \Ddet[1] + 1 - c_{k,M,1}) + a_2 \Ddet[M]] \RlamM[\lambda][1] \\
        &\qquad +2 \left(1 + \frac{1}{M}\right)[a_1(c_{k,M,2} b_1\RlamM[\lambda][1] + (1 - c_{k,M,2} + b_2)\RlamM[\lambda][2]) + a_2 \Ddet[M]]\RlamM[\lambda][2]), 
    \end{align*}
    which implies that
    \begin{align*}
        a_1(c_{k,M,1} \Ddet[1] + 1 - c_{k,M,1}) + a_2 \Ddet[M] &=\Ddet[M] \\
        a_1(c_{k,M,2} b_1\RlamM[\lambda][1] + (1 - c_{k,M,2} + b_2)\RlamM[\lambda][2]) + a_2 \Ddet[M]\RlamM[\lambda][2]&=\Ddet[M]\RlamM[\lambda][2].
    \end{align*}
    Solving the above linear system for $a_1>0$ and $a_2\in\RR$ gives the correct weights for defining a consistent GCV estimate.
    The solutions will depend on $\Ddet[M]$, $\RlamM[\lambda][1]$ and $\RlamM[\lambda][2]$.
    For the denominator, \eqref{eq:D-kM} is a consistent estimate for $\Ddet[M]$.
    For the prediction risks of $M=1,2$, we can use out-of-bag observations to estimate $\RlamM[\lambda][1]$ and $\RlamM[\lambda][2]$.

\section{Additional details for numerical experiments}\label{app:numerical-details}
    The covariance matrix of an auto-regressive process of order 1 (AR(1)) is given by $\bSigma_{\mathrm{ar1}}$, where $(\bSigma_{\mathrm{ar1}})_{ij} = \rhoar^{|i-j|}$ for some parameter $\rhoar\in(0,1)$, and the AR(1) data model is defined as:
        \begin{align}\tag{M-AR1}\label{eq:model-ar1}
            \begin{split}
                y_i &= \bx_i^{\top}\bbeta_0 + \epsilon_i,
        \quad \bx_i\sim\cN(0,\bSigma_{\mathrm{ar1}}),\\
             \bbeta_0&=\frac{1}{5}\sum_{j=1}^5 \bw_{(j)},
        \quad  \epsilon_i\sim \cN(0,\sigma^2), 
            \end{split}        
        \end{align}
        where $\bw_{(j)}$ is the eigenvector of $\bSigma_{\mathrm{ar1}}$ associated with the top $j$th eigenvalue $r_{(j)}$.
        From \citet[pp. 69-70]{grenander1958toeplitz}, the top $j$-th eigenvalue can be written as $r_{(j)}=(1-\rhoar^2)/(1-2\rhoar\cos \theta_{jp}+\rhoar^2)$ for some $\theta_{jp}\in((j-1)\pi/(p+1), j\pi/(p+1))$.
        Then, under the model \eqref{eq:model-ar1}, the signal strength $\rho^2$ defined in \Cref{asm:lin-mod} is $5^{-1} (1-\rhoar^2)/(1-\rhoar)^2$, 
        which is the limit of $25^{-1}\sum_{j=1}^5 r_{(j)} $.
        Thus, the model \eqref{eq:model-ar1} parameterized by two parameters $\rhoar$ and $\sigma^2$ satisfies Assumption \ref{asm:rmt-feat}-\ref{asm:lin-mod}.

\end{document}